\numberwithin{equation}{section}
\let\savedbigtimes\bigtimes
\let\bigtimes\relax
\let\bigtimes\savedbigtimes
\newcounter{Ccnt}
\newcommand\Co[1]{%
\@ifundefined{C-#1}%
  {\stepcounter{Ccnt}\expandafter\xdef\csname C-#1\endcsname{\arabic{Ccnt}}}%
  {}%
C_{\csname C-#1\endcsname}}
\newcommand{\pushright}[1]{\ifmeasuring@#1\else\omit\hfill$\displaystyle#1$\fi\ignorespaces}
\newcommand{\pushleft}[1]{\ifmeasuring@#1\else\omit$\displaystyle#1$\hfill\fi\ignorespaces}
\DeclareMathSymbol{\lsb@l}{\mathalpha}{letters}{`l}
\renewcommand{\r}[2]{\begin{minipage}[c]{1.5in}
  \renewcommand{\baselinestretch}{0.9}
  \raggedright{\scriptsize #1\par}
  \end{minipage}
  &&#2&&
}
\newtheorem{theorem}{Theorem}[section]
\newtheorem{proposition}[theorem]{Proposition}
\newtheorem{lemma}[theorem]{Lemma}
\newtheorem{corollary}[theorem]{Corollary}
\theoremstyle{definition}
\newtheorem{definition}[theorem]{Definition}
\newtheorem{assumption}[theorem]{Assumption}
\newtheorem*{assumption*}{Assumption}
\newtheorem{remark}[theorem]{Remark}
\newtheorem*{notation}{Notation}
\newcommand{\E}{\mathbb{E}}
\newcommand{\floor}[1]{\left\lfloor #1 \right\rfloor}
\renewcommand{\epsilon}{\varepsilon}
\newcommand{\Q}{\mathbb{Q}}
\renewcommand{\P}{\mathbb{P}}
\newcommand{\aleq}{\ensuremath{\leq}\kern-1.05em\lower1.5ex\hbox{\ensuremath{\sim}}\;}
\newcommand{\ale}{\ensuremath{<}\kern-1.05em\lower1.5ex\hbox{\ensuremath{\sim}}\;}
\newcommand{\ageq}{\ensuremath{\geq}\kern-1.05em\lower1.5ex\hbox{\ensuremath{\sim}}\;}
\newcommand{\age}{\ensuremath{>}\kern-1.05em\lower1.5ex\hbox{\ensuremath{\sim}}\;}
\newcommand{\conv}[1]{\overset{#1}{\rightarrow}}
\newcommand{\comb}{\binom{k}{xk}\binom{p-k}{(1-x)k}}
\renewcommand{\l}{\ell}
\newcommand{\Bi}[1]{\ensuremath{\text{Binomial}\left(#1\right)}}
\newcommand{\gl}{y + (1-y)(2^{1-x} - 1)}
\newcommand{\der}{\;\partial}
\newcommand{\oeq}[1]{=\Theta\left ( #1 \right)}
\newcommand{\oleq}[1]{=O\left(#1\right)}
\newcommand{\ole}[1]{=o\left(#1\right)}
\renewcommand{\[}{\begin{equation}}
\renewcommand{\]}{\end{equation}}
\newcommand{\scr}{\mathcal}
\renewcommand{\|}{\bigg{|}}
\newcommand{\kset}{random MAX k-set cover }
\newcommand{\ngrow}{\ensuremath{n \conv{} +\infty} }
\newcommand{\bhigh}{\begin{mdframed}[backgroundcolor=yellow!10,rightline=false,leftline=false, topline = false, bottomline = false]}
\newcommand{\ehigh}{\end{mdframed}}
\definecolor{figred}{RGB}{221,162,162}
\newcommand{\range}[1]{\ensuremath{\{0,1,\dots, #1\}}}
\newcommand{\bogp}{$b$-OGP }
\newcommand{\const}{\eqref{eq:r:constraint}-\eqref{eq:uniConstraint} }
\newcommand{\mSet}{\mathcal{M}}
\newcommand{\pSet}{\mathcal{P}}
\newcommand{\stateScale}{Assume that $M=\mSet, p=\pSet$ are deterministic and satisfy Assumption \ref{as:MPScale}. }
\def\l@subsection{\@tocline{2}{0pt}{1pc}{5pc}{}} \def\l@subsection{\@tocline{2}{0pt}{2pc}{6pc}{}} \makeatother
\newcommand{\kpert}{\log\left(n\right)^{-1}}
\newcommand{\err}{c}
\begin{document}

\title[On the MCMC performance in Bernoulli Group Testing]{On the MCMC performance in Bernoulli Group Testing\\ and the Random Max-set cover problem}

\author[M. Lovig, I. Zadik]{Max Lovig$^\mathsection$, Ilias Zadik$^\mathsection$}
\thanks{
$^\mathsection$ Department of Statistics and Data Science, Yale University.\\ Emails:  \texttt{max.lovig@yale.edu}, \texttt{ilias.zadik@yale.edu}}

\begin{abstract}%
The group testing problem is a canonical inference task where one seeks to identify $k$ infected individuals out of a population of $n$ people, based on the outcomes of $m$ group tests. Of particular interest is the case of Bernoulli group testing (BGT), where each individual participates in each test independently and with a fixed probability. BGT is known to be an ``information-theoretically'' optimal design, as there exists a decoder that can identify with high probability as $n$ grows the infected individuals using $m^*=\log_2 \binom{n}{k}$ BGT tests, which is the minimum required number of tests among \emph{all} group testing designs. 

An important open question in the field is if a polynomial-time decoder exists for BGT which succeeds also with $m^*$ samples. In a recent paper (Iliopoulos, Zadik COLT '21) some evidence was presented (but no proof) that a simple low-temperature MCMC method could succeed. The evidence was based on a first-moment (or ``annealed'') analysis of the landscape, as well as simulations that show the MCMC success for $n \approx 1000s$. Interestingly, in (Coja-Oghlan et al COLT '22) it was proven that if $k=n^{\alpha}$ for $\alpha \in (0,1)$ small enough, all low-degree polynomials as decoders \emph{fail} to work with $m^*$ tests if $n$ is large enough, raising the stakes for the success of an MCMC method in that regime.

In this work, we prove that, despite the intriguing success in simulations for small $n$, the class of MCMC methods proposed in previous work for BGT with $m^*$ samples takes super-polynomial-in-$n$ time to identify the infected individuals, when $k=n^{\alpha}$ for $\alpha \in (0,1)$ small enough. We show that the suggested first-moment picture by the previous work has been an artifact of ``rare bad'' events, an issue that has not appeared before in the first-moment landscape analysis of similar sparse inference models. Appropriate conditioning and a delicate truncated second moment method, allow us to conclude that a certain disconnectivity takes place in the landscape of BGT, known as Overlap Gap Property for inference problems (Gamarnik, Zadik AoS '22), leading to bottlenecks for the MCMC methods. Towards obtaining our results, we establish the tight max-satisfiability thresholds of the random $k$-set cover problem, a result of potentially independent interest in the study of random constraint satisfaction problems.
\end{abstract}

% \begin{keywords}
% \end{keywords}

\maketitle

\date{\today}
 \newpage
\tableofcontents

\section{Introduction}

In this work, we focus on the group testing problem, introduced by Dorfman in \cite{Dorfman_1943}, which is the following statistical estimation problem. We have \( n \) individuals, of which \( k \) are ``infected'' by a certain disease of interest. Let us denote by $\sigma^* \subseteq [n], |\sigma^*|=k$ the $k$-subset of infected individuals. We assume an ``agnostic'' prior on $\sigma^*$, that is $\sigma^*$ is chosen uniformly at random among all $k$-subsets of the $n$ individuals. While the statistician is unaware of the infection status of each individual, they have access to a series of $N$ \emph{group tests}. Formally, for each of the $N$ tests, one chooses a subset \( \scr{C} \subseteq [n] \) to be tested. Then, the result of the test is defined as being positive if and only if at least one individual in the tested subset is infected,

\[
\text{Result}(\scr{C}) = \begin{cases}
    + & \text{if } \scr{C} \cap \sigma^* \not = \varnothing\\ 
    - & \text{otherwise}.
\end{cases}.
\]
As such tests are often applied in practice over a short time horizon, we focus on this work in the case of the so-called non-adaptive group testing, where we conduct all the \( N \) tests \emph{in parallel}. The ultimate goal of the statistician would be to identify the $k$ infected individuals by using the minimal possible number of tests, i.e., with the minimal possible $N$.

The group testing problem is naturally motivated by a series of real-world applications such as DNA sequencing \cite{Kwang_2006,Ngo_2000}, protein interaction experiments \cite{Mourad_2013,Thierry_2006} and machine learning \cite{Emad_2015}. Yet, perhaps the most recently relevant application was during the COVID-19 pandemic \cite{Mutesa_2021,McMahan_2012} where group testing has played a key role in multiple occasions such as reopening schools \cite{augenblick2022pooled}. On top of that, the underlying mathematical structure of group testing has also led it to be a topic of intense algorithmic and mathematical study (see e.g., the survey \cite{Aldridge_2019}). Interestingly, group testing is more relevant in reducing the number of required tests in practice when the prevalence of the infection (i.e., the ratio $k/n$) is small. For this reason, in this work as performed often in the theory of group testing we adopt the asymptotic sublinear setting that $n$ is growing to infinity, $n \rightarrow +\infty$ and $k = n^{\alpha + o(1)}$, for some $\alpha \in (0,1)$ (see \cite[Section 1]{Aldridge_2019} for a relevant discussion).

The problem admits a useful bipartite graph theoretic reformulation. Consider a bipartite graph with $n$ nodes on the one side that corresponds to the individuals, $k$ of which are infected, and $N$ nodes on the other side corresponding to the tests. We then can connect each test to an individual via an undirected edge if and only if the subset that corresponds to the test contains the individual, yielding an equivalent description of the group testing instance. 

%In agreement with standard graph-theory notation, in this work, we say that an individual ``covers" a test if they are included among the test's participants. 
\begin{figure}[ht]
    \centering
    \includegraphics[scale = .65]{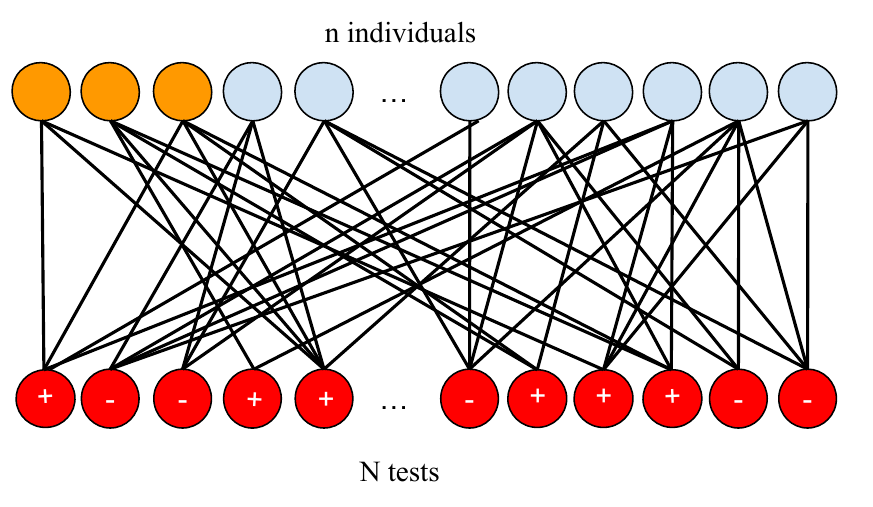}
    \caption{A realization for an instance of Bernoulli group testing.} %The goal is to find a set of \(k\) individuals that maximizes the number of positive tests covered while ensuring no negative tests are covered.}
    \label{fig:DoneBi}
\end{figure}

The construction and estimation in non-adaptive group testing can then be understood as the following two-step process: (1) first, we need to design the bipartite graph, i.e., determine which individual is included in which test, and (2) second, we need to choose a ``recovery'' algorithm which utilizes the resulting group tests outcomes from step 1 and outputs an estimator $\hat{\sigma}$ of $\sigma^*$.

Throughout this work, similar to earlier works such as \cite{scarlett2016phase, scarlett2018near, iliopoulos2021group, cojaoghlan2022statistical}, we focus on the following notion of successful estimation (or recovery) of $\sigma^*$, often called in the literature as ``almost perfect recovery". Specifically, our goal for step (2) above is to construct a $\hat{\sigma} \subseteq [n], |\hat{\sigma}|=k$ such that \begin{align}\label{eq:rec}
    \lim_n |\hat{\sigma}\cap \sigma^*|/k=1,
\end{align}
asymptotically almost surely (a.a.s.)\footnote{By a.a.s. throughout the paper, we refer to an event that holds with probability tending to one as $n$ grows to infinity.} with respect to the randomness of the prior of $\sigma^*$. In words, our goal is to recover asymptotically an $1-o(1)$ fraction of the infected individuals.

It is a folklore information theoretic argument in the literature of group testing that whenever $N \leq (1-\epsilon) \log_2 \binom{n}{k}$ for some $\epsilon>0,$ then there is no design of the group tests that can lead to a successful recovery algorithm \cite{Aldridge_2019,truong2020all, niles2023all}. Interestingly, the above result is tight as there are designs of group testing that lead to a successful recovery algorithm whenever $N \geq (1+\epsilon) \log_2 \binom{n}{k}$ for any $\epsilon>0$ \cite{Aldridge_2019}.

In this work we focus on one of the simplest such ``information-theoretically optimal'' designs called the \emph{Bernoulli group testing} design. This is a probabilistic design where for some $q \in (0,1)$ each individual is included in any given test independently with probability \( q \), leading to an Erd\H{o}s-Renyi structure in the associated bipartite graph. Interestingly, by appropriately choosing $q \approx \log 2/k$ \footnote{All logarithms in this work are with base $e$.}, it holds that whenever $N \geq (1+\epsilon) \log_2 \binom{n}{k}$ for some $\epsilon>0,$ the Bernoulli group testing design leads to a successful recovery algorithm, a.a.s. with respect to both the randomness of the prior and the Bernoulli design as $n \rightarrow +\infty$ \cite{Aldridge_2019}. The underlying reason for this striking success of the (vanilla) probabilistic method is a simple graph theoretic property which holds whenever $N \geq (1+\epsilon) \log_2 \binom{n}{k}$ in this setting (recall Figure \ref{fig:DoneBi}): any $k$-subset of the individuals that is \emph{covering} sufficiently many positive tests \footnote{We say that a $k$-subset of individuals ``covers'' a given test if at least one of the $k$ individuals took part in this test.} is almost-perfectly recovering $\sigma^*$ a.a.s. as $n \rightarrow +\infty$ (see e.g., \cite[Lemma 5]{iliopoulos2021group}). Due to this property, an interesting connection between Bernoulli group testing and the so-called \emph{random set cover problem} emerge -- we discuss more about this below. Now, given this property, a simple brute-force search algorithm over all $k$-subsets can solve the set cover problem and therefore recover the infected individuals for these values of $N$. 

While the Bernoulli group testing design is both simple to implement and optimal information-theoretically, it suffers from the fact that all known successful recovery algorithms require super polynomial-time to identify $\sigma^*$ when $N\geq  (1+\epsilon) \log_2 \binom{n}{k}$ for $\epsilon>0$ small enough. More specifically, exactly because of the $\mathcal{NP}$-hardness of the set-cover problem, as we also mentioned above, the optimal known decoding algorithm that works for step (2) whenever $ N= (1+\epsilon) \log_2 \binom{n}{k}$ for any $\epsilon>0$ requires in principle a brute-force search over all $k$-subsets and therefore has super-polynomial runtime in the worst-case. Since the Bernoulli group testing design is random, one can of course hope that some polynomial-time algorithm could also solve the set cover instance with a similar requirement on the test size to brute force search. Yet, the best known polynomial time recovery algorithm for this setting is known as Separate List decoding and requires $ N \geq (\log 2)^{-1} \log_2 \binom{n}{k}$ tests \cite{Aldridge_2019, scarlett2018near}, hence a multiplicative factor $1/\log 2\approx 1.44$ more tests compared to brute-force search approach. It remains unknown if some polynomial-time algorithm can achieve successful recovery for some $\log_2 \binom{n}{k} \leq N \leq (\log 2)^{-1} \log_2 \binom{n}{k}$. This potential trade-off between the running time and the required test size for any successful recovery algorithm places Bernoulli group testing into a family of statistical estimation tasks exhibiting what is known as a ``computational-statistical gap''; an area receiving a great deal of attention in recent works (see e.g., \cite{kunisky2019notes, gamarnik2022disordered} for two recent surveys). Albeit the fact that the gap in Bernoulli group testing is at the level of a different constant factor, in applications of group testing the multiplicative overhead in the required number of tests plays a major role. In fact, the study of this gap has been asked as one out of the nine main open problems for future work in the group testing survey \cite[Open Problem 3]{Aldridge_2019}.

Listening to the call of \cite[Open Problem 3]{Aldridge_2019}, researchers have already studied the ``hardness'' of this gap. The authors of \cite{cojaoghlan2022statistical} proved that no $O(\log n)$-degree polynomial estimator can recover $\sigma^*$ when $N <(\log 2)^{-1} \log_2 \binom{n}{k}$ as long as $k=n^{\alpha+o(1)}$ for $\alpha \in (0,1)$ a small enough constant \footnote{Formally, the lower bound has been proven for a detection variant of the model, but it is customary expected to generalize to the estimation question we focus on this work.}. Now, this low-degree lower bound is also potentially offering more than solely a rigorous lower bound against a large class of powerful estimators. It is intriguingly conjectured in the community of computational-statistical gaps that $O(\log n)$-degree polynomials as estimators are capturing the power of all polynomial-time estimators, something formalized for detection tasks in what is known as the ``low-degreee conjecture'' \cite{sam-thesis}. In particular, based on the above conjecture, \cite{cojaoghlan2022statistical} provides strong evidence that the computational statistical gap of Bernoulli group testing could be fundamental and no polynomial-time algorithm can succeed when $N <(\log 2)^{-1} \log_2 \binom{n}{k}.$

One year earlier compared to \cite{cojaoghlan2022statistical}, but again motivated by \cite[Open Problem 3]{Aldridge_2019}, \cite{iliopoulos2021group} also studied the computational-statistical gap but from a ``landscape'' point of view. They investigated whether a bottleneck for certain MCMC methods attempting to identify $\sigma^*$ appears in the landscape of Bernoulli group testing in the regime $\log_2 \binom{n}{k} \leq N \leq (\log 2)^{-1} \log_2 \binom{n}{k}$. The bottleneck is often referred to as ``Overlap Gap Property (OGP) for inference" \cite{gamarnik2022sparse}. For simplicity, we refer to this property as bottleneck-OGP (b-OGP) from now on. b-OGP in Bernoulli group testing refers to the phenomenon that for all $k$-subsets $\sigma \subseteq [n]$ which cover sufficiently many positive tests, the number of infected individuals in $\sigma$ (i.e., $|\sigma \cap \sigma^*|$) is either ``small'' (often due to high entropy effects) or ``large'' (as $\sigma^*$ covers all positive tests by definition). In particular, for any such $\sigma,$ $|\sigma \cap \sigma^*|$ cannot take a growing number of ``medium'' values. b-OGP is known to imply in many similar problems slow mixing for natural families of low-temperature MCMC methods that try to identify $\sigma^*$ (see e.g., \cite{gamarnik2021overlap, gamarnik2022sparse, arous2023free, gamarnik2024overlap, chen2023almost, chen2024low}). Moreover, $b$-OGP has been known to coincide with the threshold for the fast/slow mixing of low-temperature MCMC methods for a number of models, including sparse regression \cite{gamarnik2022sparse, chen2024low}, planted clique \cite{gamarnik2024overlap} and sparse tensor PCA \cite{chen2024low}.

The authors of \cite{iliopoulos2021group} showed that under the assumption of sufficient concentration of certain key quantities around their expectation (also called ``first-moment'' approximations, or ``annealed'' approach in statistical physics \cite{zdeborova2016statistical}) then b-OGP should in fact \emph{never be present} for Bernoulli group testing for any $N \geq (1+\epsilon)\log_2 \binom{n}{k}, \epsilon>0$. Judging on other models where b-OGP appears exactly when the low-temperature MCMC methods fail to identify in polynomial-time the planted signal $\sigma^*,$ the authors of \cite{iliopoulos2021group} asked whether these MCMC methods are always able to identify $\sigma^*$ in polynomial-time throughout the information-theoretic possible regime. Albeit an interesting question, the authors of \cite{iliopoulos2021group} do not prove that b-OGP is never present (let alone that the MCMC methods indeed identify $\sigma^*$ in polynomial-time) because the required concentration results appeared significantly difficult to establish. Despite that, they simulated these low-temperature MCMC methods for $n \approx 10^3$ and observe that indeed whenever $N= (1+\epsilon)\log_2 \binom{n}{k}$ for any $\epsilon>0$ they quickly find a $k$-subset that covers all positive tests, which as we mentioned above is sufficient to recover $\sigma^*$ for large enough $n$ \cite[Lemma 5]{iliopoulos2021group}. Besides the clear importance of proving any such positive result, the stakes are also raised given the discussed low-degree lower bound from \cite{cojaoghlan2022statistical}. Indeed, if MCMC were successful, it would be the first time in the literature of computational statistical gaps that an MCMC method run for polynomial time can provably outperform all $O(\log n)$-degree polynomials. Moreover, it would contradict any extension of the low-degree conjecture from detection tasks \cite{sam-thesis} to estimation tasks. One of the main motivations of this work is to understand whether such a significant advantage of MCMC method exists or not for Bernoulli group testing.

Notably, besides \cite{iliopoulos2021group}, we are not aware of any other theoretical work on MCMC methods for Bernoulli group testing. On the other hand, multiple applied papers have used MCMC methods for group testing \cite{schliep2003group,knill1996interpretation,furon2012decoding} and it is the general understanding that their \emph{``...empirical performance appears strong in simulations ''} \cite[Section 3.3.1]{Aldridge_2019}. To buttress these claims, it is essential to pursue an improved theoretical understanding of MCMC methods for information-theoretic optimal designs such as Bernoulli group testing, which is the central focus of this work.

Lastly, as briefly also mentioned above, the Bernoulli group testing is inherently connected with the random (or average-case) set cover problem. The set cover problem has been one of the 21 famous Karp's $\mathcal{NP}$-complete problems \cite{karp2010reducibility}, which yet remains one of the least well-understood among them on the average case. Indeed, only a few mathematical results have been established for this setting \cite{telelis2005absolute, arpino2023greedy} and, while they are very interesting, they provide only ``up to constants'' results, not offering a sufficiently tight understanding for our group testing application. It should be noted that a somewhat tighter but non-rigorous analysis is offered via statistical physics methods in \cite{mezard2007statistical}. Our relatively poor mathematical understanding of the random set-cover problem remains in sharp contrast with the very rich and detailed understanding of the community of the average-case analysis of other famous $\mathcal{NP}$-complete problems such as random SAT (see e.g., the seminal work by Ding, Sly, and Sun \cite{Ding_sat} and references therein), or more classical settings such as the random subset sum problem \cite{lagarias1985solving, frieze1986lagarias} and the maximum clique problem in random graphs dating back to the original work of Bollobas and Erd\H{o}s \cite{bollobas1976cliques}. In this work, we offer significantly tight results for the random set cover model, by exactly identifying up to $o(1)$ additive error the so-called maximum satisfiability thresholds for the problem. Our result is analogous to the celebrated work on the maximum satisfiability thresholds for random SAT by Achlioptas, Naor and Peres in \cite{achlioptas2007maximum}, and could be of independent interest.

\subsection{Contributions}

In this work, our main focus is on the power of MCMC methods for Bernoulli group testing. The gist of our theoretical results on this topic is a new strong negative result on MCMC methods. We prove that the class of low temperature MCMC methods suggested in  \cite{iliopoulos2021group} is not only unable to ``close'' the computational trade-off for Bernoulli group testing (answering the main question of \cite{iliopoulos2021group}), but in fact it is even underperforming compared to the best known polynomial-time algorithms for the setting (conceptually agreeing with a series of recent works on low temperature MCMC methods on different inference models \cite{chen2023almost, chen2024low}). 

\subsubsection{Existence of b-OGP}We start with turning to the open question for b-OGP as raised in \cite{iliopoulos2021group}. Our first result is that contrary to the first moment analysis of \cite{iliopoulos2021group} b-OGP does in fact exist for a part of the information theoretically possible regime for Bernoulli group testing. This is somewhat surprising given the success of the first moment landscape analysis in multiple inference settings, including sparse regression \cite{gamarnik2022sparse}, planted clique \cite{gamarnik2024overlap} and sparse tensor PCA \cite{arous2023free, chen2024low}. We summarize this finding in an informal theorem. \begin{theorem}[Informal theorem, see Theorem \ref{thm:QualInc_0}]\label{thm:1.4749}
    For Bernoulli group testing, suppose $k=\floor{n^{\alpha}}$ for some constant $\alpha \in (0,1)$ which is less than a sufficiently small constant. If the test size satisfies $N  \leq 1.4749 \log_2 \binom{n}{k}$ then b-OGP exists a.a.s. as $n \rightarrow +\infty.$
\end{theorem}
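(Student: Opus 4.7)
The plan is to prove b-OGP by producing an intermediate overlap interval $(\ell_1,\ell_2) \subset \{1,\ldots,k-1\}$ such that, a.a.s.\ as $n \to \infty$, no $k$-subset $\sigma$ with $|\sigma \cap \sigma^*| \in (\ell_1,\ell_2)$ covers ``sufficiently many'' positive tests in the b-OGP sense. I would first set up the vanilla first moment: let $Z_\ell$ denote the number of such $\sigma$'s with overlap exactly $\ell$. Since $|\sigma \cup \sigma^*| = 2k-\ell$, the probability that a fixed $\sigma$ leaves some positive test uncovered on a given test is $(1-q)^k - (1-q)^{2k-\ell}$, so with $x = \ell/k$, $(1-q)^k \to 1/2$, and $(1-q)^{2k-\ell} \to 2^{x-2}$,
\[
\log \E[Z_\ell] \sim k(1-\alpha)\log n \cdot \Bigl[(1-x) + C\bigl(\log_2(1+2^{x-1}) - 1\bigr)\Bigr], \qquad C := \frac{N}{\log_2\binom{n}{k}}.
\]
This exponent is negative only when $C > f(x) := (1-x)/(1-\log_2(1+2^{x-1}))$, and one checks $f(x) \ge 2$ on $(0,1)$. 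Hence the vanilla first moment cannot establish b-OGP for $C < 2$, which is precisely the obstacle identified in \cite{iliopoulos2021group} that led to their conjecture that b-OGP is absent.

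The crucial step is to recognize that the naive expectation is inflated by rare realizations of the random test design --- for instance positive tests of atypically large size, or uninfected individuals appearing in an atypically large fraction of positive tests --- which contribute a dominant factor to $\E[Z_\ell]$ while occurring with vanishing probability. I would introduce a good event $\mathcal{G}$ excluding such structural deviations, verify $\P[\mathcal{G}] \to 1$ via standard concentration, and then carry out a truncated second moment / conditional first moment computation on $Z_\ell \mathbf{1}_\mathcal{G}$. On $\mathcal{G}$, the probability that a fixed $\sigma$ covers all positive tests is sharper than the unconditional bound by a factor large enough to push the resulting exponent strictly below zero for every $\ell \in (\ell_1,\ell_2)$ and every $C \le 1.4749$. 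Markov's inequality then delivers $Z_\ell = 0$ on $\mathcal{G}$, and hence a.a.s., which is exactly b-OGP.

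The main obstacle is constructing the right $\mathcal{G}$ and executing the truncated moment computation with sufficient sharpness to recover the specific constant $1.4749$. I expect this to factor through a tight analysis of the random $k$-set cover problem --- the secondary contribution announced in the abstract --- in the spirit of the Achlioptas--Naor--Peres treatment of random Max-SAT. Concretely, one computes the asymptotic maximum fraction $M_\star(x)$ of positive tests that can be covered by a $k$-subset at overlap $xk$, via a matched truncated second moment for the max; the intermediate window $(\ell_1,\ell_2)$ then consists of overlaps $x$ at which $M_\star(x)$ is strictly below the ``sufficiently many'' threshold demanded by b-OGP, and $1.4749$ is the largest $C$ for which such a window is nonempty. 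A secondary technical point I would expect is that, because $\alpha$ must be taken small, several entropy corrections of size $k\,H(x)$ or $k\log(1-x)$ appearing in $\log\binom{k}{\ell}\binom{n-k}{k-\ell}$ are dominated by the $k\log n$ terms, so the limiting optimization really is one-dimensional in $x$.
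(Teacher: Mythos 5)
Your overall architecture — vanilla first moment fails for $C<2$, so identify the rare events inflating $\E[Z_\ell]$, condition them away, and prove b-OGP via a conditional first moment together with a matching truncated second moment that feeds through the random MAX $k$-set cover problem — is indeed the shape of the paper's argument. But there are two genuine gaps.

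First, and most substantially, the conditioning event $\mathcal{G}$ is not correctly identified, and this is the heart of the paper. You propose to exclude ``positive tests of atypically large size, or uninfected individuals appearing in an atypically large fraction of positive tests.'' Neither of these is the right object. The paper shows (Lemma \ref{lem:aSet}, imported from \cite{cojaoghlan2022statistical}) that the crucial rare events are fluctuations in the degrees of the \emph{infected} individuals in the post-processed positive-test graph: one conditions on $\mathcal{A} = \{\deg(i) \leq 2aqM,\ \forall i\in\sigma^*\}$. These infected-node degree caps enter the conditional first moment through the extra term $(1-y)D\bigl(\tfrac{2a\log(2)x}{1-y}\|r(x)\bigr)$ in Definition \ref{def:FMF} — the term entirely absent from the unconditional first moment of \cite{iliopoulos2021group} — and this term is precisely what makes the first-moment function non-monotonic near $x=0$ (Theorem \ref{lem:FMFnonMono_0}). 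Without identifying infected-degree fluctuations as the culprit, the conditional first moment computation would not produce a non-monotone $\phi$, and you could not recover the constant $1.4749$; the assumptions (\ref{as:der_0} in particular) that yield $1.4749$ are phrased explicitly in terms of the degree-cap parameter $a$.

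Second, you propose computing a ``matched truncated second moment for the max'' at every overlap $x$, to obtain $M_\star(x)$ throughout the window. The paper is leaner: for intermediate overlaps, the conditional \emph{first} moment alone (Theorem \ref{lem:lowerbound}) supplies the lower bound on $\phi(\ell)$ that kills such $\sigma$'s, while the truncated second moment — and the random set-cover analysis via the flatness technique — is needed only at $\ell=0$ to upper bound $\phi(0)$ and certify condition (1) of b-OGP (the other endpoint $\phi(k)=0$ being trivial). Running a second moment at every $x$ would be more work and, worse, is not needed for the negative direction: Markov already gives nonexistence. Finally, a small computational note: your vanilla first-moment exponent with the factor $\log_2(1+2^{x-1})-1$ does not appear to be correct — the probability a $\sigma$ at overlap $x$ covers a given positive test is $2^{x-1}$, giving exponent $(1-x)(1-C/2)$ — though this does not change your (correct) conclusion that the vanilla first moment fails for $C<2$.
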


The reason for the discrepancy to the prediction in \cite{iliopoulos2021group} is that, as we prove, the conjectured concentration around the first-moment approximations, stated in  \cite[Conjecture 26]{iliopoulos2021group}, turns out to be incorrect. The underlying mathematical reason is the existence of certain rare ``lottery'' events that cause the first moment to ``explode" but yet are misleading as they can be conditioned away. Indeed in this work, we identify these atypical events, which depend on the fluctuations of the degrees of the infected individuals. Then we appropriately condition the first moment approximations from \cite{iliopoulos2021group} on them, and execute a technical but delicate first and second moment method to prove the correctness of these now conditional first moment approximations.  The exact constant $1.4749$ is computed via numerical methods (see Section \ref{sec:num} for more details on this).
 
 \subsubsection{MCMC lower bound}\label{sec:mcmc_intro} Following recent but relatively standard tools from the literature (see e.g., \cite{arous2023free, chen2024low}), we then prove that because b-OGP appears, all elements of a natural class of low-temperature local MCMC methods fail to identify the set of infected set of individuals $\sigma^*$ in polynomial-time.
 
 More specifically, the focus is on the following class of Markov chains. As explained above, a sufficient condition for the recovery of $\sigma^*$ if $N \geq \log_2 \binom{n}{k}$ is to find any $k$-subset that ``covers'' all the positive tests \cite[Lemma 5]{iliopoulos2021group}. Hence, it is natural to focus on Markov chains that attempt to maximize this objective by having a stationary measure supported on $k$-subsets $\sigma \subseteq [n]$ given by \[\pi_{\beta}(\sigma) \propto \exp\left(-\beta \frac{\# \text{ of positive tests uncovered by }\sigma}{M}\right)\]for sufficiently large values of $\beta>0$ (or equivalently of sufficiently ``low-temperature''). Now we also focus on ``local'' Markov chains, meaning the underlying neighborhood graph on the $k$-subsets of $[n]$ connects two subsets if and only if their Hamming distance equals to 2, i.e., the chain swaps one individual at every step. This neighborhood graph is also commonly referred to as the Johnson graph \cite[p. 300]{Holton_Sheehan_1993}.
 
 We prove the following corollary of our b-OGP result.
 
 \begin{corollary}(Informal corollary, see Corollary \ref{thm:mix})\label{cor:mcmc_inf}
     For Bernoulli group testing, suppose that $k=\floor{n^{\alpha}}$ for some constant $\alpha \in (0,1)$ which is less than a sufficiently small constant. If $\beta \geq \Co{beta} k \log(n/k)$ for a sufficiently large $\Co{beta}>0$ and $\log_2 \binom{n}{k} \leq N \leq 1.4749 \log_2 \binom{n}{k}$ then all local Markov chains with stationary measure $\pi_{\beta}$ take super-polymomial time to recover $\sigma^*$, a.a.s. as $n \rightarrow +\infty.$ 
 \end{corollary}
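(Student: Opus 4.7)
The plan is to derive the corollary from the b-OGP of Theorem \ref{thm:QualInc_0} by a standard conductance / bottleneck argument for low-temperature reversible chains, exactly as in \cite{arous2023free, chen2024low}.

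First I would extract from Theorem \ref{thm:QualInc_0} a quantitative energy gap: there should exist constants $0 < r_1 < r_2 < 1$ and $\delta > 0$ (depending only on $\alpha$ and on the multiplicative constant in $N/\log_2\binom{n}{k}$) such that, a.a.s., every $k$-subset $\sigma$ with $|\sigma \cap \sigma^*|/k \in [r_1, r_2]$ leaves at least $\delta M$ positive tests uncovered, where $M$ is the normalization appearing in $\pi_\beta$. I would then partition the space of $k$-subsets into $A = \{|\sigma \cap \sigma^*| < r_1 k\}$, $S = \{r_1 k \leq |\sigma \cap \sigma^*| \leq r_2 k\}$, and $B = \{|\sigma \cap \sigma^*| > r_2 k\}$. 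Because a local chain alters $|\sigma \cap \sigma^*|$ by at most one per step, $S$ separates $A$ from $B$ in the Johnson neighborhood graph.

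Next I would compare stationary masses. Writing $F(\sigma)$ for the number of uncovered positive tests and $Z$ for the partition function, $\pi_\beta(\sigma^*) = Z^{-1}$ since $F(\sigma^*) = 0$, while every $\sigma \in S$ satisfies $F(\sigma)/M \geq \delta$; the crude bound $|S| \leq \binom{n}{k} \leq n^k$ then yields
\[
\frac{\pi_\beta(S)}{\pi_\beta(B)} \;\leq\; \frac{|S|\, e^{-\beta \delta}}{1} \;\leq\; \exp\bigl(k \log n - \beta \delta\bigr).
\]
For $\beta \geq \Co{beta}\, k \log(n/k)$ with $\Co{beta}$ sufficiently large (depending on $\delta$ and $\alpha$), this ratio is $n^{-\Omega(k)}$. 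The conclusion then follows from the classical bottleneck / hitting-time lemma for reversible chains (as used for the same purpose in \cite{arous2023free, chen2024low}): the expected hitting time of $B$ starting from any $x_0 \in A$ is at least $\Omega(\pi_\beta(A)/\pi_\beta(S)) = n^{\Omega(k)}$. A uniformly random initial $k$-subset (or any natural initialization far from $\sigma^*$) has overlap $\Theta(k^2/n) = o(k)$ with $\sigma^*$ a.a.s.\ and hence lies in $A$, whereas any $k$-subset witnessing almost-perfect recovery must lie in $B$; combining these observations with the displayed bound gives the desired super-polynomial lower bound.

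The main technical point is to verify that the b-OGP interval produced by Theorem \ref{thm:QualInc_0} is robust at its endpoints, i.e., that the energy gap $F(\sigma) \geq \delta M$ persists for the boundary overlaps $|\sigma \cap \sigma^*| \in \{\lceil r_1 k \rceil, \lceil r_2 k \rceil\}$ that form the inner boundary $\partial B$ crossed by the chain. This is only a mild strengthening of the b-OGP statement and should follow by shrinking $r_1, r_2$ slightly inside the proof of that theorem, or by noting that the forbidden interval in Theorem \ref{thm:QualInc_0} has nonempty interior. Beyond this verification, the argument is an instance of the now-standard OGP $\Rightarrow$ slow MCMC template, so the proof essentially reduces to invoking the appropriate bottleneck lemma and checking its hypotheses for the present chain and stationary measure $\pi_\beta$.
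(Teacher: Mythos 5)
Your overall strategy (b-OGP $\Rightarrow$ conductance bottleneck $\Rightarrow$ hitting-time lower bound) matches the paper's, but the specific ratio you estimate does not feed into the bottleneck lemma you invoke, and this is a real gap rather than a cosmetic one. You bound $\pi_\beta(S)/\pi_\beta(B)$ by lower-bounding the denominator via $\pi_\beta(\sigma^*) = Z^{-1}$, i.e.\ by a point in the \emph{high-overlap} region $B$. But the hitting-time lemma you then cite needs $\pi_\beta(A)/\pi_\beta(S)$ to be large, i.e.\ it needs the low-overlap region $A$ to carry substantial stationary mass relative to the boundary. Nothing in your argument controls $\pi_\beta(A)$: if most of the Gibbs mass sat in $B$ (entirely plausible a priori, since $\sigma^*$ has zero energy and $\beta$ is huge) then the chain could escape $A$ quickly even though $\pi_\beta(S) \ll \pi_\beta(B)$. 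To make the bottleneck argument go through you must lower-bound the mass of $A$, and for that you need the \emph{other half} of the b-OGP package, namely the existence of a near-minimal-energy configuration with small overlap, i.e.\ the statement $\phi(0) = H_C + o(1)$ proven in Theorem~\ref{thm:combinedBound}. This is exactly what the paper does: with $\mathcal{B} = \{\sigma: |\sigma\cap\sigma^*| \leq \epsilon_1 k\}$ it writes
\[
\frac{\pi_\beta(\partial\mathcal{B})}{\pi_\beta(\mathcal{B})} \;\leq\; \frac{\binom{k}{\epsilon_1 k}\binom{p-k}{k-\epsilon_1 k}\,e^{-\beta\phi(\epsilon_1 k)}}{e^{-\beta\phi(0)}} \;\leq\; \binom{k}{\epsilon_1 k}\binom{p-k}{k-\epsilon_1 k}\,e^{-\beta\delta},
\]
where the denominator uses the existence of a single zero-overlap $\sigma$ attaining $H(\sigma) = \phi(0)$, and $\delta$ is the \emph{gap} $\phi(\epsilon_1 k)-\phi(0)$ rather than an absolute lower bound on $H$. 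Your $\delta M$ lower bound on the energy in $S$ is the sum $\phi(0) + \delta_{\mathrm{gap}}$, so once you introduce the lower bound $\pi_\beta(A) \geq Z^{-1}e^{-\beta\phi(0)}$ the $\phi(0)$ contributions cancel and you recover the paper's ratio. Without that step the inequality you write down simply does not imply the hitting-time bound you want.

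Two secondary remarks. First, the crude $|S| \leq n^k$ is much looser than the paper's $\binom{k}{\epsilon_1 k}\binom{p-k}{k-\epsilon_1 k} = \exp\bigl((1+o(1))(1-\epsilon_1)k\log(p/k)\bigr)$; this is harmless for the informal statement because the informal threshold $\beta \gtrsim k\log(n/k)$ is anyway larger than the formal one $\beta \gtrsim k\log(p/k)$, but it would lose the tight constant $C_\epsilon \approx 2.01/\delta$ that the formal Corollary~\ref{thm:mix} extracts. Second, your worry about the boundary overlaps $\{\lceil r_1 k\rceil, \lceil r_2 k\rceil\}$ is already handled in the paper by the fact that Theorem~\ref{thm:QualInc_0} gives the energy lower bound on a whole interval $[\epsilon_1,\epsilon_2]$, and the inner boundary $\partial\mathcal{B}$ crossed by the local chain (overlap exactly $\epsilon_1 k$) sits strictly inside that interval, so no extra endpoint robustness argument is needed.
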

 Two key remarks are in order:
 \begin{itemize}
    \item[(a)] Our results prove that all these MCMC methods fail to surpass the lower bound against $O(\log n)$-degree polynomials as proven in \cite{cojaoghlan2022statistical}, settling the main question from \cite{iliopoulos2021group}. This provides further support for the ``low-degree conjecture'' in the context of statistical estimation.
    \item[(b)] Notice that these MCMC methods in fact fail to even achieve the performance of Separate List Decoding (SLD), the currently best known polynomial-time algorithm for Bernoulli group testing, as SLD works whenever $N \geq (\log 2)^{-1} \log_2 \binom{n}{k}$ \cite{Aldridge_2019} and of course $1/\log 2<1.47$. This is another case of a provable underperformance of low temperature MCMC methods for statistical estimation tasks (known as local-to-computational statistical gap) which is similar in spirit to works on Langevin dynamics for tensor PCA \cite{arous2020algorithmic} and the Metropolis process for the planted clique model \cite{chen2023almost} and sparse tensor PCA model \cite{chen2024low}. 
 \end{itemize} 

\subsubsection{Random MAX $k$-set cover}\label{sec:intro_cover}
As we mentioned above, towards proving the existence of b-OGP which led to the MCMC lower bound, we interestingly need to tackle a problem in the study of random constraint satisfaction problems of independent interest. Specifically, to prove the b-OGP we need to understand tightly how many positive tests any $k$-subset of individuals can cover, which entails to studying \emph{the random MAX $k$-set cover problem} which we describe as follows in an independent way from Bernoulli group testing. 

 Let $n$ be a growing parameter and consider for some $p=p_n$ a universe of $[p]$ elements. Then, for some $q=q_n \in (0,1)$ we independently sample $M=M_n$ subsets of $[p]$, $\mathcal{S}_i, i=1,\ldots,M$ where each element appears with probability $q$ in an i.i.d. fashion. We say that a $k$-subset of $[p]$ covers one $\mathcal{S}_i$ if it has non-empty intersection with it.  The random MAX-set cover problem asks for a given $k=k_n$ what is the asymptotic value of \begin{align}\label{max_sat}
    \Phi_k:=\max_{\sigma \subseteq [p], |\sigma|=k} \frac{\# \text{ of } \mathcal{S}_i, i=1,\ldots,M \text{ covered by }\sigma}{M},
\end{align}that is of the maximum fraction of the number of the $M$ random sets that some $k$-subset of $[p]$ can intersect or cover. One can easily convince themselves of the relation to Bernoulli group testing, where $p$ corresponds to the number of non-infected individuals \footnote{Later, we explain that $p$ in fact should correspond to the number of non-infected but ``possibly infected'' individuals. For simplicity, we omit this detail for now.} and the ``target'' sets $\mathcal{S}_i$ correspond to the positive tests. Then $\Phi_k$ corresponds to the fraction of the positive tests that can be covered by some $k$-subset of non-infected individuals.

In the literature of random constraint satisfaction problems (CSP), the random variable $\Phi_k$ is a well-known quantity which is also commonly referred to as the max-satisfiability thresholds of a random CSP, in particular here of random set cover. The max-satisfiability threshold is meaningful in the ``unsatisfiable'' regime of a random CSP where it quantifies how many constraints can be possibly satisfied. A quite attractive feature that motivates the detailed study of the max-satisfiability thresholds of $\mathcal{NP}$-hard problems (such as $k$-set cover) in the average-case is that they shed light to interesting connections with approximation complexity. A notable such result is the celebrated Feige's hypothesis \cite{feige2002relations} which revealed connections between the hardness of achieving the max-satisfiability thresholds for random 3-SAT via polynomial-time methods, and the approximation complexity of a series of other $\mathcal{NP}$-hard problems. For this reason, researchers have studied in detail the asymptotic properties of max-satisfiability thresholds $\Phi_k$ of random $\mathcal{NP}$-hard problems. Notable such results include general asymptotic formulas for the thresholds by leveraging connections with spin glass theory such as \cite{sen2018optimization, panchenko2018k,jones2022random}, but also even tighter more precise results such as the seminal work by Achlioptas, Naor and Peres on the thresholds of random k-SAT \cite{achlioptas2007maximum}. We highlight that understanding the max satisfiability thresholds at a similar level of precisions as in \cite{achlioptas2007maximum} is an arguably significant mathematical task accomplished only in limited cases, often involving a delicate second moment method argument.

In our work, we calculate the max-satisfiabilty thresholds of random set cover, $\Phi_k$,  which is a significant departure compared to random SAT. Interestingly, we achieve a comparable level of precision as \cite{achlioptas2007maximum} by identifying its asymptotic value up to $o(1)$ error. Perhaps unsurprisingly our proof proceeds by a careful conditional second moment method. Yet the application of the method is quite delicate and our main technical tool is to employ an appropriately adjusted version of the so-called ``flatness'' technique, while executing the second moment method. The flatness idea was initiated in the study of the densest subgraph problem in random graphs \cite{balister2018dense,gamarnik2024overlap} and has recently been applied also in analyzing the sparse principal components of a Gaussian tensor \cite{chen2024low}. To the best of our knowledge, this is the first time this technique has been adjusted to work in the context of sparse random graphs.

%In this work, we prove a tight concentration result for the value $\Phi_k$ for a large regime of the random MAX $k$-set cover problem, similar to the celebrated result for $k$-SAT in \cite{achlioptas2007maximum}. Moreover, also our proof follows from a significantly delicate second-moment method. Our key technique enabling us to get this results is to employ some inspiring ``flatness'' ideas from random graph theory \cite{balister2018dense, gamarnik2019landscape} for our second moment method analysis.

In terms of parameters, we choose $q=q_n$ so that $(1-q)^k=1/2$. As we discussed above, this is a natural choice in the Bernoulli group testing literature \cite{Aldridge_2019}, but it also provides an elegant normalization from a random CSP point of view. Indeed, this choice of $q$ implies that a uniform random $k$-subset of $[n]$ covers each $\mathcal{S}_i$ with probability exactly $1/2$. Hence, by the law of large numbers, a uniform random $k$-subset of $[n]$ will cover $1/2+o(1)$-fraction of the $M$ random sets a.a.s. as $n \rightarrow +\infty.$ In contrast, the quantity of interest $\Phi_k$ concerns what is the maximum possible fraction that can be covered by any $k$-subset of $[n]$ and can be compared with the $1/2$ fraction which is the performance of the trivial ``random guess'' algorithm.

It also turns out that for $\Phi_k$ to be asymptotically constant, we need to choose $M$ to scale like the entropy of the feasible region, i.e., $M=\Theta(\log \binom{n}{k})$, which is the scaling we adopt. Finally, to exactly follow the corresponding scaling for our Bernoulli group testing application we appropriately assume that for some parameters $\alpha \in (0,1), C \in (1,2)$ $k=n^{\alpha+o(1)}$, $M=(C/2+o(1)) \log_2 \binom{n}{k}$  and $p=n(k/n)^{C/2+o(1)}=n^{1-(1-\alpha)C/2+o(1)}$ (see Section \ref{sec:post_proc_intro} for further motivation and the exact this choice of scaling). We remark that, albeit natural in Bernoulli group testing, the perhaps stringent dependence of $p$ on $n$ is expected to be able to be generalized using a variation of our proof technique (see Remark \ref{rem:gen} for a relevant discussion).

Under these assumptions, we prove the following result which exactly characterizes the limiting value of $\Phi_k$. 

\begin{theorem}(Informal theorem, see Theorem \ref{thm:Phi_kLim})
     Let $n \rightarrow +\infty$. For any $C \in (1,2)$ and $\alpha \in (0,1)$ sufficiently small, if $k=\floor{n^{\alpha}}$, $ M=\floor{C \log_2 \binom{n}{k}/2}$ and $p=n^{1-(1-\alpha)C/2+o(1)}$ then a.a.s. as $n \rightarrow +\infty,$ 
     \[
         \lim_{n \rightarrow +\infty} \Phi_k=1-h^{-1}_2\left(2-2/C\right),
     \]where $h_2(x):=-x\log_2 x-(1-x)\log_2 (1-x), x \in [0,1/2]$ is the left branch of the binary entropy. 
 \end{theorem}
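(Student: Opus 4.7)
The plan is to identify the critical fraction $t^* := h_2^{-1}(2 - 2/C) \in (0, 1/2)$ at which the expected number of well-covering $k$-subsets transitions from vanishing to exploding, and combine a first moment upper bound with a truncated second moment lower bound. For $t \in (0, 1/2)$, let $X_t$ be the number of $k$-subsets of $[p]$ that miss at most $tM$ of the $M$ random sets, so that $\{\Phi_k \geq 1 - t\} = \{X_t \geq 1\}$. The normalization $(1-q)^k = 1/2$ ensures that for each fixed $\sigma$ the number of sets it covers is $\mathrm{Bin}(M, 1/2)$, and a Chernoff/Stirling estimate yields $\mathbb{P}(\mathrm{Bin}(M, 1/2) \geq (1-t)M) = 2^{-M(1-h_2(t))(1+o(1))}$. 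Combined with $\log_2 \binom{p}{k} = (1-\alpha)(1 - C/2)(1 + o(1)) k \log_2 n$ and $M = (C/2)(1 - \alpha)(1 + o(1)) k \log_2 n$, this gives $\log_2 \mathbb{E}[X_t] = (1-\alpha)\bigl[1 - C + (C/2) h_2(t)\bigr] k \log_2 n \cdot (1 + o(1))$, which is strictly negative exactly for $t < t^*$. Markov's inequality then gives $\Phi_k \leq 1 - t^* + o(1)$ a.a.s.

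\textbf{Second moment decomposition.} For $t > t^*$, $\mathbb{E}[X_t] = \exp(\omega(1))$, and we aim to show $X_t \geq 1$ a.a.s.\ via a truncated second moment. Expanding $\mathbb{E}[X_t^2] = \sum_{l=0}^k \binom{p}{k}\binom{k}{l}\binom{p-k}{k-l}\, P_l(t)$, where $P_l(t)$ is the joint probability that two $k$-subsets of overlap $l$ each cover at least $(1-t)M$ sets, the key input is the joint law of the coverage indicators $(Y_i, Z_i)$ of a single test by the two subsets, which depends only on $x := l/k$ through $\mathbb{P}(Y_i = 0, Z_i = 0) = (1-q)^{2k-l} = 2^{-2+x}$. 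A joint LDP gives $P_l(t) \approx 2^{-M J(t,x)(1+o(1))}$ with $J(t, 0) = 2(1 - h_2(t))$, so that the $l=0$ contribution precisely reproduces $\mathbb{E}[X_t]^2$; the goal is to show that every $l = xk$ with $x > 0$ contributes a negligible share. Combining the Stirling estimate of the combinatorial weight with the LDP, the $l = xk$ summand of $\mathbb{E}[X_t^2]/\mathbb{E}[X_t]^2$ has a log-exponent proportional to $-x(1-\alpha)(1 - C/2) + (C/2)(1-\alpha)\bigl[2(1-h_2(t)) - J(t, x)\bigr]$, whose strict negativity over $x \in (0, 1]$ at $t$ just above $t^*$ is the analytic content that must be verified.

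\textbf{Flatness truncation and the main obstacle.} A naive second moment fails here: rare atypical configurations of the random sets, where a small number of elements of $[p]$ appear in anomalously many tests, inflate $P_l(t)$ at non-trivial overlaps---precisely the same ``lottery'' effect that the paper identifies as responsible for the erroneous first-moment picture of earlier work on $b$-OGP. The plan is therefore to replace $X_t$ by a truncated variable $\tilde X_t$ that counts only $k$-subsets each of whose elements appears in close to the expected $qM$ tests, a ``flatness'' restriction adapted from the densest subgraph and sparse tensor PCA literatures. One then shows (i) $\mathbb{E}[\tilde X_t] = (1 - o(1)) \mathbb{E}[X_t]$, so the first-moment threshold is preserved, and (ii) under the flatness restriction the joint LDP yields a strictly larger effective rate function $\tilde J(t, x) > J(t, x)$ that drives the overlap exponent strictly negative uniformly in $x \in (0, 1]$, giving $\mathbb{E}[\tilde X_t^2] = (1 + o(1))\mathbb{E}[\tilde X_t]^2$. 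Chebyshev then delivers $\tilde X_t \geq 1$ a.a.s., hence $\Phi_k \geq 1 - t^* - o(1)$ a.a.s. The principal obstacle is performing this conditional joint LDP in the sparse bipartite regime while maintaining a uniform grip on $\tilde J(t, x)$: previous flatness-based second moment arguments have been executed in denser or Gaussian models where the relevant concentration comes essentially for free, whereas here the sparse Bernoulli tails of individual element-degrees must be controlled directly, and a delicate Laplace-type expansion near $x \to 0$ is needed to clinch the sum over small $l$. This is where the bulk of the technical effort will lie.
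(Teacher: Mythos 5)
Your high-level strategy matches the paper's: a first moment upper bound on $\Phi_k$, then a lower bound via a truncated (``flatness'') second moment on the count of well-covering $k$-subsets. The first-moment computation and the normalization $(1-q)^k=1/2$ giving $\mathrm{Bin}(M,1/2)$ per subset are right, and your reduction $\mathbb{P}(Y_i=0,Z_i=0)=2^{-2+x}$ is correct.

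The genuine gap is the form of the flatness truncation. You propose to keep only $k$-subsets \emph{each of whose elements} has degree close to $qM$ — a per-element (singleton-level) condition. What the paper, and the densest-subgraph/sparse-PCA flatness arguments you cite, actually require is a strictly stronger \emph{multiscale} condition: for a candidate $\sigma$, every subset $\sigma_l\subseteq\sigma$ of every size $l\in\{0,\dots,k\}$ must have its uncovered count in a window of width $O(D_l)$ around its conditional mean $y_{(l)}M$ given $X_\sigma=yM$ (Definition \ref{def:flat}). This is the right condition because in the truncated second moment you expand $\E[Y_y^2]$ over the overlap $l=|\sigma\cap\tau|$ and then, within each $l$, over the possible uncovered counts $y'M$ of $\sigma\cap\tau$; only a two-sided concentration of $X_{\sigma\cap\tau}$ at every scale $l$ restricts the inner sum to a short interval $S_l$ and lets the rate function $\tilde J$ close. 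Per-element degree control alone gives (via a union bound on coverage) at best a one-sided bound on $X_{\sigma_l}$ and does nothing to rule out $\sigma$'s whose elements all have typical degrees but whose coverage sets overlap atypically, which is exactly the regime where $P_l(t)$ is inflated at intermediate $l$. In short: the condition you wrote would not suppress the dangerous terms in the overlap sum. (A small further point of orientation: the per-element degree cutoff you describe is analogous to the paper's event $\mathcal A$, which is used for the \emph{first}-moment conditioning on the degrees of $\sigma^*$ in the planted model; but $\Phi_k=1-\phi(0)$ involves only subsets disjoint from $\sigma^*$, so $\mathcal A$ plays no role in the $\Phi_k$ second moment — the two conditioning devices should not be conflated.) With the multiscale flatness in place of the singleton condition, the remaining structure you outline (conditional LDP, Laplace expansion near $x\to 0$ and $x\to 1$, a concavity argument for the rate function in between) is the right roadmap.
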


A plot of the limiting $\Phi_k$ as a function of $C$ versus the performance of ``random guess'' is shown in Figure \ref{fig:PhiVal}.
 \begin{figure}[ht]\label{fig:PhiVal}
    \centering
    \begin{overpic}[scale=1]{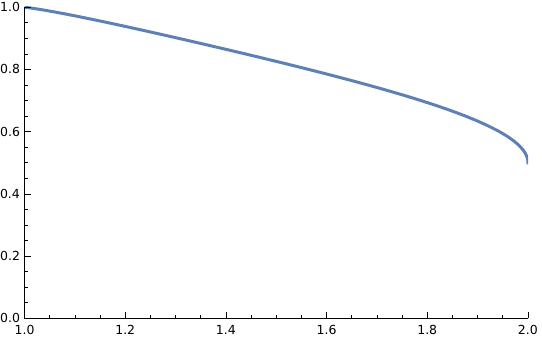}
        \put(48, -2){$C$}
        \put(-5,33){$\Phi_k$}
        \put(5,32.5){\tikz \draw[dashed,black] (0,0)--(8.5,0);}
        \put(15, 28){Trivial Lower Bound via Random Guess}
    \end{overpic}
    \caption{ $\Phi_k$, the maximal proportion of covered sets for some size $k$ set of elements, as a function of $C$ for the \kset problem, where $C$ control the number of ``target'' sets (or constraints).}
\end{figure}

%-----------------------

\subsection{Notation}

\begin{comment}
\begin{notation}
    Define a collection of constants $C_i$ for $i \in \mathbb{N}$, each $C_i \geq 0$, labelled in order of appearance. Unless otherwise specified, these constants are universal. Constants without a numerical sub-script will vary from context to context. Also, constants used in statements such as lemmas and theorems may show up in our proofs, when such a case occurs it is because we constructed the constant over the course of the proof. This may lead to a situation when we have used $C_{14}$ and, suddenly, $C_{12}$ is introduced without warning. This is because $C_{12}$ was used in the statement of the Lemma/Theorem and is that explicit constant. \textcolor{red}{IZ: This is confusing.}
\end{notation}
\end{comment}

    We use standard asymptotic notation. For any two positive sequences $A_n, B_n, n \in \mathbb{N},$ we write $A_n \oleq{B_n}$ if and only if $\limsup_n A_n/B_n <+\infty$, $A_n= \Omega(B_n)$ if and only if $B_n \oleq{A_n}$, $A_n \oeq{B_n}$ if and only if $A_n \oleq{B_n}$ and $B_n \oleq{A_n}$, $A_n \ole{B_n}$ if and only if $\lim_n A_n/B_n=0$ and $A_n=\omega(B_n)$ if and only if $B_n \ole{A_n}$.

     We say that a sequence of events $(A_n)_{n \in \mathbb{N}}$ happen asymptotically almost surely (a.a.s) if and only if $\lim_{n \conv{} \infty}\P(A_n) = 1$ as \ngrow.
     
    Given a function $f$ of possibly many variables, one of which is $\gamma$, define $\partial_\gamma f$ to represent the derivative of $f$ with respect to the variable $\gamma$. We also denote for $q_1,q_2 \in [0,1]$, the two point Kullback-Leibler (KL) divergence by
    \begin{align}\label{eq:two_KL}
    D(q_1||q_2) = q_1 \log(q_1/q_2) + (1-q_1)\log((1-q_1)/(1-q_2)).
    \end{align}
    Also we denote for any $C>1$,
    \begin{align}\label{def:H_C}
    H_C := h_2^{-1}(2-2/C),
    \end{align} where $h_2$ is the left branch of the binary entropy function.

Finally, throughout the paper, we denote some important positive constants by $C_i, i \in \mathbb{N}$. Importantly, $C_i$ will represent a specific constant when defined and will never change its value between two instances. There will also be a collection of constants using a different notation (such as $C>0$) and these constants can vary from context to context.

%\subsection{For Reference: Scaling Of The Parameters}
%For the reader's convenience, the following table contains a description and the relative scale for each major parameter in the paper:

%\begin{table}[H]
%\begin{tabular}{c|l|l}
%Parameter & Description & Scale \\ \hline
 %   $n$      &      Number of individual units to be tested       &    $n$   \\
 %    $k$     &      Number of infected individuals       &  $n^\alpha$ ($\alpha \in (0,1)$)    \\
   %   $N$    &      Number of tests       &    $C k %\log_2(n/k)$ ($C \in (1,2)$)  \\
   %   $q$    &   Probability of an individual being in %a test & $\approx \log(2)/k$\\
  %  $M$      &      Number of positive tests       &   % $\approx N/2$  \\
   %  $p$     &     Number of possible infecteds after %post-processing & $\approx n (k/n)^{C/2}$        
%\end{tabular}
%\end{table}

%------------------------------

\section{Getting Started}\label{sec:set_up}
In this section, we provide some required background to formally state our main results.
\subsection{Set-up}\label{sec:setup} We start with properly defining the Bernoulli group testing instance. Consider \(n\) to be the number of individuals. We assume that $n$ grows to infinity and all other growing parameters grow as a function of $n.$  

\begin{definition} Fix some constants $\alpha \in (0,1)$ and $C>1$. We call the $(\alpha,C)$-group instance the following setting. Among the $n$ individuals, we assume there is a subset of $k=\floor{n^{\alpha}}$ infected ones, denoted by $\sigma^*,$ which are chosen uniformly at random among all $k$-subsets of $[n]$. 

The statistician observes \(N = \floor{C  \log_2 \binom{n}{k}}\) group tests, where each individual participates in each test with an assignment probability \(q \in (0,1)\) satisfying
\[
(1-q)^k = \frac{1}{2}.
\label{eq:qEq}\]
\end{definition}The goal of the statistician is given an $(\alpha,C)$-instance and complete knowledge of the parameters, to construct a $k$-subset $\hat{\sigma} \subseteq [n]$ such that a.a.s. as \ngrow the recovery condition \eqref{eq:rec} holds.

\begin{remark}
    We make a few remarks on the choice of the parameters.  First, the choice $C>1$ is necessary because if $C<1,$ a standard information-theory packing argument implies that no $\hat{\sigma}$ is possible to be constructed for Bernoulli group testing so that \eqref{eq:rec} holds \cite{Aldridge_2019}. Second, the assumption on $q$ satisfying \eqref{eq:qEq} is also standard in Bernoulli group testing, and it is motivated by the fact that for this exact choice of $q$ some (time-inefficient) $\hat{\sigma}$ is possible to be constructed whenever $C>1$ so that \eqref{eq:rec} holds (see e.g., \cite[Lemma 5]{iliopoulos2021group}). It will be also convenient for us to notice the asymptotic that as \(n\) grows it holds \(q = (\log(2)+o(1))/k\). Moreover, with this choice, each test is positive with probability \(1/2\), resulting in \(M = (1+o(1))N/2\) positive tests, a.a.s. as $n \rightarrow +\infty.$
\end{remark}

\subsection{Post-processing step}\label{sec:post_proc_intro}

We start with an important post-processing step that most algorithmic constructions for the estimators $\hat{\sigma}$ naturally apply as a first step to a vanilla Bernoulli group testing instance, as pictured in Figure \ref{fig:DoneBi}. Notice that each negative test must be testing only non-infected individuals. Hence a natural post-processing step, known also as Combinatorial Orthogonal Matching Pursuit (COMP) \cite{Aldridge_2019}, is to immediately discard from consideration all individuals participating in at least one negative test. Interestingly, after this removal step, if $C>2,$ COMP outputs only the infected individuals a.a.s. as \ngrow and hence recovers $\sigma^*$ \cite{Aldridge_2019}. In particular, if $C>2$ the recovery problem can be considered trivial, and from now on we only consider the regime $1<C<2$.

Moreover, in this regime where $1<C<2,$ Lemma \ref{lem:M:limit} and Lemma \ref{lem:p:limit} (which follow from standard concentration of measure inequalities), imply that there are $M=(1+o(1)) N/2$ positive tests and $p=(1+o(1)) n(k/n)^{C/2}+k$ remaining individuals that are possibly infected. Pictorially, this post-processing step when applied to Figure \ref{fig:DoneBi}, results in Figure \ref{fig:PPBi}.
\begin{figure}[h]
    \centering
    \includegraphics[scale = .65]{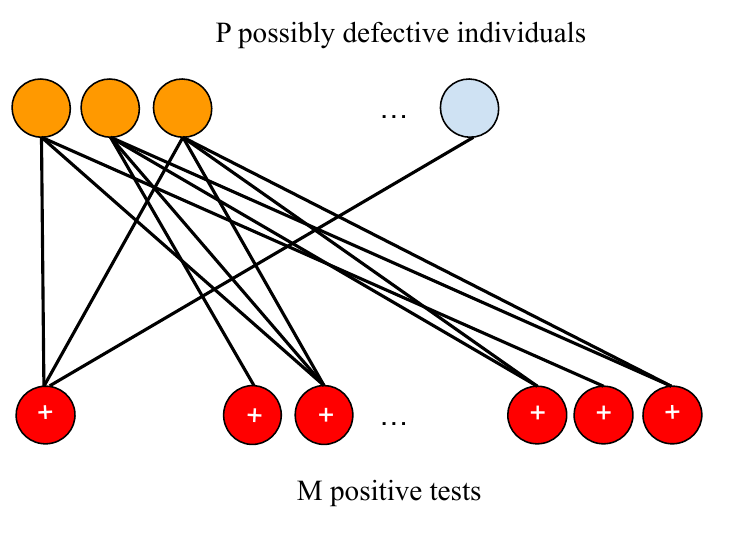}
    \caption{A realization for an instance of Bernoulli group testing, now with the COMP post-processing applied.}
    \label{fig:PPBi}
\end{figure}

\subsection{The information-theoretic, but time-inefficient, optimal algorithm}
As we mentioned above, for arbitrary $\alpha \in (0,1)$ and for all $C>1$, there exists a time-inefficient $\hat{\sigma}$ that can recover $\sigma^*$ per \eqref{eq:rec}. We now explain the details.

This algorithm consists of first applying COMP as above and then outputting any $k$-subset $\sigma$ of the $p$ possibly infected individuals, that ``covers'' all positive tests (i.e., any positive test is connected to at least one individual in $\sigma$) in the post-processed Bernoulli group testing instance (Figure \ref{fig:PPBi}). The success of this algorithm is standard in the literature, see e.g., \cite[Lemma 5]{iliopoulos2021group}. The natural implementation of this strategy is to brute-force search over all $k$-subsets of $[n]$ and output the first one that covers all the positive tests. While this algorithm successfully recovers $\sigma^*$ a.a.s. as \ngrow\!, from a run-time point of view it needs to visit $\binom{p}{k}=\exp( \Theta(k \log (p/k)))$ subsets in the worst-case, which of course is not polynomial-in-$n$ as $k=\Theta(n^{\alpha})$ and $p = \Theta\left(n (k/n)^{C/2}\right) = \omega(k)$ for $1<C<2$.

As we mentioned in the Introduction the success of this algorithm for all $C>1$ should be understood in contrast with SLC, the best known polynomial-time estimator for Bernoulli group testing, which recovers $\sigma^*$ only when $C>1/\log 2 \approx 1.44.$

\subsection{Markov chains}\label{sec:MCMC} The primary motivation of this work is the performance of Markov chains in constructing an estimator $\hat{\sigma}$. 

Now, as the information-theoretical optimal estimator is to brute-force search for a $k$-subset $\sigma \subseteq [p]$ (recall that $p$ is the set of possible infected individuals) that covers all the positive tests, equivalently our goal is to minimize the (normalized) Hamiltonian,
\begin{align}\label{eq:H} H(\sigma) \coloneqq \# \text{ of positive tests non-covered by }\sigma /M,
\end{align}
over all $k$-subsets $\sigma.$ 

Viewed from this perspective, a natural ``local-search'' approach to try to approximately minimize $H(\sigma)$ and recover $\sigma^*$ is to run a Markov chain with state space all $k$-subsets $\sigma$ and stationary distribution given by $\pi_{\beta}(\sigma) \propto \exp(-\beta H(\sigma))$, for a sufficiently large choice of $\beta.$ This leads to the class of ``low-temperature local MCMC methods" defined in Section \ref{sec:mcmc_intro} over the Johnson graph. 

For concreteness, a popular such example would be simply running the Glauber Dynamics, described as follows.

\begin{definition}\label{def:Glauber}
    Let $d_H$ be the Hamming distance on $k$-subsets. Given a group testing instance, we define the Glauber Dynamics over $k$-subsets and inverse temperature $\beta$ to have transition kernel $P_\beta(\sigma,\sigma')$ given by,
    \[P_{\beta}(\sigma, \sigma') = \begin{cases}
        \frac{1}{k(p-k)}\frac{\exp({-\beta H(\sigma')})}{\exp(-\beta H(\sigma'))+\exp(-\beta H(\sigma))} & \text{if } d_H(\sigma, \sigma') = 2, |\sigma| = k,\\
        \sum_{\sigma':d_H(\sigma,\sigma')=2}\frac{1}{k(p-k)}\frac{\exp({-\beta H(\sigma)})}{\exp(-\beta H(\sigma'))+\exp(-\beta H(\sigma))} & \text{if } \sigma = \sigma'\\
        0 & \text{otherwise}.
    \end{cases}\]
\end{definition}

%--------------------

\section{Main Results}
In this section we formally present our landscape \bogp results, resulting in our lower bounds for low-temperature MCMC methods. In all that follows, as explained in Section \ref{sec:set_up} we consider only the $p$ possibly infected individuals and subsets $\sigma$ of them.  Similar to \cite{iliopoulos2021group}, our first key step is to study the following (random) restricted optimization problems over $l \in \range{k}$,

\[\phi(\ell):= \min \{ H(\sigma): |\sigma|=k, |\sigma \cap \sigma^*|=\ell\}\label{eq:phil},\]where $H$ is defined in \eqref{eq:H}. 

The non-monotonicity of $\phi(l)$ is known to be linked with \bogp \cite{gamarnik2022sparse}, defined as follows.

\begin{definition}\label{def:OGP}
    Let constants $\zeta_1, \zeta_2 \in [0,1]$ with $\zeta_1 < \zeta_2$, threshold value $r=r_n> 0$ and height value $\delta=\delta_n > 0$. A group testing instance exhibits the bottleneck Overlap Gap Property (\bogp) for parameters $\zeta_1, \zeta_2, r, \delta$ if the following conditions hold.
    \begin{enumerate} 
        \item There exist size $k$ subsets $\sigma_1,\sigma_2$ with $\frac{1}{k}|\sigma_1 \cap \sigma^*| \leq \zeta_1$, $\frac{1}{k}|\sigma_2 \cap \sigma^*| \geq \zeta_2$, for which it holds $\max\{ H(\sigma_1), H(\sigma_2)\} < r.$
        
       %\item $\max_{0 \leq l \leq \zeta_1k}\min_{\sigma: |\sigma \cap \sigma^*| = l, |\sigma| = k}H(\sigma) \leq r$ and $\max_{\zeta_2k \leq l \leq 1}\min_{\sigma: |\sigma \cap \sigma^*| = l, |\sigma| = k}H(\sigma) \leq r$.
        \item For any $k$-subset $\sigma$ with $|\sigma \cap \sigma^*| \in [\zeta_1, \zeta_2]$ it holds $H(\sigma) \geq r + \delta$.
    \end{enumerate}
\end{definition}

%\begin{mdframed}
%    OLD

%\begin{definition}
 %   Let $0 \leq r=r_n \leq 1$  and $0 < \zeta_1=(\zeta_1)_n < \zeta_2 = (\zeta_2)_n  < \zeta_3=(\zeta_3)_n < \zeta_4 = (\zeta_4)_n < 1$ with $\zeta_{i+1} - \zeta_i = \Theta(1)$ for $i \in \{1,2,3\}$. 
    
  %  An $(\alpha, C)$ group testing instance exhibits the bottleneck Overlap Gap Property (\bogp) for parameters $r, \zeta_1, \zeta_2, \zeta_3, \zeta_4$ if the following conditions hold. 
    
  %  \begin{enumerate}
   %     \item There exist size $k$ subsets %$\sigma_1,\sigma_2$ with $\frac{1}{k}|\sigma_1 \cap \sigma^*| \leq \zeta_1$, $\frac{1}{k}|\sigma_2 \cap \sigma^*| \geq \zeta_4$, for which it holds $\max\{ H(\sigma_1), H(\sigma_2)\} < r.$ 
    %    \item There exists a region, $R = \{\sigma:|\sigma| = k, \frac{1}{k}|\sigma \cap \sigma^*| \in (\zeta_2, \zeta_3)\}$, where $\min_{\sigma \in R}H(\sigma) > r$.
   % \end{enumerate}
%\end{definition}
%\end{mdframed}
It is well-known in the literature that \bogp is related to the (non)-monotonicity of $\phi(l)$. Indeed, \cite[Lemma 20]{iliopoulos2021group} implies that the non-monotonicity of $\phi(l)$ is necessary for the existence of  \bogp\! and a simple argument, used for example in \cite[Theorem 2]{gamarnik2024overlap}, implies that the non-monotonicity of $\phi(l)$ is also sufficient for the existence of \bogp\!.  

Characterizing $\phi(l)$ leads to studying the count of size $k$ subsets $\sigma$ which have a given overlap $l$ and objective value $t$. 

\begin{definition}\label{def:Ztl}
    For $t \in \range{M}, l \in \range{k}$ define $Z_{t,l}$ to be the random variable
    \[Z_{t,l} = |\{\sigma : |\sigma| = k, |\sigma \cap \sigma^*| = l, \sigma \text{ leaves at most }t\text{ positive tests uncovered}\}|\]
\end{definition}Notice that $\phi(l)  \leq t/M$ if and only if $Z_{t,l} \geq 1$. Hence, it suffices to find the minimal $t>0$ such that $Z_{t,l} \geq 1$ a.a.s. as $n \rightarrow +\infty.$ Naturally, this can be accomplished using the first and second moment methods. 

\subsection{The Vanilla First Moment Function As In \cite{iliopoulos2021group}}\label{subsec:VFMF}
Following this perspective, to approximate $\phi(l)$ the authors of \cite{iliopoulos2021group} define an implicit ``first-moment'' equation in $t$,
\[\E[Z_{t,l}] = 1\label{eq:basicFMF}.\]
The motivation for this choice is two-fold. To explain this, let us fix a $l \in \range{k}$.
\begin{itemize}
    \item[(a)] If for some $t_1>0$ it holds that $\E[Z_{t_1,l}] =o(1)$, then by Markov's inequality $Z_{t_1,l}=0$ a.a.s. as \ngrow\!, and therefore $\phi(l) \geq t_1.$ This is customary called the first moment method.

    \item[(b)] On the other hand, if for some $t_2>0$ (ideally relatively ``close'' to $t_1>0$) it holds that $\E[Z_{t_2,l}] =\omega(1)$ and the distribution of $Z_{t_2,l}$ concentrates, for example with $\mathrm{Var}[Z^2_{t_2,l}]=o(\E[Z_{t_2,l}]^2)$, then $Z_{t_2,l} \geq 1$ a.a.s. as \ngrow\!, giving $\phi(l) \leq t_2.$ This is customary called the second moment method.
\end{itemize}Thus, if $t_{\ell}$ is the ``first-moment'' solution for \eqref{eq:basicFMF} with respect to $t$ and one establishes sufficient concentration of $Z_{t,l}$ for $t \approx t_l$, then one could naturally predict that a.a.s. as \ngrow\!, it holds
\[\label{eq:iliop}
    \phi(l) \approx t_{\ell}.
\] Following the first and second moment method strategy, tight approximations such as \eqref{eq:iliop} to their corresponding first moment solutions have already been successfully established for a plethora of similar sparse settings to Bernoulli group testing, including sparse regression \cite{gamarnik2022sparse, chen2024low}, planted clique \cite{gamarnik2024overlap} and sparse tensor PCA \cite{arous2023free, chen2024low}.

Now, \emph{under the assumption of \eqref{eq:iliop}}, \cite{iliopoulos2021group}, analyzed the monotonicity properties of $\phi(\ell)$ via $t_{\ell}$ and concluded that $\phi(\ell)$ should be monotonic, implying that the \bogp never appears.

\subsection{The Conditional First Moment Function}\label{subsec:CFMF}
A crucial contribution of this work is demonstrating that in Bernoulli group testing \eqref{eq:iliop}, as well its conclusion on non-existence of the \bogp\!, are incorrect due to the presence of rare events. Notice that one can consider a variation of the first-moment equation \eqref{eq:basicFMF} under a conditioned event $\scr{A}$, 
\[\E[Z_{t,l}|\scr{A}] = 1. \label{eq:FmfCond}\] 
The key idea is that a ``conditional'' first moment method also holds: if $\scr{A}$ occurs a.a.s. as \\\ngrow, then for any $t'_1>0,$ with $\E[Z_{t'_1,l}|\scr{A}]=o(1)$, it must hold that $\phi(l) \geq t'_1$ a.a.s. as \ngrow\!, with the potential $t'_1$ being much larger than $t_1$ coming from the vanilla first moment method. Albeit a natural idea, no such conditioning has been required in the analysis of similar sparse problems \cite{gamarnik2022sparse, gamarnik2024overlap, arous2023free, chen2024low}. 

Notice that in Bernoulli group testing the degrees of the individuals in Figure \ref{fig:PPBi} are random. Interestingly, in \cite{cojaoghlan2022statistical} the fluctuations of these degrees were shown to be detrimental for directly proving a low-degree lower bound, which led the authors of \cite{cojaoghlan2022statistical} to employ a more involved method using the Franz-Parisi potential \cite{NEURIPS2022_daff6824}. We observe that the degree fluctuations are also detrimental to the vanilla first moment equation, as conditioning on an event that upper bounds these degrees is crucial to get an accurate approximation of $\phi(l)$.

We first define the key conditioning event. 
\begin{lemma}[\cite{cojaoghlan2022statistical}, Section 9.2.1 (arxiv version)]\label{lem:aSet}
Consider an $(\alpha, C)$ instance of group testing. If $a$ is an element of the set
\[\left\{a : \log(2)C(a \log(a) - a + 1) > \frac{\alpha}{1-\alpha}\right\},\label{eq:a:set}\]
then for $$\scr{A} := \{\deg(i) \leq 2aqM, \;\forall i \in \sigma^*\}$$ it holds that $P(\scr{A}) = 1 - o(1).$
\end{lemma}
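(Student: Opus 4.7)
The plan is a Chernoff-plus-union-bound argument controlling the maximum degree among the $k$ infected individuals.

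For $i \in \sigma^*$, every test containing $i$ is automatically positive, so the post-processing step leaves $\deg(i)$ equal to the total number of tests that $i$ takes part in; by the Bernoulli design this gives $\deg(i) \sim \mathrm{Binomial}(N,q)$, and the $k$ random variables $\{\deg(i) : i \in \sigma^*\}$ are mutually independent, since the participation indicators of distinct individuals form disjoint families of i.i.d.\ coin flips. Writing $\mu := \mathbb{E}[\deg(i)] = Nq$, the multiplicative Chernoff bound yields, for every $a > 1$,
\[
\mathbb{P}\bigl(\deg(i) > a\mu\bigr) \;\leq\; \exp\bigl(-\mu(a\log a - a + 1)\bigr).
\]

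Next I would identify $2aqM$ with $a\mu$ asymptotically: using $(1-q)^k = 1/2$ yields $q = (\log 2)(1+o(1))/k$, while Lemma \ref{lem:M:limit} gives $M = (1+o(1))N/2$ a.a.s., so $2aqM = a\mu\,(1+o(1))$ on a high-probability event. The openness of the defining set of $a$ in \eqref{eq:a:set} lets us absorb this $(1+o(1))$ slack by an arbitrarily small perturbation of $a$ preserving the strict inequality, so that the Chernoff bound applies directly to the threshold $2aqM$ up to a $(1+o(1))$ factor in the exponent.

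A union bound over $i \in \sigma^*$ then gives $\mathbb{P}(\scr{A}^c) \leq k\exp\bigl(-\mu(a\log a - a + 1)\bigr)$. Plugging in Stirling's estimate $\log\binom{n}{k} = k(1-\alpha)\log n\,(1+o(1))$ for $k = \lfloor n^{\alpha}\rfloor$, so that $\mu = C\log\binom{n}{k}/k\cdot(1+o(1)) = C(1-\alpha)\log n\,(1+o(1))$, and using $\log k = \alpha\log n$, this rewrites as
\[
\mathbb{P}(\scr{A}^c) \;\leq\; \exp\Bigl((\log n)\bigl[\alpha - C(1-\alpha)(a\log a - a + 1)\bigr](1+o(1))\Bigr),
\]
which is $o(1)$ whenever $a$ satisfies the condition in \eqref{eq:a:set}.

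The argument is essentially mechanical; the only delicate point is propagating the $(1+o(1))$ slack from the $M \approx N/2$ and $q \approx (\log 2)/k$ approximations through the exponent without losing the strict inequality, which is handled cleanly by the openness of the admissible set in \eqref{eq:a:set}.
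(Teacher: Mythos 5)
The paper does not include its own proof of this lemma; it is cited to Coja-Oghlan et al., Section 9.2.1, so there is no internal argument to compare against. Your Chernoff-plus-union-bound derivation is a correct self-contained proof of the claim.

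One observation worth recording: tracking the constants, your exponent gives $\P(\scr{A}^c) \le \exp\bigl((\log n)[\alpha - C(1-\alpha)(a\log a - a +1)](1+o(1))\bigr)$, so the condition your argument actually uses is $C(a\log a - a + 1) > \alpha/(1-\alpha)$, \emph{without} the $\log 2$ appearing in \eqref{eq:a:set}. The factors of $\log 2$ coming from $N = C\log_2\binom{n}{k} = C\log\binom{n}{k}/\log 2$ and from $q = (\log 2)(1+o(1))/k$ cancel in $\mu = Nq$, exactly as your computation shows. Since $\log 2 < 1$, the set \eqref{eq:a:set} is contained in the larger set your analysis admits, so your argument does prove the lemma as stated; the extra $\log 2$ in its hypothesis is slack, presumably inherited from a different normalization of $N$ in the cited reference. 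It is worth being aware that the paper carries this extra $\log 2$ throughout its downstream assumptions, so this is a conservative choice rather than an error.

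One point that deserves an explicit sentence rather than a hand-wave: $M$ and $\deg(i)$ are positively correlated (every test containing an infected $i$ is positive, so $M \ge \deg(i)$), and the substitution $2aqM = a\mu(1+o(1))$ is best justified via the decomposition $\P(\deg(i) > 2aqM) \le \P\bigl(\deg(i) > a(1-N^{-\eta})\mu\bigr) + \P\bigl(M < (1-N^{-\eta})N/2\bigr)$, with the second term controlled by Lemma \ref{lem:M:limit}. Your ``openness'' argument implements exactly this once the event is split, but the correlation should be acknowledged to explain why the $(1+o(1))$ factor in the random threshold can be made deterministic before the Chernoff bound is applied.
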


Using this choice of $\scr{A}$ in equation \eqref{eq:FmfCond}, we denote by $t'_l=t'_l(\scr{A})$ the (conditional now) first moment solution of \eqref{eq:FmfCond} with respect to $t$ given the value of $l\in \range{k}$.

One could aim to solve for $t'_l$ and seek to get a simpler formula for it. Using linearity of expectation, standard concentration of measure asymptotics, and a direct computation with \eqref{eq:FmfCond} (deferred to Section \ref{sec:FMFDerivation}), we indeed get a simpler (but still implicit) set of equations satisfied by a very close proxy to $t'_l$. 

To explain the derived equations, notice that both $t$ and $l$ take values in growing regions, $\range{M}$ and $\range{k}$ respectively. Hence, it is convenient to re-parameterize our setting in terms of the proportional overlap $ \frac{l}{k} = x \in [0,1]$. Moreover, we also denote our proxy for the re-scaled quantity $\frac{t'_l}{M}=\frac{t'_{xk}}{M}$ by $y(x) \in [0,1].$ To define $y(x)$ we first remind the reader the definition of the two point KL divergence from \eqref{eq:two_KL}.
We now define $y(x)$ as follows.
\begin{definition}\label{def:FMF}
    Consider $r(x) \coloneqq  4 \cdot 2^{-x}(1-2^{-x})$, $s(x) \coloneqq 1-2^{x-1}$, with $x\in [0,1]$, $\alpha \in (0,1)$, $C \in (1,2)$, constants $\Co{roomR}, \Co{roomS}, \Co{roomI} > 0$, and $a$ an element of the set \eqref{eq:a:set}.

    For any $x \in [0,1]$ define the $(\Co{roomR},\Co{roomS},\Co{roomI})$-\emph{first moment function} at $x$, denoted by $y=y(x)$ as the solution to the equation,
     \begin{align}\label{FMF}
     \frac{1}{M}\log\left(\binom{k}{\floor{xk}} \binom{p - k}{\floor{(1-x)k}}\right) &=  (1-y)D\left(\frac{2a\log(2)x}{1-y}\|\|r(x)\right)  + D(y||s(x))
     \end{align}
    satisfying the following four constraints,
        \begin{align}\frac{2a\log(2)x}{1-y} &\leq (1-\Co{roomR})r(x)\label{eq:r:constraint}\\
        y &\leq (1-\Co{roomS})s(x)\label{eq:s:constraint}\\
       2a\log(2)x &\leq (1-\Co{roomR})r(x)\label{eq:existConstraint}\\
       D\left(1-\frac{2a\log(2)x}{(1-\Co{roomR})r(x)}\|\|s(x)\right) + \frac{2a\log(2)x}{(1-\Co{roomR})r(x)}D((1-\Co{roomR})r(x)||r(x)) &\leq (1-\Co{roomI})(1-x)(2-C)\log(2)/C\label{eq:uniConstraint}
    \end{align}
    Often we will reference the region of $x$ where $(x,y(x))$ satisfy \const\!, in which there is an implicit choice of $\alpha, C, \Co{roomR}, \Co{roomS}, \Co{roomI}, a$. 
\end{definition}

The definition of the first moment function is unfortunately quite technical. For this reason, we defer explaining the exact relation between $t'_l/M$ and $y(x)$ to Section \ref{sec:FMFDerivation} and proceed with a few high level explanatory remarks.

\begin{remark}\label{rem:exists}
The equation \eqref{FMF} turns out to be equivalent to \eqref{eq:FmfCond} up to lower order terms. This is an outcome of a standard concentration of measure argument on the product Bernoulli distribution that constraints \eqref{eq:r:constraint} and \eqref{eq:s:constraint} allow to be applied. Moreover, under constraints \eqref{eq:r:constraint} and \eqref{eq:s:constraint}, the additional constraints \eqref{eq:existConstraint} and \eqref{eq:uniConstraint} allow us to restrict to values of $x$ that the first moment function $y(x)$ provably exists and is unique. The proof of this fact is given in Section  \ref{sec:uniqueProof}. Moreover, as long as the first moment function exists on an interval, a similar argument allows us to conclude the continuous differentiability of $y(x)$ on the interval (see also Section \ref{sec:uniqueProof}).
\end{remark}

\begin{remark}[Comparison to \cite{iliopoulos2021group}]
    Definition \ref{def:FMF} without $(1-y)D\left(\frac{2a\log(2)x}{1-y}\|\|r(x)\right)$ on the right-hand side of \eqref{FMF}, under the constraint \eqref{eq:s:constraint}, and missing the constraints \eqref{eq:r:constraint}, \eqref{eq:existConstraint}, \eqref{eq:uniConstraint} was also utilized in \cite{iliopoulos2021group} to define their (unconditional) first moment function. The additional term in \eqref{FMF} is due to the conditioning event $\scr{A}$ from Lemma \ref{lem:aSet}. In Figure \ref{fig:FMFComp}, we plot several solutions to our (conditional) first moment function for different values of $C$ and $n$, and compare it with the unconditional first moment function from \cite{iliopoulos2021group}. It is interesting how important the conditioning appears to be; for large finite values of $n$ the unconditional first moment function is monotonic (as established in \cite{iliopoulos2021group}), while the conditional becomes not monotonic (as we prove later in Theorem \ref{lem:FMFnonMono_0}).
\end{remark}
    
\begin{remark}[The role of $\Co{roomR}$, $\Co{roomS}$ and $\Co{roomI}$]
    The introduction of the constants $\Co{roomR}$, $\Co{roomS}$ and $\Co{roomI}$ in the definition is purely for technical convenience. They do not change the value of the solution to $y(x)$ in \eqref{FMF}, they simply slightly restrict the region of $x$ where $(x,y(x))$ is defined to avoid certain degeneracies in our arguments in Section \ref{sec:FMFDerivation}. For this reason, we consider them to be arbitrarily small constants. 
\end{remark}

\begin{remark}\label{rem:integ}
Lastly, we highlight that often in what follows (but not always) we consider the values of $x$ to be restricted on the set $\{0, 1/k, 2/k, \ldots, 1\}$. In those cases, for notational simplicity and when clear from context, we drop the floor function from the binomial coefficients in \eqref{FMF}.
\end{remark}
\begin{figure}
    \centering
    \includegraphics[width=0.7\textwidth]{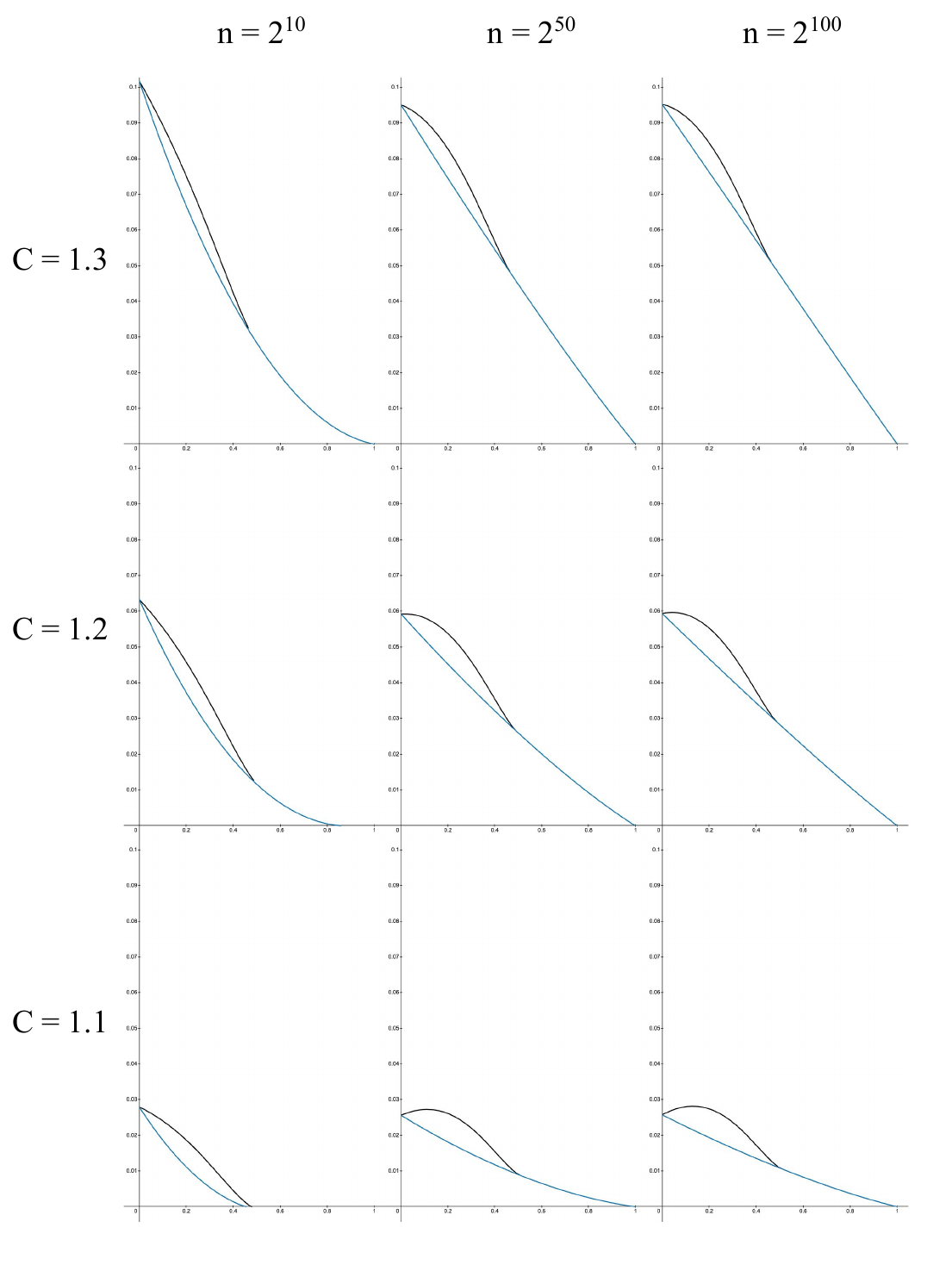}
    \caption{Solutions to two differing first moment functions, \cite{iliopoulos2021group}'s unconditional first moment function in blue and our conditional first moment function in black. These plots were made with parameters $\alpha = .01$, $a = 1.17$ and varying $C$ and $n$. The unconditional first moment function is monotonic for all of our chosen values of $C$ and $n$, confirming the analysis done by \cite{iliopoulos2021group}. Our conditional first moment function is non-monotonic for $C$ sufficiently small and $n$ sufficiently large, confirming our Theorem \ref{lem:FMFnonMono_0}. 
    %(2). The non-monotonicity of our conditional first moment function is highly dependent on the value of $n$, this presents a possible explanation as to why evidence of a \bogp was not found in simulations of Glauber dynamics of group testing conducted in \cite{iliopoulos2021group} as they only tested $n$ less than 10000.
    }
    
    %(3). The point where the conditional first moment function no longer uniquely exists is at the intersection point with \cite{iliopoulos2021group}'s unconditional first moment function. This has a direct probabilistic interpretation, our first moment function stops uniquely existing in the case that constraint \eqref{eq:r:constraint} is violated. This would mean that the Chernoff bound we applied in \eqref{eq:FMF:upperbound} is no longer valid and the corresponding binomial probability can only be trivially upper bounded by one, leading to the same derivation presented in \cite{iliopoulos2021group}.}
    \label{fig:FMFComp}
\end{figure}
\subsection{Local Monotonicity Of A First-Moment Function}\label{subsec:LM}
Recall that our goal is to prove that $\phi(l)$ ($l \in \range{k}$) is non-monotonic for some regime of $\alpha,C$ to conclude the existence of \bogp.  Moreover, as we aim to approximate $\phi(l)$ using the deterministic $y(l/k)$, a natural question is whether $y(l/k)$ is non-monotonic. On top of that, following the plots in Figure \ref{fig:FMFComp}, it is natural to expect that the non monotonicity to take place around $l/k \approx 0.$ Hence, we now focus on whether there exists a region of $x=l/k$ close to $0$ where we can prove the non-monotonicity behavior of $y(x)$.

To answer this question, we first naturally need to guarantee that for some $\epsilon>0$ the first moment function exists for all $x \in [0,\epsilon]$ which, as explained in Remark \ref{rem:exists} it is guaranteed if the constraints \eqref{eq:r:constraint}-\eqref{eq:uniConstraint} are satisfied for all $x \in [0,\epsilon]$.
The following assumption suffices to guarantee this part.
%Recall the definition of $H_C$ from \eqref{def:H_C}in what follows. \textcolor{red}{Plot?}
\begin{assumption}\label{as:FMFExists} We assume that the parameters $(\alpha, C, a, \Co{roomR}, \Co{roomI})$ satisfy
\[D\left(1- \frac{a}{2(1-\Co{roomR})}\big{|}\big{|}\frac{1}{2}\right) \leq (1-\Co{roomI})\frac{2-C}{C}\log(2)\label{eq:FMFExists1}\]
and 
\[\frac{a}{2(1-\Co{roomR})} < 1,\label{eq:FMFExists2}\] 
where $\Co{roomR}, \Co{roomI} > 0$ and $a$ being a valid choice from \eqref{eq:a:set}.
\end{assumption}
Because of the complexity of the assumption, we plot the range of $\alpha$ and $C$ for which Assumption \ref{as:FMFExists} holds in Figure \ref{fig:310}, by setting $a$ and $\Co{roomR}, \Co{roomI}$ to their lowest possible values. It is worth pointing out that the assumption is satisfied for any $1<C<2$ as long as $\alpha>0$ is small enough.

\begin{figure}[h]
    \centering
    \begin{overpic}[scale=.8]{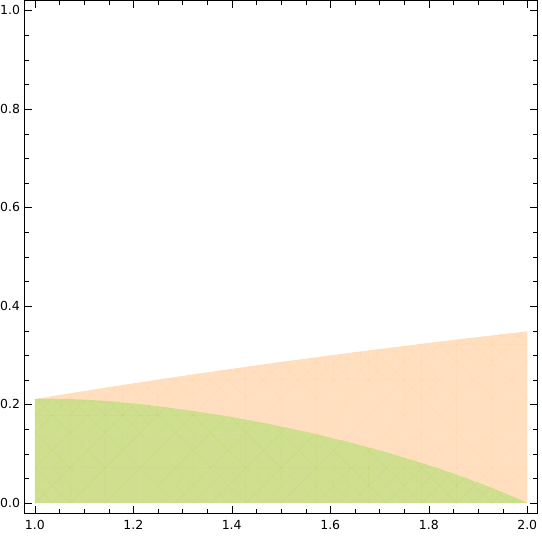}
        \put(49, -5){$C$}
        \put(-5,48){$\alpha$}
    \end{overpic}
    \caption{The green and orange regions in the above plot represent the values of $\alpha$ and $C$ for which conditions \eqref{eq:FMFExists1} and \eqref{eq:FMFExists2} from Assumption \ref{as:FMFExists} are satisfied under the choice of $a$ from the lower boundary of the set \eqref{eq:a:set} and setting $\Co{roomR}, \Co{roomI} = 0$. Note that the region in green is a subset of the region in orange.}\label{fig:310}
\end{figure}

Under Assumption \ref{as:FMFExists}, we have the following result.
\begin{lemma}\label{lem:FMFnear0}
  If $(\alpha, C, a, \Co{roomR}, \Co{roomI})$ satisfy Assumption \ref{as:FMFExists}, then there exists an $\epsilon > 0$ such that the first moment function $y(x)$ according to Definition \ref{def:FMF} for  $x \in [0,\epsilon]$ exists and is unique  a.a.s. as \ngrow\! (with respect to the randomness of $p,M$). Moreover, $y(x)$ is continuous and differentiable over $[0,\epsilon]$.
\end{lemma}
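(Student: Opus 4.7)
The plan is to reduce the lemma to an application of the implicit function theorem (IFT) at the base point $x=0$, together with the concentration estimates for $p$ and $M$ from Lemmas~\ref{lem:M:limit} and~\ref{lem:p:limit}.

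First I would analyze the base point. Writing $F(x,y)$ for the right-hand side of \eqref{FMF} and $L(x)$ for the (random) left-hand side, I note that $r(0)=0$ and $s(0)=1/2$, so the first term of $F$ vanishes at $x=0$: along $u(x,y)=\tfrac{2a\log 2\cdot x}{1-y}$ and $r(x)\sim 4x\log 2$, the elementary expansion $D(u||r)=u\log(u/r)+r-u+O((u-r)^2/r)$ gives $(1-y)D(u(x,y)||r(x))=O(x)$. Hence $F(0,y)=D(y||1/2)=\log 2-h(y)$. Using the cited concentration for $p$ and $M$, the a.a.s.\ limit of $L(0)=M^{-1}\log\binom{p-k}{k}$ equals $(2-C)\log 2/C$. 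Solving $\log 2-h(y)=(2-C)\log 2/C$ yields $h_2(y(0))=2-2/C$, so $y(0)=H_C\in(0,1/2)$ by \eqref{def:H_C} and $C\in(1,2)$. A short computation then verifies that constraints \eqref{eq:r:constraint}--\eqref{eq:uniConstraint} hold at $(0,H_C)$: \eqref{eq:r:constraint} and \eqref{eq:existConstraint} have both sides vanishing; \eqref{eq:s:constraint} holds strictly since $H_C<(1-\Co{roomS})/2$ for $\Co{roomS}$ small; and \eqref{eq:uniConstraint}, after taking $x\to 0^+$ using $\tfrac{2a\log 2\cdot x}{(1-\Co{roomR})r(x)}\to \tfrac{a}{2(1-\Co{roomR})}$ and $D((1-\Co{roomR})r(x)||r(x))\to 0$, reduces exactly to Assumption~\ref{as:FMFExists}, condition~\eqref{eq:FMFExists1}.

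Next I would invoke the IFT. Interpreting \eqref{FMF} via the continuous extension of the binomial coefficients (dropping floors, as in Remark~\ref{rem:integ}), the preceding expansion shows that $G(x,y):=F(x,y)-L(x)$ extends to a $C^1$ function on a one-sided neighborhood of $(0,H_C)$ in $[0,1)\times (0,1/2)$; Assumption~\ref{as:FMFExists}\eqref{eq:FMFExists2} keeps the ratio $u/r$ bounded away from $1$ and prevents singularities in the $\log$ terms. A direct calculation gives $\partial_y G(0,H_C)=\log(H_C/(1-H_C))<0$. The IFT then produces $\epsilon>0$ and a $C^1$ function $y:[0,\epsilon]\to (0,1/2)$ with $G(x,y(x))=0$, unique in a neighborhood. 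Shrinking $\epsilon$ if necessary, continuity propagates each of \eqref{eq:r:constraint}--\eqref{eq:uniConstraint} away from $x=0$; constraint \eqref{eq:r:constraint} is the one that needs inspection, and it holds because $\tfrac{a}{2(1-H_C)(1-\Co{roomR})}<1$ follows from Assumption~\ref{as:FMFExists}\eqref{eq:FMFExists2} and $H_C<1/2$.

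Finally I would handle the a.a.s.\ qualifier by a standard perturbation argument. The concentration lemmas give $L(x)\to \widetilde L(x)$ uniformly on $[0,\epsilon]$ a.a.s., where $\widetilde L$ is the deterministic limiting left-hand side. Running the IFT with $\widetilde L$ produces a deterministic $\widetilde y(x)$, and since $\partial_y G$ is bounded away from zero on a compact neighborhood of $(0,H_C)$, the random equation $F(x,y)=L(x)$ admits a unique $C^1$ solution within $o(1)$ of $\widetilde y(x)$ a.a.s., as required. The main technical obstacle is that constraint~\eqref{eq:uniConstraint} collapses to equality at $x=0$ when Assumption~\ref{as:FMFExists}\eqref{eq:FMFExists1} is tight; in that borderline case one must check that the $x$-derivative of the difference (LHS $-$ RHS) of \eqref{eq:uniConstraint} at $x=0$ is nonpositive, and the strictness built into $\Co{roomI}>0$ provides exactly the slack needed to guarantee $\epsilon>0$ rather than a vacuous interval.
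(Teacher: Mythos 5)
Your plan diverges from the paper's: the paper reduces the lemma to Lemma~\ref{lem:FMFUnique} (existence via intermediate value theorem between $y_{x,0}=0$ and $y_{x,1}=1-\tfrac{2a\log 2\,x}{(1-\Co{roomR})r(x)}$, uniqueness via strict monotonicity of $F_0$ in $y$ on the constrained set) and Lemma~\ref{lem:contDiff} (the implicit function theorem, applied locally in $x$ where the combinatorial left-hand side is constant), and then only verifies constraints \eqref{eq:existConstraint}--\eqref{eq:uniConstraint} on $[0,\epsilon]$. You instead identify $y(0)=H_C$ first and apply the implicit function theorem directly at $(0,H_C)$. That route can work, but there is a concrete error in the constraint check at the base point.

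You claim that constraint \eqref{eq:r:constraint} at $(0,H_C)$, i.e.\ $\tfrac{a}{2(1-H_C)(1-\Co{roomR})}<1$, ``follows from Assumption~\ref{as:FMFExists}\eqref{eq:FMFExists2} and $H_C<1/2$.'' It does not: \eqref{eq:FMFExists2} gives $\tfrac{a}{2(1-\Co{roomR})}<1$, and since $H_C>0$ for $C\in(1,2)$, dividing by $1-H_C<1$ makes the quantity larger, so the inequality you need is strictly stronger. The inequality is in fact correct, but it comes from \eqref{eq:FMFExists1} rather than \eqref{eq:FMFExists2}: rewriting \eqref{eq:FMFExists1} as $\log 2 - h\!\left(\tfrac{a}{2(1-\Co{roomR})}\right) \le (1-\Co{roomI})\bigl(\log 2 - h(H_C)\bigr)$ and using $\Co{roomI}>0$, $\log 2-h(H_C)>0$, and $h(x)=h(1-x)$, one gets $h\!\left(1-\tfrac{a}{2(1-\Co{roomR})}\right) > h(H_C)$; since $a>1$ and $\Co{roomR}$ is small we have $\tfrac{a}{2(1-\Co{roomR})}>1/2$, so both arguments lie in $(0,1/2)$ where $h$ is increasing, giving $1-\tfrac{a}{2(1-\Co{roomR})}>H_C$ as needed. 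In effect, \eqref{eq:FMFExists1} is what guarantees $H_C\le y_{x,1}(0)$; this is precisely what the paper's IVT argument encodes by showing $F_0(0,y_{x,1}(0))\le L(0)$, rather than by a pointwise check of \eqref{eq:r:constraint}. A secondary point: the implicit function theorem only gives uniqueness of the solution in a neighborhood of $(0,H_C)$, whereas the lemma asserts uniqueness among all $y$ satisfying constraints \eqref{eq:r:constraint}--\eqref{eq:s:constraint}; to match the paper's statement you still need the strict monotonicity of $F_0(x,\cdot)$ on the constrained interval, which your outline omits.
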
The proof of this result is given in Section \ref{sec:uniqueProof}.

Now that we have established that the first moment function exists and is unique around zero, we also make the following assumption on our parameters which allows us to conclude the desired monotonicity of the first moment function at $0.$

\begin{assumption}\label{as:der_0}
    Recall $H_C$ from Definition \ref{def:H_C}. We assume that $(\alpha,C, a)$ satisfies
    \[C < \frac{1-\frac{\alpha}{1-\alpha}}{a\left(1 - \log\left(\frac{a}{2(1-H_C)}\right)\right) + H_C - 1},\]
    and that $a$ is a valid choice from \eqref{eq:a:set}.
\end{assumption}

This cumbersome assumption appears quite naturally by calculating the discrete derivative of $y(l/k)$ around $l/k \approx 0$ and checking when it is strictly positive (See Section \ref{sec:monotone}). Given a pair $(\alpha,C)$, if one chooses $a$ to be the lowest feasible value from \eqref{eq:a:set}, then the pairs $(\alpha,C)$ that satisfy this assumption are given in Figure \ref{fig:JustDerCond}. In particular, we highlight that the condition is valid for all $0<C<C^* \approx 1.4749$ for $\alpha>0$ sufficiently small.

\begin{figure}[ht]
    \centering
    \begin{overpic}[scale = .8]{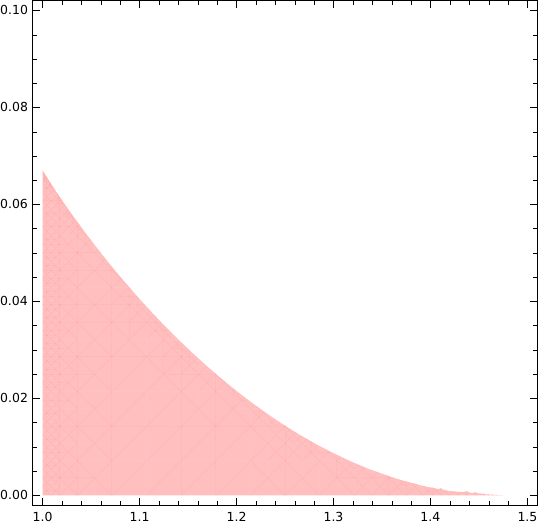}
        \put(49, -5){$C$}
        \put(-5,48){$\alpha$}
        \put(35,30){\includegraphics[scale = .5]{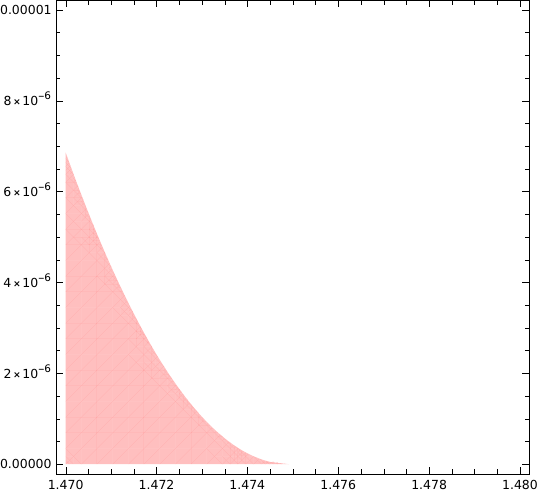}}
        \put(67,7){\tikz \draw[->, line width = 1pt] (0,0) -- (1.6,-1.8);}
    \end{overpic}
    \caption{The region in red represents the values of $\alpha$ and $C$ for which Assumption \ref{as:der_0} holds when choosing of $a$ from the lower boundary of the set \eqref{eq:a:set}.}
    \label{fig:JustDerCond}
\end{figure}

Now, under the above assumptions we prove that indeed the first moment function must increase near $0.$

\begin{theorem}\label{lem:FMFnonMono_0}
  If the parameters $(\alpha, C, a, \Co{roomR}, \Co{roomI})$ satisfy Assumption \ref{as:FMFExists} and Assumption \ref{as:der_0}, then, a.a.s. as $n \rightarrow +\infty$ (with respect to the randomness of $p,M$), there exist constants $\epsilon_1 > 0$ and $\delta_1 > 0$ such that for all $0 \leq l \leq \epsilon_1 k$ it holds
  \[y(l/k) - y(0) \geq \delta_1 l/k.\] 
\end{theorem}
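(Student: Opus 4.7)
The plan is to prove the stronger differential statement $y'(0) > 0$ under Assumption \ref{as:der_0}, and then conclude via the mean value theorem using the $C^1$-regularity of $y$ on $[0,\epsilon]$ supplied by Lemma \ref{lem:FMFnear0}.

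First, I would evaluate $y(0)$ by setting $x = 0$ directly in \eqref{FMF}. The term $(1-y)D(q(x,y)\|\|r(x))$ vanishes at $x = 0$ since both $q(0,y) = 0$ and $r(0) = 0$, so the equation collapses to $\frac{1}{M}\log\binom{p-k}{k} = D(y(0)\|\|1/2)$. Plugging in the asymptotics $M = (1+o(1))C\log_2\binom{n}{k}/2$ and $p = (1+o(1))n(k/n)^{C/2}$ from Lemma \ref{lem:M:limit} and Lemma \ref{lem:p:limit}, and using the identity $D(y\|\|1/2) = \log(2)(1 - h_2(y))$, yields $y(0) = h_2^{-1}(2 - 2/C) = H_C$, matching the quantity $H_C$ appearing in Assumption \ref{as:der_0}.

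Next I would compute the right derivative $y'(0)$ by implicit differentiation of $F(x) = G(x, y(x))$, where $F(x) := \frac{1}{M}\log(\binom{k}{xk}\binom{p-k}{(1-x)k})$ and $G(x,y) := (1-y)D(q(x,y)\|\|r(x)) + D(y\|\|s(x))$ with $q(x,y) := 2a\log(2)x/(1-y)$. The delicate analytical step is the expansion of $(1-y)D(q\|\|r(x))$ at $x = 0$: although both $q$ and $r(x)$ vanish linearly as $x \to 0^+$, their ratio has the finite limit $q(x, H_C)/r(x) \to \rho := a/(2(1-H_C))$ lying in $(0,1)$ (which follows from Assumption \ref{as:FMFExists}). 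Factoring out this ratio and performing a first-order Taylor expansion assigns a finite value to $\partial_x G(0, H_C)$. Combining this with the Stirling-type asymptotic for $F'(0)$ and the elementary observation $\partial_y G(0, H_C) = \log(H_C/(1-H_C)) < 0$, a direct algebraic rearrangement reveals that the sign condition $y'(0) > 0$ is equivalent to
\begin{equation*}
C \bigl[ a(1 - \log \rho) + H_C - 1 \bigr] < 1 - \frac{\alpha}{1-\alpha},
\end{equation*}
which is precisely Assumption \ref{as:der_0}. Hence $y'(0) > 0$.

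Finally, since $y'$ is continuous on $[0,\epsilon]$ by Lemma \ref{lem:FMFnear0} and $y'(0) > 0$, I can pick $\epsilon_1 \in (0,\epsilon]$ and set $\delta_1 := y'(0)/2$ so that $y'(x) \geq \delta_1 > 0$ for every $x \in [0,\epsilon_1]$; applying the mean value theorem between $0$ and $l/k$ then yields $y(l/k) - y(0) \geq \delta_1 \cdot (l/k)$ for every $0 \leq l \leq \epsilon_1 k$. The main obstacle I anticipate is the bookkeeping of the Taylor expansion of $D(q\|\|r(x))$ near $x = 0$, where the naive differentiation produces indeterminate $0/0$ terms that must be resolved through the finite limit $\rho$; conceptually, it is exactly this extra contribution---introduced by the conditioning event $\scr{A}$ of Lemma \ref{lem:aSet} and absent in the unconditional analysis of \cite{iliopoulos2021group}---that enables $y'(0)$ to be positive and thus produces the non-monotonic behavior of the conditional first moment function.
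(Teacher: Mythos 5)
Your computational core is the same as the paper's: you evaluate $\partial_x G$ and $\partial_y G$ at $(0, y(0))$, you resolve the $0/0$ indeterminacy in the KL term through the finite ratio $\rho = a/(2(1-H_C))$ (which is indeed bounded below $1$ by constraint \eqref{eq:r:constraint} as $x\to 0^+$ under Assumption \ref{as:FMFExists}), and the sign condition you extract is precisely Assumption \ref{as:der_0}. The wrapper is different. You propose to establish $y'(0)>0$ via the implicit function theorem and then lean on Lemma \ref{lem:FMFnear0}'s $C^1$-regularity of $y$ together with a one-variable mean value theorem on $[0, l/k]$. The paper instead applies the \emph{two-variable} mean value theorem to the smooth function $f(y,x)$ along the segment from $(y(0),0)$ to $(y(l/k),l/k)$ and solves algebraically for $y(l/k)-y(0)$, treating the change in the combinatorial pre-factor $\frac{1}{M}\log\binom{k}{\floor{xk}}\binom{p-k}{\floor{(1-x)k}}$ as an explicit discrete difference rather than a derivative.

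The advantage of the paper's version, and the one place your route needs extra care, is that the combinatorial pre-factor is a step function in $x$ (due to the floor functions), so it has zero classical derivative on each interval $(l/k,(l+1)/k)$ and jumps at the lattice points; consequently, $y(x)$ is $C^1$ only \emph{between} lattice points (this is all that Lemma \ref{lem:contDiff} actually proves), and your appeal to continuity of $y'$ across $[0,\epsilon_1]$ does not follow immediately. Your ``Stirling-type asymptotic for $F'(0)$'' is secretly a discrete difference quotient, which is exactly the quantity the paper writes as $\frac{1}{M}\log\!\left(\frac{\prod_{i=1}^l (k-i+1)^2}{l!\,\prod_{i=1}^l(p-2k+i)}\right)$. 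So your approach is correct in spirit and gives the same final bound, but should be phrased as summing per-interval increments plus lattice jumps (equivalently, as the paper's discrete MVT) rather than as a genuine derivative $y'(0)$. One further small correction: $y(0) = H_C + o(1)$, not $H_C$ exactly (Lemma \ref{lem:solZeroRigor}); the $o(1)$ does not affect the sign condition for large $n$, but it is the event on $M,p$ that the paper explicitly conditions on.
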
The proof of the theorem is deferred to Section \ref{sec:monotone}.

\subsection{Local Monotonicity Of $\phi(l)$ Via First Moment Function Approximations}\label{subsec:LMFMF}

From Theorem \ref{lem:FMFnonMono_0}, we know that $y(l/k)$ increases for all $l \leq \epsilon k$ for some small $\epsilon>0$. We now investigate whether $\phi$ inherits this monotonic increase near zero from the first moment's functions behavior. To establish this, it suffices to show that $y(l/k) - o(1)$ a.a.s. lower bounds $\phi(l)$ over the region $l/k \in [0,\epsilon]$ and demonstrate an equivalent $y(0) + o(1)$ a.a.s. upper bound for $\phi(0)$.

Similar to the above result on the first moment function, the following result on $\phi(l)$ is subject to a few parameter assumptions. This assumption is again rather cumbersome, an outcome of an involved second moment method argument that leverages it. Crucially, however, this assumption is satisfied for all $1<C<2$ when $\alpha$ is sufficiently small (see Figure \ref{fig:PhiCond}). We also direct the reader to Section \ref{subsec:qual} for more details on this assumption.

\begin{assumption}\label{as:alphaC_0}
    The pair of parameters $(\alpha, C)$ satisfy $\alpha<28/1000$ and 
    \[C < 2\frac{1-2\alpha}{1-\alpha}\label{eq:alpha:stirling_0}.\]
    Moreover, the pair satisfies the following two conditions with $H_C$ from Definition \ref{def:H_C},
    \[\hspace{-1cm}\label{eq:2mmCond1_0}
    C\bigg{[}(1-H_C)(1-\log(2(1-H_C))) - \frac{h_2(H_C)}{2}-7\sqrt{\frac{\alpha}{1-\alpha}}\left(\frac{1}{2}\log(2(1-H_C)) \right)\bigg{]} > 4 \alpha/(1-\alpha)\]
    and 
    \[C\left[\frac{h_2(H_c)}{2} + \frac{1}{2}\log\left(\frac{1-H_C}{H_C} \right)\left( 1-H_C - 5\sqrt{\frac{\alpha}{1-\alpha}}\right) +H_C -1 \right] > 3 \alpha / (1-\alpha).\label{eq:2mmCond2_0}\]
\end{assumption}

Using this assumption we can then get our desired bounds on $\phi(l)$.

\begin{theorem}\label{thm:combinedBound}
    If the parameters $(\alpha, C, a, \Co{roomR}, \Co{roomI})$ satisfy Assumption \ref{as:FMFExists} and Assumption \ref{as:alphaC_0}, then there exists an $\epsilon' > 0$ such that, for all $x = l/k \in [0,\epsilon']$, we have a.a.s. as $n \conv{} + \infty$ that,
    \[\phi(l) \geq y(l/k) - O(1/k).\]
 Moreover, a.a.s. as $n \conv{} + \infty$,
    \[\phi(0) = y(0) + o(1) = H_C + o(1).\]
\end{theorem}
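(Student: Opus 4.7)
The plan is to split the statement into a uniform lower bound $\phi(l)\ge y(l/k)-O(1/k)$ for $l/k\in[0,\epsilon']$, obtained by a conditional first moment method on $Z_{t,l}$, and a matching upper bound $\phi(0)\le y(0)+o(1)$, obtained by a truncated conditional second moment method on $Z_{t,0}$. That $y(0)=H_C$ is a direct substitution into \eqref{FMF}: at $x=0$ we have $r(0)=0$ and $s(0)=1/2$, so the equation collapses to $\frac{1}{M}\log\binom{p-k}{k}=D(y\|1/2)$, and inserting the asymptotics $p=\Theta(n(k/n)^{C/2})$, $M=(C/(2\log 2)+o(1))\log\binom{n}{k}$ yields $h_2(y)=2-2/C$, hence $y(0)=H_C$.

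For the lower bound I would compute $\E[Z_{t,l}\mid\mathcal{A}]$ by linearity. For any fixed $\sigma$ with $|\sigma|=k$ and $|\sigma\cap\sigma^*|=l$, the number of positive tests left uncovered by $\sigma$ conditional on $\mathcal{A}$ is a sum of Bernoulli-type indicators (one per positive test), and a sharp Chernoff/concentration analysis (carried out in Section \ref{sec:FMFDerivation}) identifies \eqref{FMF} precisely as the tilt equation $\E[Z_{t,l}\mid\mathcal{A}]\asymp 1$ up to sub-exponential factors, with the two divergence terms in \eqref{FMF} coming from the high-degree conditioning and the positive-test coverage respectively. Sharpening this estimate by a routine local analysis, setting $t=\lfloor My(l/k)-\Co{quant}\log n\rfloor$ with $\Co{quant}$ large makes $\E[Z_{t,l}\mid\mathcal{A}]\le n^{-2}$ uniformly in $l\in\range{k}$; a union bound over $l$ combined with $\P(\mathcal{A})=1-o(1)$ and the equivalence $\{\phi(l)\le t/M\}=\{Z_{t,l}\ge 1\}$ then gives $\phi(l)> t/M = y(l/k)-O(\log n/M)=y(l/k)-O(1/k)$ for all such $l$ simultaneously, a.a.s.

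For the upper bound on $\phi(0)$, I would take $t=\lceil M(H_C+\eta_n)\rceil$ with $\eta_n=\omega(\log n/M)$ but $\eta_n=o(1)$, so that $\E[Z_{t,0}\mid\mathcal{A}]\to\infty$, and reduce $\P(Z_{t,0}\ge 1\mid\mathcal{A})\to 1$ via Paley--Zygmund to the second-moment bound $\E[Z_{t,0}^2\mid\mathcal{A}]=(1+o(1))\E[Z_{t,0}\mid\mathcal{A}]^2$. Expanding the second moment by splitting on the overlap $u=|\sigma\cap\sigma'|$ of two candidate $k$-subsets disjoint from $\sigma^*$, the key ratio reads
\[
\frac{\E[Z_{t,0}^2\mid\mathcal{A}]}{\E[Z_{t,0}\mid\mathcal{A}]^2}=\sum_{u=0}^{k}\binom{k}{u}\binom{p-2k}{k-u}\binom{p-k}{k}^{-1}\frac{P_{u}}{P_0^2},
\]
where $P_u$ is the joint probability, given $\mathcal{A}$, that two fixed $k$-subsets with overlap $u$ both leave at most $t$ positive tests uncovered. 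The bulk contribution comes from the typical overlap $u^*\approx k^2/p$, where the two subsets decorrelate and $P_u\approx P_0^2$, giving a $1+o(1)$ contribution; the task is to uniformly control the tails $u\neq u^*$. For this I would adapt the ``flatness'' technique of \cite{balister2018dense,gamarnik2024overlap,chen2024low} to the sparse bipartite Erd\H{o}s--R\'enyi structure here, showing that, once conditioned on $\mathcal{A}$, $\log P_u$ is essentially an affine function of the coverage profile, so the large-deviation rate in $u$ is strictly positive off the typical value.

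The main obstacle I expect is exactly this flatness step, where Assumption \ref{as:alphaC_0} --- and in particular the two quantitative inequalities \eqref{eq:2mmCond1_0} and \eqref{eq:2mmCond2_0} --- enter to dominate the combinatorial entropy $\log\binom{k}{u}\binom{p-2k}{k-u}\binom{p-k}{k}^{-1}$ by the large-deviation gain in $P_u/P_0^2$, uniformly in $u\in\range{k}\setminus\{u^*\}$. The conditioning on $\mathcal{A}$ is essential at both moment levels: without it, the ``lottery'' events identified in Section \ref{subsec:CFMF} inflate the unconditional second moment through atypically high-degree elements of $\sigma^*$. Once the flatness bound is in hand the second moment estimate follows, and Paley--Zygmund together with $\P(\mathcal{A})=1-o(1)$ yields $\phi(0)\le H_C+\eta_n=H_C+o(1)$, a.a.s.; combined with the $l=0$ case of the lower bound from the previous step, which gives $\phi(0)\ge y(0)-O(1/k)=H_C-o(1)$, this completes the proof.
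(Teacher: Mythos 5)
Your overall architecture matches the paper's: a conditional first-moment lower bound valid uniformly on a neighborhood of $l=0$, an explicit solve of \eqref{FMF} at $x=0$ giving $y(0)=H_C+o(1)$, and a truncated/flat second-moment upper bound on $\phi(0)$ closed via Paley--Zygmund, with Assumption~\ref{as:alphaC_0} entering to beat the combinatorial entropy in the overlap tails. The lower-bound and $y(0)$ steps are essentially identical to Sections~\ref{subsec:Lower} and~\ref{subsec:soly(0)}.

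However, there is a conceptual error in your treatment of the upper bound that is worth flagging. You write that conditioning on $\mathcal{A}$ ``is essential at both moment levels: without it, the lottery events \ldots inflate the unconditional second moment through atypically high-degree elements of $\sigma^*$.'' This is exactly backwards at $l=0$. The event $\mathcal{A}$ constrains degrees of elements of $\sigma^*$, and $Z_{t,0}$ counts $k$-subsets $\sigma$ \emph{disjoint} from $\sigma^*$; the participation pattern of non-$\sigma^*$ elements in positive tests is iid $\mathrm{Bernoulli}(q)$ and independent of $\mathcal{A}$. Indeed, plugging $x=0$ into \eqref{FMF} kills the conditioning term (the factor $2a\log(2)x$ in the first KL divergence vanishes), which is precisely the formal manifestation of this independence. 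The paper exploits this by recasting $\phi(0)$ as the unconditional random MAX $k$-set cover quantity $1-\Phi_k$ (Section~\ref{subsec:relate}) and running the flatness second-moment argument of Section~\ref{subsec:Up} on the null model with no reference to $\sigma^*$ or $\mathcal{A}$ whatsoever. Your version, carrying $\mathcal{A}$ through the second moment, would not produce wrong numbers (the conditioning is asymptotically vacuous here), but the stated rationale for the conditioning is mistaken, and you would be carrying inert conditioning through a delicate computation. The lottery events identified in Section~\ref{subsec:CFMF} only matter for the first-moment bound at overlaps $l>0$, where the intersection $\sigma\cap\sigma^*$ interacts with the degree profile of $\sigma^*$; they are invisible to $\phi(0)$.

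One smaller point: your overlap decomposition writes $\binom{k}{u}\binom{p-2k}{k-u}\binom{p-k}{k}^{-1}$. This is the hypergeometric weight for two $k$-subsets drawn from a universe of size $p-k$ (the non-$\sigma^*$ possibles), which is consistent with the set-cover reformulation, but be careful about whether your $p$ counts all possibly-infected individuals (including $\sigma^*$) or only the non-infected ones --- the paper uses $p$ for the former and $\pSet=p-k$ for the latter when passing to Theorem~\ref{thm:Phi_kLim}. The hard content you correctly identify (flatness to control $\log P_u$ away from the typical overlap, with \eqref{eq:2mmCond1_0}--\eqref{eq:2mmCond2_0} supplying the needed slack near $u/k\to 0$ and $u/k\to 1$) is where the real work lives, and you have the right idea but have not done it.
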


This result combines an a.a.s, as \ngrow lower bound on $\phi(l)$ for all $l=0,1,\ldots,k$ as well as an a.a.s, as \ngrow\! upper bound on $\phi(0)$, both of which are shown in  Section \ref{sec:combProof} and Section \ref{subsec:Up}. The former relies on a relatively straightforward application of a conditional first moment method. The latter part is highly non-trivial to prove. We prove it via an elaborate conditional second moment method and is far more technical due to the necessity for delicate control over shared positive tests between two non-infected individuals. In particular, obtaining our result for $\phi(0)$ amounts to a very tight understanding of the so-called random set cover model, a connection we describe in Section \ref{subsec:MAX}.

%\begin{remark}\label{rem:as_3.13} 
 %   We can immediately see for any $C \in (1,2)$ that we can choose $\alpha$ sufficiently close to $0$ such that the first condition in Assumption \ref{as:alphaC_0} is satisfied. The other conditions in Assumption \ref{as:alphaC_0} are more complex, so we instead provide a visualization for what values of $C$ and $\alpha$ do these two conditions holds, see Figure \ref{fig:PhiCond}. This visualization demonstrates that for all $C \in (1,2)$ there still exists a sufficiently small value of $\alpha \in (0,1)$ such that both conditions \eqref{eq:2mmCond1_0} and \eqref{eq:2mmCond2_0} are satisfied.
%\end{remark}

\begin{figure}[ht]
    \centering
    \begin{overpic}[scale = .8]{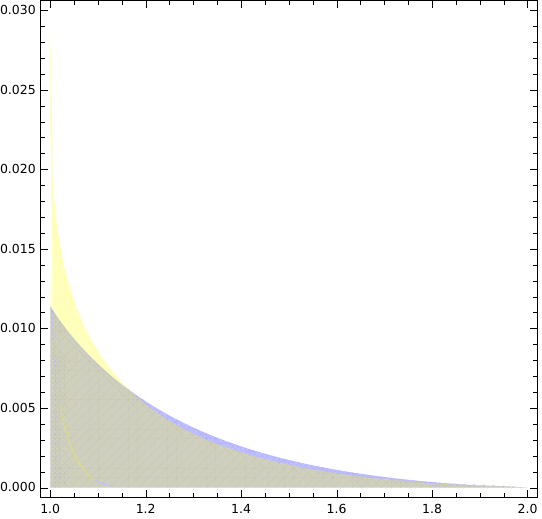}
        \put(51, -5){$C$}
        \put(-5,48){$\alpha$}
        \put(35,30){\includegraphics[scale = .5]{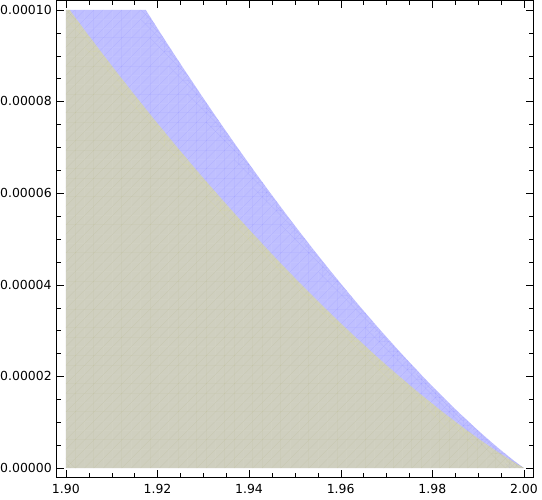}}
        \put(95,6){\tikz \draw[->, line width = 1pt] (-.1,0) -- (0,-1.7);}
    \end{overpic}
    \caption{A visual representation for when Assumption \ref{as:alphaC_0} holds. The $x$-axis represents the value of $C$ and the $y$-axis represents the value of $\alpha$. The blue region contains the values for which the condition \eqref{eq:2mmCond1_0} holds, and the yellow region contains the values for which the condition \eqref{eq:2mmCond2_0} holds. The intersection of both colors represents the region where both conditions are satisfied.}
    \label{fig:PhiCond}
\end{figure}

\subsection{\bogp and MCMC Failure In Bernoulli Group Testing}\label{subsec:MCMC}

Combining Theorem \ref{thm:combinedBound} with Theorem \ref{lem:FMFnonMono_0} lets us directly conclude that $\phi(l)$ is increasing for small $l/k$. Moreover, notice that $\phi(k)=0$ by the definition of $\sigma^*$. Combining this fact with Theorems \ref{lem:FMFnonMono_0} and \ref{thm:combinedBound}, with $\alpha$ and $C$ satisfying Assumptions \ref{as:FMFExists}, \ref{as:der_0}, \ref{as:alphaC_0}, we can conclude that $\phi(\ell)$ is non-monotonic and in particular, using standard arguments in the literature, that \bogp appears.
\begin{theorem}\label{thm:QualInc_0}
    For an $(\alpha, C)$ instance of group testing, a valid choice of $a$ from \eqref{eq:a:set} and arbitrarily small $\Co{roomR}, \Co{roomI} > 0$ satisfy Assumptions \ref{as:FMFExists}, \ref{as:der_0}, \ref{as:alphaC_0}, then there exists $\delta > 0$ and $0 < \epsilon_1 < \epsilon_2$ such that for all $l$ with $l/k \in [\epsilon_1, \epsilon_2]$, we have a.a.s. as \ngrow\!, $\phi(l) - \phi(0) \geq \delta$. In particular, as $\phi(k) = 0$, \bogp holds in this regime.
\end{theorem}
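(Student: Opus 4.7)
The plan is to deduce the non-monotonicity of $\phi$ on an interval $[\epsilon_1 k, \epsilon_2 k]$ directly by combining the two main ingredients already established: the local increase of the conditional first-moment function $y$ near zero (Theorem \ref{lem:FMFnonMono_0}) and the a.a.s.\ approximation $\phi(l) \approx y(l/k)$ together with the sharp identification of $\phi(0)$ (Theorem \ref{thm:combinedBound}). The b-OGP conclusion then follows from the specific structure of $\phi$: it starts at $\phi(0)\approx H_C$, jumps up by a constant $\delta$ on the middle interval, and collapses back to $\phi(k)=0$ at the planted signal.

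Concretely, first I would invoke Theorem \ref{lem:FMFnonMono_0} under Assumptions \ref{as:FMFExists} and \ref{as:der_0} to obtain constants $\epsilon_* > 0$ and $\delta_1 > 0$ such that a.a.s.\ as $n\to\infty$, the first moment function $y$ is well-defined on $[0,\epsilon_*]$ and satisfies $y(l/k) - y(0) \geq \delta_1\, l/k$ for every $0 \leq l \leq \epsilon_* k$. Second, I would invoke Theorem \ref{thm:combinedBound} under Assumptions \ref{as:FMFExists} and \ref{as:alphaC_0} to obtain some $\epsilon' > 0$ for which a.a.s.
\[
  \phi(l) \;\geq\; y(l/k) - O(1/k) \quad \text{for all } l \text{ with } l/k \in [0, \epsilon'], \qquad \phi(0) \;=\; y(0) + o(1).
\]
Intersecting the two a.a.s.\ events, set $\epsilon_2 := \min\{\epsilon_*, \epsilon'\}$ and $\epsilon_1 := \epsilon_2/2$. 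For any $l$ with $l/k \in [\epsilon_1, \epsilon_2]$,
\[
  \phi(l) - \phi(0) \;\geq\; \bigl(y(l/k) - y(0)\bigr) - O(1/k) - o(1) \;\geq\; \delta_1 \cdot \tfrac{l}{k} - o(1) \;\geq\; \tfrac{\delta_1 \epsilon_1}{2}
\]
for all $n$ sufficiently large. Hence the claim holds with $\delta := \delta_1 \epsilon_1 / 2$.

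To derive b-OGP, I would apply Definition \ref{def:OGP} with $\zeta_1 = \epsilon_1$, $\zeta_2 = \epsilon_2$, threshold $r := \phi(0) + \delta/2$, and gap $\delta/2$. For condition (1), pick $\sigma_1$ to be any $k$-subset attaining the minimum in $\phi(0)$ (so $|\sigma_1 \cap \sigma^*| = 0 \leq \zeta_1 k$ and $H(\sigma_1) = \phi(0) < r$) and pick $\sigma_2 := \sigma^*$ (so $|\sigma_2 \cap \sigma^*| = k \geq \zeta_2 k$ and $H(\sigma_2) = 0 < r$). For condition (2), any $k$-subset $\sigma$ with $|\sigma \cap \sigma^*|/k \in [\epsilon_1, \epsilon_2]$ satisfies $H(\sigma) \geq \phi(|\sigma \cap \sigma^*|) \geq \phi(0) + \delta = r + \delta/2$, as required.

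The main obstacle in this theorem is not in the synthesis itself, which is a short bookkeeping argument, but rather in the two inputs that have already been proved. In particular, the upper bound $\phi(0) \leq H_C + o(1)$ inside Theorem \ref{thm:combinedBound} is the genuinely hard step, requiring the conditional (flatness-based) second moment method alluded to in Section \ref{subsec:MAX}; without it one could only assert $\phi(l) - \phi(0) \geq y(l/k) - y(0) - o(1)$ conditionally, losing the strict separation. The only care needed here is to ensure that the a.a.s.\ events from Theorems \ref{lem:FMFnonMono_0} and \ref{thm:combinedBound} coexist on a common interval $[0,\min(\epsilon_*,\epsilon')]$, which is automatic by taking the intersection of two a.a.s.\ events, and that all of Assumptions \ref{as:FMFExists}, \ref{as:der_0}, \ref{as:alphaC_0} are simultaneously in force as granted by the hypothesis of the theorem.
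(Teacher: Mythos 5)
Your proof is correct and follows essentially the same route as the paper's: invoke Theorem \ref{lem:FMFnonMono_0} for the local increase of $y$ near zero, combine it with the two-sided bounds on $\phi(l)$ and $\phi(0)$ from Theorem \ref{thm:combinedBound}, intersect the a.a.s. events on a common interval, and read off b-OGP using $\phi(k)=0$. The only cosmetic differences are the specific choice of $\epsilon_1,\epsilon_2$ and that the paper phrases the threshold as $r=y(0)+\delta/2$ rather than $r=\phi(0)+\delta/2$, which are interchangeable here since $\phi(0)=y(0)+o(1)$.
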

The proof of this result is deferred to Section \ref{sec:q}.

%\begin{figure}[ht]
 %   \centering
    %\includegraphics[scale = .7]{figs/Final Picture.pdf}
    %\caption{\textcolor{red}{Unsure where to place that}The cartoon landscape of group testing when $(\alpha, C)$ satisfy Assumptions \ref{as:der_0} and \ref{as:alphaC_0}, assuming that the first moment function uniquely exists for all $x \in [0,1]$. We have also provided where each part of this sketch was derived in the Appendix. The overlap is given by $x$ and the corresponding minimizer of $H$ for $x = l/k$ overlap is given by $y$.}
    %\label{fig:finalResult}
%\end{figure}

Using now also standard bottleneck arguments in the literature \cite{gamarnik2024overlap, LevinPeresWilmer2006}, we conclude via the existence of \bogp that all local MCMC methods sampling from $\pi_{\beta}$ for $\beta$ large enough, take a super-polynomial time to recover $\sigma^*.$ This result is formally described in the following theorem and is the main contribution of this work, answering the main question of \cite{iliopoulos2021group}.

 \begin{corollary}\label{thm:mix} 
 For an $(\alpha, C)$ instance of group testing, a valid choice of $a$ from \eqref{eq:a:set} and arbitrarily small $\Co{roomR}, \Co{roomI} > 0$ satisfy Assumptions \ref{as:FMFExists}, \ref{as:der_0}, \ref{as:alphaC_0}, then there exists $\epsilon_1, \epsilon_2  \in (0,1) $ with $\epsilon_1 < \epsilon_2$ and an $\epsilon_1$ dependent constant $C_\epsilon>0$ such that if $\beta \geq C_\epsilon k \log(p/k)$ the following holds a.a.s. as $n \rightarrow +\infty.$ 
 
For any local Markov chain on the Johnson graph with stationary distribution $\pi_\beta$, there exists an initialization for which the Markov chain requires at least $\exp(\Omega(k \log(p/k)))$ iterations to reach any $k$-subset $\sigma$ with $| \sigma \cap \sigma^* | \geq \epsilon_2 k.$ 
\end{corollary}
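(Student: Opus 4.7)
The plan is to translate the bottleneck OGP provided by Theorem \ref{thm:QualInc_0} into a conductance-style bottleneck for $\pi_\beta$, and then invoke a standard hitting-time inequality for reversible Markov chains. By Theorem \ref{thm:QualInc_0}, under the stated assumptions there exist constants $0 < \epsilon_1 < \epsilon_2$ and $\delta > 0$ (independent of $n$) such that a.a.s.\ as $n \to +\infty$ every $k$-subset $\sigma$ with $|\sigma \cap \sigma^*|/k \in [\epsilon_1, \epsilon_2]$ satisfies $H(\sigma) \geq \phi(0) + \delta$. Partition the vertex set of the Johnson graph into
\[
U = \{\sigma : |\sigma \cap \sigma^*| < \epsilon_1 k\},\ \ B = \{\sigma : |\sigma \cap \sigma^*|/k \in [\epsilon_1, \epsilon_2]\},\ \ V = \{\sigma : |\sigma \cap \sigma^*| > \epsilon_2 k\}.
\]
Because a single-swap step changes $|\sigma \cap \sigma^*|$ by $\pm 1$ (or leaves it fixed at a self-loop), every trajectory on this graph from $U$ to $V$ must traverse $B$. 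Hence the hitting time of $V$ starting from any $\sigma_0 \in U$ is bounded below by the hitting time of $B$ from $\sigma_0$.

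Next I would fix the initialization $\sigma_0 \in \arg\min\{H(\sigma) : |\sigma|=k,\ \sigma \cap \sigma^* = \emptyset\}$, which lies in $U$ and satisfies $H(\sigma_0) = \phi(0)$. Comparing stationary weights, $\pi_\beta(\sigma_0) \propto e^{-\beta \phi(0)}$, while every $\sigma \in B$ contributes at most $e^{-\beta(\phi(0) + \delta)}$ to the unnormalized weight of $B$, and $|B| \leq \binom{p}{k} = \exp\bigl(O(k \log(p/k))\bigr)$ by a crude Stirling bound. Consequently
\[
\frac{\pi_\beta(\sigma_0)}{\pi_\beta(B)} \;\geq\; \frac{e^{\beta \delta}}{|B|} \;\geq\; \exp\!\bigl(\beta \delta - C_1\, k \log(p/k)\bigr)
\]
for some absolute $C_1 > 0$. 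Taking $C_\epsilon$ strictly greater than $(C_1+1)/\delta$ and $\beta \geq C_\epsilon k \log(p/k)$ makes this ratio at least $\exp\bigl(\Omega(k \log(p/k))\bigr)$. Note that the constants $\epsilon_1,\epsilon_2$ and $\delta$ depend only on $(\alpha,C)$, so $C_\epsilon$ depends only on $\epsilon_1$ in the sense stated in the corollary.

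To conclude, I would apply the standard bottleneck inequality for reversible Markov chains, as used in \cite{gamarnik2024overlap,chen2024low} and going back to \cite[\S 7]{LevinPeresWilmer2006}: reversibility with respect to $\pi_\beta$ together with the single-swap neighborhood structure (both built into the local Markov chain class of Section \ref{sec:MCMC}) convert the stationary-weight ratio above into a lower bound on the expected hitting time of $B$ from $\sigma_0$ of order $\pi_\beta(\sigma_0)/\pi_\beta(B)$, and by a standard Markov inequality the same bound holds with probability $1-o(1)$ up to a constant factor. Since hitting $V$ requires hitting $B$ first, the hitting time of $V$ inherits the $\exp(\Omega(k \log(p/k)))$ lower bound. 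The true work of this corollary is Theorem \ref{thm:QualInc_0}; the only delicate step here is the bookkeeping that balances the energy gap $\beta \delta$ against the combinatorial entropy $k \log(p/k)$ of the bottleneck set, which is precisely what the hypothesis $\beta \geq C_\epsilon k \log(p/k)$ with $C_\epsilon$ large enough arranges.
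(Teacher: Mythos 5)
Your proposal follows essentially the same route as the paper: you pass through Theorem~\ref{thm:QualInc_0} to obtain the energy barrier of height $\delta$ over the overlap window $[\epsilon_1,\epsilon_2]$, translate that into a stationary-weight comparison between a low-overlap state of energy $\phi(0)$ and the bottleneck annulus, and then convert the weight ratio into a hitting-time lower bound via reversibility. The paper phrases step two slightly differently (bounding $\pi_\beta(\partial\mathcal{B})/\pi_\beta(\mathcal{B})$ for $\mathcal{B}=\{\sigma:|\sigma\cap\sigma^*|\le\epsilon_1 k\}$ and citing \cite[Proposition 2.2]{arous2023free}), while you compare $\pi_\beta(\sigma_0)$ against $\pi_\beta(B)$ with $|B|\le\binom{p}{k}$; these are the same estimate up to the choice of annulus width, since both combinatorial counts are $\exp(O(k\log(p/k)))$.

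The one step you should repair is the last one. You argue ``expected hitting time $\gtrsim \pi_\beta(\sigma_0)/\pi_\beta(B)$, then Markov's inequality gives the a.a.s.\ statement.'' Markov's inequality runs the wrong direction here: $\mathbb{E}[\tau_B]\ge T$ does \emph{not} imply $\mathbb{P}(\tau_B\ge cT)=1-o(1)$ (a random variable can have large mean because of a tiny-probability huge value). The correct mechanism, which is what the cited references actually use, is to go straight to probabilities via reversibility: for a reversible chain, $P^t(\sigma_0,b)=\frac{\pi_\beta(b)}{\pi_\beta(\sigma_0)}P^t(b,\sigma_0)\le \frac{\pi_\beta(b)}{\pi_\beta(\sigma_0)}$, so a union bound over $b\in B$ and over $t\le T$ gives
\[
\mathbb{P}_{\sigma_0}(\tau_B\le T)\;\le\;(T+1)\,\frac{\pi_\beta(B)}{\pi_\beta(\sigma_0)}.
\]
With $\pi_\beta(\sigma_0)/\pi_\beta(B)\ge \exp(ck\log(p/k))$ and $T=\exp((c/2)k\log(p/k))$ the right side is $o(1)$, giving the a.a.s.\ statement directly (and the expected-hitting-time bound as a corollary, not the other way around). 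With that substitution your argument is correct and matches the paper's.
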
The proof of the corollary is deferred to Section \ref{sec:mix_pf}.

\subsection{Numerics for the critical $C \approx 1.47491$ when $\alpha \approx 0$}\label{sec:num}

Both of our main theorem \ref{thm:QualInc_0} and Corollary \ref{thm:mix} holds under the technical assumptions \ref{as:FMFExists}, \ref{as:der_0}, \ref{as:alphaC_0}. We here combine our numerical results also presented in Figure \ref{fig:310}, Figure \ref{fig:JustDerCond}, Figure \ref{fig:PhiCond}, to describe the region of $(\alpha, C) \subset [0,1] \times [1,2]$ that satisfy all of them. To do so, we set the two ``slack'' constants in the definition of the first moment function equal to zero; $\Co{roomR}, \Co{roomS} = 0$. For any condition involving $a$ we choose it from the infimum of \eqref{eq:a:set}, that is 
\[a = \inf\left\{a : \log(2)C(a \log(a) - a + 1) > \frac{\alpha}{1-\alpha}\right\}.\]

Given the above, the region for where Theorem \ref{thm:QualInc_0} and Corollary \ref{thm:mix} hold is shown in Figure \ref{fig:conditionrange}. We can see in this visualization that when $\alpha$ is sufficiently close to zero, Assumption \ref{as:der_0} is the first condition which is violated as $C$ grows. We numerically found where this condition is violated, i.e. solving
\[C = \frac{1}{a\left(1 - \log\left(\frac{1}{2(1-H_C)}\right)\right) - 1},\]
which has the solution $C \approx 1.47491$. This calculation justifies our discussion in the introduction and specifically informal Theorem \ref{thm:1.4749} and Corollary \ref{cor:mcmc_inf}.
\begin{figure}[ht]
        \centering
        \begin{overpic}[scale = .8]{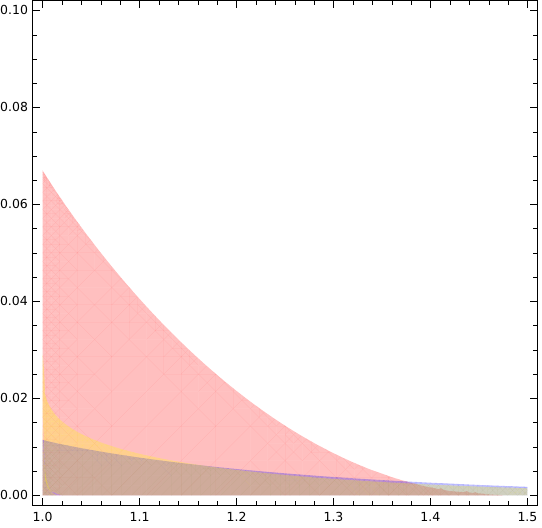}
            \put(49, -5){$C$}
            \put(-5,48){$\alpha$}
            \put(6,31.5){\tikz \draw[dashed, line width = 1pt] (0,0) -- (6.9,0);}
            \put(65, 34){$\alpha < 28/1000$}
        \end{overpic}
        \caption{A plot of the values for $\alpha$ (on the $y$-axis) and $C$ (on the $x$-axis), such that an $(\alpha, C)$ group testing instance satisfied the conditions of Theorem \ref{thm:QualInc_0}. Note that we have only plotted the three most restrictive conditions: Blue corresponding to \eqref{eq:2mmCond1_0}, yellow corresponding with \eqref{eq:2mmCond2_0} and red corresponding to Assumption \ref{as:der_0}.}
        \label{fig:conditionrange}
\end{figure}

\section{The Thresholds For Random MAX-Set Cover}\label{subsec:MAX}

As we mentioned in the previous section, towards proving the upper bound on $\phi(0)$ for Theorem \ref{thm:combinedBound}, we establish a result on the random MAX-set cover problem, which could be of independent interest. Notice that $\phi(0)$ is about maximizing the number of covered positive tests over all $k$-subsets $\sigma$ disjoint from $\sigma^*.$ In particular, $\phi(0)$ has no relation with $\sigma^*$ and its distribution can be interestingly (and independently from Bernoulli Group Testing) also defined as the following random set cover setting.

 Let a growing parameter $n \rightarrow +\infty.$ Consider a universe of $\pSet=\pSet_n$ elements and $k=k_n \in \mathbb{N}, k \leq \mathcal{P}$. We then choose $\mSet=\mSet_n$ random sets $\mathcal{S}_1, \dots, \mathcal{S}_{\mSet}$ where each set contains each element of the universe with probability $q=q_n$ in an i.i.d. fashion, where $q$ is the solution to $(1-q)^k=1/2$. We then consider the  random MAX-set-cover question on $\{\mathcal{S}_i\}_{i \in [\mSet]}$: what is the maximum fraction of the sets $\{\mathcal{S}_i\}_{i \in [\mSet]}$ that can be covered by some $k$-subset? Recall that this random fraction is defined in \eqref{max_sat} as $\Phi_k$, where we simply replace for this section $M$ by $\mSet$ and $p$ by $\pSet$. 

\begin{remark}
    As explained in Section \ref{sec:intro_cover} observe that in this setting simply outputting a $k$-subset covers approximately a $1-(1-q)^k+o(1)=1/2+o(1)$ fraction of the $\mSet$ sets, a.a.s. as $n \rightarrow +\infty$ over the randomness of $\{\mathcal{S}_i\}_{i \in [\mSet]}$. Thus studying $\Phi_k$ investigates how much better the optimal $k$-subset performs as opposed to the trivial ``random guessing'' method.
\end{remark}

 Comparing with Bernoulli group testing, notice that by setting $k=n^{\alpha+o(1)}$, $\mSet$ equal to the (random) number of positive tests in Bernoulli group testing and $\pSet$ equal to the (random) number of possible infected individuals, $1-\Phi_k$ equals in distribution to $\phi(0).$ Hence, proving the second part of Theorem \ref{thm:combinedBound} amounts to proving under an appropriate scaling of the parameters that it holds $\Phi_k=1-H_C+o(1)$, where $H_C$ is defined in \eqref{def:H_C}.

For this reason, we focus on the scaling on $k$, $\mSet$ and $\pSet$ as functions of $n$ that aligns with the asymptotic scaling of the number of infected individuals, number positive tests, and number of possible infected individuals as motivated by Bernoulli group testing (see Lemmas \ref{lem:M:limit} and \ref{lem:p:limit} for the concentration properties of the last two). The exact scaling is described in the following assumption.

\begin{assumption}\label{as:MPScale}
  For some constants $\alpha>0$, $C>1$ let $k=\floor{n^{\alpha}}$ and $N = \floor{C \log_2\binom{n}{k}}$. We assume that $(\mSet,\pSet)$ satisfy the following constraints, for some $1/4 > c > 0$, as $n \conv{} \infty$,
    \[(1-N^{-\err})N/2 \leq \mSet \leq (1 + N^{-\err})N/2,\]
    \[(1-k^{-\err})n\left( \frac{k}{n} \right)^{\frac{C}{2}(1+k^{-\err})} \leq \pSet \leq (1+k^{-\err})n\left( \frac{k}{n} \right)^{\frac{C}{2}(1-k^{-\err})}.\]
\end{assumption}

Under this assumption, we prove the following result.
\begin{theorem}\label{thm:Phi_kLim}
    Assume $1 < C < 2$, the pair $(\alpha, C)$ satisfy Assumption \ref{as:alphaC_0} and constraints \eqref{eq:r:constraint}-\eqref{eq:uniConstraint} are satisfied at $x = 0$ and the triplet $(k,\mSet, \pSet)$ satisfy Assumption \ref{as:MPScale}, then
    \begin{align}\label{eq:phi_k}
        \lim_{n \rightarrow +\infty} \Phi_k = 1 - h^{-1}_2\left(2 - 2/C\right)
    \end{align}
    a.a.s. as \ngrow\!.
\end{theorem}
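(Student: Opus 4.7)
The plan is to prove $\Phi_k \leq 1 - H_C + o(1)$ and $\Phi_k \geq 1 - H_C - o(1)$ a.a.s., separately. Letting $Z_y$ denote the number of $k$-subsets of $[\pSet]$ that cover at least $(1-y)\mSet$ of the random sets $\mathcal{S}_1,\ldots,\mathcal{S}_\mSet$, this is equivalent to establishing that $Z_y = 0$ a.a.s.\ for every fixed $y < H_C$ (via the first moment method) and $Z_y \geq 1$ a.a.s.\ for every fixed $y > H_C$ (via a delicate conditional second moment method). Taking $y \to H_C$ from both sides then yields \eqref{eq:phi_k}.

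For the first moment, a fixed $k$-subset $\sigma$ covers a single random set with probability $1-(1-q)^k = 1/2$, so the number of sets it covers is $\mathrm{Binomial}(\mSet, 1/2)$. Chernoff gives $\P[\mathrm{Bin}(\mSet, 1/2) \geq (1-y)\mSet] = 2^{-\mSet(1-h_2(y))(1+o(1))}$, while Assumption \ref{as:MPScale} implies $\log_2\binom{\pSet}{k} = (2/C - 1)\mSet(1+o(1))$, so
\begin{equation}
    \E[Z_y] = 2^{\mSet(h_2(y) - (2 - 2/C))(1+o(1))}.
\end{equation}
Since $H_C = h_2^{-1}(2-2/C)$, this is $o(1)$ precisely for $y < H_C$, and Markov's inequality then yields $\Phi_k \leq 1 - H_C + o(1)$ a.a.s.

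The lower bound on $\Phi_k$ is the main obstacle. We would expand $\E[Z_y^2]$ stratified by the intersection $j = |\sigma_1 \cap \sigma_2|$: for each of the $\mSet$ independent sets the joint coverage indicators are symmetric Bernoulli with $\P[\text{both cover}] = \P[\text{neither covers}] = (1-q)^{2k-j} = 2^{j/k-2}(1+o(1))$ (using $(1-q)^k = 1/2$), and $P_j := \P[\sigma_1, \sigma_2 \text{ both cover} \geq (1-y)\mSet \text{ sets}]$ is governed by a two-dimensional large-deviation principle for the multinomial counts $(X_{--}, X_{-+}, X_{+-}, X_{++})$ subject to the two marginal constraints. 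Summing $\binom{\pSet}{k}\binom{k}{j}\binom{\pSet-k}{k-j}P_j$ over $j$, the $j=0$ term exactly reproduces $\E[Z_y]^2$. The difficulty is that without further care, intermediate $j$ contributions blow up because of rare ``hub'' elements of $[\pSet]$ that lie in an atypically large fraction of the sets; these hubs inflate the local abundance of good $k$-subsets and are precisely the same fluctuations that invalidated the unconditional first-moment picture of \cite{iliopoulos2021group}.

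To suppress this pathology we would adapt the flatness strategy of \cite{balister2018dense, gamarnik2024overlap, chen2024low}, conditioning on the event $\mathcal{F}$ that every element of $[\pSet]$ has degree $q\mSet(1+o(1))$; a Chernoff union bound yields $\P(\mathcal{F}) = 1 - o(1)$. Under $\mathcal{F}$ the contributions to $\E[Z_y^2 \mid \mathcal{F}]$ are regularized, and Assumption \ref{as:alphaC_0} --- specifically \eqref{eq:2mmCond1_0} and \eqref{eq:2mmCond2_0} --- would guarantee that the saddle point of the combined exponent in $j$ is attained at $j = 0$, so $\E[Z_y^2 \mid \mathcal{F}] \leq n^{O(1)}\, \E[Z_y \mid \mathcal{F}]^2$. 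Paley-Zygmund then yields $\P(Z_y \geq 1 \mid \mathcal{F}) \geq n^{-O(1)}$, and a Talagrand-type concentration inequality exploiting the independence of the $\mathcal{S}_i$ boosts this to $1 - o(1)$. The hardest step will be carrying out the saddle-point analysis uniformly in $j \in [0, k]$ in the regime where $(1-q)^{2k-j}$ is highly nonlinear in $\beta = j/k$, and adapting the flatness technique --- originally developed for dense random graphs --- to the sparse bipartite Bernoulli geometry arising here.
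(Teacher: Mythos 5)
Your first-moment argument for $\Phi_k \leq 1 - H_C + o(1)$ is correct and essentially matches the paper: at overlap $l=0$ the conditioning on the degrees of $\sigma^*$ disappears, so the unconditional Chernoff bound together with $\log_2\binom{\pSet}{k}/\mSet \to 2/C - 1$ gives precisely the exponent you write, and this is what the paper obtains from Theorem~\ref{lem:lowerbound} combined with Lemma~\ref{lem:solZeroRigor}.

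The lower bound on $\Phi_k$ is where the proposal diverges, and there is a genuine gap in the conditioning event. You propose conditioning on $\mathcal{F}$ = \emph{every element of $[\pSet]$ has degree $q\mSet(1+o(1))$}. The paper's ``flatness'' is a different and more targeted notion (Definition~\ref{def:flat}): it is a property of a candidate $k$-subset $\sigma$, asserting that \emph{every subset} $\sigma_l \subseteq \sigma$ has its number of uncovered sets within $D_l$ of the conditional mean $y_{(l)}\mSet$ given $X_\sigma = y\mSet$. The second-moment blow-up at intermediate overlap $j = |\sigma\cap\tau|$ is caused by fluctuations in the \emph{joint coverage count} of $\sigma\cap\tau$ conditioned on $X_\sigma = y\mSet$, which is what decides how much $\tau$'s conditional probability of also being a good cover is boosted; per-element degree regularity of $[\pSet]$ does not obviously tame this, since the problematic fluctuation is about the coverage pattern of the specific $j$-subset $\sigma\cap\tau$, not about individual elements being hubs. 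The paper handles this by replacing $Z_y$ with the surrogate count $Y_y$ of flat solutions (Definition~\ref{def:countFlat}), noting that a.a.s. all solutions are flat (Lemma~\ref{lem:D_linterval}), and then decomposing $\E[Y_y^2]$ by conditioning on the coverage count of $\sigma\cap\tau$ \emph{which is forced into the flat window $S_l$} (Lemma~\ref{lem:2mm bound 1}).

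A second structural difference: you aim for $\E[Z_y^2\mid\mathcal{F}] \leq n^{O(1)}\E[Z_y\mid\mathcal{F}]^2$ followed by Paley--Zygmund giving $n^{-O(1)}$ and a Talagrand-style boost. The paper instead proves the stronger $\E[Y_{y_*}^2]/\E[Y_{y_*}]^2 \to 1$ (Lemma~\ref{lem:2mmbound3} plus Lemma~\ref{lem:o(1)sum}) for the perturbed value $y_* = H_C + \Co{Cprime}(\log n)^{-1}$, so Paley--Zygmund immediately yields $Y_{y_*} \geq 1$ a.a.s. without any concentration booster. Your boosting step is not unreasonable in spirit (the maximum coverage count is $1$-Lipschitz in the sets, so McDiarmid gives concentration of $\Phi_k\mSet$), but as written it is a separate, unproved ingredient, whereas the paper's route sidesteps it entirely by extracting the tight ratio from flatness. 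In short: right high-level shape, but the wrong conditioning event and a second moment bound that is an order of magnitude too weak to close the argument without an additional step you have not established.
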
The proof of the theorem is deferred to Sections \ref{sec:combProof} and Section \ref{subsec:Up}.

\begin{remark}
    We remind the reader that Assumption \ref{as:alphaC_0} is (numerically) observed to be satisfied for any $1<C<2$ as long as $\alpha \in (0,1)$ is sufficiently small. Moreover, we highlight the interesting fact that the asymptotic value of $\Phi_k$ is independent of $\alpha>0$ but only depends on $C>1$.
\end{remark}

\begin{remark}\label{rem:gen}
    One might be eager to generalize Theorem \ref{thm:Phi_kLim} to a scaling independent of Bernoulli group testing, and identify the limiting $\Phi_k$ for any constants $D>0, \beta \in (0,1)$ when $\mSet = \floor{D \log_2 \binom{\pSet}{k}}$ and $k = \floor{\pSet^{\beta}}$. Albeit we do not attempt this generalization in the present work, we expect that, in this general setting, as long as $\beta$ is small enough, for any $D>1$ the limiting value of $\Phi_k$ would be equal to $1-h_2^{-1}(1-1/D).$ We believe that an appropriate modification of our conditional second moment method, via similar key flatness ideas \cite{balister2018dense}, can establish this more general result and we leave this as an interesting direction for future work.
\end{remark}

\section{Key Technical Results on the First Moment Function}\label{sec:FMFDerivation} 

\subsection{Deriving the Form of the First Moment Function}
Consider the post-processing discussed in Section \ref{sec:post_proc_intro}, where we prune all negative tests and consider the set of $M$ positive tests and $p$ possibly infected individuals (specifically, individuals who do not participate in a negative test). To derive the first moment function given in Definition \ref{def:FMF}, we calculate the expectation in \eqref{eq:FmfCond} under this induced post-processed probability measure $\P$. 

Throughout this derivation we use the following short-hands. For any test $b$, set $\sigma$ and $s \in [M]$:
\begin{itemize}
    \item[(a)] $E^{\leq s}(\sigma)$ represents the event where the number of positive tests left uncovered by $\sigma$ is less than or equal to $s$. Similarly $E^{s}(\sigma)$ is the event that $\sigma$ leaves exactly $s$ positive tests uncovered.
    \item[(b)] $E_b(\sigma)$ refers to the event that $\sigma$ covers the test $b$. 
    \item[(c)] We define $Z^\sigma_{t,l}$ to be the following indicator random variable:
        \[Z^\sigma_{t,l} = I\{|\sigma \cap \sigma^*| = l, \sigma \text{ leaves at most }t\text{ positive tests uncovered}\}.\]
\end{itemize}
 Finally, we also consider the ``null model" $\Q$ defined in the following way. For $p$ individuals and $M$ positive tests, in an i.i.d. fashion we let each of the $p$ elements take part in each of the $M$ positive tests with probability $q.$ From a graph theoretic viewpoint, $\Q$ models an Erd\"os-Renyi bipartite graph between $p$ and $M$ nodes with connection probability $q$. Moreover, one can directlycompute the likelihood ratio between $\P$ and $\Q$. This can be shown by conditioning $\Q$ on the event $E^0(\sigma^*)$, the ``planting" of the true signal into the null model, and using that any size $k$ subset will cover a test with probability $1/2$, which $\sigma^*$ must be doing $M$ times. This argument gives for any event $E$,
\[
    \P(E) = \Q(E | E^0(\sigma^*)) = \frac{\Q(E \cap E^0(\sigma^*))}{\Q(E^0(\sigma^*))} = 2^M \Q(E \cap E^0(\sigma^*)).\label{eq:PQRelation}
\]

Now for any $k$ subset $\sigma,$ with $ |\sigma \cap \sigma^*|=l$, let us define $\alpha_s \coloneqq \Q(E^0(\sigma^*) \cap E^s(\sigma) )$ and $\beta_s \coloneqq \Q(\scr{A}|E^0(\sigma^*) \cap E^{s}(\sigma) )$. 
Using $\P({\scr{A}}) = 1-o(1)$ in line \eqref{eq:usedForDer1} and \eqref{eq:PQRelation} in line \eqref{eq:condcalc2}, we calculate the conditional expectation of $Z_{t,l}$ (from Definition \ref{def:Ztl}) given $\scr{A}$ as,
\begin{align}
	\E[Z_{t,l}|\scr{A}] &= \binom{k}{l} \binom{p - k}{k-l} \P(Z^\sigma_{t,l}|\scr{A}) = \binom{k}{l} \binom{p - k}{k-l} \frac{\P(Z^\sigma_{t,l}\cap \scr{A})}{\P(\scr{A})} \\
 &= (1+o(1)) \binom{k}{l} \binom{p - k}{k-l} \P(Z^\sigma_{t,l}\cap \scr{A})\label{eq:usedForDer1}\\
	&= (1+o(1))\binom{k}{l} \binom{p - k}{k-l}  2^M\sum_{s = 0}^t \Q(E^0(\sigma^*) \cap E^{s}(\sigma) \cap \scr{A})\label{eq:condcalc2}\\
 &= (1+o(1))\binom{k}{l} \binom{p - k}{k-l}  2^M\sum_{s = 0}^t \alpha_s \beta_s.\label{eq:origionalEbound}
\end{align}

We now notice that $\alpha_s = \binom{M}{s}\Q((E_b(\sigma))^C \cap E_b(\sigma^*))^s\Q(E_b(\sigma) \cap E_b(\sigma^*))^{M-s}$. Moreover, the probability all the $k$ elements in $\sigma$ miss test $b$ and the $k-l$ elements in $\sigma^* \setminus \sigma$ to cover~test~$b$~equals
\[
	 \Q((E_b(\sigma))^C \cap E_b(\sigma^*)) = (1-q)^k(1-(1-q)^{k-l}) = \frac{1}{2}\left(1-2^{-\frac{k-l}{k}}\right).\label{eq:probcalc}
\]

Recalling $|\sigma \cap \sigma^*| = l$, we find that $\Q(E_b(\sigma \cap \sigma^*)) = 1- (1-q)^l= 1-2^{-l/k}$ and thus,
\[
    \Q(E_b(\sigma) \cap E_b(\sigma^*)) = (1-2^{-l/k}) + 2^{-l/k}(1-2^{-(k-l)/k})^2= 1-2\cdot 2^{-l/k - (1-l/k)} + 2^{-l/k - 2(1-l/k)} = 2^{l/k - 2}.
\]
Using the above two equalities, we have
\[
    \alpha_s = \binom{M}{s} \left(\frac{1}{2}\left(1-2^{-\frac{k-l}{k}}\right)\right)^s \left(2^{l/k - 2}\right)^{M-s}.\label{eq:alphaBound}
\]
Recall that $\scr{A}$ is the event where every element in $\sigma^*$ covers at most $d$ positive tests, for some $d \in \{0, \dots, M\}$.
If we consider any $\sigma$ where $|\sigma \cap \sigma^*| = l$, then $\scr{A}$ implies that the total number of positive tests covered by the elements in $\sigma^* \cap \sigma$ is bounded by $ld$.
Under $E^0(\sigma^*) \cap E^s(\sigma)$, the number of positive tests that could be potentially covered by the intersection $\sigma \cap \sigma^*$ is $M-s$ (since $\sigma$ misses $s$ positive tests). Hence, for $r(l/k) = 4 \cdot 2^{-l/k}(1-2^{-l/k})$ and $B'_s \sim \text{Binomial}(M-s, r(l/k))$ we have
\[\beta_s \leq \P(B'_s \leq ld).\label{eq:betaBound}\]
Indeed, $r(l/k)$ is the probability that a given test $b$ contains at least one element in the intersection $|\sigma \cap \sigma^*| = l$ conditioned on the event that the test $b$ contains an element in both $\sigma$ and $\sigma^*$ respectively. Its derivation is given in \cite[Section 9.2.2 (Arxiv version)]{cojaoghlan2022statistical}.

Returning to the expected value calculation \eqref{eq:origionalEbound}, define two random variables $B_1\sim\text{Binomial}(M-t,r(l/k))$ and $B_2 \sim \text{Binomial}(M,s(l/k))$, with $s(l/k) = 1-2^{l/k-1}$. Plugging in our bounds for $\alpha_s$ and $\beta_s$ from \eqref{eq:alphaBound} and \eqref{eq:betaBound} respectively, recalling that a $\Bi{n,p}$ random variable is stochastically upper bounded by a $\Bi{m,p}$ random variables when $n \leq m$ in line \eqref{eq:stocDom}, and a Chernoff bound on binomial random variables in line \eqref{eq:FMF:upperbound} (see Lemma \ref{lem:KLBinomial}), we have

\begin{align}
	\hspace{-0.75cm}(1-o(1))\E[Z_{t,l}|\scr{A}] &\leq  \binom{k}{l} \binom{p - k}{k-l} 2^M \sum_{s = 0}^t\binom{M}{s}\left[\frac{1}{2}\left(1-2^{-\frac{k-l}{k}}\right)\right]^s [2^{l/k - 2}]^{M-s}\P(B'_s \leq ld)\\
	&\leq  \binom{k}{l} \binom{p - k}{k-l}\sum_{s = 0}^t\binom{M}{s}(1-2^{\frac{l}{k}-1})^s (2^{l/k - 1})^{M-s}\P(B_1 \leq ld)\label{eq:stocDom}\\
	&=  \binom{k}{l} \binom{p - k}{k-l} \P(B_2 \leq ld) \P(B_1\leq t)\\
	&\leq \binom{k}{l} \binom{p - k}{k-l} \exp\left({-(M-t)D\left(\frac{ld}{M-t}\|\|r\left(\frac{l}{k} \right)\right)} {-MD\left(t/M\|\|s\left(\frac{l}{k}\right)\right)}\right).\label{eq:FMF:upperbound}
\end{align}

Note that the constraints \eqref{eq:r:constraint} and \eqref{eq:s:constraint} allow for the application of the Chernoff bounds in \eqref{eq:FMF:upperbound} to be valid. Given \eqref{eq:FMF:upperbound}, we define the first moment solution to be the solution to the following implicit equation of $t$,
\begin{equation}
    1 = \binom{k}{l} \binom{p - k}{k-l} \exp\left({-(M-t)D\left(\frac{ld}{M-t}\|\|r(l/k)\right)} {-MD\left(t/M||s(l/k)\right)}\right).
\end{equation}
Taking the logarithm of both sides and substituting $l = xk, x \in [0,1]$, $t = yM, y \in [0,1]$, we get
\[0 = \log\left(\binom{k}{\floor{xk}} \binom{p - k}{\floor{(1-x)k}}\right) - M(1-y)D\left(\frac{xkd}{M(1-y)}\|\|r(x)\right) - MD(y||s(x)).\]
Now replacing $d$ with $2aqM$ and rearranging, we get the equation given in Definition \ref{def:FMF}. The above derivation elicits a convenient upper bound for the conditional expectation of $Z_{t,l}$ given $\scr{A}$. This upper bound will come in handy when we apply the first moment method in Section \ref{subsec:Lower}. For this reason, we state this result here as a proposition.

\begin{proposition}\label{cor:upperBound}
    For all values of $x$ where $(x,y(x))$ satisfy \const\!, 

    \[\E[Z_{t,l}|\scr{A}] \leq (1+o(1))\binom{k}{l} \binom{p - k}{k-l} e^{-(M-t)D(\frac{ld}{M-t}||r(l/k))} e^{-MD(t/M||s(l/k))}\]
\end{proposition}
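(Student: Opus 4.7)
The plan is to follow closely the chain of estimates already sketched in the derivation of the first moment function and merely record the resulting bound; indeed the proposition essentially summarizes what was shown in lines \eqref{eq:usedForDer1}--\eqref{eq:FMF:upperbound}. First I would note that by Lemma \ref{lem:aSet}, $\P(\scr{A}) = 1-o(1)$, so $\E[Z_{t,l}\mid\scr{A}] = (1+o(1))\E[Z_{t,l}\mathbf{1}_{\scr{A}}]$. By symmetry and linearity of expectation, and with $Z^\sigma_{t,l}$ the indicator that a fixed size-$k$ subset $\sigma$ with $|\sigma \cap \sigma^*|=l$ leaves at most $t$ positive tests uncovered, one obtains $\E[Z_{t,l}\mathbf{1}_{\scr{A}}] = \binom{k}{l}\binom{p-k}{k-l}\P(Z^\sigma_{t,l}\cap \scr{A})$ for any single such $\sigma$.

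Second, I would pass from the planted measure $\P$ to the null Erd\H{o}s--R\'enyi measure $\Q$ via the likelihood-ratio identity $\P(E) = 2^M\Q(E\cap E^0(\sigma^*))$ of \eqref{eq:PQRelation}, and decompose $Z^\sigma_{t,l}$ according to the exact number $s \in \{0,\dots,t\}$ of positive tests left uncovered by $\sigma$. Under $\Q$ the tests are independent, so $\alpha_s := \Q(E^0(\sigma^*)\cap E^s(\sigma))$ factors as $\binom{M}{s}$ times a product of per-test probabilities, which I would evaluate explicitly using the normalization $(1-q)^k = 1/2$ and $|\sigma\cap\sigma^*|=l$, as in \eqref{eq:probcalc}--\eqref{eq:alphaBound}.

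Third, to upper bound $\beta_s := \Q(\scr{A}\mid E^0(\sigma^*)\cap E^s(\sigma))$, I would use the fact that $\scr{A}$ forces the total degree of elements in $\sigma\cap\sigma^*$ to be at most $ld$; conditionally on $E^0(\sigma^*)\cap E^s(\sigma)$, the number of positive tests that must be covered through the intersection $\sigma\cap\sigma^*$ is distributed as $\mathrm{Binomial}(M-s, r(l/k))$, where $r(l/k)$ is the conditional probability that a given test contains an intersection element given that it is covered by both $\sigma$ and $\sigma^*$. This yields $\beta_s \le \P(B'_s \le ld)$. Stochastic domination then replaces $B'_s$ by $B_1 \sim \mathrm{Binomial}(M-t, r(l/k))$, allowing $\P(B_1\le ld)$ to be pulled out of the sum over $s$, while the remaining sum assembles into exactly $\P(B_2\le t)$ for $B_2 \sim \mathrm{Binomial}(M, s(l/k))$.

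Finally, I would apply the Chernoff inequality for binomial lower tails (Lemma \ref{lem:KLBinomial}) to both $B_1$ and $B_2$, producing the two KL-divergence exponentials in the stated bound. The only subtle point is that these Chernoff estimates are valid only when the threshold lies below the mean of the binomial, i.e.\ when $ld/(M-t) \le r(l/k)$ and $t/M\le s(l/k)$; these are exactly the constraints \eqref{eq:r:constraint} and \eqref{eq:s:constraint} imposed in Definition \ref{def:FMF} on the regime of $(x,y(x))$ under consideration, so the bound applies on the full range of validity claimed in the proposition. I expect this direction-of-tail verification to be the only genuinely delicate step; everything else is bookkeeping already carried out in the preceding display.
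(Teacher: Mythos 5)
Your proposal matches the paper's own derivation essentially step for step: the paper's proof of Proposition~\ref{cor:upperBound} simply points back to the chain of estimates culminating in line~\eqref{eq:FMF:upperbound}, which is exactly the sequence you reconstruct (conditioning away $\scr{A}$ at $1+o(1)$ cost, the $\P$-to-$\Q$ change of measure, the decomposition over the exact number $s$ of uncovered tests, the $\alpha_s,\beta_s$ bounds, stochastic domination to pull out $\P(B_1\le ld)$, reassembly into $\P(B_2\le t)$, and the Chernoff/KL bound validated by constraints~\eqref{eq:r:constraint} and~\eqref{eq:s:constraint}). No gap; you have also correctly identified the only nontrivial point, namely that the lower-tail Chernoff bound requires the thresholds to lie below the respective binomial means, which is precisely what those two constraints guarantee.
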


\begin{proof}
    See the aligned equation ending in line \eqref{eq:FMF:upperbound}.
\end{proof}

\subsection{Existence, Uniqueness and Differentiability of the First Moment Function}\label{sec:uniqueProof}

Below we will provide justification as to why Assumption \ref{as:FMFExists} is a sufficient condition for the existence of the first moment function $y(x)$ in some small interval $x \in [0,\epsilon]$ with $\epsilon > 0$. 

\begin{remark}\label{rem:Scale}We will see in the following proofs that we make these claims under an a.a.s. as \ngrow guarantee. An alternative to this argument would be to assume the a.a.s. events on $M,p$ described in Lemmas \ref{lem:p:limit}, \ref{lem:M:limit}, (and also in the context of \kset in Assumption \ref{as:MPScale}) and directly establish these proofs for any such deterministic $M,p$. \end{remark}

\begin{remark}
    Note that, in slight contrast to Remark \ref{rem:integ}, for this subsection (Section \ref{sec:uniqueProof}), it is essential to consider $x \in [0,1]$ which is no longer constrained in the set of $\{0,1/k, 2/k, \dots, 1-1/k,1\}$. For this reason, we include back the floor symbols for the combinatorial terms in the definition of the first moment function.
\end{remark}
We start with a general lemma establishing the existence of the first moment function.
\begin{lemma}\label{lem:FMFUnique}
    Given $a \in (1,2)$ from \eqref{eq:a:set}, let $\scr{X} \subseteq [0,1]$ be a region where constraints \eqref{eq:existConstraint} and \eqref{eq:uniConstraint} hold. Then, there exists a solution $y(x)$ to the equation, 
    \[\frac{1}{M}\log\left(\binom{k}{\floor{xk}}\binom{p-k}{\floor{(1-x)k}} \right) = (1-y(x))D\left(\frac{2a\log(2)x}{1-y(x)}\|\|r(x) \right) + D(y(x)||s(x))\]
    satisfying constraints \eqref{eq:r:constraint} and \eqref{eq:s:constraint} for all $x \in \scr{X}$  a.a.s. as $n \rightarrow +\infty$. Moreover, this solution is unique.
\end{lemma}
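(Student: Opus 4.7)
Write $F(x,y) := (1-y)D\bigl(\tfrac{2a\log(2)x}{1-y}\|\|r(x)\bigr) + D(y\|\|s(x))$ and $L(x) := \tfrac{1}{M}\log\bigl(\binom{k}{\lfloor xk\rfloor}\binom{p-k}{\lfloor(1-x)k\rfloor}\bigr)$. The plan is to fix $x \in \scr{X}$, view $F(x,\cdot)$ as a one-variable function of $y$ on the admissible interval
\[
I(x) := \bigl[0,\,\min\{(1-\Co{roomS})s(x),\,y^*(x)\}\bigr], \quad y^*(x) := 1-\tfrac{2a\log(2)x}{(1-\Co{roomR})r(x)},
\]
and solve $F(x,y)=L(x)$ via strict monotonicity of $F$ in $y$ together with the intermediate value theorem. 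A preliminary application of Stirling's formula plus the a.a.s.\ concentration of $M$ and $p$ (Lemmas \ref{lem:M:limit}, \ref{lem:p:limit}; cf.\ Remark \ref{rem:Scale}) gives, uniformly in $x\in[0,1]$,
\[
L(x) \;=\; \tfrac{(1-x)(2-C)\log 2}{C} \;+\; o(1) \quad \text{a.a.s.\ as } n\to\infty,
\]
the $\binom{k}{\lfloor xk\rfloor}$ factor contributing only $O(k/M)=o(1)$ since $M=\omega(k)$.

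\textbf{Monotonicity and boundary values.} A short derivative computation, with $u := 2a\log(2)x/(1-y)$, yields $\partial_y[(1-y)D(u\|\|r)] = \log\tfrac{1-r}{1-u}$, which is strictly negative on $I(x)$ because \eqref{eq:r:constraint} enforces $u \leq (1-\Co{roomR})r(x) < r(x)$ with uniform slack. The second summand $D(y\|\|s(x))$ is convex in $y$ with minimum at $y=s(x)$, and \eqref{eq:s:constraint} restricts $I(x)\subset[0,s(x))$, so it too is strictly decreasing on $I(x)$. Hence $F(x,\cdot)$ is strictly decreasing on $I(x)$. At the left endpoint $F(x,0) = D(2a\log(2)x\|\|r(x)) + (1-x)\log 2 \geq (1-x)\log 2$, which exceeds the asymptotic value of $L(x)$ by a uniform positive gap because $(2-C)/C < 1$ for $C>1$. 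At $y=y^*(x)$ the first summand collapses to $\tfrac{2a\log(2)x}{(1-\Co{roomR})r(x)}D((1-\Co{roomR})r(x)\|\|r(x))$, so $F(x,y^*(x))$ is exactly the left-hand side of constraint \eqref{eq:uniConstraint}; combining with the asymptotic for $L(x)$ gives $F(x,y^*(x)) \leq (1-\Co{roomI})L(x) + o(1) < L(x) - \eta$ for some uniform $\eta > 0$.

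\textbf{Conclusion and main obstacle.} Continuity, strict monotonicity, and IVT then produce a unique $y(x)\in(0,y^*(x))$ solving $F(x,y(x))=L(x)$, satisfying \eqref{eq:r:constraint} automatically. Constraint \eqref{eq:s:constraint} at $y(x)$ is immediate in the case $y^*(x)\leq(1-\Co{roomS})s(x)$; in the complementary case one evaluates $F$ at $(1-\Co{roomS})s(x)$ and verifies it remains strictly below $L(x)$ once $\Co{roomS}$ is chosen small relative to the gap above, whence monotonicity forces $y(x) < (1-\Co{roomS})s(x)$. The principal technical delicacy is the uniform $o(1)$-precision in the asymptotic for $L(x)$, since the slack constants $\Co{roomR},\Co{roomS},\Co{roomI}$ in \eqref{eq:r:constraint}--\eqref{eq:uniConstraint} are calibrated precisely to absorb these $o(1)$ fluctuations, and any cruder estimate would destroy the strict gap needed to invoke IVT uniformly in $x\in\scr{X}$.
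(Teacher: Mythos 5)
Your proposal follows essentially the same route as the paper: fix $x$, show $F(x,\cdot)$ is strictly decreasing on the admissible interval, evaluate at the two endpoints $y=0$ and $y^*(x)=1-\tfrac{2a\log(2)x}{(1-\Co{roomR})r(x)}$, and combine the intermediate value theorem for existence with strict monotonicity for uniqueness. The differences are mostly cosmetic: you derive the two-sided uniform asymptotic $L(x)=(1-x)(2-C)\log 2/C + o(1)$, while the paper uses the one-sided a.a.s.\ bound $L(x)\leq(1-x)\log 2$ from \cite[Lemma 36]{iliopoulos2021group} for the $y=0$ endpoint together with a separate lower bound on $L(x)$ for the $y^*(x)$ endpoint; and your reduced derivative $\partial_y\bigl[(1-y)D(u\|\|r)\bigr]=\log\tfrac{1-r}{1-u}$ is the simplified form of the more verbose expression the paper writes down. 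Both identify $F(x,y^*(x))$ as exactly the left-hand side of \eqref{eq:uniConstraint}, which is the crux.

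There is, however, a genuine gap in your treatment of \eqref{eq:s:constraint} in the ``complementary case'' $(1-\Co{roomS})s(x)<y^*(x)$. You claim to ``evaluate $F$ at $(1-\Co{roomS})s(x)$ and verify it remains strictly below $L(x)$ once $\Co{roomS}$ is chosen small,'' but since $F(x,\cdot)$ is decreasing and in this case $(1-\Co{roomS})s(x)<y^*(x)$, you have $F(x,(1-\Co{roomS})s(x))>F(x,y^*(x))$ and nothing in your setup bounds the larger value from above by $L(x)$; the monotonicity you invoke points the wrong way, so that inequality simply does not follow from what you have established. Moreover, shrinking $\Co{roomS}$ moves $(1-\Co{roomS})s(x)$ upward toward $s(x)$, which exits rather than remains inside the complementary case. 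The correct resolution, which is what the paper does, is to show the complementary case never arises: using $\frac{a\log(2)x}{1-2^{-x}}>1-\Co{roomR}$ for all $x\geq 0$ and $a\in(1,2)$, one gets $y^*(x)/s(x)<1$ strictly with uniform slack, so a fixed sufficiently small $\Co{roomS}>0$ ensures $y^*(x)\leq(1-\Co{roomS})s(x)$ for every admissible $x$. Your proof needs that observation to close; as written, the case split is not resolved.
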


\begin{proof}[Proof of Lemma \ref{lem:FMFUnique}]

    Consider $x \in \scr{X}$ and define $F_0(x,y) = (1-y)D\left(\frac{2a\log(2)x}{1-y}||r(x)\right) + D(y||s(x))$. 
    As $1<C<2$, we invoke \cite[Lemma 36]{iliopoulos2021group} to obtain a upper bound a.a.s as $n \rightarrow +\infty$ of the form,
    \[\frac{1}{M}\log\left(\binom{k}{{\floor{xk}}} \binom{p - k}{{\floor{(1-x)k}}}\right) \leq (1-x)\log(2).\]
    By the non-negativity of KL divergence, $s(0) = 1/2$ and that $\log(1/s(x))$ is increasing for $x\in [0,1)$, we can conclude that for any $x \in [0,1)$,
    \[F_0(x,0) = D(2a\log(2)x||r(x)) + \log\left(\frac{1}{s(x)}\right) \geq \log(2) \geq (1-x)\log(2).\]
    Thus, for $y_{x,0} := 0$ it holds $F_0(x,y_{x,0}) \geq \frac{1}{M}\log\left(\binom{k}{{\floor{xk}}} \binom{p - k}{{\floor{(1-x)k}}}\right)$ a.a.s. as \ngrow\!.
    Moreover, for $y = 0$ we see that constraint \eqref{eq:r:constraint} becomes $2a\log(2)x \leq (1-\Co{roomR})r(x)$, which must be satisfied as $x \in \scr{X}$ satisfies \eqref{eq:existConstraint}. Constraint \eqref{eq:s:constraint} becomes $0 \leq (1-\Co{roomS})s(x)$, which trivially holds. Thus, $y_{x,0} = 0$ also satisfies \eqref{eq:r:constraint} and \eqref{eq:s:constraint}.

    For any $\epsilon_1 > 0$, a combination of $\binom{a}{b}\geq \left(\frac{a}{b}\right)^{b}$ in line \eqref{eq:lowerComb}, $k = o(p)$ in line \eqref{eq:lowerComb1} and Lemma \ref{lem:plower} (to lower bound $\log(p/k)$) in line \eqref{eq:lowerboundComb1} gives the following lower bound a.a.s. \ngrow\!,
    \begin{align}
        \log\left(\binom{k}{\floor{xk}} \binom{p - k}{\floor{(1-x)k}}\right) &\geq \log\binom{p - k}{\floor{(1-x)k}}\\
        &\geq \floor{(1-x)k}\log\left(\frac{p-k}{\floor{(1-x)k}} \right)\label{eq:lowerComb}\\
        &\geq ((1-x)k - 1)\log\left(\frac{p-k}{(1-x)k + 1}\right)\\
        &\geq ((1-x)k - 1)\log\left(\frac{p}{k}\frac{1-k/p}{1-1/k}\right)\\
        &\geq (1-o(1))(1-x)k\log\left(\frac{p}{k} \right)\label{eq:lowerComb1}\\
        &\geq (1-\epsilon_1)k(1-x)(1-\alpha)(1-C/2)\log(n).\label{eq:lowerboundComb1}
    \end{align}
    Similarly, for any $\epsilon_2 > 0$, invoking Lemma \ref{lem:M:limit} and Lemma \ref{lem:Mbounds} (to upper bound $N/2$) gives a.a.s. as \ngrow that,
    \[M \leq (1+\epsilon_2)\frac{N}{2} \leq (1+\epsilon_2)\frac{C(1-\alpha)k\log(n)}{2\log(2)} + O(k)\label{eq:lowerboundComb2}\]
    Combining \eqref{eq:lowerboundComb1} and \eqref{eq:lowerboundComb2} elicits, for some $\epsilon_3 > 0$, that for sufficiently large $n$,
    \begin{align}\frac{1}{M}\log\left(\binom{k}{\floor{xk}} \binom{p - k}{\floor{(1-x)k}}\right) &\geq \frac{(1-\epsilon_1)k\left(1-x\right)\left(1-\alpha\right)\left(1-C/2\right)\log\left(n\right)}{(1+\epsilon_2)\frac{C\left(1-\alpha\right)k\log\left(n\right)}{2\log(2)} + O(k)} \\
    &\geq \frac{\left(1-\epsilon_1\right)\left(1-x\right)\left(1-C/2\right)2\log(2)}{(1+\epsilon_2)C + o(1)}\\
    &\geq \frac{\left(1-\epsilon_1\right)\left(1-x\right)\left(1-C/2\right)2\log(2)}{\left(1+\epsilon_2\right)\left(1+\epsilon_3\right)C}.\label{eq:CombLower}\end{align}
    Now for $y_{x,1}:= 1-\frac{2a\log(2)x}{(1-\Co{roomR})r(x)}$, we then see that,
    \[F_0(x,y_{x,1}) =  \frac{2a\log(2)x}{(1-\Co{roomR})r(x)}D((1-\Co{roomR})r(x)||r(x))+D\left(1-\frac{2a\log(2)x}{(1-\Co{roomR})r(x)}\|\|s(x)\right).\]
    By the assumed inequality constraint \eqref{eq:uniConstraint}, with some $\Co{roomI} > 0$, we have that we can choose sufficiently small $\epsilon_1, \epsilon_2, \epsilon_3 > 0$ such that,
    \[F_0(x,y_{x,1}) \leq \frac{(1-\epsilon_1)(1-x)(1-\frac{C}{2})2\log(2)}{(1+\epsilon_2)(1+\epsilon_3)C} \leq \frac{1}{M}\log\left(\binom{k}{\floor{xk}} \binom{p - k}{\floor{(1-x)k}}\right).\] 
    Thus, for $y_{x,1} \geq 0$ it holds $F_0(x,y_{x,1}) \leq \frac{1}{M}\log\left(\binom{k}{\floor{xk}} \binom{p - k}{\floor{(1-x)k}}\right)$ a.a.s. as \ngrow\!. We can see that $y_{x,1}$ satisfies constraint \eqref{eq:r:constraint} by simple algebra. Moreover, using that $\frac{a\log(2)x}{1-2^{-x}} \geq \frac{\log(2)x}{1-2^{-x}} > 1- \Co{roomR}$ for all $x \geq 0,1 > \Co{roomR} > 0$ and $a \in (1,2)$ we see that $\frac{2a\log(2)x}{(1-\Co{roomR})4 \cdot 2^{-x}(1-2^{-x})} > 2^{x-1}$. This further implies that $\frac{y_{x,1}}{s(x)} = \frac{1-\frac{2a\log(2)x}{(1-\Co{roomR})r(x)}}{s(x)} < 1$ for all $a \in (1,2)$, $1 > \Co{roomR} > 0$ and $x \geq 0$. Meaning, there exists a sufficiently small $\Co{roomS} > 0$ which gives $y_{x,1} \leq (1-\Co{roomS})s(x)$ satisfying constraint \eqref{eq:s:constraint}. Hence for our choice of $y_{x,1}$ the constraints \eqref{eq:r:constraint} and \eqref{eq:s:constraint} are also satisfied.
     
    By elementary inspection, $F_0(x,y)$ is continuous in $y$. Thus, we invoke the intermediate value theorem to give that a solution $y(x)$ exists for all $x \in \scr{X}$ a.a.s. as $n \rightarrow +\infty$.
    Further, by the monotonicity of constraints \eqref{eq:r:constraint} and \eqref{eq:s:constraint} in $y \in [0,1)$, we have that the solution $y(x)$ also 
    satisfies constraints \eqref{eq:r:constraint} and \eqref{eq:s:constraint}.

    To prove uniqueness, we calculate,
    \[\frac{\der}{\der y}F_0(x,y) = \frac{C_a x}{1-y}\log\left(\frac{\frac{2a\log(2)}{1-y}}{r(x)}\frac{1-r(x)}{1-   \frac{2a\log(2)}{1-y}     } \right) - D\left( \frac{2a\log(2)x}{1-y}\|\| r(x) \right) + \log\left( \frac{y}{s(x)}\frac{1-s(x)}{1-y}\right).\]
    Observe that for any value of $y \in [0,1]$ which satisfies constraints \eqref{eq:r:constraint} and \eqref{eq:s:constraint}, we have that that the above derivative is strictly negative. Meaning that for any fixed $x$, $F_0(x,y)$ is monotonically decreasing with respect to $y$ on the set $[y_{x,0},y_{x,1}]$. These collections of facts allow us to conclude the solution to the equation
    \[\frac{1}{M}\log\left(\binom{k}{\floor{xk}} \binom{p - k}{\floor{(1-x)k}}\right) = F_0(x,y),\]
     exists and is unique for any $x \in \scr{X}$ a.a.s. as $n \rightarrow +\infty$, as we wanted.

\end{proof}

%\subsection{Continuity and Differentiability Of The First Moment Function}\label{subsec:Cont}

\begin{lemma}\label{lem:contDiff}
    If there exists a region of $\scr{X} \subseteq [0,1]$ where constraints \eqref{eq:existConstraint} and \eqref{eq:uniConstraint} hold, then the solution to the equation \eqref{FMF}, $y(x)$, is continuously differentiable for all $x \in \scr{X}$.
    \end{lemma}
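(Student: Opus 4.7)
The plan is to apply the Implicit Function Theorem (IFT) to the defining equation \eqref{FMF}, rewritten as $G(x,y) = 0$ with
\[
G(x,y) := (1-y)D\!\left(\tfrac{2a\log(2)x}{1-y}\|\| r(x)\right) + D(y \|\| s(x)) - \tfrac{1}{M}\log\!\left(\binom{k}{xk}\binom{p-k}{(1-x)k}\right).
\]
Following Remark \ref{rem:integ}, I would interpret the two binomial coefficients via their smooth Gamma-function extensions, i.e. replace $\binom{k}{\lfloor xk\rfloor}$ by $\Gamma(k+1)/[\Gamma(xk+1)\Gamma((1-x)k+1)]$ and analogously for the second one, since the literal $\lfloor\cdot\rfloor$-based expression is piecewise constant in $x$ and therefore precludes any $C^1$ statement. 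The smooth extension agrees with the integer version on the grid $\{0, 1/k, \ldots, 1\}$ used elsewhere in the paper, and the argument of Lemma \ref{lem:FMFUnique} applies verbatim, producing for each $x \in \scr{X}$ a unique solution $y(x)$ lying in the open region cut out by the strict slack inequalities \eqref{eq:r:constraint}--\eqref{eq:s:constraint}.

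Next, I would check that $G$ is $C^1$ in $(x,y)$ on an open neighborhood of the graph $\{(x, y(x)) : x \in \scr{X}\}$. The functions $r(x)$ and $s(x)$ are smooth on $(0,1)$, the Gamma function is smooth on $(0,\infty)$, and the positive slack constants $\Co{roomR}, \Co{roomS}$ in \eqref{eq:r:constraint}--\eqref{eq:s:constraint} ensure that the arguments of both KL-divergences and both logarithms are bounded away from the singular values $\{0, r(x), s(x), 1\}$ throughout such a neighborhood. Consequently $G$ is smooth there, so both $\partial_x G$ and $\partial_y G$ exist and are continuous.

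The remaining IFT hypothesis $\partial_y G(x, y(x)) \neq 0$ has in fact already been verified inside Lemma \ref{lem:FMFUnique}: the displayed formula for $\partial_y F_0(x,y)$ was shown to be strictly negative throughout the region carved out by \eqref{eq:r:constraint} and \eqref{eq:s:constraint}, and $G$ and $F_0$ have the same $y$-derivative because the combinatorial term is $y$-independent. The IFT then produces, around each $x \in \scr{X}$, a $C^1$ function $\tilde y$ satisfying $G(x, \tilde y(x)) \equiv 0$; the uniqueness clause of Lemma \ref{lem:FMFUnique} forces $\tilde y \equiv y$ on $\scr{X}$, yielding the claimed continuous differentiability.

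The main (essentially conventional) subtlety is handling the floor functions: the literal definition creates jump discontinuities of size $O(\log k / M) = o(1)$ that have to be smoothed away before any $C^1$ claim can be made, and the Gamma extension is the natural formalization since Stirling's approximation is what the paper uses when manipulating these binomials anyway. Beyond this bookkeeping I do not anticipate a real obstacle: all the analytic work (the monotonicity computation underlying the non-degeneracy of $\partial_y G$) is already absorbed into Lemma \ref{lem:FMFUnique}, so the present lemma reduces to invoking IFT and appealing to the uniqueness statement already in hand.
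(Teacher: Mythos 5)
Your proof is correct and relies on the same core ingredient as the paper's: the strict negativity of $\partial_y F$ (established inside Lemma \ref{lem:FMFUnique}) makes the Implicit Function Theorem applicable. The only real divergence from the paper is how the two arguments handle the $\lfloor\cdot\rfloor$-based combinatorial prefactor. The paper does \emph{not} smooth it out: it fixes $x_* \in \scr{X}$ and chooses the IFT neighborhood $\scr{I}$ to be a small interval on which $\frac{1}{M}\log\big(\binom{k}{\lfloor xk\rfloor}\binom{p-k}{\lfloor(1-x)k\rfloor}\big)$ is \emph{literally constant}, i.e.\ it localizes between consecutive grid points $\ell/k$, where the term is piecewise constant and so trivially smooth. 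You instead re-interpret the binomials through their $\Gamma$-function extensions to make $F$ globally $C^1$ in $x$. Both routes are legitimate; yours produces a genuinely globally $C^1$ implicit function at the cost of implicitly redefining $y(x)$ off the grid (it agrees with the paper's $y$ only at $x \in \{0,1/k,\dots,1\}$, which is all that is used downstream), whereas the paper's produces $C^1$ regularity on each cell $(\ell/k,(\ell+1)/k)$ without touching the definition but is, as written, silent about what happens exactly at the jump points of the floor. In other words, you correctly identified the subtlety the paper glosses over and chose to resolve it by smoothing, while the paper resolves it by localizing — the IFT and the non-degeneracy check are identical.
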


 \begin{proof}[Proof of Lemma \ref{lem:contDiff}]
         Define the function 
        \[F(x,y) \coloneqq \frac{1}{M}\log\left(\binom{k}{\floor{xk}} \binom{p - k}{\floor{(1-x)k}}\right) - (1-y)D\left(\frac{2a\log(2)x}{(1-y)}\|\|r(x)\right)  - D(y||s(x))\]
        By Lemma \ref{lem:FMFUnique}, we have that the unique solution $y(x)$ satisfies the constraints \eqref{eq:r:constraint} and \eqref{eq:s:constraint} for all $x \in \scr{X}$.
        Fixing a point $x_* \in \scr{X}$, we consider a small interval $\scr{I}$ centered about $x_*$, such~that~for~all~$x \in \scr{I}$,
        \[\frac{1}{M}\log\left(\binom{k}{\floor{xk}} \binom{p - k}{\floor{(1-x)k}}\right),\]
        is constant as a function of $x$ on $\scr{I}$. On this interval $\scr{I}$,~the~function~$F$~is~continuously~differentiable~and
        \begin{align}
            \hspace{-.5cm}\frac{\der}{\der y} F(x,y) &= \frac{\der}{\der y} \left[(1-y)D\left(\frac{2a\log(2)x}{(1-y)}||r(x)\right)  + D(y||s(x))\right]\\
            &= \frac{1}{1-y} \left[\frac{\der}{\der x_1}D\left(x_1||r(x)\right)\right]  \bigg{|}_{x_1=\frac{2a\log(2)x}{(1-y)}} - D\left(\frac{2a\log(2)x}{(1-y)}\|\|r(x)\right) + \left[\frac{\der }{\der x_1}D(x_1||s(x))\right] \bigg{|}_{x_1=y},
        \end{align}where by $x_1$ we refer to the first argument of the KL divergence.

        Conditions \eqref{eq:r:constraint}, \eqref{eq:s:constraint} allows us to invoke Lemma \ref{lem:D_3} (by setting $\delta = \min(\Co{roomR},\Co{roomS})/2$) and conclude that both the following conditions hold, $\left[\frac{\der}{\der x_1}D\left(x_1||r(x)\right)\right]  \bigg{|}_{x_1=\frac{2a\log(2)x}{(1-y)}} \leq 0$ and \\$\left[\frac{\der }{\der x_1}D(x_1||s(x)) \right]\bigg{|}_{x_1=y} \leq 0$. Hence, by constraint \eqref{eq:r:constraint}, for all $x \in \scr{X},$ it holds 
        \begin{align}
            \frac{\der}{\der y} F(x^*,y) \leq D\left(\frac{2a\log(2)x}{(1-y)}\|\|r(x)\right)<0.
        \end{align}
        By the two-dimensional implicit function theorem, we conclude that $y(x)$ is continuously differentiable for $x \in \scr{X} \subseteq [0,1]$.
\end{proof}

We are now in a position to prove the vital Lemma \ref{lem:FMFnear0}.

\begin{proof}[Proof of Lemma \ref{lem:FMFnear0}]
    By Lemma \ref{lem:FMFUnique} and Lemma \ref{lem:contDiff}, we just need to show that there exists a region $\scr{X} = [0,\epsilon]$ such that constraints \eqref{eq:existConstraint} and \eqref{eq:uniConstraint} both hold.

    When $x = 0$, using that $r(x) = 4\log(2)x - O(x^2)$ as $x \conv{} 0^+$, the constraint \eqref{eq:uniConstraint} is equivalent to $D\left(1- \frac{a}{2(1-\Co{roomR})}\big{|}\big{|}\frac{1}{2}\right) \leq (1-\Co{roomI})\frac{2-C}{C}\log(2)$, which is assumed by Assumption \ref{as:FMFExists}. By continuity of both sides of the inequality in \eqref{eq:uniConstraint}, we have, for a sufficiently small $\epsilon_1 > 0$, that constraint \eqref{eq:uniConstraint} (say, with constant $\Co{roomI}/2$) holds for all $x \in [0,\epsilon_1]$.
    
    Again using, $r(x) = 4\log(2)x - O(x^2)$ as $x \conv{} 0^+$, we have that constraint 
    \eqref{eq:existConstraint} is equivalent to $\frac{2a\log(2)x}{(1-\Co{roomR})4\log(2)x} \leq 1 - \frac{O(x^2)}{(1-\Co{roomR})4\log(2)x} = 1 - O(x)$. Hence, it suffices $\frac{a}{2(1-\Co{roomR})} \leq 1 - O(x)$, which is satisfied for all $x \in [0,\epsilon_2]$ for a sufficiently small $\epsilon_2 > 0$ when $\frac{a}{2(1-\Co{roomR})} < 1$. Taking $\epsilon = \min(\epsilon_1,\epsilon_2)$ gives the proof.
\end{proof}

%------------------------------------

\section{Proofs of Theorem \ref{thm:combinedBound} and Theorem \ref{thm:Phi_kLim}}\label{sec:combProof}

\subsection{Structure of the Proofs}
The proof of Theorem \ref{thm:combinedBound} is accomplished in three steps.

 In Section \ref{subsec:Lower}, we first establish the lower bound; we prove that for any $x=l/k \in [0,1]$ that the first moment solution $y(x)$ exists, $\phi(l)$ is larger than $y(x)=y(l/k)$ up to an additive $O(1/k)$ error. Notice that this proves the first part of Theorem \ref{thm:combinedBound}.

In Section \ref{subsec:soly(0)}, we calculate that the limiting value of the first moment function at $x=0$, i.e., $y(0)$, is $H_C$.

Finally, our last step is to prove that $\phi(0)$ is upper bounded from $H_C$ up to an additive $O(\kpert)$ error. This, combined with the first and second steps for $x=0$, proves the second and last part of the Theorem \ref{thm:combinedBound}. 

As explained in the main body of the paper, the third part is the most technical part of this proof. Moreover, establishing it turns out to be equivalent to identifying the max-satisfiability threshold $\Phi_k$ of \kset in an appropriate parameter regime, as described in Theorem \ref{thm:Phi_kLim}. We elaborate more on this connection in Section \ref{subsec:relate}. For these reasons, we establish the last third part of the Theorem \ref{thm:combinedBound} (and therefore also Theorem \ref{thm:Phi_kLim}) in Section \ref{subsec:Up}. 

\subsection{On the fluctuations of $M,p$}\label{subsec:Plan}
Proving Theorem \ref{thm:combinedBound} requires us to control the fluctuations in the number of positive tests $M$ and the number of possible infected $p$. Below we give two results that provide tight upper and lower bounds on these fluctuations, a.a.s. as \ngrow\!, for $M$ and $p$ respectively. Both of these Lemmas are proven in Appendix \ref{subsec:commonProofs}, and they are extensions of similar results in \cite{iliopoulos2021group}.

\begin{lemma}\label{lem:M:limit}
    Recall $N = \floor{C \log_2\binom{n}{k}}$ with $C \in (1,2)$, and $M$ as the number of positive tests. 
    %We then have for every constant in $\eta \in (0,1)$, a.a.s. as \ngrow\!, that
    %\[(1-\eta)\frac{N}{2}\leq M \leq (1+\eta)\frac{N}{2}.\] \textcolor{red}{Why we do not just keep the second stronger statement?}\maxwell{I use the former when the accuracy of the later is not needed, Its kept here for reference to the reader} \textcolor{red}{it is immediate so not need to display here}
    We have for every $\eta_M \in (0,1/2)$, a.a.s. as \ngrow\!, that
    \[(1-N^{-\eta_M})\frac{N}{2}\leq M \leq (1+N^{-\eta_M})\frac{N}{2}.\]
\end{lemma}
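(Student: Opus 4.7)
\medskip
\noindent\textbf{Proof plan for Lemma \ref{lem:M:limit}.}

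The plan is to observe that, conditional on the prior (i.e., fixing $\sigma^*$), the number of positive tests $M$ is a Binomial$(N,1/2)$ random variable, and then apply a standard concentration inequality.

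First I would unpack the generative model. For each of the $N$ tests, each individual is included independently with probability $q$, and different tests are drawn independently. Hence a given test is positive (covers at least one element of $\sigma^*$) if and only if at least one of the $k$ elements of $\sigma^*$ is included, which by independence happens with probability $1-(1-q)^k$. By the defining equation $(1-q)^k=1/2$ of the Bernoulli group testing design, this probability is exactly $1/2$. Because the tests are mutually independent, the positivity indicators across the $N$ tests are i.i.d.\ Bernoulli$(1/2)$, and so $M \sim \mathrm{Binomial}(N,1/2)$.

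Second, I would apply Hoeffding's inequality (or equivalently a Chernoff bound for binomials) to the sum. Setting $s_N := N^{1-\eta_M}/2$, Hoeffding gives
\[
\mathbb{P}\!\left( \bigl| M - N/2 \bigr| > s_N \right) \;\leq\; 2\exp\!\left( -\frac{2 s_N^2}{N} \right) \;=\; 2\exp\!\left( -\tfrac{1}{2} N^{1-2\eta_M} \right).
\]
Since $\eta_M \in (0,1/2)$ we have $1-2\eta_M > 0$, and because $C>1$ and $k=\lfloor n^{\alpha}\rfloor$ with $\alpha\in(0,1)$, we have $N=\lfloor C\log_2\binom{n}{k}\rfloor \to \infty$ as $n\to \infty$. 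Therefore the right-hand side tends to $0$, which yields the desired two-sided bound a.a.s.

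The step with the real content is just the identification that $M$ is Binomial with parameter $1/2$; once that is in hand the concentration is entirely routine. There is no serious obstacle: the only mild care needed is to ensure that the above a.a.s.\ statement is with respect to the joint randomness of $\sigma^*$ and the Bernoulli design, but since the conclusion is distribution-free in $\sigma^*$ (the marginal law of $M$ does not depend on which $k$-subset is planted), the a.a.s.\ guarantee follows directly from the unconditional bound above.
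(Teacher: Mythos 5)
Your proof is correct and takes essentially the same approach as the paper: both identify that $M\sim\mathrm{Binomial}(N,1/2)$ via the normalization $(1-q)^k=1/2$ and then invoke a standard binomial concentration inequality with deviation parameter $\delta=N^{-\eta_M}$ (the paper uses a Chernoff bound, you use Hoeffding, which is an inessential difference).
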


\begin{lemma}\label{lem:p:limit}
    Consider $C \in (1,2)$, $\alpha \in (0,1/3)$ and recall that $p$ denotes the number of possible infected. We have for every $\eta_p \in (0,C/4)$ that a.a.s. as \ngrow\!,
    \[(1-k^{-\eta_p})n\left(\frac{k}{n}\right)^{\frac{C}{2}(1+k^{-\eta_p})}\leq p \leq (1+k^{-\eta_p})n\left(\frac{k}{n}\right)^{\frac{C}{2}(1-k^{-\eta_p})}\hspace{-1.5cm}.\]
\end{lemma}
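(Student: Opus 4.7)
The plan is to decompose $p = k + W$, where $W$ counts the non-infected individuals $l \notin \sigma^*$ that avoid every negative test (i.e., survive COMP), and then prove concentration of $W$ by conditioning on which tests are negative.

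The first step is to identify the conditional distribution of $W$. Let $\mathcal{N}$ denote the (random) set of negative tests and $M^- := |\mathcal{N}|$. For any non-infected $l$, the events $\{l \in j\}_{j \in [N]}$ are mutually independent $\mathrm{Bernoulli}(q)$, and the collection is independent of the participation indicators of $\sigma^*$ (which alone determine $\mathcal{N}$). Hence, conditional on $\mathcal{N}$, the event that $l$ survives COMP is equivalent to $l$ avoiding all tests in $\mathcal{N}$, which occurs with probability $(1-q)^{M^-}$ independently across $l$. Therefore $W \mid \mathcal{N} \sim \mathrm{Binomial}(n-k, (1-q)^{M^-})$, and unconditionally $M^- \sim \mathrm{Binomial}(N, 1/2)$ using $(1-q)^k = 1/2$.

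The second step is to apply two Chernoff bounds sequentially. A Chernoff bound on $M^-$ yields $M^- = (1 + O(N^{-\eta_M}))\,N/2$ a.a.s.\ for any $\eta_M \in (0, 1/2)$, mirroring Lemma~\ref{lem:M:limit}. I would then asymptotically evaluate $(1-q)^{M^-} = 2^{-M^-/k}$ using Stirling's approximation $\log_2 \binom{n}{k} = k\log_2(n/k) + O(k)$, together with $N = \floor{C \log_2 \binom{n}{k}}$ and the above concentration of $M^-$, to obtain $(1-q)^{M^-} = (k/n)^{(C/2)(1 + O(k^{-\eta_p}))}$ a.a.s.\ after absorbing lower-order corrections into the exponent. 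A second Chernoff bound, applied to the conditional Binomial distribution of $W$ given $\mathcal{N}$, then gives $W = (1 \pm k^{-\eta_p})(n-k)(1-q)^{M^-}$ a.a.s., because the conditional mean grows polynomially in $n$ and hence dominates $k^{2\eta_p}$.

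The main obstacle will be the careful bookkeeping of error terms: combining the two Chernoff concentration errors with the Stirling and $q = 1 - 2^{-1/k}$ expansion errors, and absorbing them uniformly into the form $(C/2)(1 \pm k^{-\eta_p})$ in the exponent and $(1 \pm k^{-\eta_p})$ in the prefactor of the statement. The condition $\alpha \in (0, 1/3)$ controls the Stirling correction term $O(k^2 / n) = o(1)$, and $\eta_p \in (0, C/4)$ ensures that the conditional mean $(n-k)(1-q)^{M^-} = n^{1 - (1-\alpha)C/2 + o(1)}$ polynomially dominates $k^{2\eta_p}$, so that the Chernoff tail probabilities vanish as $n \conv{} +\infty$. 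Since $k = n^{\alpha} \ll n(k/n)^{C/2}$ for $C < 2$, the additive contribution of the $k$ infected individuals is negligible and is absorbed into the multiplicative slack $(1 \pm k^{-\eta_p})$.
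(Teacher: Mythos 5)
Your proposal is correct and follows essentially the same route as the paper's proof: both condition on the number of negative tests, observe that the number of COMP-survivors among the non-infected is conditionally $\mathrm{Binomial}(n-k,(1-q)^{\bar M})$, and apply two successive Chernoff bounds (one for $\bar M$, one for the conditional binomial) with an error-absorption step. One small inaccuracy worth flagging: you attribute the hypothesis $\alpha<1/3$ to controlling the Stirling correction $O(k^2/n)=o(1)$, but that only needs $\alpha<1/2$; in the paper the $\alpha<1/3$ restriction is instead used to guarantee the Chernoff deviation parameter $\delta$ dominates $k/n$ so that the $(1-k/n)$ prefactor can be absorbed into the $(1\pm k^{-\eta_p})$ slack.
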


Notice that the previous lemmas imply that a.a.s. \ngrow, for any $c \in (0,1/4)$, as $n$ grows,
 \[(1-N^{-c})N/2 \leq M \leq (1+N^{-c})N/2 \label{eq:1} \]
 \[(1-k^{-c})n\left(\frac{k}{n}\right)^{\frac{C}{2}(1+k^{-c})}\leq p \leq (1+k^{-c})n\left(\frac{k}{n}\right)^{\frac{C}{2}(1-k^{-c})} \label{eq:2} \]

Given that, in the proof of Theorem \ref{thm:combinedBound}, we treat $M,p$ as arbitrary deterministic numbers satisfying the a.a.s. \ngrow conditions described in \eqref{eq:1}, \eqref{eq:2}.

\subsection{Lower Bounding $\phi(l)$}\label{subsec:Lower}

Our first step towards proving Theorem \ref{thm:combinedBound} is to establish a lower bound on $\phi(l)$ for all $l$ where $x = l/k$ is sufficiently small. This is accomplished using a conditional first moment method argument.

\begin{theorem}\label{lem:lowerbound}
  Assume that $M, p$ are deterministic and satisfies the conditions \eqref{eq:1}, \eqref{eq:2}. Moreover, assume that the parameters $(\alpha, C, a, \Co{roomR}, \Co{roomI})$ satisfy Assumption \ref{as:FMFExists} and Assumption \ref{as:alphaC_0}.
    
    Let $\phi(l)$ as defined in \eqref{eq:phil}. There exists a constant $\Co{low} > 0$ (dependent on $\Co{roomS}$ and $\Co{roomR}$) and $\epsilon' > 0$ such that, for all $l/k \in [0,\epsilon']$ and $n$ sufficiently large, we have that
    \begin{align}\label{eq:LB_phi}
        \phi(l) \geq y\left({\frac{l}{k}}\right) - \frac{\Co{low}}{k}.
    \end{align}
\end{theorem}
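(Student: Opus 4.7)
The plan is to prove~\eqref{eq:LB_phi} via a conditional first moment method. Set $t_l := \lfloor M(y(l/k) - \Co{low}/k) \rfloor$. Because $M\phi(l)$ is a non-negative integer, the event $Z_{t_l,l}=0$ is equivalent to $\phi(l) \geq (t_l+1)/M \geq y(l/k) - \Co{low}/k$, so it suffices to show that $Z_{t_l,l}=0$ simultaneously for every integer $l$ with $l/k \in [0,\epsilon']$, a.a.s.\ as $n \to \infty$. Lemma~\ref{lem:FMFnear0} together with Assumption~\ref{as:FMFExists} guarantees that $y(\cdot)$ exists, is unique, and is continuously differentiable on some $[0,\epsilon']$; Lemma~\ref{lem:aSet} yields $\P(\scr{A})=1-o(1)$; and Proposition~\ref{cor:upperBound} supplies the upper bound on $\E[Z_{t_l,l}\mid \scr{A}]$ one needs throughout this interval.

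The heart of the argument exploits the defining identity $\binom{k}{l}\binom{p-k}{k-l} = \exp\bigl(M\cdot F_0(l/k, y(l/k))\bigr)$, where $F_0(x,y)$ denotes the right-hand side of~\eqref{FMF}. Inserting this into Proposition~\ref{cor:upperBound} and using $qk = (1+o(1))\log 2$ to identify $\tfrac{ld}{M-t_l}$ with $\tfrac{2a\log(2)(l/k)}{1-t_l/M}$ up to $1+o(1)$ factors, the conditional expectation collapses to
\[
\E[Z_{t_l,l}\mid \scr{A}] \leq \exp\bigl(M\bigl[F_0(l/k, y(l/k)) - F_0(l/k, t_l/M)\bigr] + o(M/k)\bigr).
\]
A first-order Taylor expansion in $y$, combined with a uniform bound $-\partial_y F_0(x, y(x)) \geq \gamma > 0$ on $x \in [0,\epsilon']$, will turn this into $\E[Z_{t_l,l}\mid \scr{A}] \leq \exp(-\gamma \Co{low} M/(3k))$ once $\Co{low}$ is taken large enough. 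Such a uniform $\gamma$ exists because $\partial_y F_0$ is continuous in $(x,y)$ and strictly negative at every point $(x, y(x))$ on $[0,\epsilon']$: the computation in the proof of Lemma~\ref{lem:contDiff} yields $\partial_y F_0(x,y(x)) \leq -D(u \|\| r(x))$ with $u := \tfrac{2a\log(2)x}{1-y(x)} < (1-\Co{roomR})r(x)$, while at $x=0$ one has directly $\partial_y F_0(0,y(0)) = \log(y(0)/(1-y(0))) < 0$ by constraint~\eqref{eq:s:constraint}; compactness of $[0,\epsilon']$ then produces a single $\gamma$, and this is also the mechanism by which $\Co{low}$ ends up depending on $\Co{roomR}, \Co{roomS}$ as claimed.

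Since $k = \lfloor n^\alpha \rfloor$ forces $M/k = \Theta(\log n)$, the displayed bound is at most $n^{-\Omega(\Co{low})}$, which is $o(1/k)$ once $\Co{low}$ is large enough. A union bound over the $O(k)$ admissible integer values of $l$, combined with conditional Markov's inequality, then shows $Z_{t_l,l}=0$ for all such $l$ with conditional probability $1-o(1)$; combining with $\P(\scr{A})=1-o(1)$ promotes this to an unconditional a.a.s.\ statement, which is exactly~\eqref{eq:LB_phi}. The main technical obstacle is the uniform derivative bound of the second paragraph: converting the qualitative strict negativity of $\partial_y F_0$ into a concrete positive $\gamma$ that is uniform over $[0,\epsilon']$ and quantitative in the slack constants $\Co{roomR}, \Co{roomS}$ is where the analytic effort sits, whereas everything else is bookkeeping around the two probabilistic inputs ($\P(\scr{A})=1-o(1)$ and conditional Markov).
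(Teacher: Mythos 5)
Your proposal follows essentially the same route as the paper: a conditional first moment method built on $\P(\scr{A})=1-o(1)$ (Lemma~\ref{lem:aSet}), the upper bound of Proposition~\ref{cor:upperBound}, the defining equation for $y(x)$ to cancel the combinatorial prefactor, a first-order expansion in $t/M$ around $y(l/k)$, and conditional Markov plus a union bound over the $O(k)$ overlaps. The one place where you deviate, and where there is a subtle gap, is the uniform derivative bound. You argue that $\partial_y F_0(x,y(x))$ is continuous and strictly negative on the compact set $[0,\epsilon']$, hence bounded away from $0$ by some $\gamma>0$. But $y(\cdot)$ is an $n$-dependent function (it depends on $M,p$), so compactness only yields a $\gamma=\gamma_n>0$ for each fixed $n$, and you have not shown $\inf_n \gamma_n>0$. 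The paper sidesteps this by invoking Lemma~\ref{lem:D_3}: the constraint $y(x)\leq(1-\Co{roomS})s(x)$ (which holds for all $n$ and all $x$ in the feasible region) directly gives $\partial_{x_1}D(x_1||s(x))\big|_{x_1=y(x)}\leq -c_0$ with $c_0$ depending only on $\Co{roomS}$, and the other term is nonpositive since $u\partial_{x_1}D(x_1||r)\big|_{x_1=u}-D(u||r)=\log\bigl((1-r)/(1-u)\bigr)\leq 0$ when $u<r$, so one gets $\partial_y F_0(x,y(x))\leq -c_0$ uniformly in both $x$ and $n$ with no compactness needed. You already flag that $\Co{low}$ should depend on $\Co{roomR},\Co{roomS}$, so you evidently see the role of the slack constants; you should just replace the compactness step with the direct Lemma~\ref{lem:D_3} computation to make the constant manifestly $n$-independent. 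A second, minor wrinkle (shared with the paper's exposition) is the claim that the identification $ld/(M-t_l)\approx 2a\log(2)(l/k)/(1-t_l/M)$ contributes only $o(M/k)$ to the exponent; since $qk=\log 2 + O(1/k)$, the error is in fact $O(Mx/k)$, which for constant $x$ is $\Theta(M/k)$ --- the same order as the perturbation $\Co{low}M/k$ you are fighting. This does not break the argument, because $\Co{low}$ can be chosen large enough to dominate, but the $o(M/k)$ claim should be $O(M/k)$ with the constant absorbed into $\Co{low}$.
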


\begin{proof}[Proof of Theorem \ref{lem:lowerbound}]
   By Lemma \ref{lem:FMFnear0} and Remark \ref{rem:Scale}, we have that under condition \eqref{eq:1}, condition \eqref{eq:2} and Assumption \ref{as:FMFExists}, the first moment function $y(x)$ exists and is unique on the region $x \in [0,\epsilon']$ for some $\epsilon' > 0$.

    Recall the event $\scr{A}$ from Lemma \ref{lem:aSet}. We will demonstrate that under conditions \eqref{eq:1} and \eqref{eq:2},  with $\scr{S} = \{l : l/k \in [0,\epsilon']\}$, that
    \[\limsup_{n \conv{} \infty}\sum_{l \in \scr{S}} \E[Z_{t,l}|\scr{A}] = 0\label{eq:goal:Sum},\]
    for a choice of $t = My(l/k) - \Co{low}\log(k)$ with an appropriately chosen large constant $\Co{low} > 0$. By Markov's Inequality, condition \eqref{eq:goal:Sum} suffices to prove the theorem. Indeed, we can use condition \eqref{eq:1} with $k = \Theta(n^\alpha)$ to get that $M = \Theta(k \log(k))$ and absorb the implicit constant inside $\Co{low}$ to derive \eqref{eq:LB_phi}.
    Using Proposition \ref{cor:upperBound}\footnote{For this proof we drop the $(1-o(1))$ error from the proposition as it will not affect the limit \eqref{eq:goal:Sum}.} with $l = xk$ and $t =  My(x) - \Co{low}\log(k)$ gives that
    \[
        \label{eq:FMF:upperBound1}\begin{split}\E[Z_{ My(x) - \Co{low}\log(k),xk}|\scr{A}] &\leq \comb\exp\bigg{(}-M\bigg{[}\left(1-y(x) + \frac{\Co{low}\log(k)}{M}\right)\\
        &\qquad\times D\left(\frac{2a\log(2)x}{1-y(x) + \frac{\Co{low}\log(k)}{M}}\|\|r(x)\right) + D\left(y(x) - \frac{\Co{low}\log(k)}{M} \|\|s(x)\right)\bigg{]}\bigg{)}.\end{split}
    \]
    Defining $\rho = \frac{2a\log(2)x\frac{\Co{low}\log(k)}{M}}{(1-y(x) + \frac{\Co{low}\log(k)}{M})(1-y(x))} $, rearranging terms in the exponent of \eqref{eq:FMF:upperBound1} elicits,
    \begin{align}
        \begin{split}&-M\bigg{[}\left(1-y(x) + \frac{\Co{low}\log(k)}{M}\right) D\left(\frac{2a\log(2)x}{1-y(x) + \frac{\Co{low}\log(k)}{M}}\|\|r(x)\right) + D\left(y(x) - \frac{\Co{low}\log(k)}{M} \|\|s(x)\right)\bigg{]}\end{split}\\
        \begin{split}
        &\qquad= -M\bigg{[}(1-y(x))D\left(\frac{2a\log(2)x}{1-y(x)} - \rho\|\|r(x)\right) + \frac{\Co{low}\log(k)}{M}D\left(\frac{2a\log(2)x}{1-y(x)} - \rho\|\|r(x)\right) \\
        &\qquad\qquad + D\left(y(x) - \frac{\Co{low}\log(k)}{M} \|\|s(x)\right)\bigg{]}\label{eq:FMF:exponent}.
        \end{split}
    \end{align}
    Under condition \eqref{eq:1}, we have that $\log(k)/M = o(1)$ and thus, for large enough $n$, we have that $\rho < \frac{2a\log(2)x}{1-y(x)}$. Furthermore, we utilize \eqref{eq:r:constraint} and \eqref{eq:s:constraint}, as $y(x)$ exists over the region $[0,\epsilon']$, to justify the existence of constant $\Co{roomR}, \Co{roomS} > 0$ such that $\frac{2a\log(2)x}{1-y(x)} \leq (1-\Co{roomR})r(x)$ and $y(x) \leq (1-\Co{roomS})s(x)$ for all $x \in [0,\epsilon']$. Thus, we invoke Lemma \ref{lem:D_3}, with $\delta = \min(\Co{roomR}, \Co{roomS})$, to guarantee the following lower bounds with constant $c_0>0$:
    \[
        D\left(\frac{2a\log(2)x}{1-y(x)} - \rho\|\|r(x)\right) \geq D\left(\frac{2a\log(2)x}{1-y(x)}\|\|r(x)\right) +c_0\rho \geq D\left(\frac{2a\log(2)x}{1-y(x)}\|\|r(x)\right),
    \]
    \[
        D\left(y(x) - \frac{\Co{low}\log(k)}{M} \|\|s(x)\right) \geq D(y(x)||s(x)) + c_0 \frac{\Co{low} \log(k)}{M}.
    \]
    Thus, we conclude that our exponent in \eqref{eq:FMF:exponent} is bounded above by
    \[\label{eq:FMF:exp2}-M\bigg{[} \underbrace{(1-y(x))D\left(\frac{2a\log(2)x}{1-y(x)}\|\|r(x)\right) + D(y(x)||s(x))}_{(A)} + \frac{\Co{low}\log(k)}{M}\left(c_0 + D\left(\frac{2a\log(2)x}{1-y(x)}\|\|r(x)\right)\right)\bigg{]}.\]
    Noticing that term $(A)$ in \eqref{eq:FMF:exp2} is the solution to the first moment function in Definition \ref{def:FMF}, we can simplify \eqref{eq:FMF:exp2} to
    \[\label{eq:FMF:exp3}-\log\left(\comb\right) -M\bigg{[}\frac{\Co{low}\log(k)}{M}\left(c_0 + D\left(\frac{2a\log(2)x}{1-y(x)}\|\|r(x)\right)\right)\bigg{]}.\]
    The left most term in \eqref{eq:FMF:exp3} cancels the combinatorial pre-factor in \eqref{eq:FMF:upperBound1}, allowing us to further bound
    \begin{align}
        \E[Z_{ My(x) - \Co{low}\log(k),xk}|\scr{A}]&\leq \exp\left(- M\left[\frac{\Co{low}\log(k)}{M}\left(c_0 + D\left(\frac{2a\log(2)x}{1-y(x)}\|\|r(x)\right)\right)\right]\right)\\
        &\leq \exp\left(- {c_0\Co{low}\log(k)}\right)\label{eq:c_0:1}
    \end{align}
    as KL-Divergence is always positive. Thus, setting $t = My(l/k) + \Co{low} \log(k)$, using that $|\scr{S}| \leq k$ and choosing $\Co{low}$ such that $c_0\Co{low} \geq 1 + \epsilon$, for some $\epsilon > 0$, gives that
    \[\limsup_{n \conv{} \infty}\sum_{l \in \scr{S}}\E[Z_{t,l}|\scr{A}] \leq \limsup_{n \conv{} \infty}\sum_{l \in \scr{S}}k^{-(1+\epsilon)} \leq \limsup _{n \conv{} \infty}k^{-\epsilon} = 0,\]
    completing the proof.
\end{proof}

\subsection{Solving For $y(0)$}\label{subsec:soly(0)}

The second step to prove Theorem \ref{thm:combinedBound} is to identify the limiting value of the first moment function $y(x)$ from Definition \ref{def:FMF} at $x = 0$.

%This is a relatively straightforward calculation that mainly concerns showing that error terms arising from asymptotic approximations are negligible.

%\textcolor{red}{IZ: do we need the first moment function at a neighborhood of zero?}\maxwell{I don't think so, but if you satisfy the conditions in the FMF then you automatically do so If you }

\begin{lemma}\label{lem:solZeroRigor}
    Assume that $M, p$ are deterministic and satisfies the conditions \eqref{eq:1}, \eqref{eq:2}. Recall the first moment function, $y(x)$, from Definition \ref{def:FMF} and $H_C$ from Definition \ref{def:H_C}. If the parameters  $(\alpha, C, a, \Co{roomR}, \Co{roomI})$ satisfy Assumption \ref{as:FMFExists} and Assumption \ref{as:alphaC_0}, then,
    \begin{equation}
        y(0) = H_C + o(1).
    \end{equation}
\end{lemma}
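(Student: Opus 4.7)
The plan is to simply evaluate the implicit defining equation of $y(\cdot)$ at $x=0$ and read off the asymptotic value of $y(0)$ after estimating the resulting ``entropy-over-tests'' ratio.

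\textbf{Step 1: Collapse the equation at $x=0$.} At $x = 0$ one has $\binom{k}{0} = 1$, $r(0) = 4\cdot 2^{0}(1-2^{0}) = 0$ and $s(0) = 1 - 2^{-1} = 1/2$. Plugging into the first KL-term of \eqref{FMF}, the first argument $\tfrac{2a\log(2)\cdot 0}{1-y} = 0$ equals the second argument $r(0)=0$, so by the standard convention $0\log 0 = 0$ the entire term $(1-y)D(0\|0)$ vanishes. The defining equation of $y(0)$ thus reduces to
\[
\frac{1}{M}\log\binom{p-k}{k} \;=\; D\bigl(y(0)\,\bigl\|\bigr.\,\tfrac{1}{2}\bigr).
\]
Using the identity $D(y\|1/2) = \log 2 - \bigl(-y\log y - (1-y)\log(1-y)\bigr) = \log 2\cdot\bigl(1 - h_2(y)\bigr)$ (converting the natural entropy on the left into the binary entropy $h_2$ on the right), the reduced equation becomes
\[
\frac{1}{M\log 2}\log\binom{p-k}{k} \;=\; 1 - h_2\bigl(y(0)\bigr).
\]

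\textbf{Step 2: Estimate the LHS via Stirling and the fluctuation bounds \eqref{eq:1}, \eqref{eq:2}.} Since $\alpha$ is small enough (Assumption \ref{as:alphaC_0} in particular gives $\alpha < (2-C)/(4+C)$ implicitly through \eqref{eq:alpha:stirling_0}), we have $k^2 = o(p)$. Combining this with the standard Stirling estimate $\log\binom{p-k}{k} = k\log((p-k)/k) + k + O(\log k) + O(k^2/p)$ yields
\[
\log\binom{p-k}{k} \;=\; (1+o(1))\,k\log(p/k).
\]
From \eqref{eq:2}, $\log(p/k) = (1-C/2+o(1))\log(n/k)$, and from \eqref{eq:1} together with $\log_2\binom{n}{k} = (1+o(1))k\log(n/k)/\log 2$, we get $M = (1+o(1))\tfrac{C}{2\log 2}\,k\log(n/k)$. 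Dividing,
\[
\frac{1}{M}\log\binom{p-k}{k} \;=\; \frac{2(1-C/2)\log 2}{C} \,+\, o(1) \;=\; \frac{(2-C)\log 2}{C} \,+\, o(1).
\]

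\textbf{Step 3: Invert and identify $y(0)$.} Combining Steps 1 and 2,
\[
h_2\bigl(y(0)\bigr) \;=\; 1 \,-\, \frac{2-C}{C} \,+\, o(1) \;=\; 2 - \frac{2}{C} \,+\, o(1).
\]
Constraint \eqref{eq:s:constraint} forces $y(0) \le (1-\Co{roomS})/2 < 1/2$, so $y(0)$ lies on the left branch of $h_2$, on which $h_2:[0,1/2]\to[0,1]$ is a continuous strictly increasing bijection with continuous inverse. Since the sequence $y(0)$ is bounded in $[0,1/2]$, continuity of $h_2^{-1}$ at the point $2-2/C\in(0,1)$ (valid because $1 < C < 2$) gives $y(0) = h_2^{-1}(2-2/C) + o(1) = H_C + o(1)$, as required.

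\textbf{Main obstacle.} There is no real obstacle here: the lemma is essentially a one-variable calculation once the implicit equation \eqref{FMF} is taken at face value. The only mildly delicate points are (i) correctly disposing of the degenerate term $(1-y)D(0\|0)$ at $x=0$, and (ii) verifying that the Stirling estimate $\log\binom{p-k}{k} = (1+o(1))k\log(p/k)$ is genuinely tight, for which the smallness of $\alpha$ built into Assumptions \ref{as:FMFExists} and \ref{as:alphaC_0} (guaranteeing $k^2=o(p)$) is used. The heavy lifting for the proof of Theorem~\ref{thm:combinedBound} sits in the companion upper bound on $\phi(0)$ via the conditional second moment method, not in this lemma.
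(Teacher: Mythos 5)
Your proposal is correct and follows essentially the same route as the paper: evaluate \eqref{FMF} at $x=0$ (noting that the first KL term degenerates to $D(0\|0)=0$), estimate $\tfrac{1}{M}\log\binom{p-k}{k}$ via Stirling and the fluctuation bounds \eqref{eq:1}--\eqref{eq:2} to get $\tfrac{(2-C)\log 2}{C}+o(1)$, then invert $h_2$ on its left branch using the constraint \eqref{eq:s:constraint} to pin $y(0)<1/2$. One tiny parenthetical slip: \eqref{eq:alpha:stirling_0} rearranges to $\alpha < (2-C)/(4-C)$, not $(2-C)/(4+C)$, though the substantive consequence you draw (that $k^2 = o(p)$) is correct and is exactly what the paper uses.
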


%Before we give the proof of this result we can plot the curve $H_C = h_2^{-1}(2-2/C)$ for $C \in (1,2)$ to observe its behavior in Figure \ref{fig:HCPlot}. Interestingly this function is independent of the value of $\alpha$ chosen; something perhaps surprising as in principle $\phi(0)$ and $y(0)$ depend on the value of $k=n^{\alpha+o(1)}.$

%\begin{figure}[ht]
 %   \centering
 %   \includegraphics[scale = .35]{figs/H_C.pdf}
 %   \caption{ A plot of the function $H_C = h^{-1}(2-2/C)$ for $C \in (1,2)$.}
 %   \label{fig:HCPlot}
%\end{figure}

\begin{proof}[Proof of Lemma \ref{lem:solZeroRigor}]
    By Lemma \ref{lem:FMFnear0} and Remark \ref{rem:Scale}, we have that under conditions \eqref{eq:1}, condition \eqref{eq:2} and Assumption \ref{as:FMFExists}, $y(x)$ exists and is unique on the region $x \in [0,\epsilon']$ for some $\epsilon' > 0$. This means that $y(0)$ is well-defined as the solution to the following equation at $x = 0$,
    \[\frac{1}{M}\log \left(\binom{k}{kx}\binom{p-k}{k}\right) =  D \left( y(x) || s \left( x \right)\right) +  D \left(\frac{2\log \left(2\right) a  x }{1 -  y(x) }\|\| r \left( x \right)\right),\label{eq:FMF:zeroSetup}\]
    Plugging in $x = 0$, we have, with $y(0) = y$ for the remainder of the proof, that        
         \[\frac{1}{M}\log\binom{p-k}{k} = \log(2) - h(y).\]
         Rearranging terms and applying $h^{-1}$ (on the $[0,1/2]$ branch), we see that $y = h^{-1}\left(\log(2) - \frac{1}{M}\log\binom{p-k}{k}\right)$. Now we consider \ngrow\!, by the continuity of $h^{-1}$, we have
         \[\lim_{n \conv{} \infty} y = h^{-1}\left(\log(2) - \lim_{n \conv{} \infty}\frac{1}{M}\log\binom{p-k}{k}\right).\label{eq:zeroSol:setup}\]
         Thus, we need to calculate the asymptotic value of $\frac{1}{M}\log\binom{p-k}{k}$. By \eqref{eq:2}, $p = \Omega\left(n(k/n)^{(1+k^{-\err})C/2}\right)$, giving
         \[\frac{k^2}{p} = O\left(\frac{n^{2\alpha}}{n^{1+(\alpha-1)(1+k^{-\err})C/2}}\right) = O\left(n^{2\alpha + (1-\alpha)(1+k^{-\err})C/2 - 1}\right)\]
         Using Assumption \ref{as:alphaC_0}, we see that $2\alpha + \frac{C}{2}(1-\alpha) - 1 < 0$ and thus, for a sufficiently large $n$, since $k=\omega(1)$ we have that $\frac{k^2}{p} = o(1)$. Then, utilizing Stirling's approximation for $p-k$ and $k$ growing, we have that there exists a sequence $\delta_n \conv{} 0$ with
            \[\log\binom{p-k}{k} = \log\left((1+\delta_n)\left(\frac{(p-k)e}{k}\right)^{ k }\left(2\pi  k \right)^{ - \frac{1}{2}} e ^{\frac{ k ^{2}}{2 (p-k) }\left(1 + \delta_n\right)}\right).\]
         As we previously showed that $O(k/p) = O(k^2/p) = o(1)$, the leading order term in the above equation is $k\log(p/k)$ which means that there exists a sequence $\delta_n \conv{} 0$ a.a.s. as \ngrow where
         \[\frac{\log\binom{p-k}{k}}{M} = \frac{(1+\delta_n)k\log\left(\frac{p}{k}\right)}{M}.\label{eq:stir1}\]
        Again using \eqref{eq:2}, alongside condition \eqref{eq:1}, we have the following upper and lower bounds using Lemma \ref{lem:Mbounds}, Lemma \ref{lem:plower}:
        \[\frac{k(1-\alpha)(1-C/2)\log(n) - O(k^{1-\err})}{(1+ k^{-\err})\left(\frac{C(1-\alpha)k\log(n)}{2\log(2)} + O(k)\right)} 
        \leq \frac{k\log(p/k)}{M} \leq \frac{k(1-\alpha)(1-C/2)\log(n) + O(k^{1-\err})}{(1- k^{-\err})\left(\frac{C(1-\alpha)k\log(n)}{2\log(2)} - O(1)\right)}.\]
        One can then calculate that
        \[\frac{k\log(p/k)}{M} = \log(2)\frac{2-C}{C} + o(1).\]
        Combining this fact with \eqref{eq:stir1} leads to the existence of a sequence $\tilde{\delta}_n \conv{} 0$ where $\frac{1}{M}\log\binom{p-k}{k} = \log(2)\frac{2-C}{C} + \tilde{\delta}_n$ a.a.s. as \ngrow\!. Plugging this fact into \eqref{eq:zeroSol:setup} gives, $h^{-1}\left(\log(2)\left(1-\frac{2-C}{C}\right) + \tilde{\delta}_n\right) = y$.
        By the inverse function theorem we have that the derivative of $h^{-1}$ at any input $z\in [0,\log(2)]$ is given by $(h^{-1})'(z) = \frac{1}{h'(h^{-1}(z))}$.
        Thus, $(h^{-1})'(z)$ is bounded when $h^{-1}(z)$ is bounded away from $1/2$. As $h(z)$ is the $(0,1/2)$ branch of entropy, this remains true for all $z < \log(2)$. As $1<C<2$ and $z = \log(2)(1-\frac{2-C}{C}) + o(1)$ then this constraint will hold for large enough $n$. Thus, by the mean value theorem, there exists a sequence $\delta_n \conv{} 0$ as \ngrow such that,
        \[h^{-1}\left(\log(2)\left(1-\frac{2-C}{C}\right)\right) + \delta_n = y(0).\]
        Recognizing that $h^{-1}(\log(2)x) = h_2^{-1}(x)$ and $1- \frac{2-C}{C} = 2-2/C$ gives the desired result.

\end{proof}

\subsection{Relating Group Testing To Random MAX-Set Cover}\label{subsec:relate}

Now we turn our focus on proving the final part of Theorem \ref{thm:combinedBound}, which is that a.a.s. as $n \rightarrow +\infty$, $\phi(0)=y(0)+o(1)$. Using the result of the previous subsection it suffices to show that a.a.s. as $n \rightarrow +\infty$, $\phi(0)=H_C+o(1).$ This will be proven in Section \ref{subsec:Up} by establishing Theorem \ref{thm:Phi_kLim} as we explain below.

They key observation is that $\phi(0)=\min_{\sigma \cap \sigma*=\emptyset} H(\sigma)$ has, in fact, no dependence on the planted signal $\sigma^*$ as it can be simply rephrased as a maximization over all $k$-subsets $\sigma$ of the $p-k$ possibly infected \emph{but not infected} individuals. Interestingly, it is for this reason that as long as we fix $M,p$ to take deterministic values then $\phi(0)$ equals in distribution to $1-\Phi_k$, where $\Phi_k$ the maximum satisfiability threshold of a ``null'' model called the \kset problem for $\mSet=M,\pSet=p-k$, which is explicitly described in Section \ref{subsec:MAX}.

In terms of parameters, conditioning $M,p$ to be arbitrary numbers satisfying \eqref{eq:1}, \eqref{eq:2} then the assumptions of Theorem \ref{thm:combinedBound} for BGT are mapped identically to the assumptions of Theorem \ref{thm:Phi_kLim} (in particular $\mSet, \pSet$ satisfying Assumption \ref{as:MPScale}).
Hence, by the previous two subsections, we can conclude that for \kset the assumptions of Theorem \ref{thm:Phi_kLim} it holds $\Phi_k \leq 1-H_C+o(1)$. Moreover, if we prove that a.a.s. as $n \rightarrow +\infty$  it holds \[\Phi_k \geq 1-H_C+o(1) \label{eq:final_s}\] we get an equivalent upper bound on $\phi(0)$ and in particular complete simultaneously the proof of both Theorem \ref{thm:combinedBound} and Theorem \ref{thm:Phi_kLim}. This will be the topic of the following section.

\section{The Lower Bound On the Max-Satisfiability Threshold $\Phi_k$ }\label{subsec:Up}

As explained in Section \ref{subsec:relate}, we focus here on completing the proofs of Theorem \ref{thm:combinedBound} and Theorem \ref{thm:Phi_kLim}, for which it suffices to show \eqref{eq:final_s} under the assumptions of Theorem \ref{thm:Phi_kLim}. In particular, in this section we follow the (equivalent) notation of the \kset problem. In words, we aim to prove that there exists a set of elements with size $k$ that leave all but $H_C + o(1)$ ``target'' sets uncovered. For ease of notation we now set $M=\mSet$ and $p=\pSet$ which recall are now deterministic numbers satisfying Assumption \ref{as:MPScale} in the context of \kset.

We prove this result using a second moment method analysis on the random variable $Z_{yM,0}$ which counts the number of $k$-subsets covering at most $yM$ sets. As is often the case, a direct second moment argument has difficulties with obtaining tight results and the ``art'' is to appropriately condition it to make it succeed. To overcome this difficultly in our case, we instead consider a surrogate counting random variable which lower bounds the random variable and counts only a carefully chosen ``well-behaved'' (or ``flat'') set of the $k$-subsets (disjoint from $\sigma^*$) covering exactly $yM$ sets. This surrogate counting random variable is inspired by similar ``flatness" arguments from \cite{balister2018dense,gamarnik2024overlap}. Before we go into the specifics of this second moment calculation, we introduce the concept of flatness in our setting and build up the necessary tools for the second moment method proof. 

\subsection{Getting Started: Flatness In The Random Max K-Set Problem}

Flatness, speaking informally, is the condition that whenever a set $\sigma$ leaves $yM$ sets uncovered (with $y \in (0,1/2)$) then the number of sets covered by \emph{any subset} $\sigma' \subseteq \sigma$ concentrates around its conditional expectation given that $\sigma$ leaves $yM$ sets uncovered. Interestingly, this conditional expectation depends only on the size, $|\sigma'|$, of the subset $\sigma'$ of $\sigma$. This allows us to employ the following simplifying notation for our purposes.
\begin{notation}\label{not:sigmal}
   Given a set $\sigma$ and any $\ell \in \mathbb{N}$ with $0 \leq \ell \leq |\sigma|$ we denote by $\sigma_l \subseteq \sigma$ to be an arbitrary subset of $\sigma$ with $|\sigma_l| = l$. 
\end{notation}
Following this logic, let us first condition that a set of elements $\sigma$, with $|\sigma| = k$, leaves exactly $y M$ sets uncovered. We then find the expected number of sets left uncovered by any fixed subset $\sigma_l \subseteq \sigma$. To formally do so, we define the key random variable in question. 
\begin{definition}\label{def:Xsigma}
    Let random variable $X_\sigma$ to be the number of sets left uncovered by $\sigma$.
\end{definition}

We consider the expectation of $X_{\sigma_l}/M$ conditioned on the event that $X_{\sigma} = yM$. This expectation has a simple form based on $l$ and $y$ that we define now.
\begin{definition}\label{def:SubsetProp}
Given $q$ such that $(1-q)^k = 1/2$ and $0\leq l \leq k$, let
\begin{equation}
    y_{(l)} := y + (2(1-q)^l - 1)(1-y) = y + (2^{1-\frac{l}{k}} - 1)(1-y)\label{gamma l}.
\end{equation}
Similarly, given $x \in [0,1]$ define\footnote{Note that we denote this proportion of covered sets as $y_{(x)}$, not to be confused with the solution to the first moment function $y(x)$.}
\[y_{(x)} := y + (2^{1-x}-1)(1-y)\label{def:gammaeps}.\] 
\end{definition}
To calculate the conditional expectation we first make the following probabilistic calculation.
\begin{lemma}\label{lem:p_l}
    Given a set of elements $\sigma$ with $|\sigma| = k$, and our notation $\sigma_l$ given above, we have that for any target set $m,$ \[p_l \coloneqq \P(\sigma_l \text{ does not cover set } m | m \text{ is covered by $\sigma$}) = 2^{1-l/k} - 1\]
\end{lemma}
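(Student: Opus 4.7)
The plan is to compute the conditional probability directly from the definition, exploiting the independence structure of the random set model. Recall that in the \kset construction, each element of $[p]$ is included in each target set $m$ independently with probability $q$, where $q$ satisfies $(1-q)^k = 1/2$. First I will rewrite
\[
p_l = \frac{\P\bigl(\sigma_l \text{ does not cover } m \text{ and } \sigma \text{ covers } m\bigr)}{\P(\sigma \text{ covers } m)},
\]
so the computation reduces to evaluating two unconditional probabilities.

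The denominator is the classical ``cover'' probability for $\sigma$: the set $m$ fails to be covered by $\sigma$ precisely when none of the $k$ elements of $\sigma$ lies in $m$, an event of probability $(1-q)^k = 1/2$ by the choice of $q$. Hence $\P(\sigma \text{ covers } m) = 1/2$.

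For the numerator, decompose $\sigma = \sigma_l \sqcup (\sigma \setminus \sigma_l)$, which are disjoint of sizes $l$ and $k-l$ respectively. The joint event ``$\sigma_l$ does not cover $m$ and $\sigma$ covers $m$'' is equivalent to ``no element of $\sigma_l$ lies in $m$ and at least one element of $\sigma \setminus \sigma_l$ lies in $m$''. Since membership of distinct elements in $m$ is independent, this probability factors as $(1-q)^l \bigl(1 - (1-q)^{k-l}\bigr)$. Using $(1-q)^k = 1/2$ gives $(1-q)^l = 2^{-l/k}$ and $(1-q)^{k-l} = 2^{-(k-l)/k}$, so the numerator equals $2^{-l/k}\bigl(1 - 2^{-(k-l)/k}\bigr) = 2^{-l/k} - 1/2$.

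Dividing, I obtain $p_l = 2 \cdot (2^{-l/k} - 1/2) = 2^{1-l/k} - 1$, as claimed. There is no real obstacle here; the lemma is a short direct calculation once the independence of the Bernoulli assignments and the normalization $(1-q)^k = 1/2$ are invoked, and the only care needed is in correctly identifying the numerator event as the conjunction of the two independent sub-events on $\sigma_l$ and $\sigma \setminus \sigma_l$.
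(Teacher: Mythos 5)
Your proof is correct and matches the paper's argument exactly: both write the conditional probability as a ratio, factor the numerator via independence as $(1-q)^l\bigl(1-(1-q)^{k-l}\bigr)$, and simplify using $(1-q)^k = 1/2$.
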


\begin{proof}[Proof of \ref{lem:p_l}]Recall that each element is included in test $m$ independently and with probability $q$. Hence, it holds
    \begin{align}
        p_l &= \frac{\P(\sigma_l \text{ nodes do not cover set } m \cap m \text{ is covered by $\sigma$})}{\P(m \text{ is covered by $\sigma$})}\\
        &= \frac{(1-q)^l(1-(1-q)^{k-l})}{1-(1-q)^k} = \frac{(1-q)^l-(1-q)^{k}}{1-(1-q)^k} = \frac{(1-q)^l-1/2}{1/2} = 2(1-q)^l - 1\\
        &= 2 \cdot 2^{-l/k} - 1,\label{eq:used(1-q)}
    \end{align}where we used that $(1-q)^k = 1/2$ in \eqref{eq:used(1-q)}.
\end{proof}
By applying Lemma \ref{lem:p_l}, we can see that the expected number of sets left uncovered by $\sigma_l$ is a sum of a deterministic value (after conditioning) of uncovered sets $yM$ and the expectation of a $\Bi{(1-y)M, p_l}$ random variable. The below lemma confirms our choice of $y_{(l)}$ in Definition \ref{def:SubsetProp}.

\begin{lemma}\label{lem:gammaepsexp}
    Given $X_{\sigma}$, $X_{\sigma_l}$ from Definition \ref{def:Xsigma} and $y_{(l)}$ from Definition \ref{def:SubsetProp}, the following statement holds. If $y \in (0,1/2)$, then $\E[X_{\sigma_l} | X_{\sigma} = y M] = y_{(l)} M.$
\end{lemma}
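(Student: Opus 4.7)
The plan is to decompose $X_{\sigma_l}$ across the $M$ target sets and use independence together with Lemma \ref{lem:p_l}. Since $\sigma_l \subseteq \sigma$, for each target set $m \in [M]$ the event \{\,$\sigma_l$ does not cover $m$\,\} decomposes disjointly as \{\,$\sigma$ does not cover $m$\,\} or \{\,$\sigma$ covers $m$ but $\sigma_l$ does not\,\}. Writing this at the level of indicators and summing over $m$ gives
\[
X_{\sigma_l} \;=\; X_{\sigma} \;+\; Y, \qquad Y \;:=\; \sum_{m=1}^{M} \mathbf{1}\{\sigma \text{ covers } m,\; \sigma_l \text{ does not cover } m\}.
\]

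Next I would condition on $X_\sigma = yM$. The first summand contributes the deterministic quantity $yM$. For $Y$, the key point is that under the null model the $M$ target sets are mutually independent, so the pairs of events (\{\,$\sigma$ covers $m$\,\}, \{\,$\sigma_l$ does not cover $m$\,\}) are i.i.d.\ across $m$. Conditioning on the identity of the $yM$ uncovered-by-$\sigma$ tests therefore leaves the remaining $(1-y)M$ covered-by-$\sigma$ tests independent, each contributing a Bernoulli$(p_l)$ to $Y$ with $p_l = 2^{1-l/k}-1$ by Lemma \ref{lem:p_l}. Hence $Y \mid \{X_\sigma = yM\} \sim \operatorname{Binomial}\bigl((1-y)M,\, p_l\bigr)$, and in particular
\[
\E[Y \mid X_\sigma = yM] \;=\; (1-y)M\,(2^{1-l/k}-1).
\]

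Putting the two pieces together gives
\[
\E[X_{\sigma_l} \mid X_\sigma = yM] \;=\; yM + (1-y)M(2^{1-l/k}-1) \;=\; M\bigl(y + (2^{1-l/k}-1)(1-y)\bigr) \;=\; y_{(l)} M,
\]
which is exactly the claimed identity in Lemma \ref{lem:gammaepsexp}.

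There is no real technical obstacle: the entire content is the disjoint decomposition $\mathbf{1}\{B_m\} = \mathbf{1}\{A_m\} + \mathbf{1}\{A_m^c \cap B_m\}$ (with $A_m \subseteq B_m$ by $\sigma_l \subseteq \sigma$), independence of the $M$ target sets, and a direct application of Lemma \ref{lem:p_l}. The only point requiring a sentence of care is that conditioning on the value $X_\sigma = yM$ (rather than on the full configuration of which tests are covered) is harmless because the pairs $(A_m, B_m)$ are exchangeable and i.i.d., so the conditional distribution of $Y$ depends on the conditioning event only through the count $yM$.
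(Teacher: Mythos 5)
Your proof is correct and takes essentially the same approach as the paper: decompose the uncovered sets into those $\sigma$ itself misses (deterministically $yM$ after conditioning) and those $\sigma$ covers but $\sigma_l$ does not (a $\mathrm{Binomial}((1-y)M, p_l)$ by Lemma~\ref{lem:p_l}), then take expectations. Your closing remark about why conditioning on the count $X_\sigma = yM$ rather than on the exact configuration suffices is a nice extra sentence of care that the paper leaves implicit.
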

    
\begin{remark}\label{rem:epsGammaEpsLims}
      Before we proceed with the proof, we present some intuition on the formula of $y_{(\ell)}$ in the two extreme cases. When $l = 0$, $\sigma_{l}$ is empty, so it does not cover any sets. Indeed, it is easy to see that $y_{(0)} = 1$. When $l = k$, $\sigma_{\ell}=\sigma$ and therefore it must be true that $y_{(k)} = y.$ Indeed, that holds since $2(1-q)^l - 1 = 2(1/2) - 1 = 0$.  
\end{remark}

\begin{proof}[Proof of Lemma \ref{lem:gammaepsexp}]
    The expected number of sets left uncovered by $\sigma_l$ can be decomposed into two parts. The first part is the proportion of sets which are missed by the set $\sigma$, which $\sigma_l$ cannot possibly cover. The second is the expectation of a binomial over all the sets which are covered by $\sigma$. Thus, using Lemma \ref{lem:p_l}
    \begin{align}
        \frac{1}{M}\E[X_{\sigma_l} | X_{\sigma} = y M] &= y + \frac{1}{M}\E[\Bi{(1-y)M, p_l}] = y + p_l(1-y)\\
        & = y + (2^{1-l/k} - 1)(1-y) = y_{(l)}.
    \end{align}
\end{proof}

We now must demonstrate the rate that any such subset $\sigma_l$ can deviate from leaving $y_{(l)} M$ sets uncovered. Meaning that we want to find an appropriate $D_l > 0$ under which all possible subsets $\sigma_l$ have their number of uncovered sets in $[y_{(l)}M - D_l, y_{(l)}M + D_l] \cap \{0,1,\dots,M\}$ a.a.s. as \ngrow~\!\!\!~. The Lemma below provides us of a valid choice for $D_l$. The proof of this result is deferred to Appendix \ref{subsec:proofD_linterval} since it relies on some technical aspects of two-point KL divergence.
\begin{lemma}\label{lem:D_linterval} 
\stateScale Given $X_{\sigma}$ from Definition \ref{def:Xsigma} we condition on $X_\sigma = yM$ for some $y \in (0,1/2)$. Then for any $\sigma_l \subseteq \sigma$ with $|\sigma_l| = l$, $y_{(l)}$ from Definition \ref{def:SubsetProp}, $p_l$ from Lemma \ref{lem:p_l}, and any constant $\Co{D_l}>0$, define
    \[D_{l,\Co{D_l}} := \sqrt{6p_l(1-p_l)(1-y)M \left[\log \binom{k}{l} + (1+\Co{D_l})\log k\right]},\label{eq:D_l:def}\]the following holds.
If $\alpha < 28/1000$, then for every $\sigma_l$ with $0 \leq l \leq k$, we have that $|X_{\sigma_l} - y_{(l)} M| \leq D_l$ for sufficiently large $n$.
\end{lemma}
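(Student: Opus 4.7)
The plan proceeds in three steps: identify the conditional distribution of $X_{\sigma_l}$ given $X_\sigma = yM$; apply a sharp KL/Chernoff tail bound; and union bound over $\sigma_l$ and $l$. The main obstacle will be ensuring the KL tail is strong enough uniformly in $l$, which ultimately forces the numerical hypothesis $\alpha < 28/1000$.

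First I would observe that, conditional on $X_\sigma = yM$, the variable $X_{\sigma_l}$ admits a very simple decomposition. Each of the $yM$ sets left uncovered by $\sigma$ is automatically uncovered by every $\sigma_l \subseteq \sigma$. Among the remaining $(1-y)M$ sets that $\sigma$ covers, the events ``$\sigma_l$ does not cover set $m$'' are mutually independent across $m$ (because distinct target sets $\mathcal{S}_m$ have independent contents in the null model) and each occurs with the probability $p_l = 2^{1-l/k} - 1$ computed in Lemma \ref{lem:p_l}. Therefore, conditionally on $X_\sigma = yM$,
\[X_{\sigma_l} \;\stackrel{d}{=}\; yM + \Bi{(1-y)M,\, p_l},\]
which identifies $y_{(l)} M = yM + (1-y)M\, p_l$ as the conditional mean (recovering Lemma \ref{lem:gammaepsexp}).

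Next I would apply a two-point KL/Chernoff bound to $B_l \sim \Bi{(1-y)M, p_l}$. Setting $\epsilon_l := D_{l,\Co{D_l}}/((1-y)M)$,
\[\P\!\bigl(\,|B_l - (1-y)M\, p_l| \geq D_{l,\Co{D_l}}\,\bigr) \leq 2\exp\!\bigl(-(1-y)M \cdot D(p_l + \epsilon_l \,||\, p_l)\bigr).\]
Under Assumption \ref{as:MPScale} one has $(1-y)M = \Theta(k\log n)$, so $\epsilon_l$ is small and a second-order expansion of the KL divergence (in the spirit of Lemma \ref{lem:D_3}) gives
\[(1-y)M \cdot D(p_l + \epsilon_l \,||\, p_l) \;\geq\; (1 - o(1))\,\frac{D_{l,\Co{D_l}}^2}{2 p_l(1-p_l)(1-y)M} \;=\; 3(1-o(1))\bigl[\log\tbinom{k}{l} + (1+\Co{D_l})\log k\bigr],\]
where the factor $3$ is produced by the factor $6$ in the definition \eqref{eq:D_l:def}. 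Hence the failure probability for a single $\sigma_l$ is at most $2\binom{k}{l}^{-3+o(1)}\, k^{-3(1+\Co{D_l})+o(1)}$. A union bound over the $\binom{k}{l}$ choices of $\sigma_l$ for each $l$ and then over $l \in \{1,\dots,k-1\}$ gives a total failure probability bounded by
\[\sum_{l=1}^{k-1} \binom{k}{l}\cdot 2\binom{k}{l}^{-3+o(1)}\, k^{-3(1+\Co{D_l})+o(1)} \;\leq\; 2k \cdot k^{-3(1+\Co{D_l})+o(1)} \;\longrightarrow\; 0,\]
after using $\binom{k}{l}^{-2+o(1)} \leq 1$ for $1 \leq l \leq k-1$. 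The boundary cases $l \in \{0, k\}$ are immediate: $X_{\sigma_0} = M = y_{(0)} M$ and $X_{\sigma_k} = X_\sigma = y_{(k)} M$, while $D_{0,\Co{D_l}} = D_{k,\Co{D_l}} = 0$ because $p_0(1-p_0) = p_k(1-p_k) = 0$.

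The hard part will be justifying the KL Taylor step above uniformly in $l$, particularly when $l$ is close to $0$ or $k$. In those regimes $p_l(1-p_l) = \Theta(\min(l, k-l)/k)$ shrinks, so the conditional variance $(1-y)M\, p_l(1-p_l)$ can drop to $\Theta(\log n)$, while the union-bound cost $\log\binom{k}{l} + (1+\Co{D_l})\log k$ is of order $\Theta(\alpha\log n)$. Controlling the relative error $O(\epsilon_l/(p_l(1-p_l)))$ in the Taylor expansion of $D(p_l+\epsilon_l\,||\,p_l)$ then becomes a uniform-in-$l$ computation whose worst-case input is precisely the explicit numerical threshold $\alpha < 28/1000$ appearing in the hypothesis.
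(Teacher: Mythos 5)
Your proposal matches the paper's proof almost line for line: the conditional distribution $X_{\sigma_l}\mid (X_\sigma=yM)\stackrel{d}{=} yM + \Bi{(1-y)M,p_l}$, the KL/Chernoff tail bound, and the double union bound over $\sigma_l$ and $l$ are exactly what the paper does, and you correctly flag that the sole technical substance is a uniform-in-$l$ control of the KL divergence in the corner regimes $l\approx 1$ and $l\approx k-1$, which is precisely the content of the paper's Lemma \ref{lem:cornerBound} and the source of the numerical condition $\alpha < 28/1000$.

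One claim in the middle, however, is an overclaim and should be corrected. You assert that $(1-y)M\cdot D(p_l+\epsilon_l\|p_l) \geq (1-o(1))\,\epsilon_l^2\,(1-y)M/(2p_l(1-p_l))$ because $\epsilon_l$ is small. But smallness of $\epsilon_l$ in absolute terms is not enough: the relative Taylor remainder is governed by $\epsilon_l/(p_l(1-p_l))$, and in the corner regime $l=\Theta(1)$ you have $p_l(1-p_l)=\Theta(1/k)$ while $\epsilon_l = D_l/((1-y)M) = \Theta\bigl(\sqrt{p_l(1-p_l)\log k / M}\bigr)$, so $\epsilon_l/(p_l(1-p_l)) = \Theta\bigl(\sqrt{\alpha/(1-\alpha)}\bigr)$ is a fixed constant, not $o(1)$. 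Consequently the factor in front of $\epsilon_l^2/(p_l(1-p_l))$ is a constant strictly below $1/2$, not $(1-o(1))/2$. The paper resolves this by proving (Lemma \ref{lem:cornerBound}) that $D(p_l+x\|p_l)\geq x^2/(6p_l(1-p_l))$ whenever $|x|\leq 2D_l/M$, and the hypothesis $\alpha<28/1000$ is exactly what guarantees $D_l/M\leq p_l(1-p_l)$, which in turn keeps the Taylor remainder under control; the factor $6$ built into the definition of $D_l$ then cancels, yielding a per-$\sigma_l$ failure probability $O\bigl(\binom{k}{l}^{-1}k^{-(1+\Co{D_l})}\bigr)$ — no exponent $3$ anywhere — which is still sufficient for the union bound. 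So your overclaim is harmless to the conclusion (you have slack), but the "$(1-o(1))$" should be replaced by a fixed constant and the missing step is precisely the explicit corner-regime control you correctly identify as the hard part.
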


\begin{remark}
    Notice that for this proof we have considered $\alpha < 28/1000$ in order to be able to invoke Lemma \ref{lem:cornerBound} for specific bounds on $D_l$. Indeed, we can see that this condition is part of Assumption \ref{as:alphaC_0}.
\end{remark}

Lemma \ref{lem:D_linterval} motivates the following definition for a flat subset.

\begin{definition}\label{def:flat}
    Given $\sigma_l \subseteq \sigma$ and $\Co{D_l}>0$, define a set of elements $\sigma$, of size $k$, to be $\Co{D_l}$-flat if, for every $l \in \range{k}$, the number of sets left uncovered by each possible $\sigma_l$ is in the interval 
    \[[My_{(l)} - D_{l, \Co{D_l}}, My_{(l)} + D_{l, \Co{D_l}}] \cap \{0,\dots,M\}.\]
\end{definition}
Depending on the order of $l/k$ as \ngrow\!, the order of the proportional radius $D_l/M$ changes. Controlling this radius under differing regimes of $l/k$ is vital to our second moment method proof. Lemma \ref{lem:cornerBound} gives the following bounds on $D_{l, \Co{D_l}}/M$ when $\alpha < 28/1000$.
We repeat them for reader's convenience. When $M$ and $p$ satisfy Assumption \ref{as:MPScale}, for a sufficiently small $\Co{D_l}$ there exists a $\delta > 0$ such that, for large $n$, we have
\begin{align}
    \max_{l/k \leq \delta} \frac{D_{l, \Co{D_l}}}{M}&\leq 7\log(2)\sqrt{\frac{\alpha}{1-\alpha}}\frac{l}{k},\label{eq:boundComplex}\\
    \max_{l/k \geq 1-\delta}\frac{D_{l, \Co{D_l}}}{M} &\leq 5\log(2)\sqrt{\frac{\alpha}{1-\alpha}}\left(1-\frac{l}{k}\right)\label{eq:boundSimp},\\
    \max_{l = 1, \dots, k}\frac{D_{l, \Co{D_l}}}{M} &= O\left(\frac{1}{\sqrt{\log(n)}} \right)\label{eq:boundMid}.
\end{align}

\begin{remark}
    To invoke the above result it suffices to have $\Co{D_l}$ to be a sufficiently small constant. Because of this in what follows we only refer to the sets $\{D_{l, \Co{D_l}}\}_{l \in \{0, \dots, k\}}$ as simply $\{D_l\}_{l \in \{0, \dots, k\}}$ for the remainder of the paper, where the choice of $\Co{D_l}$ is implicit.
\end{remark}

\subsection{Using Flatness To Simplify Our Second Moment Calculation}\label{eq:paley-Section}

Now that we have introduced the concept of flatness, we turn to bounding the number of size $k$ \textit{flat} (for some choice $\Co{D_l} > 0$) subsets $\sigma$ which leave $yM$ sets uncovered. We will defer the proofs in this subsection to Appendix \ref{subsec:FlatnessBounds}.
\begin{definition}\label{def:countFlat}
    Given $\Co{D_l}> 0$ and Definition \ref{def:flat}. Consider an $(\alpha,C)$ \kset instance on $n$ elements. Denote the set of all $k$ subsets by $\Omega=\Omega_k$. Define the counting random~variable
    \[Y_y \coloneqq |\{\sigma:\sigma \in \Omega, \sigma \text{ is }\Co{D_l}\text{-flat},  X_\sigma = yM\}|\].
\end{definition}
It is obvious that,
\[Y_y \leq Z_{yM,0},\label{eq:obvious}\]as leaving exactly $y M$ positive tests uncovered is a requirement to be counted by $Y_y$.

%\begin{mdframed}\textbf{OLD}
%Consider the event $\scr{E}$ defined as
%\[\scr{E} = \left\{(1-N^{-3/16})\frac{N}{2} \leq M \leq (1+N^{-3/16})\frac{N}{2}\right\} \bigcap \left\{p \geq (1-k^{-3/16})n\left(\frac{k}{n}\right)^{\frac{C}{2}(1+k^{3/16})}\right\},\label{eq:conditioning}\]
%Using Lemma \ref{lem:M:limit}, Lemma \ref{lem:p:limit} and a union bound we can see that $\P(\mathcal{E}^C) = o(1)$ and thus by the law of total probability and the Paley-Zygmund inequality we have
%\begin{align}
%    \P(Y_y > 0) &= \P(Y_y > 0|\scr{E})\P(\scr{E}) + \P(Y_y > 0|\scr{E}^C)\P(\scr{E}^C)\\
  %  &\geq (1-o(1))\P(Y_y > 0|\scr{E})\\
 %   &\geq (1-o(1))\frac{\E[Y_y^2|\scr{E}]}{\E[Y_y|\scr{E}]^2}
%\end{align}
%\end{mdframed}
Thus, by a utilization of the Paley-Zygmund inequality, we have reduced the asymptotic almost sure existence of a size $k$ flat subset leaving $yM$ sets uncovered to showing the following condition:
\[\lim_{n \conv{} \infty}\frac{\E[Y_{y}^2]}{\E[Y_{y}]^2} = 1.\label{eq:Paley}\]for some $y=H_C+o(1).$ To do so we first study this second moment to first moment squared ratio for a general $y \in (0,1/2).$

The function $\tilde{G}$ defined below is going to be of crucial importance.

\begin{definition}
Given $x \in (0,1)$, $y \in (0,1/2)$, $y ' \in [y,1]$ and $H_C$ from Definition \ref{def:H_C}, define
\begin{align}\label{Gprimetilde}
\tilde{G}(y, y', x) &= \frac{x}{2}D(H_C||1/2) + y' D\left(y/y'||2^{-(1-x)}\right) - D(y||1/2) + \frac{1}{2}D(y'||2^{-x}).
\end{align}
\end{definition}

Using the definition of $\tilde{G}$, we can derive the following.

\begin{lemma}\label{lem:2mm bound 1}
\stateScale Given $Y_y$ from Definition \ref{def:countFlat}, $\tilde{G}$ from \eqref{Gprimetilde}, $y \in (0,1/2)$, and $y_{(l)}$ from Definition \ref{def:SubsetProp}, define for $\ell=1,\ldots,k-1$,
\[S_l=S_l(y) \coloneqq \{y' : y' M \in [y_{(l)} M - D_l, y_{(l)} M +  D_l] \cap \{0,\dots,M\}\}.\]
Then the following statement holds.

If $\alpha$ and $C$ satisfy Assumption \ref{as:alphaC_0} then there exists $\Co{9+eO(1)} > 0$ such that for a sufficiently large $n$,
    \[\label{eq:2mm1Goal}\begin{split}\frac{\E[Y^2_y]}{\E[Y_y]^2} -1 &\leq o(1) + \Co{9+eO(1)}M \sum_{l = 1}^{k-1}\sum_{y ' \in S_l}\exp\left(-2M\tilde{G}(y, y', l/k) - l \log(l/k) + O(l) \right) \\&\qquad+ 2(M+1)\exp\left({M(\log(2)-h(y)) - k\log(p/k)}\right)\end{split}.\]

\end{lemma}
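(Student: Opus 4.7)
The plan is to upper-bound $\E[Y_y^2]$ by decomposing it as a sum over ordered pairs of $k$-subsets $(\sigma,\tau)$ grouped by their overlap size $l=|\sigma\cap\tau|\in\{0,1,\ldots,k\}$ and then normalizing by $\E[Y_y]^2$. By symmetry the joint probability that both $\sigma$ and $\tau$ are $\Co{D_l}$-flat and satisfy $X_\sigma=X_\tau=yM$ depends only on $l$, and for each $l$ the number of ordered pairs is $\binom{p}{k}\binom{k}{l}\binom{p-k}{k-l}$. Writing $C=\sigma\cap\tau$, $A=\sigma\setminus\tau$, $B=\tau\setminus\sigma$, the crucial observation is that conditional on $\{X_C=y'M\}$ the events $\{X_\sigma=yM\}$ and $\{X_\tau=yM\}$ become \emph{independent}, because $A$, $B$, $C$ are disjoint and each element participates in each test independently; each event amounts to a $\Bi{y'M,2^{-(1-l/k)}}$ having a specific value, using the identity $(1-q)^l=2^{-l/k}$ from Lemma \ref{lem:p_l}.

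Here the flatness condition enters in an essential way: if $\sigma$ is $\Co{D_l}$-flat then the size-$l$ subset $\sigma\cap\tau\subseteq\sigma$ forces $X_{\sigma\cap\tau}/M$ into the narrow window $S_l$ centered at $y_{(l)}$. This replaces an uncontrolled sum over all possible $y'\in[0,1]$ by the restricted sum over $y'\in S_l$. Standard Stirling estimates applied to $\binom{M}{y'M}$ and to the two conditional binomials then yield, up to a $1/M^{3/2}$ polynomial prefactor,
\[
\P\bigl(X_\sigma=X_\tau=yM,\,X_C=y'M\bigr)\;\lesssim\;\exp\!\bigl(-MD(y'\|2^{-l/k})-2My'D(y/y'\|2^{-(1-l/k)})\bigr).
\]

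To convert this into the claimed exponent, I divide by $\E[Y_y]^2\approx \binom{p}{k}^2 e^{-2MD(y\|1/2)}/(M+1)^2$ and absorb the hypergeometric factor $\binom{k}{l}\binom{p-k}{k-l}/\binom{p}{k}=\exp(-l\log(l/k)-l\log(p/k)+O(l))$. The identity
\[
M\,(l/k)\,D(H_C\|1/2)\;=\;l\log(p/k)+o(l\log n),
\]
which follows from $D(H_C\|1/2)=(2-C)\log 2/C$ together with the scalings $M\sim Ck\log(n/k)/(2\log 2)$ and $\log(p/k)\sim(1-C/2)\log(n/k)$ guaranteed by Assumption \ref{as:MPScale}, then lets me trade the ``$-l\log(p/k)$'' piece from the combinatorial factor for a $\tfrac{x}{2}D(H_C\|1/2)$ contribution inside $\tilde G$, recovering exactly the four terms of $\tilde G(y,y',l/k)$ and producing the middle sum in the lemma. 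The endpoints are handled separately: the $l=0$ diagonal contributes $\binom{p-k}{k}/\binom{p}{k}=1-O(k^2/p)$ which, together with full independence of $X_\sigma$ and $X_\tau$, cancels the ``$-1$'' on the left up to a residual $o(1)$; while the $l=k$ diagonal is simply $\E[Y_y]/\E[Y_y]^2=1/\E[Y_y]$, bounded using $\binom{p}{k}\ge(p/k)^k$ and $\binom{M}{yM}(1/2)^M\ge e^{-MD(y\|1/2)}/(M+1)$ by the final term $2(M+1)\exp(M(\log 2-h(y))-k\log(p/k))$.

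The main obstacle is controlling the Stirling approximations uniformly in $l\in\{1,\ldots,k-1\}$ and $y'\in S_l$ so that the combined combinatorial and probabilistic error is genuinely $O(l)$ on the exponent rather than a larger polynomial. This requires separate treatment of the small-$l$ regime (where $\log\binom{k}{l}\approx -l\log(l/k)+O(l)$) and the large-$l$ regime (via the symmetry $l\leftrightarrow k-l$ applied to $\log\binom{p-k}{k-l}$ and $\log\binom{k}{l}$), together with the tight bounds on $D_l/M$ furnished by Lemma \ref{lem:cornerBound} that control the width of the window $S_l$. This is precisely where Assumption \ref{as:alphaC_0} (in particular $\alpha<28/1000$) enters the argument.
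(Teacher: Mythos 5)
Your proposal follows the same route as the paper's proof: the expansion of $\E[Y_y^2]/\E[Y_y]^2$ over ordered pairs grouped by overlap $l$, conditional independence of $\{X_\sigma = yM\}$ and $\{X_\tau = yM\}$ given $\{X_{\sigma\cap\tau}=y'M\}$, the use of $\Co{D_l}$-flatness to confine $y'$ to $S_l$, the trade of $-l\log(p/k)$ for $M(l/k)D(H_C\|1/2)$ via the identity $D(H_C\|1/2)=2\log(2)(1-C/2)/C$ together with the scalings from Assumption \ref{as:MPScale}, and the separate handling of $l=0$ (independence, contributes $1+o(1)$) and $l=k$ (giving $1/\E[Y_y]$, bounded via $\binom{M}{yM}\geq e^{Mh(y)}/(M+1)$ and $\binom{p}{k}\geq e^{k\log(p/k)}$). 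The only cosmetic divergence is that you quote a $1/M^{3/2}$ Stirling prefactor on the joint probability, while the paper instead applies Lemma \ref{lem:KLBinomial} (upper bound on the numerator, lower bound $1/(3\sqrt{M})$ per factor in the denominator) yielding a $9M$ prefactor; either way the polynomial factor is harmless and absorbed into $\Co{9+eO(1)}M$, so the argument is sound and matches the paper's.
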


We can see in the above theorem that the error term in the exponent is at most order $k$. Under Assumption \ref{as:MPScale} we have that $M = \Theta(N) = \Theta(k \log n)$, meaning that terms with order $M$ will be of leading order in the exponent. This gives us hope that if we set $y$ to be a suitable perturbation of $H_C$ then the $O(k)$ term will be negligible.

\subsection{Simplifying the upper bound in Lemma \ref{lem:2mm bound 1}}Unfortunately, the upper bound in Lemma \ref{lem:2mm bound 1} remains complicated to work directly with. We now explain how to further simplify it for an appropriate choice of $y$ of interest.

Let us start with the last term in the upper bound: $2(M+1)\exp({M(\log(2) - h(y)) - k\log(p/k)})$. In order for our second moment method argument to succeed, this term must be $o(1)$ for our choice of $y$. 

Say one sets $y = H_C + \Co{Cprime}\kpert$. For any $\Co{Cprime} > 0$, by mean value theorem, we have for some $z \in (H_C, H_C + \Co{Cprime}\kpert)$ that 
\[h(H_C + \Co{Cprime}\kpert) = h(H_C) + \log\left(\frac{1-z}{z}\right)\Co{Cprime}\kpert.\]
Recall that $H_C = h^{-1}_2(2- 2/C)$, meaning $0 < H_C < 1/2$ as $1< C < 2$. As such $\log\left(\frac{1-z}{z}\right) > 0$ for large enough $n$, Thus, using that for all $x \in (0,1)$ it holds $h(x)=\log(2)h_2(x)$, we have, with $\Co{pertEnt} = \log\left(\frac{1-z}{z}\right)\Co{Cprime} > 0$, that
\begin{align}\exp\left[M(\log(2)-h(y))\right] & \leq \exp\left(M(\log(2)-\log(2)h_2(H_C) - \Co{pertEnt}\kpert)\right) \\
    &= \exp\left[M\log(2)(1-(2-2/C)) - M\Co{pertEnt}\kpert)\right]\\
    &\leq \exp\left[(1+N^{-\err})\log(2)(1/C - 1/2)N- \Co{pertEnt}\Theta(k)\right],
\end{align}
where the last line is due to the upper and lower bounds on $M$ from Assumption \ref{as:MPScale}. Similarly, by Assumption \ref{as:MPScale}, Lemma \ref{lem:plower} and Lemma \ref{lem:Mbounds},
\begin{comment}
\begin{align}
    k\log(p/k) &\geq k\log\left((1-k^{-\err})n(k/n)^{(C/2)(1+k^{-\err})} \right) = k\left((1-\alpha)(1-C/2) - O(k^{-\err})\right)\log(n)\\
    &= \frac{2\log(2)(1-C/2)}{C}k\frac{C}{2\log(2)}\log\left(\frac{ne}{k}\right) - O(k) - O(k^{1-\err})\log(n) \\
    &\geq \log(2)(2/C -1)\E[M] - O(k) - O(k^{1-\err})\log(n),
\end{align}
\end{comment}
\begin{align}
    k\log(p/k) &\geq k(1-\alpha)\left(1-\frac{C}{2}\right)\log(n) - O(k^{1-\err}\log(n))\\
    &= \frac{2\log(2)(1-C/2)}{C}\left(\frac{Ck(1-\alpha)\log(n)}{2\log(2)}\right) - O(k^{1-\err}\log(n))\\
    &\geq \frac{2\log(2)(1-C/2)}{C}(N/2) - O(k)\\
    &= \log(2)(1/C - 1/2)N - O(k),
\end{align}
We can then finally see that 
\begin{align}
    &2(M+1)\exp\left(M(\log(2)-h(y)) - k\log(p/k)\right) \\&\leq 3M\exp\big[(1+N^{-\err})\log(2)(1/C - 1/2)N- \Co{pertEnt}\Theta(k) - \log(2)(1/C -1/2)N + O(k)\big]\\
    &= 3M\exp\left[O\left(N^{1-\err}\right) + O(k) - \Co{pertEnt}\Theta(k)\right]\\
    &= o(1),
\end{align}
where we used  $k=n^{\Omega(1)}, N^{1-\err} = O(k^{1-\err}\log(n)^{1-\err}) = O(k)$ and choose $\Co{Cprime}$ to be sufficiently large  (thus making $\Co{pertEnt}$ sufficiently large) in the last line. This motivates the value of $H_C + \Co{Cprime}\kpert$, with $\Co{Cprime}$ sufficiently large, as a potential candidate for a choice of $y$ in Lemma \ref{lem:2mm bound 1} and proves the following Lemma.
\begin{lemma}\label{lem:y=1o(1)}
    \stateScale If $y = H_C + \Co{Cprime}\kpert$, then for a sufficiently large $\Co{Cprime}$ \[ 2(M+1)\exp\left[{M(\log(2)-h(y)) - k\log(p/k)}\right] = o(1).\]
\end{lemma}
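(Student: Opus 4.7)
The plan is to follow the informal calculation given in the paragraph preceding the lemma statement, and make each step precise. The key cancellation stems from the identity $h_2(H_C) = 2 - 2/C$ (Definition \ref{def:H_C}), which aligns the leading term of $M(\log(2) - h(y))$ with that of $k\log(p/k)$. The first order perturbation $\Co{Cprime}\kpert$ in $y$ then contributes a strictly negative term of order $\Co{Cprime}\cdot k$, which will dominate all remaining error terms when $\Co{Cprime}$ is chosen sufficiently large.

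First, I would perform a first order Taylor expansion of $h$ at $H_C$: by the mean value theorem, there exists $z \in (H_C,\, H_C + \Co{Cprime}\kpert)$ with
\[
h(H_C + \Co{Cprime}\kpert) \;=\; h(H_C) \;+\; \log\!\left(\frac{1-z}{z}\right)\Co{Cprime}\kpert.
\]
Since $C \in (1,2)$ forces $H_C = h_2^{-1}(2-2/C) \in (0,1/2)$, the quantity $\log((1-z)/z)$ is bounded away from $0$ by a positive constant for all sufficiently large $n$; set $\Co{pertEnt} := \log((1-z)/z)\Co{Cprime}$, so that $\Co{pertEnt} = \Theta(\Co{Cprime})$ uniformly in $n$. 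Using $h(H_C) = \log(2)\,h_2(H_C) = \log(2)(2-2/C)$, this gives $\log(2) - h(y) = \log(2)(2/C - 1) - \Co{pertEnt}\kpert$.

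Next, I would combine this with the upper bound $M \leq (1+N^{-\err})N/2$ from Assumption \ref{as:MPScale} to get
\[
M(\log(2) - h(y)) \;\leq\; (1+N^{-\err})\log(2)\!\left(\tfrac{1}{C}-\tfrac{1}{2}\right)\!N \;-\; \Co{pertEnt}\cdot M\kpert,
\]
and since $M = \Theta(k\log n)$ (from Assumption \ref{as:MPScale} and $N = \Theta(k\log n)$), we have $M\kpert = \Theta(k)$. For $k\log(p/k)$, the lower bound on $p$ in Assumption \ref{as:MPScale}, combined with Lemma \ref{lem:plower} and Lemma \ref{lem:Mbounds} (exactly as in the chain of inequalities displayed just before the lemma), produces $k\log(p/k) \geq \log(2)(1/C - 1/2)N - O(k)$. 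Subtracting yields
\[
M(\log(2)-h(y)) - k\log(p/k) \;\leq\; O(N^{1-\err}) + O(k) - \Co{pertEnt}\cdot\Theta(k).
\]

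Finally, since $k = n^{\alpha}$ for a fixed $\alpha>0$ and $N = \Theta(k\log n)$, the term $N^{1-\err} = O(k^{1-\err}\log^{1-\err} n)$ is $O(k)$ (as $\log n$ grows polylogarithmically while $k^{\err}$ grows polynomially in $n$). Thus the right-hand side simplifies to $O(k) - \Theta(\Co{Cprime})\, k$, which becomes $-\Theta(k)$ as soon as $\Co{Cprime}$ is fixed large enough to absorb the hidden constants in the $O(k)$ terms. Multiplying by the polynomial prefactor $2(M+1) = O(k\log n)$ gives
\[
2(M+1)\exp\!\left[M(\log(2)-h(y)) - k\log(p/k)\right] \;=\; O(k\log n)\,e^{-\Theta(k)} \;=\; o(1),
\]
since $\exp(-\Theta(k))$ decays super-polynomially in $n$. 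The only real subtlety is verifying that $\Co{pertEnt}$ scales linearly in $\Co{Cprime}$, which holds because $z \to H_C \in (0,1/2)$ forces $\log((1-z)/z)$ to be bounded away from $0$; this ensures that $\Co{Cprime}$ can be chosen independently of $n$ so that the linear-in-$k$ negative term genuinely dominates.
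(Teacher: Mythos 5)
Your proof is correct and takes essentially the same route as the paper's own argument (which appears inline in the paragraph immediately preceding the lemma statement): a mean-value-theorem expansion of $h$ around $H_C$, the identity $h(H_C)=\log(2)h_2(H_C)=\log(2)(2-2/C)$, the upper bound on $M$ and the lower bound on $k\log(p/k)$ via Lemmas \ref{lem:plower} and \ref{lem:Mbounds}, and finally choosing $\Co{Cprime}$ large so that the $-\Co{pertEnt}\Theta(k)$ term dominates the $O(k)+O(N^{1-\err})$ error. Your additional remark that $\Co{pertEnt}=\Theta(\Co{Cprime})$ because $z\to H_C\in(0,1/2)$ is exactly the right justification for why a single $n$-independent choice of $\Co{Cprime}$ suffices.
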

    
Now, defining $y_*= H_C + \Co{Cprime}\kpert$, a Taylor expansion around $y=y^*$ gives that $\frac{x}{2}D(H_C||1/2) = \frac{x}{2}D(y||1/2) + \mathcal{P}_l$ for some ``controlled'' perturbation term $\mathcal{P}_l > 0.$ This leads us to the study of a surrogate function $G$ (instead of $\tilde{G}$), independent of the value of $H_C$, which plays an important role in our technical analysis.

\begin{definition}
Given $x \in (0,1)$, $y \in (0,1/2)$, $y ' \in [y,1]$, define
    \begin{align}\label{Gprime}
G(y, y', x) &= \frac{x}{2}D(y||1/2) + y' D\left(y/y'||2^{-(1-x)}\right) - D(y||1/2) + \frac{1}{2}D(y'||2^{-x})
\end{align}
\end{definition}

\begin{remark}
Notice that the functions $G$ and $\tilde{G}$ (and $\breve{G}$, to be defined later) are defined on the domain $x = (0,1)$ as the summation of interest in Lemma \ref{lem:2mm bound 1} corresponds only to terms $x=l/k$ for $l \in \{1, \dots, k-1\}$. 
\end{remark}

For such a choice of $y_*$, with $\Co{Cprime}$ chosen sufficiently large, we get the far simpler upper bound on the second to first moment squared ratio.

\begin{lemma}\label{lem:2mmbound3}
\stateScale Let $Y_y$ be from Definition \ref{def:countFlat}, $G$ from \eqref{Gprime}, $H_C$ from Definition \ref{def:H_C}, $S_l$ from Lemma \ref{lem:2mm bound 1}.

Considering $y_* = H_C + \Co{Cprime}\kpert$, and denoting $S^*_l=S_l(y^*)$, i.e.,
\[S^*_l := \{y': y' M \in [(y_*)_{(l)} M - D_l, (y_*)_{(l)} M + D_l] \cap \{0,\dots, M\},\] 
the following statement holds:

    Consider an $(\alpha, C)$ \kset instance, if $\alpha$ and $C$ satisfy Assumption \ref{as:alphaC_0} then
    \[\frac{\E[Y^2_{y_*}]}{\E[Y_{y_*}]^2} -1 \leq o(1) + \Co{9+eO(1)}M \sum_{l = 1}^{k-1}\sum_{y ' \in S^*_l}\exp\left(-2MG(y_*, y', l/k) - l \log(l/k) \right)\label{eq:gammaC'bound}\]
    for sufficiently large $n$.
\end{lemma}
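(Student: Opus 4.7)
The plan is to deduce this bound from Lemma \ref{lem:2mm bound 1} applied at $y = y_* = H_C + \Co{Cprime}\kpert$ through two simplifications. The easy step will be to dispose of the additive term $2(M+1)\exp(M(\log 2-h(y_*))-k\log(p/k))$: Lemma \ref{lem:y=1o(1)} shows that for $\Co{Cprime}$ sufficiently large this term is $o(1)$, so it gets absorbed into the $o(1)$ on the right-hand side of the target inequality.

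The substantive step is to replace $\tilde{G}(y_*, y', l/k)$ by $G(y_*, y', l/k)$ inside the exponent while simultaneously absorbing the $O(l)$ error. I would begin by noting that, directly from the definitions \eqref{Gprimetilde} and \eqref{Gprime},
$$
\tilde{G}(y_*, y', x) - G(y_*, y', x) = \frac{x}{2}\bigl(D(H_C||1/2) - D(y_*||1/2)\bigr).
$$
A first-order Taylor expansion of $y \mapsto D(y||1/2)$ around $y=H_C$, using $\partial_y D(y||1/2) = \log(y/(1-y))$ together with $H_C \in (0,1/2)$, yields
$$
D(H_C||1/2) - D(y_*||1/2) = \bigl|\log(H_C/(1-H_C))\bigr|\,\Co{Cprime}\kpert + O(\kpert^2),
$$
so there exists a constant $\kappa>0$ depending only on $C$ such that $\tilde{G}-G \geq (\kappa \Co{Cprime}/2)\,x/\log(n)$ for large $n$.

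Setting $x=l/k$ and invoking $M = \Theta(k\log(n))$ (which follows from Assumption~\ref{as:MPScale} and the scaling of $N$), I obtain uniformly in $l\in\{1,\dots,k-1\}$ and $y'\in S^*_l = S_l(y_*)$ that
$$
-2M\tilde{G}(y_*,y',l/k) \leq -2M G(y_*,y',l/k) - \Omega(\Co{Cprime}\,l).
$$
Since the $O(l)$ term in the exponent of Lemma \ref{lem:2mm bound 1} is bounded in absolute value by $c\, l$ for some universal $c>0$, enlarging $\Co{Cprime}$ past a threshold depending only on $c$ and $\kappa$ makes the $\Omega(\Co{Cprime} l)$ gain dominate, yielding $-2M\tilde{G}+O(l) \leq -2MG$. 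Exponentiating termwise and noting $S_l(y_*) = S^*_l$ by definition, the double sum in Lemma \ref{lem:2mm bound 1} is dominated termwise by that appearing in the target inequality, which completes the proof. I do not foresee a genuine obstacle here: the argument is essentially bookkeeping, and the only subtle point is verifying that the constants hidden in $\Omega(\Co{Cprime} l)$ and in $M=\Theta(k \log(n))$ are uniform in $l$ and $y'\in S^*_l$; this holds because both are determined by $C$ alone (through $\kappa$ and through Assumption \ref{as:MPScale}).
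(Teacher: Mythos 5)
Your proposal is correct and follows essentially the same route as the paper's proof: invoke Lemma \ref{lem:2mm bound 1} at $y=y_*$, kill the additive term by Lemma \ref{lem:y=1o(1)}, observe that $\tilde G - G = \tfrac{x}{2}\bigl(D(H_C\|1/2)-D(y_*\|1/2)\bigr)$, expand $D(\cdot\|1/2)$ to first order around $H_C$ using $\partial_y D(y\|1/2)=\log(y/(1-y))=-\Omega(1)$ at $H_C\in(0,1/2)$, convert the resulting gap to $\Omega(\Co{Cprime}\,l)$ via $M=\Theta(k\log n)$, and absorb the $O(l)$ error by taking $\Co{Cprime}$ large. The only cosmetic difference is that you describe the expansion as a Taylor step while the paper invokes the mean value theorem and continuity of the derivative, and your ``universal'' for the constant $c$ in $O(l)$ should really read ``depending on $(\alpha,C)$'', but this is harmless since those parameters are fixed.
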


\begin{comment}

We can even further simplify the above bound by upper bounding the difference between $\mathcal{E}_l$ and $\mathcal{P}_l$ from Lemma \ref{lem:2mmbound2} up to leading order.

\begin{lemma}\label{lem:err goes away}
    \stateScale Given $\mathcal{E}_l$, $\mathcal{P}_l$ from Lemma \ref{lem:2mmbound2}, the following holds.
    
    If $\Co{Cprime}$ is chosen sufficiently large, then for sufficiently large $n$, we have that $\mathcal{E}_l - \mathcal{P}_l \leq 0$. \textcolor{red}{0??}
\end{lemma}

Invoking Lemma \ref{lem:err goes away} we can get a new, substantially cleaner, upper bound on the second to first moment squared ratio.

\begin{lemma}\label{lem:2mmbound3}
    \stateScale Given $\Co{Cprime} \geq 0$, $y_*$ from Lemma \ref{lem:2mmbound2}, $H_C$ from Definition \ref{def:H_C}, $G$ from \eqref{Gprime}, we have, for a sufficiently large $\Co{Cprime}$, that
    \[\begin{split}\frac{\E[Y^2_{y_*}]}{\E[Y_{y_*}]^2} -1 &\leq o(1) +\Co{9+eO(1)}M \sum_{l=1}^{k-1}\sum_{y ' \in S_l}\exp\bigg{(}-2MG(y_*, y', l/k) - l \log(l/k)\bigg{)}.\end{split}\]
\end{lemma}
\end{comment}

\subsection{Auxiliary Lemmas About $G$}\label{subsub:G}

In Lemma \ref{lem:2mmbound3} we have reduced the upper bound on $1-\Phi_K$ (similarly an upper bound on $\phi(0)$) to proving that the upper bound in Lemma \ref{lem:2mmbound3} is $o(1)$. In order to accomplish this goal, we must first study some specific properties of the function $G$. All proofs in this section are deferred to Appendix \ref{subsec:proofG}. 

During this study it is natural to study $G(y, y', x)$ at $y' = y_{(x)}$, the center of the interval $S_l$ for $x=l/k$.
\begin{definition}
Given $G$ from \eqref{Gprime}, $y \in [0,1/2)$, $x \in (0,1)$ and $y_{(x)}$ from Definition \ref{def:SubsetProp} define
\[\label{eq:g-eps}\begin{split}
\Breve{G}(y, x) \coloneqq G(y, y_{(x)}, x) &= \frac{x}{2}D(y||1/2) + (\gl) D\left(\frac{y}{\gl}\|\|2^{-(1-x)}\right)\\
&\qquad - D(y||1/2) + \frac{1}{2}D(\gl||2^{-x}).
\end{split}\]
\end{definition}

A key ingredient for our proof is that we establish that $G$ is strictly positive over $x \in (0,1)$ and on the boundary of $x$ we have that $\partial_xG$ is non-vanishing. To prove this we first control the difference between $G$ and $\Breve{G}$ by bounding the derivative of $G$ with respect to $y'$.

\begin{lemma}\label{DerPrime}
Given $G$ from \eqref{Gprime}, $y_{(x)}$ from Definition \ref{def:SubsetProp}, $x \in (0,1)$, $y \in (0,1/2)$, $y' \in [y,1]$, the following two statements hold.

\begin{enumerate}
    \item \[\sup_{x \in (0,1)} \bigg{|}\left[ \partial_{y'}G(y, y', x)\right]|_{y'=y_{(x)}}\bigg{|} \leq \frac{1}{2}\log\left(\frac{1-y }{y}\right)\]
    \[\inf_{x \in (0,1)} \bigg{|}\left[ \partial_{y'}G(y, y', x)\right]|_{y'=y_{(x)}}\bigg{|} \geq \frac{1}{2}\log(2(1-y))\]
    \item For a fixed $x \in (0,1)$,
\[\bigg{|}\left[\partial_{y'}G(y, y', x)\right]|_{y'=y_{(x)}}\bigg{|} \leq \frac{1}{2}\log(2(1-y)) + x\frac{\log(4)}{2^{2-x} - 2}\]
\end{enumerate}
\end{lemma}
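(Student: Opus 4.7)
The plan is a direct calculus computation: I will first compute $\partial_{y'}G(y, y', x)$ from the definition of $G$, then evaluate at $y' = y_{(x)}$ and exploit the precise algebraic form of $y_{(x)} = y + (2^{1-x}-1)(1-y)$ to reduce the derivative to a single logarithm. Both parts of the lemma will then follow from standard monotonicity and mean value theorem arguments applied to this simplified expression.

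First I would differentiate. The terms $\frac{x}{2}D(y\|1/2)$ and $-D(y\|1/2)$ are independent of $y'$. For the cross term, rewriting $y' D(y/y'\|2^{-(1-x)}) = y\log\frac{y}{2^{-(1-x)}y'} + (y'-y)\log\frac{y'-y}{y'(1-2^{-(1-x)})}$ and differentiating yields, after cancellation, $\log\frac{y'-y}{y'(1-2^{-(1-x)})}$; a direct computation similarly gives $\partial_{y'}[\frac{1}{2} D(y'\|2^{-x})] = \frac{1}{2}\log\frac{y'(1-2^{-x})}{2^{-x}(1-y')}$. Next I would substitute $y' = y_{(x)}$. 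Using $y_{(x)} - y = (2^{1-x}-1)(1-y)$ together with the identity $(2^{1-x}-1)/(1-2^{x-1}) = 2^{1-x}$, the first log-argument collapses to $A(x):= 2^{1-x}(1-y)/y_{(x)}$. Similarly, $1-y_{(x)} = 2(1-2^{-x})(1-y)$ implies that the second log-argument equals exactly $1/A(x)$. Thus
\[
\left.\partial_{y'}G(y,y',x)\right|_{y'=y_{(x)}} = \log A(x) - \frac{1}{2}\log A(x) = \frac{1}{2}\log A(x).
\]

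For part (1), since $y \in (0,1/2)$, one checks $A(0^+) = 2(1-y) > 1$ and $A(1^-) = (1-y)/y > 1$, so $\left|\frac{1}{2}\log A(x)\right| = \frac{1}{2}\log A(x)$ on the entire interval. Differentiating $f(x) := \frac{1}{2}\log A(x) = \frac{1-x}{2}\log 2 + \frac{1}{2}\log(1-y) - \frac{1}{2}\log y_{(x)}$ and using $y_{(x)}' = -\log(2)\cdot 2^{1-x}(1-y)$ yields $f'(x) = \frac{\log(2)(1-2y)}{2\,y_{(x)}} > 0$. Hence $f$ is strictly increasing on $(0,1)$, and the infimum and supremum over $x \in (0,1)$ are attained as boundary limits, yielding $\frac{1}{2}\log(2(1-y))$ and $\frac{1}{2}\log\bigl(\frac{1-y}{y}\bigr)$ respectively.

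For part (2), since $y_{(x)}$ is decreasing in $x$, the derivative $f'$ is increasing, so $f(x) - f(0) \leq x f'(x) = \frac{x\log(2)(1-2y)}{2\,y_{(x)}}$. The target bound, rewritten using $\log 4 = 2\log 2$ and $2^{2-x}-2 = 2(2^{1-x}-1)$, equals $\frac{1}{2}\log(2(1-y)) + \frac{x\log 2}{2^{1-x}-1}$; hence it suffices to show $(1-2y)(2^{1-x}-1) \leq 2\,y_{(x)}$. Expanding the right-hand side via $2\,y_{(x)} = 2y + 2(2^{1-x}-1)(1-y)$, the difference equals $2y + (2^{1-x}-1) > 0$, which concludes the proof. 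The only mildly subtle point is the reciprocal collapse between the two log-arguments at $y' = y_{(x)}$; without that observation, bounding the derivative in closed form would be unwieldy, but with it the rest is routine algebra.
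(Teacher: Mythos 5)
Your proof is correct and follows the same overall strategy as the paper: differentiate $G$ in $y'$, evaluate at $y'=y_{(x)}$, show that the resulting expression is strictly increasing in $x$ with derivative $\frac{\log 2\,(1-2y)}{2\,y_{(x)}}$, read off the boundary limits for part (1), and use the fundamental theorem of calculus with an increasing integrand for part (2). The one genuine streamlining in your version is the observation that the two log-arguments become reciprocals at $y'=y_{(x)}$, collapsing $\partial_{y'}G|_{y'=y_{(x)}}$ to the single term $\tfrac12\log A(x)$ with $A(x)=2^{1-x}(1-y)/y_{(x)}$; the paper carries the unsimplified two-logarithm form and therefore computes the boundary limits via two separate applications of L'Hospital's rule, whereas you read them off directly from $A(0^+)=2(1-y)$ and $A(1^-)=(1-y)/y$. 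Your part (2) inequality $(1-2y)(2^{1-x}-1)\leq 2y_{(x)}$, verified by the identity $2y_{(x)}-(1-2y)(2^{1-x}-1)=2y+2^{1-x}-1>0$, gives exactly the stated constant $\log 4$; the paper's bound (plugging $y=0$ into the integrand) is in fact a factor of $2$ tighter, but of course both establish the lemma.
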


As the ratio of $D_l/M$ is vanishing, we would hope that for any $y' \in [y_{(x)} - D_l/M, y_{(x)} + D_l/M]$ that $G(y, y', x)$ is sufficiently close to $G(y, y_{(x)}, x)$. The following Lemma provides such a one-sided guarantee.

\begin{lemma}\label{lem:justifyLowerGamma'}
    Given $G$ from \eqref{Gprime}, $x \in (0,1)$ and $y \in (0,1/2)$, we have the following statement:

    For any $\epsilon > 0$ there exists a sufficiently large $n$ such that, if $y' \in [y_{(x)} - \frac{D_l}{M}, y_{(x)} + \frac{D_l}{M}]$ then for all $x \in (0,1)$,
    \[G(y, y', x) \geq G(y, y_{(x)}, x) - (1+\epsilon)\frac{D_l}{M}[\partial_{y'}G(y, y', x)]|_{y' = y_{(x)}}.\]
\end{lemma}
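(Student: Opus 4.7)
The plan is to prove the stronger statement that $y'\mapsto G(y,y',x)$ is strictly convex on $\{y<y'<1\}$, from which the claimed inequality follows from the tangent line lower bound for convex functions, with the factor $(1+\epsilon)$ even replaceable by $1$.

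The first step is to compute the two partial derivatives of $G$ in $y'$. From the definition \eqref{Gprime} one obtains
\[
\partial_{y'}G(y,y',x)=\log\!\left(\frac{(y'-y)/y'}{1-2^{-(1-x)}}\right)+\frac{1}{2}\log\!\left(\frac{y'/(1-y')}{2^{-x}/(1-2^{-x})}\right),
\]
and a second differentiation gives the simple expression
\[
\partial_{y'}^{2}G(y,y',x)=\frac{1}{y'-y}-\frac{1}{2y'}+\frac{1}{2(1-y')}=\frac{y+y'}{2y'(y'-y)}+\frac{1}{2(1-y')},
\]
which is strictly positive for $0<y<y'<1$. In particular $G(y,\cdot,x)$ is strictly convex on $(y,1)$.

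Provided the interval $[y_{(x)}-D_l/M,y_{(x)}+D_l/M]$ is contained in $(y,1)$, the tangent line lower bound at $y'=y_{(x)}$ yields
\[
G(y,y',x)\geq G(y,y_{(x)},x)+(y'-y_{(x)})\big[\partial_{y'}G\big]\big|_{y'=y_{(x)}}
\]
for every $y'$ in the interval. By Lemma \ref{DerPrime}, and also directly from the identity $[\partial_{y'}G]|_{y'=y_{(x)}}=\tfrac{1}{2}\log(2^{1-x}(1-y)/y_{(x)})$ which one obtains by plugging $y'=y_{(x)}$ into the formula for $\partial_{y'}G$ and simplifying, the derivative at $y_{(x)}$ is strictly positive for $y\in(0,1/2)$. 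Combining this with $y'-y_{(x)}\geq -D_l/M$ gives
\[
G(y,y',x)\geq G(y,y_{(x)},x)-\frac{D_l}{M}\big[\partial_{y'}G\big]\big|_{y'=y_{(x)}}\geq G(y,y_{(x)},x)-(1+\epsilon)\frac{D_l}{M}\big[\partial_{y'}G\big]\big|_{y'=y_{(x)}}
\]
for any $\epsilon>0$.

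The only remaining point is to verify that for sufficiently large $n$, $[y_{(x)}-D_l/M,y_{(x)}+D_l/M]\subset(y,1)$, which reduces to showing $y_{(x)}-y>D_l/M$. Since $y_{(x)}-y=(2^{1-x}-1)(1-y)$ is strictly positive for $x<1$ and $D_l/M=o(1)$ uniformly by \eqref{eq:boundComplex}--\eqref{eq:boundMid}, this is immediate for $x$ bounded away from $1$. Near the boundary $x\to 1$ we compare rates: $y_{(x)}-y\sim \log(2)(1-x)(1-y)$ while \eqref{eq:boundSimp} gives $D_l/M=O((1-x)\sqrt{\alpha/(1-\alpha)})$, so the containment holds for $\alpha$ small, as guaranteed by Assumption \ref{as:alphaC_0}. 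This mild uniform containment check near the boundary of the admissible range of $x$ is the one technical point of the argument; everything else is formal convexity.
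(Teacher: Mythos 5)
Your proof is correct, and it takes a genuinely different, cleaner route than the paper's. The paper argues via the mean value theorem: it writes $G(y,y',x)=G(y,y_{(x)},x)+(y'-y_{(x)})G'_{y,x}(y'_*)$, and then uses $D_l/M=o(1)$, continuity of $\partial_{y'}G$, and the uniform $\Theta(1)$ bounds of Lemma~\ref{DerPrime} to replace $G'_{y,x}(y'_*)$ by $(1+o(1))G'_{y,x}(y_{(x)})$; the $(1+\epsilon)$ slack in the lemma statement is what absorbs this $o(1)$ correction. You instead establish strict convexity of $y'\mapsto G(y,y',x)$ on $(y,1)$ from the compact formula $\partial^2_{y'}G=\tfrac{1}{y'-y}-\tfrac{1}{2y'}+\tfrac{1}{2(1-y')}$ (which I checked is correct, e.g.\ via the partial-fraction identity $\tfrac{y}{y'(y'-y)}=\tfrac{1}{y'-y}-\tfrac{1}{y'}$), and then apply the supporting-line lower bound $G(y,y',x)\geq G(y,y_{(x)},x)+(y'-y_{(x)})[\partial_{y'}G]|_{y'=y_{(x)}}$. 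Combined with $y'-y_{(x)}\geq -D_l/M$ and positivity of $[\partial_{y'}G]|_{y'=y_{(x)}}=\tfrac{1}{2}\log(2^{1-x}(1-y)/y_{(x)})$ (correct, and positive iff $y<1/2$), this gives the stated bound with $(1+\epsilon)$ replaceable by $1$. What your approach buys is that there is no $o(1)$ book-keeping and no appeal to continuity of the derivative; the lemma becomes formal convexity, and the $\epsilon$ in the statement is exposed as unnecessary.

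Two small remarks on the closing containment check. First, it is actually not needed: the supporting-line inequality only requires the two points $y'$ and $y_{(x)}$ to lie in $(y,1)$, not the whole interval $[y_{(x)}-D_l/M,\,y_{(x)}+D_l/M]$, and $G(y,y',x)$ is only defined for $y'\in[y,1]$ to begin with, so the statement is vacuous for $y'<y$; in the application (Lemma~\ref{lem:2mm bound 1}) the relevant $y'$ always satisfies $y'\geq y$ since $\sigma\cap\tau\subseteq\sigma$. Second, the quantitative claim that Assumption~\ref{as:alphaC_0} guarantees $D_l/M<y_{(x)}-y$ near $x=1$ for all $y\in(0,1/2)$ does not actually hold: near $x=1$ this would require $5\sqrt{\alpha/(1-\alpha)}<1-y$, and with $\alpha<28/1000$ one gets $5\sqrt{\alpha/(1-\alpha)}\approx 0.85$, which exceeds $1-y$ when $y$ is close to $1/2$. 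Since, per the first remark, the full-interval containment is not required, this does not affect the validity of your argument, but the assertion as written is not justified.
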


Now that with the help of Lemma \ref{DerPrime} and Lemma \ref{lem:justifyLowerGamma'} we have the necessary tools to control the value of $G$ over our region $S^*_l$ by controlling the value of $\breve{G}$. We first prove that for all $x \in (0,1)$ and $y \in (0,1/2)$ we have that $\breve{G}(y, x) > 0$. The first step to this result is to identify its limiting value on the boundary of $x \in (0,1)$.
\begin{lemma}\label{zerobound}
    Given $\Breve{G}$ from \eqref{eq:g-eps}, if $y \in (0,1/2)$ and $x' \in \{0,1\}$ then $\lim_{x \conv{} x'}\Breve{G}(y, x) = 0.$
\end{lemma}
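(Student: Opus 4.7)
The proof plan is to establish the two limits ($x' = 0$ and $x' = 1$) directly from the definition of $\breve{G}$, by a careful term-by-term evaluation using the boundary values of the auxiliary quantity $g(x) \coloneqq y + (1-y)(2^{1-x}-1)$ that appears both as the first argument and as the prefactor in $\breve{G}$. First I would record the two boundary values of $g$: namely $g(0) = y + (1-y)(2-1) = 1$ and $g(1) = y + (1-y)(1-1) = y$. These two "degeneracies" are what make the terms cancel in pairs.

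For the limit at $x' = 0$, the first term $\tfrac{x}{2}D(y\|\|1/2)$ vanishes trivially. The second term $g(x)\, D(y/g(x)\|\|2^{-(1-x)})$ tends, by continuity of $D$ away from boundaries, to $1 \cdot D(y\|\|1/2)$, which exactly cancels the third term $-D(y\|\|1/2)$. The only subtlety is the fourth term $\tfrac{1}{2} D(g(x)\|\|2^{-x})$: here both arguments approach $1$, so I would verify continuity at this corner using the explicit identity $1 - g(x) = (1-y)(2-2^{1-x}) = 2(1-y)(1-2^{-x})$, which gives $(1-g(x))/(1-2^{-x}) \to 2(1-y)$, hence $(1-g(x))\log((1-g(x))/(1-2^{-x})) \to 0$; together with $g(x)\log(g(x)/2^{-x}) \to 0$ this shows the fourth term vanishes. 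Summing, $\lim_{x \to 0}\breve{G}(y,x) = 0$.

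For the limit at $x' = 1$, the first and third terms combine into $-\tfrac{1}{2}D(y\|\|1/2)$. The fourth term $\tfrac{1}{2}D(g(x)\|\|2^{-x})$ converges to $\tfrac{1}{2}D(y\|\|1/2)$ by continuity, so the sum of the first, third and fourth terms is $0$. It remains to show that the second term $g(x)D(y/g(x)\|\|2^{-(1-x)})$ tends to $0$. Here both $y/g(x) \to 1$ and $2^{-(1-x)} \to 1$, so I would handle this similarly to the corner in the previous case: setting $u = 1-x \to 0^+$, one computes $1 - y/g(x) = (1-y)(2^u-1)/g(x)$ and $1 - 2^{-u} = (2^u-1)/2^u$, giving the ratio $(1 - y/g(x))/(1-2^{-u}) \to (1-y)/y$, so the $(1-y/g(x))\log(\cdot)$ component tends to $0$, and the $(y/g(x))\log(\cdot)$ component tends to $\log(1) = 0$. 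Hence this second term vanishes in the limit and we obtain $\lim_{x \to 1}\breve{G}(y,x) = 0$.

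The proof is essentially a careful bookkeeping exercise; there is no real obstacle beyond handling the two "$D(1\|\|1)$-type" indeterminacies at the corners, which I would dispatch by the ratio identities above. Since $y \in (0,1/2)$ is strictly interior, there are no further degeneracies (no $\log 0$ issues), so the limits are finite and equal to $0$ as claimed.
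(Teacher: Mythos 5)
Your proof is correct and follows the same basic term-by-term decomposition as the paper's. The one place where you go genuinely further is the fourth term $\tfrac12 D(y_{(x)}\,\|\,2^{-x})$ in the $x\to 0$ limit: the paper dismisses it as $0$ ``by the continuity of KL divergence in both of its arguments,'' but $D(\cdot\|\cdot)$ is \emph{not} continuous at the corner $(1,1)$ (e.g.\ with $a_n=1-1/n$, $b_n=1-e^{-n}$ one gets $D(a_n\|b_n)\to 1$, not $0$). The limit there is $0$ only because the two arguments approach $1$ at proportional rates, which is exactly what your identity $1-y_{(x)}=2(1-y)(1-2^{-x})$ makes precise, so your argument actually patches a real (if minor) gap in the paper's justification. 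At $x\to 1$, where the analogous indeterminacy appears in the middle term $y_{(x)}D(y/y_{(x)}\,\|\,2^{-(1-x)})$, the paper resolves it via Taylor expansions of $2^{\pm(1-x)}$ while you resolve it via the ratio identity $(1-y/y_{(x)})/(1-2^{-(1-x)})\to(1-y)/y$; both work, and the two are essentially dual ways of seeing that the arguments separate from $1$ at commensurate linear rates, with yours being arguably cleaner and more uniform with your $x\to 0$ treatment.
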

Another important property is that the derivative of $\breve{G}$ with respect to $x$ is strictly positive and strictly negative as $x \conv{} 0$ and $x \conv{} 1$ respectively, implying that the function $\breve{G}$ is positive locally around the points $x = 0$ and $x = 1$.
\begin{lemma}\label{posder}
Given $\Breve{G}$ from \eqref{eq:g-eps}, we have the following statement:

If $y \in (0, 1/2)$, then
\[\lim_{x' \conv{} 0}[\partial_{x}\Breve{G}(y, x)]{|}_{x = x'} = \log(2)\left((1-y)(1-\log(2-2y)) - \frac{h_2(y)}{2}\right)\] 
and
\[\lim_{x' \conv{} 1}[\partial_{x}\Breve{G}(y, x)]{|}_{x = x'} = \log(2)\left((1-y)\left(1+\frac{1}{2}\log\left(\frac{y}{1-y} \right)\right) - \frac{h_2(y)}{2}\right).\]
In particular, for all $y \in (0,1/2)$, $\lim_{x' \conv{} 0}[\partial_{x}\Breve{G}(y, x)]|_{x = x'} > 0$ and $\lim_{x' \conv{} 0}[\partial_{x}\Breve{G}(y, x)]|_{x = x'} < 0$.
\end{lemma}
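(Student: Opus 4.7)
\emph{Proof sketch (plan).} The approach is to compute the two one-sided limits of $\partial_x \breve{G}(y,x)$ by writing $\breve{G}(y,x) = G(y, y_{(x)}, x)$ and applying the chain rule, then carefully passing to the endpoints. The main technical obstacle is that both endpoints are degenerate: at $x = 0$ we have $y_{(x)} \to 1$, so the term $\tfrac{1}{2} D(y_{(x)} \| 2^{-x})$ produces an indeterminate $0 \cdot \log 0$; symmetrically at $x = 1$ we have $y_{(x)} \to y$, and the term $y_{(x)} D(y/y_{(x)} \| 2^{x-1})$ produces the same kind of indeterminacy.

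First, by the chain rule,
\begin{align*}
\partial_x \breve{G}(y,x) = [\partial_{y'} G(y, y', x)]|_{y' = y_{(x)}} \cdot y_{(x)}'(x) + [\partial_x G(y, y', x)]|_{y' = y_{(x)}}.
\end{align*}
For the first partial I would reuse the closed form (already used in the proof of Lemma \ref{DerPrime})
\begin{align*}
\partial_{y'} G(y, y', x) = \log\frac{y' - y}{y'(1 - 2^{x-1})} + \tfrac{1}{2} \log\frac{y'(1 - 2^{-x})}{2^{-x}(1 - y')},
\end{align*}
and for $\partial_x G$ I would differentiate through the two KL arguments $a = 2^{x-1}$ and $b = 2^{-x}$ using $\partial_a D(p\|a) = -p/a + (1-p)/(1-a)$, obtaining an exact expression valid for $x \in (0,1)$. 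Combined with $y_{(x)}'(x) = -(1-y)(\log 2) 2^{1-x}$, this yields a single closed formula for $\partial_x \breve{G}(y, x)$.

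Second, to take the endpoint limits I would use the algebraic identities $y_{(x)} - y = (1-y)(2^{1-x} - 1)$ and $1 - y_{(x)} = 2(1-y)(1 - 2^{-x})$, together with the elementary facts $(2^{1-x}-1)/(1-2^{x-1}) \to 1$ as $x \to 1$ and $(1 - y_{(x)})/(1 - 2^{-x}) \to 2(1-y)$ as $x \to 0$. These resolve the apparent singularities in both $\partial_{y'} G$ and $\partial_x G$ at the two endpoints. Substituting and simplifying using $D(y \| 1/2) = \log 2 - h(y) = \log(2)(1 - h_2(y))$, the surviving terms consolidate into the two stated closed-form limits. For the sign assertions I would set
\begin{align*}
F_0(y) := (1-y)(1 - \log(2-2y)) - \tfrac{1}{2} h_2(y), \quad F_1(y) := (1-y)\!\left(1 + \tfrac{1}{2} \log\tfrac{y}{1-y}\right) - \tfrac{1}{2} h_2(y),
\end{align*}
so that the claimed limits equal $\log(2) F_0(y)$ and $\log(2) F_1(y)$. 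Direct evaluation gives $F_0(0) = 1 - \log 2 > 0$, $F_0(1/2) = 0$, $F_1(0^+) = -\infty$, $F_1(1/2) = 0$. A short computation of the second derivatives shows $F_0'' > 0$ and $F_1'' < 0$ on $(0, 1/2)$; combined with $F_0'(1/2) = F_1'(1/2) = 0$, this yields that $y = 1/2$ is the unique minimizer of $F_0$ and the unique maximizer of $F_1$ on $[0, 1/2]$, hence $F_0 > 0$ and $F_1 < 0$ on $(0, 1/2)$.

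The principal difficulty lies in the two endpoint limit computations: in the chain-rule expansion, both $\partial_{y'} G|_{y'=y_{(x)}}$ and the $y'$-dependent parts of $\partial_x G|_{y'=y_{(x)}}$ individually contain terms that blow up as $x \to 0$ or $x \to 1$, and the cancellations must be tracked via first-order Taylor expansions of $2^{1-x}$, $2^{x-1}$, $2^{-x}$ and of $y_{(x)}, 1 - y_{(x)}$ in the appropriate small parameter ($x$ or $1-x$). Once the leading divergent pieces are matched, what remains is elementary algebra; the final sign analysis is routine convexity/concavity.
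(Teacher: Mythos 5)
Your plan is correct and arrives at the same closed-form limits and the same sign analysis as the paper; the difference is largely organizational. You propose to write $\breve{G}(y,x) = G(y, y_{(x)}, x)$, apply the chain rule, reuse the closed form of $\partial_{y'} G$ from Lemma~\ref{DerPrime}, and then chase the endpoint limits via first-order Taylor expansions of $2^{\pm x}$ and the identities $y_{(x)} - y = (1-y)(2^{1-x}-1)$ and $1 - y_{(x)} = 2(1-y)(1-2^{-x})$. The paper instead differentiates $\breve{G}$ directly and lands on a single closed form for $\partial_x\breve{G}(y,x)$ (the display beginning $\frac{1}{2}[(1-y)\log(4-4y) + y\log(y) + 2^{-x}(1-y)\log(4)(\cdots)]$), which it observes ``by elementary inspection'' extends continuously to $x\in[0,1]$ — so the endpoint values are literal substitutions $x=0$ and $x=1$, with no limit-taking or indeterminacy resolution needed. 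What the paper's route buys is that all the apparent $0\cdot\log 0$ singularities disappear in the algebra before any limits are taken, whereas your route resolves them at the limit stage; both are sound and give the same answer. Your sign analysis is essentially the paper's, just re-phrased: you use $F_0'(1/2)=F_1'(1/2)=0$ together with $F_0''>0$ and $F_1''<0$ to conclude $y=1/2$ is the unique minimizer of $F_0$ (resp.\ maximizer of $F_1$) on $[0,1/2]$; the paper phrases the same facts as monotonicity of $F_0'$, $F_1'$ plus the boundary values $F_0(1/2)=F_1(1/2)=0$, $F_0(0)=1-\log 2>0$, $F_1(0^+)=-\infty$. No gap.

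Two small points worth being careful about if you write this out. First, in your chain-rule decomposition you still need to compute $[\partial_x G(y,y',x)]|_{y'=y_{(x)}}$ from scratch — the $\partial_{y'}G$ formula from Lemma~\ref{DerPrime} only handles one of the two terms, so this is not quite a ``free'' reuse. Second, when you take the $x\to 0$ limit of $\partial_{y'}G|_{y'=y_{(x)}}\cdot y_{(x)}'(x)$, both factors are $\Theta(1)$ (the paper shows $\lim_{x\to 0}\partial_{y'}G|_{y'=y_{(x)}} = \frac{1}{2}\log(2(1-y))$ in its proof of Lemma~\ref{DerPrime}, and $y_{(x)}'(0) = -(1-y)\cdot 2\log 2$), so this piece alone is non-singular; the cancellations you need to track live entirely inside $\partial_x G$, not across the two chain-rule terms. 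Making this explicit would tighten the sketch.
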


Using both Lemma \ref{zerobound} and Lemma \ref{posder}, the condition that $\Breve{G}$ is strictly concave in $x \in (0,1)$ is sufficient to prove that $\Breve{G}(y, x) > 0$ for all $x \in (0,1)$ and $y \in (0,1/2)$. The following lemma establishes this result.

\begin{lemma}\label{positive}
Given $\Breve{G}$ from \eqref{eq:g-eps}, if $y \in (0,1/2)$ then $\Breve{G}(y,x)$ is strictly concave over $x \in (0,1)$. Moreover, $\Breve{G}(y,x) > 0$ for all $x \in (0,1), y \in (0,1/2)$. In particular, whenever $x,y$ are bounded away from 0 and 1, $\Breve{G}(y,x)=\Omega(1)$.
\end{lemma}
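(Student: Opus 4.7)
My plan breaks into three stages: first, establish strict concavity of $x \mapsto \Breve{G}(y,x)$ on $(0,1)$ for each fixed $y \in (0,1/2)$; second, combine that with the boundary data supplied by Lemmas \ref{zerobound} and \ref{posder} to deduce strict positivity on the open rectangle; third, upgrade pointwise positivity to the uniform $\Omega(1)$ bound on compact sub-rectangles.

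For the concavity step, the key maneuver is to simplify $\Breve{G}$ before differentiating. Expanding the two KL divergences in \eqref{eq:g-eps} and using the algebraic identities $y_{(x)} - y = (1-y)(2^{1-x} - 1)$ and $1 - y_{(x)} = 2(1-y)(1 - 2^{-x})$, many of the logarithmic cross-terms collapse and $\Breve{G}$ can be rewritten as
\[
\Breve{G}(y, x) \;=\; \tfrac{x-2}{2}\bigl(\log 2 - h(y)\bigr) + y \log(2y) + (1-y)\,2^{-x}\log\bigl(2(1-y)\bigr) - \tfrac{1}{2} y_{(x)} \log y_{(x)} - \tfrac{x \log 2}{2}\, y_{(x)},
\]
which, incidentally, makes the boundary vanishing from Lemma \ref{zerobound} transparent. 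I would then differentiate twice in $x$, using $y_{(x)}' = -2(1-y)(\log 2)\,2^{-x}$ and the relation $y_{(x)}'' = -(\log 2)\, y_{(x)}'$; the affine-in-$x$ contributions drop out, and after factoring the surviving terms one arrives at the clean identity
\[
\Breve{G}''(y, x) \;=\; (\log 2)^2 \,(1-y)\, 2^{-x} \bigl[\, \log(1 + t) - t\, \bigr], \qquad t \;:=\; \frac{1 - 2y}{y_{(x)}}.
\]
Because $y \in (0, 1/2)$ forces $1-2y>0$ and $y_{(x)} > 0$, we get $t > 0$, so the bracket is strictly negative by the elementary inequality $\log(1+t) < t$ for $t > 0$. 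Hence $\Breve{G}''(y,x) < 0$ on the entire domain, establishing strict concavity in $x$.

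For the positivity step, Lemma \ref{zerobound} gives $\Breve{G}(y, \cdot) \to 0$ at both endpoints of $(0,1)$, while Lemma \ref{posder} supplies a strictly positive one-sided derivative at $0^+$ and a strictly negative one-sided derivative at $1^-$. A first-order Taylor expansion at each endpoint then produces some $\delta = \delta(y) > 0$ with $\Breve{G}(y, x) > 0$ on $(0, \delta) \cup (1-\delta, 1)$. For any remaining $x \in [\delta, 1-\delta]$, I pick $x_1 \in (0, \delta)$ and $x_2 \in (1-\delta, 1)$ bracketing $x$ with $\Breve{G}(y, x_i) > 0$, write $x = \lambda x_1 + (1-\lambda) x_2$, and invoke strict concavity to conclude
\[
\Breve{G}(y, x) \;>\; \lambda\, \Breve{G}(y, x_1) + (1-\lambda)\, \Breve{G}(y, x_2) \;>\; 0.
\]
Combining the two cases, $\Breve{G}(y, x) > 0$ throughout $(0, 1) \times (0, 1/2)$.

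Finally, $\Breve{G}$ is continuous on the open rectangle $(0,1) \times (0, 1/2)$ (all of $y_{(x)}$, $y/y_{(x)}$, $2^{-x}$, $2^{-(1-x)}$ remain strictly inside $(0,1)$ there), so for any $\epsilon > 0$ its restriction to the compact set $K_\epsilon := [\epsilon, 1-\epsilon] \times [\epsilon, 1/2 - \epsilon]$ attains its minimum, which is strictly positive by the previous step; the minimum depends only on $\epsilon$, giving the desired $\Omega(1)$ lower bound. The main obstacle in this plan is really only the algebraic simplification that produces the clean form of $\Breve{G}''$; once the substitution $t := (1-2y)/y_{(x)}$ is recognized, the sign follows from a one-line inequality and the remaining stages are routine consequences of concavity and continuity.
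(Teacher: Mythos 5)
Your proof is correct and follows essentially the same route as the paper: compute $\partial_x^2\Breve{G}$, show it is strictly negative, and combine strict concavity with the boundary data from Lemmas \ref{zerobound} and \ref{posder} to get positivity and (by compactness) the $\Omega(1)$ bound. The only difference is cosmetic: your substitution $t=(1-2y)/y_{(x)}$, which turns the second derivative into $(\log 2)^2(1-y)2^{-x}[\log(1+t)-t]$, is an algebraically equivalent rewriting of the paper's expression $-(1-y)\log^2(2)\bigl(\tfrac{1-2y}{2-2y-2^x(1-2y)}+2^{-x}\log(1+\tfrac{(2y-1)2^x}{2-2y})\bigr)$ (note $2^xy_{(x)}=2-2y-2^x(1-2y)$), but it makes the sign immediate from $\log(1+t)<t$ rather than via the paper's bound $\log(1+a)>a/(1+a)$.
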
 

\subsection{Using Properties Of $G$ and $\Breve{G}$ Derive The Limiting Value Of $1-\Phi_k$}\label{subsec:PhikLimitingValue}

We are now in a position to show that the upper bound in Lemma \ref{lem:2mmbound3} is vanishing.

\begin{lemma}\label{lem:o(1)sum}

\stateScale Given $\Co{Cprime}$, $y_*=y_*(\Co{Cprime})$ from Lemma \ref{lem:2mmbound3}, $y_{(l)}$ from Definition \ref{def:SubsetProp}, and $S^*_l$ from Lemma \ref{lem:2mmbound3}, 
if $\alpha$ and $C$ satisfy Assumption \ref{as:alphaC_0}, then
\[M\sum_{l = 1}^{k-1}\sum_{y' \in S^*_l}\exp\left({-2MG(y_*, y', l/k) + l\log(k/l)}\right) = o(1).\]
\end{lemma}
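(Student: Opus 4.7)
The strategy is to split the sum over $l \in \{1,\ldots,k-1\}$ into three regions---small $(1 \leq l \leq \delta k)$, middle $(\delta k < l < (1-\delta) k)$, and large $((1-\delta) k \leq l \leq k-1)$---for a suitably small constant $\delta > 0$ depending only on $\alpha$ and $C$, and to show that each region contributes $o(1)$ separately. In every region the starting reduction is Lemma \ref{lem:justifyLowerGamma'}: for any $y' \in S^*_l$,
\[
G(y_*, y', l/k) \geq \Breve{G}(y_*, l/k) - (1+\epsilon)\frac{D_l}{M}\,|\partial_{y'}G(y_*, y_{(l/k)}, l/k)|,
\]
so the task reduces to lower-bounding $\Breve{G}(y_*, l/k)$ with an $O(D_l/M)$ correction and comparing against $l\log(k/l)$. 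Throughout I use $y_* = H_C + o(1)$ from Lemma \ref{lem:solZeroRigor} and $(2M/k)\log 2 = C(1-\alpha)\log n\,(1+o(1))$ from Assumption \ref{as:MPScale}.

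In the small-$l$ region, Taylor-expanding $\Breve{G}(y_*, \cdot)$ at $0$ via Lemmas \ref{zerobound} and \ref{posder} gives $\Breve{G}(y_*, l/k) \geq (l/k)\,\partial_x\Breve{G}(y_*, 0^+)(1-o(1))$ uniformly in $l \leq \delta k$ (with the $o(1)$ shrinking as $\delta \to 0$). Combining with the sharp bound $D_l/M \leq 7\log(2)\sqrt{\alpha/(1-\alpha)} \cdot l/k$ from \eqref{eq:boundComplex} and the local estimate $|\partial_{y'}G(y_*, y_{(l/k)}, l/k)| \leq (1/2)\log(2(1-y_*))(1+o(1))$ from Lemma \ref{DerPrime}(2), the resulting lower bound reads
\[
2MG(y_*, y', l/k) \geq l\,(1-\alpha)\log n\,\bigl[\text{LHS of \eqref{eq:2mmCond1_0}}\bigr](1-o(1)).
\]
The $4\alpha/(1-\alpha)$ slack of \eqref{eq:2mmCond1_0} then delivers $2MG \geq 4l\log k\,(1-o(1))$, so each summand is bounded by $k^{-3l+o(l)}$; with $|S^*_l| = O(l\log n)$ and $\sum_{l \geq 1} l\,k^{-3l} = O(k^{-3})$, the outer $M = O(k\log n)$ factor leaves the total small-$l$ contribution as $o(1)$. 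The large-$l$ region is entirely parallel, Taylor expanding $\Breve{G}$ at $1$ and using $|\partial_x\Breve{G}(y_*, 1^-)| = \log(2)[h_2(y_*)/2 + ((1-y_*)/2)\log((1-y_*)/y_*) + y_* - 1]$ from Lemma \ref{posder}, the uniform bound $|\partial_{y'}G(y_*, y_{(l/k)}, l/k)| \leq (1/2)\log((1-y_*)/y_*)$ from Lemma \ref{DerPrime}(1), and \eqref{eq:boundSimp}. The resulting estimate matches exactly the bracket of \eqref{eq:2mmCond2_0} divided by $C$, whose $3\alpha/(1-\alpha)$ slack yields a summand at most $k^{-2(k-l)+o(k-l)}$ (using $l\log(k/l) = O(k-l)$ in this range), and the same bookkeeping delivers $o(1)$.

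For the middle region, the concavity of $\Breve{G}(y_*, \cdot)$ from Lemma \ref{positive} together with $\Breve{G}(y_*, 0) = \Breve{G}(y_*, 1) = 0$ forces the minimum of $\Breve{G}(y_*, \cdot)$ on $[\delta, 1-\delta]$ to lie at an endpoint, so $\Breve{G}(y_*, l/k) \geq c_\delta := \min\{\Breve{G}(y_*, \delta), \Breve{G}(y_*, 1-\delta)\} > 0$ uniformly in $n$. The crude bound $D_l/M = O(1/\sqrt{\log n})$ from \eqref{eq:boundMid} reduces the Lemma \ref{lem:justifyLowerGamma'} correction to $o(1)$, so $2MG \geq 2Mc_\delta(1-o(1))$, which dominates $l\log(k/l) = O(\alpha k\log n)$ as soon as $c_\delta > \alpha\log(2)/(C(1-\alpha))$. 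Each middle-region summand then decays like $\exp(-\Omega(k\log n))$, and summation over at most $k$ values of $l$, over $|S^*_l| = O(k\sqrt{\log n})$, and the outer $M$ still yields $o(1)$. The main subtlety, and what I expect to be the trickiest step, is calibrating $\delta$: since $\partial_x\Breve{G}(y_*, 0^+)$ and $|\partial_x\Breve{G}(y_*, 1^-)|$ are positive constants depending only on $C$ in the limit, while the threshold $\alpha\log(2)/(C(1-\alpha))$ is of order $\alpha$, the Taylor lower bound $c_\delta \geq \delta \cdot \min\{\partial_x\Breve{G}(y_*, 0^+), |\partial_x\Breve{G}(y_*, 1^-)|\}(1-O(\delta))$ shows that any fixed constant $\delta$ small enough for the boundary Taylor expansions to apply suffices once $\alpha$ is small enough---precisely the regime guaranteed by Assumption \ref{as:alphaC_0}.
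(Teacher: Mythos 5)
Your proof takes essentially the same approach as the paper: you split the sum over $l$ into three regions (small, middle, large), lower-bound $G$ by $\Breve G$ minus the flatness correction via Lemma \ref{lem:justifyLowerGamma'}, use the boundary slopes from Lemma \ref{posder} and the corner bounds \eqref{eq:boundComplex}--\eqref{eq:boundMid} to reduce the boundary regions to conditions \eqref{eq:2mmCond1_0} and \eqref{eq:2mmCond2_0}, and use concavity plus $\Breve G(y_*,\delta),\Breve G(y_*,1-\delta)=\Omega(1)$ for the middle. One small inefficiency: in the middle region you compare $2M\Breve G$ against the loose bound $l\log(k/l)=O(\alpha k\log n)$, which manufactures a spurious threshold $c_\delta>\alpha\log 2/(C(1-\alpha))$; since $l/k$ is bounded away from $0$ and $1$ there, $l\log(k/l)=O(k)=o(M)$ already, so the middle region imposes no constraint on $\alpha$ at all---the ``calibration of $\delta$'' you flag as the trickiest step is actually a non-issue, and the only constraints on $(\alpha,C)$ come from the two boundary regions, exactly as the paper arranges it.
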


Before we prove Lemma \ref{lem:o(1)sum}, we show that it completes the proof of upper bound on the maximal number of sets left uncovered by any $k$-subset $\sigma$. This gives an upper bound on $1- \Phi_k$ under Assumption \ref{as:MPScale} that matches the first moment method lower bound $1-\Phi_k \geq H_C - o(1)$ as described in Section \ref{subsec:relate} and specifically Lemma \ref{lem:lowerbound} at the point $l = 0$.

\begin{lemma}\label{thm:upperbound}
    \stateScale Given $\phi$ from \eqref{eq:phil}, $\Phi_k$ as the maximal number of sets covered by a set of size $k$ in \kset and $y_* = H_C + \Co{Cprime}\kpert$ from Lemma \ref{lem:2mmbound3},
    if $\alpha$ and $C$ satisfy Assumption \ref{as:alphaC_0} and $\Co{Cprime}$ is sufficiently large, then for an $(\alpha, C)$ \kset instance we have,
    \[1 - \Phi_k \leq y_* = H_C + o(1).\]
\end{lemma}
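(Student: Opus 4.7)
The plan is to combine the two previously established results, Lemma \ref{lem:2mmbound3} and Lemma \ref{lem:o(1)sum}, via the Paley--Zygmund inequality to conclude that the flat counting random variable $Y_{y_*}$ is strictly positive asymptotically almost surely. First I would note that Lemma \ref{lem:2mmbound3} reduces the second-moment-to-first-moment-squared ratio $\E[Y_{y_*}^2]/\E[Y_{y_*}]^2 - 1$ to an $o(1)$ term plus the explicit sum
\[
\Co{9+eO(1)} M \sum_{l=1}^{k-1} \sum_{y' \in S^*_l} \exp\bigl(-2 M G(y_*, y', l/k) - l \log(l/k)\bigr).
\]
Lemma \ref{lem:o(1)sum} then shows precisely this sum is itself $o(1)$, which immediately gives $\E[Y_{y_*}^2]/\E[Y_{y_*}]^2 = 1 + o(1)$.

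Next I would apply the Paley--Zygmund (second moment) inequality, which yields
\[
\P(Y_{y_*} \geq 1) \;\geq\; \frac{\E[Y_{y_*}]^2}{\E[Y_{y_*}^2]} \;=\; 1 - o(1),
\]
so a.a.s.\ as $n \to +\infty$ there exists at least one flat $k$-subset $\sigma$ with $X_\sigma = y_* M$, i.e., leaving exactly $y_* M$ of the target sets uncovered. Combining this with the obvious inequality $Y_{y_*} \leq Z_{y_* M, 0}$ from \eqref{eq:obvious} produces a $k$-subset that leaves at most $y_* M$ sets uncovered, which by the definition of $\Phi_k$ in \eqref{max_sat} implies $1 - \Phi_k \leq y_*$ a.a.s.

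To finish I would simply observe that $y_* = H_C + \Co{Cprime} (\log n)^{-1} = H_C + o(1)$ as $n \to +\infty$, establishing the claimed bound $1 - \Phi_k \leq H_C + o(1)$. Since Lemmas \ref{lem:2mmbound3} and \ref{lem:o(1)sum} already encode the assumption that $(\alpha, C)$ satisfies Assumption \ref{as:alphaC_0} and that $(M,p)$ satisfy Assumption \ref{as:MPScale}, and that $\Co{Cprime}$ is chosen large enough to handle the last boundary term (through Lemma \ref{lem:y=1o(1)}), no additional assumptions are needed. I do not anticipate any real obstacle at this final step; it is essentially a bookkeeping assembly of the hard analytic work already contained in the second-moment calculation (Lemma \ref{lem:2mm bound 1}), its simplification via the choice of $y_*$ (Lemma \ref{lem:2mmbound3}), and the geometric control of $G$ and $\Breve G$ (Lemmas \ref{DerPrime}--\ref{positive}) used inside Lemma \ref{lem:o(1)sum}. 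The only minor subtlety worth being careful about is ensuring that $Y_{y_*}$ has strictly positive first moment so that Paley--Zygmund is non-vacuous, which follows from the fact that $y_* \in S^*_l$ is a feasible value of the uncovered fraction and that the expected count of flat subsets with $X_\sigma = y_* M$ grows (otherwise the ratio $\E[Y_{y_*}^2]/\E[Y_{y_*}]^2 = 1 + o(1)$ would itself be vacuous).
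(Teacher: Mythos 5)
Your proposal is correct and follows essentially the same route as the paper: apply Lemma \ref{lem:2mmbound3} together with Lemma \ref{lem:o(1)sum} to get $\E[Y_{y_*}^2]/\E[Y_{y_*}]^2 \to 1$, invoke Paley--Zygmund to conclude $Y_{y_*}>0$ a.a.s., then pass through \eqref{eq:obvious} and the choice $y_*=H_C+\Co{Cprime}\kpert=H_C+o(1)$ to obtain $1-\Phi_k\leq H_C+o(1)$. Your closing remark about the first moment being strictly positive is a reasonable extra check, and your reference to Lemma \ref{lem:y=1o(1)} for the role of large $\Co{Cprime}$ is consistent with how the boundary term is handled in the paper.
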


\begin{proof}[Proof of Lemma \ref{thm:upperbound}, assuming Lemma \ref{lem:o(1)sum}]
    Assumptions \ref{as:alphaC_0} and \ref{as:MPScale} are necessary to get our initial upper bound on the second to first moment squared ratio in Lemma \ref{lem:2mm bound 1}. Recalling $Y_y$ from Definition \ref{def:countFlat}, using \eqref{eq:obvious}, we can see proving $Y_y > 0$ under Assumption \ref{as:MPScale} implies that a $\sigma$ leaves $yM$ sets uncovered. To show that $Y_{y_*}>0$ under Assumption \ref{as:MPScale}, we utilize Paley-Zygmund inequality as in Section \ref{eq:paley-Section}. Thus, we are left to show for $y_* = H_C + \Co{Cprime}\kpert$ that
    \[\lim_{n \conv{} \infty}\frac{\E[Y_{y_*}^2]}{\E[Y_{y_*}]^2} = 1,\label{eq:PZ}\]
    when $(M, p)$ satisfy Assumption \ref{as:MPScale} and $\Co{Cprime}$ is chosen sufficiently large. Applying Lemma \ref{lem:2mmbound3} to gain a further upper bound on the second to first moment squared ratio and invoking Lemma \ref{lem:o(1)sum} to show the upper bound from Lemma \ref{lem:2mmbound3} is $o(1)$, we have $\frac{\E[Y_{y_*}^2]}{\E[Y_{y_*}]^2} - 1 = o(1)$ for $y_* = H_C + \Co{Cprime}\kpert$, implying \eqref{eq:PZ}, giving the result.
\end{proof}

Combining the upper and lower bounds on $1- \Phi_k$ we have that when $(\mSet, \pSet)$ satisfy Assumption \ref{as:MPScale} and $(\alpha, C)$ satisfy Assumption \ref{as:alphaC_0} then $1- \Phi_k = H_C + o(1)$ implying that $\Phi_k = 1 - H_C + o(1)$ giving the proof of Theorem \ref{thm:Phi_kLim}.

\subsection{The Proof of Lemma \ref{lem:o(1)sum}}

%We can now give the proof of the vital Lemma \ref{lem:o(1)sum}, a visual aid is given in Figure \ref{fig:aid}.
%\begin{figure}[htb]
%    \centering
%    \includegraphics[scale = .6]{figs/G Function.pdf}\\
%    \caption{ \textcolor{red}{IZ: confusing, what is this integral upon?}A visual aid for the proof technique of Lemma \ref{lem:o(1)sum}, on the $x$-axis we have each of our 4 cases and their given boundaries. We then have a lower bound for the $\breve{G}$ function, from \eqref{eq:g-eps}, given by a dotted red line. We have included $\breve{G}'(y_*,0)x$ and $\breve{G}'(y_*,1)(1-x)$ for reference.\maxwell{I removed the hueristic part because it was just confusing, I am going to edit this picture to update the argument last.}}
%    \label{fig:aid}
%\end{figure}

\begin{proof}[Proof of Lemma \ref{lem:o(1)sum}]

     Recall $y_{(x)}$ from Lemma \ref{lem:gammaepsexp}. Below, we recollect our results on $G$ and $\Breve{G}$ from the previous section. We summarize the following results for the reader's convenience.
    \begin{enumerate}[(a)]
        \item By Lemma \ref{posder}, the derivative of $\Breve{G}(y, x)$ with respect to $x$ is positively bounded away from zero as $x \conv{} 0$ and negatively bounded away from zero as $x \conv{} 1$. By a slight abuse of notation\footnote{Although $\breve{G}'_y(x)$ is not defined for $x \in \{0,1\}$ as $\breve{G}$ has domain $x \in (0,1)$, we can consider the limiting value of the derivative for $x \conv{} 0$ or $x \conv{} 1$ respectively. It is easy to check (see the proof of Lemma \ref{posder}) that $\breve{G}'_y(x)$ can be continuously extended to $x \in [0,1]$. Thus, for any $\epsilon > 0$, there exists a $n$ large enough such that $\breve{G}'_y(1/k) \geq \breve{G}'_y(0) - \epsilon$.}, we denote the limits of these derivatives as $\Breve{G}'_y(0)$ and $\Breve{G}'_y(1)$ respectively. 
        \item By Lemma \ref{positive}, $\Breve{G}(y, x)$ is strictly concave in $x$ and bounded positively away from zero for $x \in (0,1)$. Moreover, by Lemma \ref{zerobound}, $\lim_{x \conv{} x'}\breve{G}(y, x) = 0$ for $x' \in \{0,1\}$.
        \item By Lemma \ref{DerPrime} we have the following uniform upper bound which we denote as $C_d$,
    \[C_d \coloneqq \sup_{x \in (0,1)}\bigg{|}[\partial_{y'}G(y, y', x)]|_{y'=y_{(x)}}\bigg{|} = \frac{1}{2}\log\left(\frac{1-y }{y}\right) \label{eq:unifDerBound}.\]
        \item By Lemma \ref{DerPrime}, we also have a bound on the derivative for a fixed $x \in (0,1)$, with
    \[\bigg{|}[\partial_{y'}G(y, y', x)]|_{y'=y_{(x)}}\bigg{|} \leq \frac{1}{2}\log(2(1-y)) + x\frac{\log(4)}{2^{2-x} - 2}.\label{eq:DerPrimeNear0}\]
        \item Thus, for any region $(0,x']$, $x' \in (0,1)$, we know by the monotonicity of the right-hand side of \eqref{eq:DerPrimeNear0} in $x$ that 
    \[C_d^{x'} \coloneqq \sup_{x \in (0,x']}\bigg{|}[\partial_{y'}G(y, y', x)]|_{y'=y_{(x)}}\bigg{|} \leq \frac{1}{2}\log(2(1-y)) + x'\frac{\log(4)}{2^{2-x'} - 2} \label{eq:unifDerBound0}.\]
    Moreover, if $0\leq x_1 \leq x_2 \leq 1$ then $C_d^{x_1} \leq C_d^{x_2}$.
    \end{enumerate}
    
    For brevity, we denote $\Breve{G}(y_*,x)$ as $\Breve{G}_{y_*}(x)$ ,$\partial_x \Breve{G}(y_*, x)$ as $\Breve{G}'_{y_*}(x)$ and define
    \[s_{y, y', x} \coloneqq \exp\left[-2MG(y, y', x) + x k \log(x)\right].\]
    Our goal is to show, for any choice of $\Co{Cprime} > 0$, that
    \[M \sum_{1 \leq l \leq k-1}\sum_{y' \in S^*_l} s_{y_*, y', l/k} = o(1).\label{eq:theSum}\]
    To do so, we decompose the sum in \eqref{eq:theSum} over $l$ (parameterized as $x = l/k$) into three regions and showing that the sum over each region is vanishing. 
    %(See Figure \ref{fig:aid} for pictorial representation of these regions)
    
    %\textbf{Case 1}: $x = 1$
    
    \textbf{Case 1}: $x \leq \zeta_1$, $x \ne 0$, for some $n$-independent $\zeta_1 \in (0,1)$ appropriately be chosen later
    
    \textbf{Case 2}: $x \geq \zeta_2$, $x \ne 1$, for some $n$-independent $\zeta_2 \in (0,1)$ appropriately be chosen later
    
    \textbf{Case 3}: $x \in [\zeta_1, \zeta_2]$
    \vspace{.5cm}
     
    \hspace{-.5cm}We begin with the first case,
    
%    \textbf{Case 1}, $x = 1$ or equivalently $l=k$.
    
%    By Lemma \ref{lem:D_linterval}, we have that $D_k = 0$, making $S_k = \{y_*\}$. Thus,
%    \[M\sum_{y' \in S_k} s_{y_*, y', 1} = M\cdot s_{y_*, y_*, 1}.\]
%    Setting $\Co{Cprime} = \frac{1.03}{\log(\frac{1-H_C}{H_C})}$, Lemma \ref{lem:err goes away} gives that $\Co{gap} \geq 1.02$. Using conditioning event $\scr{E}$ and $\alpha \leq 1$, we have that $M = O(k\log(n)) = o(n^{1.01})$. Combining this fact with Lemma \ref{zerobound}, giving $G(y_*, y_*, 1) = \Breve{G}(y_*,x) = 0$ \textcolor{red}{IZ: $x=1$ not well defined, don't get how you use a limit here...}, results in the upper bound for any $k \geq 1$,
%    \[Ms_{y_*, y, 1} \leq Me^{-\Co{gap}k^{7/8}\log(n)} \leq n^{-0.01} = o(1).\]
%    Define constants $\zeta_i = \Theta(1)$, $\delta_i = \Theta(1)$ for $i\in\{1,2\}$ which are chosen appropriately in the subsequent cases.
    
    \textbf{Case 1}, $x \leq \zeta_1$, $x \ne 0$:
    We start with a concavity argument. Consider a sufficiently small $\epsilon_1 > 0$. By the strict concavity of $\Breve{G}_{y_*}(x)$ and, by Lemma \ref{zerobound}, that $\lim_{x \conv{} 1}\Breve{G}_{y_*}(x) = 0$, the line $(\Breve{G}'_{y_*}(0) - \epsilon_1)x$ will intersect $\Breve{G}_{y_*}(x)$ at some point $(\zeta_1, \Breve{G}_{y_*}(\zeta_1))$. Notice that we can make $\epsilon_1$ sufficiently small so that $\zeta_1 \leq \delta$, where $\delta$ is from Lemma \ref{lem:cornerBound}. In particular, with $\delta_1 = \Breve{G}
    _{y_*}(\zeta_1)$, concavity gives that \[\Breve{G}_{y_*}(x) \geq \frac{\delta_1}{\zeta_1}x\] and \[\frac{\delta_1}{\zeta_1} + \epsilon_1 = \Breve{G}'_{y_*}(0).\] 
    Assumption \ref{as:MPScale} and Lemma \ref{lem:Mbounds} implies that for any $\epsilon_2 > 0$, for sufficiently large $n$, $\frac{C(1-\alpha)}{2\log(2)}k\log(n) \leq (1 + \epsilon_{2})M$, and therefore,
    \[
        l \log(k/l) = x k \log(k/l) \leq x k \log(k) \leq x\alpha k\log(n) \leq (1 + \epsilon_{2})x\frac{2\log(2)\alpha}{C(1-\alpha)}M.\label{eq:MBound1}
    \]
    Utilizing \eqref{eq:boundComplex} (as we have ensured that $\zeta_1 \leq \delta$, and can invoke Lemma \ref{lem:cornerBound}), Lemma \ref{lem:justifyLowerGamma'} and Lemma \ref{DerPrime} for $y' \in S_{xk}$, gives that, for $\epsilon_3 > 0$, there exists a sufficiently large $n$ where
    \[G(y_*, y', x) \geq \Breve{G}(y_*,x) - 7(1 + \epsilon_{3})x\sqrt{\frac{\alpha}{1-\alpha}}C_d^{x} \geq \Breve{G}(y_*,x) - 7(1 + \epsilon_{3})x\sqrt{\frac{\alpha}{1-\alpha}}C_d^{\zeta_1}\label{eq:GBound1}\]
    for each $x \leq \zeta_1$.
    Using \eqref{eq:MBound1}, and invoking Lemma \ref{lem:cornerBound} and Lemma \ref{lem:justifyLowerGamma'} we have
    \begin{align}
        s_{y_*, y', x} & \leq \exp\left({-2MG(y_*, y', x) + x(1+\epsilon_2)\frac{2\log(2)\alpha}{C(1-\alpha)}M} \right)\\
        &\leq \exp \left(-2M\left[G(y_*, y', x) - x (1+\epsilon_2)\frac{\log(2)\alpha}{C(1-\alpha)}\right]\right)\\
        &\leq \exp\left(-2M\left[\Breve{G}(y_*, x) - 7(1+\epsilon_3)\log(2)x\sqrt{\frac{\alpha}{1-\alpha}}C^{\zeta_1}_d -  x(1+\epsilon_2)\frac{\log(2)\alpha}{C(1-\alpha)}\right]\right)\label{eq:usedDerlem1}\\
        &\leq \exp\left(-2Mx\left[\frac{\delta_1}{\zeta_1}- 7(1+\epsilon_3)\log(2)\sqrt{\frac{\alpha}{1-\alpha}}C^{\zeta_1}_d - (1+\epsilon_2)\frac{\log(2)\alpha}{C(1-\alpha)}\right]\right).
    \end{align}
    Using $|S^*_l| \leq M$ in line \eqref{eq:usedbound1}, $x = l/k \geq 1/k$ in line \eqref{eq:eps>1/k}, and Assumption \ref{as:MPScale} and Lemma \ref{lem:Mbounds} in line \eqref{eq:Mupper1} with sufficiently large $n$, we have the contribution to our overall sum \eqref{eq:theSum} is, for any $\epsilon_2, \epsilon_3, \epsilon_4, \epsilon_5 > 0$,
    \begin{align}\hspace{-2cm}
    M \sum_{l\geq 1:\frac{l}{k}\leq\zeta_1}\sum_{y' \in S^*_l}s_{y_*, y', l/k}
        &\leq M^2ke^{\bigg{(}-2Mx\bigg{[}\frac{\delta_1}{\zeta_1}- 7(1+\epsilon_3)\log(2)\sqrt{\frac{\alpha}{1-\alpha}}C_d^{\zeta_1}  - (1+\epsilon_2)\frac{\log(2)\alpha}{C(1-\alpha)}\bigg{]}\bigg{)} }\label{eq:usedbound1}\\
        &\leq n^{3(1+\epsilon_4)\alpha}e^{\left(-2M\frac{1}{k}\left[\frac{\delta_1}{\zeta_1}- 7(1+\epsilon_3)\sqrt{\frac{\alpha}{1-\alpha}}C_d^{\zeta_1} - (1+\epsilon_2)\frac{\log(2)\alpha}{C(1-\alpha)}\right]\right)}\label{eq:eps>1/k}\\
    \label{eq:Mupper1}&\leq n^{3(1+\epsilon_4)\alpha}e^{\left(-(1-\epsilon_5)\log(n)\frac{C(1-\alpha)}{\log(2)}\left[\frac{\delta_1}{\zeta_1}- 7(1+\epsilon_3)\log(2)\sqrt{\frac{\alpha}{1-\alpha}}C_d^{\zeta_1} - (1 + \epsilon_2)\frac{\log(2)\alpha}{C(1-\alpha)}\right]\right)}\\
    &\leq n^{-\left[(1-\epsilon_5)\frac{C(1-\alpha)}{\log(2)}\left[\frac{\delta_1}{\zeta_1}- 7(1+\epsilon_3)\log(2)\sqrt{\frac{\alpha}{1-\alpha}}C_d^{\zeta_1} - (1+\epsilon_2)\frac{\log(2)\alpha}{C(1-\alpha)}\right] - 3(1+\epsilon_4)\alpha\right]}\\
    \end{align}
    We can see that the above bound is $o(1)$ as $n \conv{} \infty$ as long as we can choose $\zeta_1,\epsilon_i>0, i=1,\ldots,5$ so that
    \[(1-\epsilon_5)\frac{C(1-\alpha)}{\log(2)}\left[\frac{\delta_1}{\zeta_1}- 7(1+\epsilon_3)\log(2)\sqrt{\frac{\alpha}{1-\alpha}}C_d^{\zeta_1} - (1+\epsilon_2)\frac{\log(2)\alpha}{C(1-\alpha)}\right] - 3(1+\epsilon_4)\alpha > 0.\]
    Recalling that $\delta_1 / \zeta_1 +\epsilon_1 = \Breve{G}_{y_*}'(0)$ and rearranging terms we then have a sufficient condition for this contribution to the sum to be $o(1)$ is to choose $\zeta_1,\epsilon_i>0, i=1,\ldots,5$ so that
    \[(1-\epsilon_5)C\left[\frac{1}{\log(2)}\left(\Breve{G}_{y_*}'(0) - \epsilon_1\right)- 7(1+\epsilon_3)\sqrt{\frac{\alpha}{1-\alpha}}C_d^{\zeta_1}\right]> 4(1+\epsilon_2)(1+\epsilon_4)\alpha/(1-\alpha).\label{eq:equivCondition1}\]
One can rewrite 
    \[\Breve{G}'_{y_*}(0) = \Breve{G}'_{H_C + o(1)} = \Breve{G}'_{H_C}(0) + o(1) = \log(2)\left((1-H_C)(1-\log(2-2H_C)) - \frac{h_2(H_C)}{2}\right) + o(1)\] by Lemma \ref{posder} and the fact that $\breve{G}'_y(0)$ is continuous and has a bounded derivative for all $H_C > 0$ which holds true as we have assumed $C > 1$. Employing the last displayed equation, \eqref{eq:equivCondition1} is then directly implied by \eqref{eq:2mmCond1_0} in Assumption \ref{as:alphaC_0} for sufficiently small $\zeta_1, \epsilon_1, \epsilon_2, \epsilon_3, \epsilon_4, \epsilon_5 > 0$, yielding that the sum is $o(1)$ in this case.\hfill\\

    \textbf{Case 2}, $x \geq \zeta_2$,
    
    By the concavity of $\Breve{G}(y_*,x)$ in $x$, through a similar argument to Case 1, we consider $\epsilon_6 > 0$ and define $\zeta_2$ such that the line $(\breve{G}'_y(1) + \epsilon_6)x$ intersects $\Breve{G}_y(x)$ at the point $x = \zeta_2$. Define $\delta_2 = \Breve{G}(y_*, \zeta_2)$ and choose $\epsilon_6$ small enough so that $1-\zeta_2$ is sufficiently close to $1$ and as such for $\zeta_2 \leq x \leq 1$ both
    \[\Breve{G}(y_*,x) \geq \frac{\delta_2}{1-\zeta_2}(1-x)\]
    and
    \[\frac{\delta_2}{1-\zeta_2} = -\Breve{G}'(1) - \epsilon_6\]
    hold. Moreover, by having $\epsilon_6$ sufficiently small, we have $1-\zeta_2 \geq 1-\delta$ where $\delta$ is from Lemma \ref{lem:cornerBound}.
    Utilizing \eqref{eq:boundSimp} (as we have satisfied the conditions of Lemma \ref{lem:cornerBound}), Lemma \ref{lem:justifyLowerGamma'} and Lemma \ref{DerPrime} for $y' \in S_{xk}$, gives for any $\epsilon_7 > 0$, with $n$ sufficiently large, 
    \[G(y_*, y', x) \geq \Breve{G}(y_*,x) - 5(1+\epsilon_7)\log(2)(1-x)\sqrt{\frac{\alpha}{1-\alpha}}C_d. \label{eq:GBound2}\]
    Using \eqref{eq:GBound2} in \eqref{eq:usedDerBound2}, $\log(1+x) \leq x$, $n$ sufficiently large in \eqref{eq:needbign} and $(1 + \epsilon_{8})\frac{C(1-\alpha)}{2\log(2)}k\log(n) < M$ for any $\epsilon_8 > 0$ (from Assumption \ref{as:MPScale} and Lemma \ref{lem:Mbounds}), we have
    \begin{align}
        s_{y_*, y', x} &= \exp(-2MG(y_*, y', x) + l \log(k/l))\\
        &\leq \exp(-2MG(y_*, y', x) + l \log(1 + k/l - 1))\\
        &\leq \exp\left(-2M[\Breve{G}(y_*, x) - 5(1 + \epsilon_7)\log(2)(1-x)\sqrt{\frac{\alpha}{1-\alpha}}C_d] + l\left(\frac{k}{l}-1\right)\right)\label{eq:usedDerBound2}\\
        &\leq \exp\left(-2M\left[\Breve{G}(y_*, x) - 5(1+\epsilon_7)\log(2)(1-x)\sqrt{\frac{\alpha}{1-\alpha}}C_d\right] + (k-l)\right)\\
        &\leq \exp\left(-2M(1-x)\left[\frac{\delta_2}{\zeta_2} - 5(1+\epsilon_7)\log(2)\sqrt{\frac{\alpha}{1-\alpha}}C_d\right] + (k-l)\right)\\
        &= \exp\left(-2M\frac{k-l}{k}\left[\frac{\delta_2}{\zeta_2} - 5(1 + \epsilon_7)\log(2)\sqrt{\frac{\alpha}{1-\alpha}}C_d\right] + (k-l)\right)\\
        &\leq \exp\left(-(k-l)\log(n)(1-\epsilon_8)\frac{C(1-\alpha)}{\log(2)}\left[\frac{\delta_2}{\zeta_2} - 5(1 + \epsilon_7)\log(2)\sqrt{\frac{\alpha}{1-\alpha}}C_d\right] + (k-l)\right)\\
        &\leq \exp\left(-(k-l)\left[\log(n)(1-\epsilon_8)\frac{C(1-\alpha)}{\log(2)}\left[\frac{\delta_2}{\zeta_2} - 5(1+\epsilon_7)\log(2)\sqrt{\frac{\alpha}{1-\alpha}}C_d\right] - 1\right]\right)\\
        &\leq \exp\left(-\left[\log(n)(1-\epsilon_8)\frac{C(1-\alpha)}{\log(2)}\left[\frac{\delta_2}{\zeta_2} - 5(1+\epsilon_7)\log(2)\sqrt{\frac{\alpha}{1-\alpha}}C_d\right] - 1\right]\right).\label{eq:needbign}
    \end{align}
    Thus, we bound this case's contribution to \eqref{eq:theSum}, for any $\epsilon_9 > 0$, as
    \begin{align}
        \hspace{-1.5cm}M \sum_{l \geq 1 : 1 \ne \frac{l}{k} \geq \zeta_2}\sum_{y' \in S^*_l} s_{y_*, y', l/k} &\leq  M^2 k e^{\left(-\log(n)(1-\epsilon_8)\frac{C(1-\alpha)}{\log(2)}\left[\frac{\delta_2}{\zeta_2} - 5(1+\epsilon_7)\log(2)\sqrt{\frac{\alpha}{1-\alpha}}C_d\right] + 1\right)}\\
        & \leq e n^{3(1+\epsilon_9) \alpha}n^{-(1-\epsilon_8)\frac{C(1-\alpha)}{\log(2)}\left[\frac{\delta_2}{\zeta_2} - 5(1+\epsilon_7)\log(2)\sqrt{\frac{\alpha}{1-\alpha}}C_d\right]}\\
        & \leq e n^{-\left((1-\epsilon_8)\left(\frac{C(1-\alpha)}{\log(2)}\left[\frac{\delta_2}{\zeta_2} - 5(1+\epsilon_7)\log(2)\sqrt{\frac{\alpha}{1-\alpha}}C_d\right]\right) - 3(1+\epsilon_9)\alpha\right)}.
    \end{align}
    A sufficient condition for when this bound is $o(1)$ if for sufficiently small $\zeta_2,\epsilon_i>0, i=6,\ldots,9$ it holds
    \[(1-\epsilon_8)\left(\frac{C(1-\alpha)}{\log(2)}\left[\frac{\delta_2}{\zeta_2} - 5(1 + \epsilon_7)\log(2)\sqrt{\frac{\alpha}{1-\alpha}}C_d\right]\right) - 3(1+\epsilon_9)\alpha > 0.\]
    Recalling that $\delta_2 / (1-\zeta_2) = -\Breve{G}_{y_*}'(1) - \epsilon_6$ and rearranging terms we then have a sufficient condition for this contribution to the sum to be $o(1)$ is that for sufficiently small $\zeta_2,\epsilon_i>0, i=6,\ldots,9$ it holds
    \[(1-\epsilon_8)C\left[\frac{1}{\log(2)}\left(-\Breve{G}_{y_*}'(1) - \epsilon_6\right)- 5(1+\epsilon_7)\log(2)\sqrt{\frac{\alpha}{1-\alpha}}|C_d|\right]> 3(1+\epsilon_9)\alpha/(1-\alpha).\label{eq:equivCondition2}\]
    By similar arguments as in the previous case and using the fomula of $\Breve{G}'_y(1)$ from Lemma \ref{posder}, for $C > 1$, we have that 
    \[-\Breve{G}'_{y_*}(1) = -\Breve{G}'_{H_C + o(1)} = - \log(2)\left((1-y)\left(1+\frac{1}{2}\log\left(\frac{y}{1-y} \right)\right) - \frac{h_2(y)}{2}\right) + o(1).\]
    Recall $C_d = \frac{1}{2}\log\left( \frac{1-y}{y}\right)$, as $y\mapsto \frac{1}{2}\log\left( \frac{1-y}{y}\right)$ is continuous and has a bounded derivative for all $y \in (0,1)$, we invoke the mean value theorem to give that $C_d$ is equivalent up to $o(1)$ factors when we have $y = H_C$ in lieu of $y = y_* = H_C + o(1)$.  Under these two conditions \eqref{eq:equivCondition2} is equivalent to \eqref{eq:2mmCond2_0} in Assumption \ref{as:alphaC_0} for a sufficiently small $\zeta_2, \epsilon_6, \epsilon_7, \epsilon_8, \epsilon_9$ a.a.s. as \ngrow\! and hence the contribution of this case to the sum is $o(1)$ as well.\hfill\\
    
    \textbf{Case 3}, $x \in (\zeta_I, \zeta_2)$:
    
    Since we have that $x$ is bounded away from $0$ and $1$ in this case, a combination of Lemma \ref{lem:cornerBound} and Lemma \ref{positive} gives the following:
    \[D_l/M = O\left(\frac{1}{\sqrt{\log(n)}} \right) = o(1),\]
    \[l \log(k/l) = x k \log(1/x) = O(k) = o(M),\]
    \[\Breve{G}(y_*, x) \geq \min\{\Breve{G}(y_*, \zeta_1),\Breve{G}(y_*, \zeta_2)\} = \Omega(1).\]
    Thus, using Lemma \ref{lem:justifyLowerGamma'} in \eqref{eq:usedDerBound3} and the above order bounds in \eqref{eq:usedOrder} we can bound the summand for any $\epsilon_{10} > 0$ with
    \begin{align}
        s_{y_*, y', l/k} &= \exp(-2M G(y_*, y', x) + l \log(k/l))\\
        &\leq \exp(-2M G(y_*, y', x) + l \log(k/l))\\
        &\leq \exp\left(-2M \left[\Breve{G}(y_*, x) - (1 - \epsilon_{10})C_d\frac{D_l}{M}\right] - l \log(k/l) \right)\label{eq:usedDerBound3}\\
        &\leq \exp(-2M(1-o(1))[\min\{\Breve{G}(y_*, \zeta_1),\Breve{G}(y_*, \zeta_2)\}])\label{eq:usedOrder}\\
        & =o(1)
    \end{align}
    for a sufficiently large $n$.
    
    Thus, the contribution to the sum \eqref{eq:theSum} in this case is also
    \[M\sum_{l \geq 1: \frac{l}{k} \in (\zeta_I, \zeta_2)} \sum_{y ' \in S^*_l} s_{y_*, y', l/k}\leq M^2 k \exp(-2M[\min\{\Breve{G}(y_*, \zeta_1),\Breve{G}(y_*, \zeta_2)\}]) = o(1)\]
    
    \textbf{Putting it all together}\hfill\\
    
   Combining the above, we bound the sum \eqref{eq:theSum} under any choice of $\Co{Cprime}$ in Case 1 and conditions \eqref{eq:2mmCond1_0} and \eqref{eq:2mmCond2_0}, as
    \begin{align}\sum_{l \geq 1}\sum_{y' \in S^*_l} s_{y_*, y', x} &= \sum_{l \geq 1: l/k \leq \zeta_I}\sum_{y' \in S^*_l} s_{y_*, y', x} + \sum_{l \geq 1: l/k \in (\zeta_I, \zeta_2)}\sum_{y' \in S^*_l} s_{y_*, y', x}\\
    & \qquad + \sum_{l \geq 1: 1 \ne l/k \geq \zeta_2}\sum_{y' \in S^*_l} s_{y_*, y', x}\\
    &= o(1) + o(1) + o(1)\\
    &= o(1).\end{align}
    
    This concludes the proof.
    \end{proof}

%-------------------------

\section{The Proof Of Theorem \ref{lem:FMFnonMono_0}}\label{sec:monotone}

\begin{proof}[Proof of Theorem \ref{lem:FMFnonMono_0}]
  By Lemma \ref{lem:FMFnear0}, Assumption \ref{as:FMFExists} implies there exists an $\epsilon' > 0$ such that the function $y(x)$ exists and is unique for any $x \in [0,\epsilon']$, and $y(x)$, on this interval, is continuously differentiable.

    For this proof only, we introduce the notation $O_x(1)$ and $o_x(1)$. Let $A(x) = O_x(1)$ and $B(x) = o_x(1)$ if, for some $\Co{O_x(1)} > 0$, $\lim_{x \downarrow 0} \frac{A(x)}{x} \leq \Co{O_x(1)}$ and $\lim_{x \downarrow 0}\frac{B(x)}{x} = 0$. Using Lemma \ref{lem:M:limit}, \ref{lem:p:limit}, \ref{lem:solZeroRigor} and a union bound, we condition on the a.a.s. as \ngrow event for some sequence $\gamma_n = o(1)$ and constant $1/4 > \err > 0$,
    \[\label{eq:conditionH_C}\begin{split}
        &\left\{p \geq (1-k^{-\err})n\left(k/n\right)^{(1+k^{-\err})C/2} \right\} \bigcap\; \left\{M \leq (1+
        N^{-\err})\frac{N}{2}\right\} \bigcap \left\{|y(0) - H_C| \leq \gamma_n\right\}.
    \end{split}\]
    Define $C_a \coloneqq 2a\log(2)$ and $f(y,x) \coloneqq (1-y)D\left(\frac{C_a x}{1-y}\|\|r(x) \right) + D(y||s(x))$. By Definition \eqref{def:FMF}, 
    \[
        f(y(x), x) - f(y(0), 0) = \frac{\log\left(\binom{k}{l}\binom{p-k}{k-l}\right) - \log\left(\binom{k}{0}\binom{p-k}{k-0}\right)}{M} = \frac{\log\left(\frac{\prod_{i=1}^{l}(k-i+1)^2}{l!\prod_{i=1}^l(p-2k+i)}\right)}{M}.
    \]
    Thus, applying the mean value theorem on $f$ restricted to the line connecting $(y(0), 0)$ to $(y(x),x)$, we have for some $y^* \in (y(0),y(x))$ and $x^* \in (0,x)$ that for $l=\floor{xk},$
    \[\left[\frac{\der}{\der y}f(y,x)\right]\bigg{|}_{(x,y) = (x^*,y^*)} (y(x) - y(0)) + \left[\frac{\der}{\der x}f(y,x)\right]\bigg{|}_{(x,y) = (x^*, y^*)}x = \frac{\log\left(\frac{\prod_{i=1}^{l}(k-i+1)^2}{l!\prod_{i=1}^l(p-2k+i)}\right)}{M}\label{eq:MVT}\]
    As $y(x)$ is continuously differentiable, we have $y^* = y(0) + O_x(1)$. We can then calculate that
    \begin{align}
        \frac{\der}{\der x}f(y,x) &= \left(\frac{1-y}{1-s(x)} - \frac{y}{s(x)}\right)s'(x) \\
        &\qquad + (1-y)\left( \frac{C_a}{1-y}\log\left(\frac{\frac{C_a x}{1-y}(1-r(x))}{\left(1-\frac{C_a x}{1-y}\right)r(x)} \right) + \left(\frac{1-\frac{C_a x}{1-y}}{1-r(x)} - \frac{\frac{C_a x}{1-y}}{r(x)} \right)r'(x)\right)\\
        &= \left(\frac{1-y}{1-s(x)} - \frac{y}{s(x)}\right)s'(x) \\
        &\qquad + C_a\log\left(\frac{x}{r(x)}\frac{C_a(1-r(x))}{(1-y)\left(1-\frac{C_a x}{1-y}\right)} \right) + r'(x)\left(\frac{1-y-C_a x}{1-r(x)} - C_a\frac{x}{r(x)} \right)\label{eq:derWRTx}
    \end{align}
    As $s(x) = 1-2^{x-1}$ and $r(x) = 4 \cdot 2^{-x}(1-2^{-x})$ are both continuously twice differentiable we have that for any $x^* \in [0,x]$ that $s(x^*) = s(0) + O_x(1)$, $r(x^*) = r(0) + O_x(1)$, $r'(x^*) = r'(0) + O_x(1)$ and $s'(x^*) = s'(0) + O_x(1)$. We also have that $r(x^*) = r(0) + x^*r'(0) + o_{x}(1)$, meaning that 
    \[\frac{x^*}{r(x^*)} = \frac{x^*}{x^* \cdot r'(0) + o_{x}(1)} = \frac{1}{r'(0)} + O_x(1).\]
    Plugging all of these results into \eqref{eq:derWRTx}, for any $x \in [0,x]$ and $y \in [0,y(x)]$
    \begin{align}\begin{split}\frac{\der}{\der x}f(y^*,x^*) &= \left(\frac{1-y(0)}{1-s(0)} - \frac{y(0)}{s(0)} + O_x(1)\right)(s'(0) + O_x(1)) \\
        &\qquad + C_a \log\left(\left(\frac{1}{r'(0)} + O_x(1)\right)\frac{C_a}{1-y(0)}(1-r(0)) + O_x(1) \right) \\
        &\qquad + (r'(0) + O_x(1))\left(\frac{1-y(0) + O_x(1)}{1-r(0)} - \frac{C_a}{r'(0)} + O_x(1) \right)\end{split}\\
        \begin{split}&= \left(\frac{1-y(0)}{1-s(0)} - \frac{y(0)}{s(0)}\right)s'(0)  \\
        &\qquad + C_a \log\left(\frac{C_a(1-r(0))}{r'(0)(1-y(0))}\right)+ r'(0)\left(\frac{1-y(0)}{1-r(0)} - \frac{C_a}{r'(0)}\right)+ O_x(1)\end{split}
        \end{align}
    As $s(0) = 1/2, r(0) = 0, r'(0) = 4\log(2), s'(0) = -\log(2)/2$, we have that
    \[\begin{split}
    \frac{\der}{\der x}f(y^*,x^*) &= -\log(2)\left(1-2y(0)\right)  + C_a \log\left(\frac{C_a}{4\log(2)(1-y(0))}\right)  \\
        &\qquad  + 4\log(2)\left(1-y(0) - \frac{C_a}{4\log(2)}\right) + O_x(1).
    \end{split}\]
    Substituting back $C_a = 2a\log(2)$ gives,
    \[\label{eq:derX}\begin{split}
        \frac{\der}{\der x}f(y^*,x^*) &= -\log(2)\left(1-2y(0)\right) + 2a\log(2) \log\left(\frac{a}{2(1-y(0))}\right)  \\
        &\qquad  + 4\log(2)\left(1 - y(0) - \frac{a}{2}\right) + O_x(1).
    \end{split}\]
    Next, we find the derivative of $f$ with respect to $y$,
    \begin{align}
        \frac{\der}{\der y}f(y,x) &= (1-y)\left[\frac{\der}{\der y} D\left(\frac{C_a x}{1-y}\|\|r(x)\right)\right] - D\left(\frac{C_a x}{1-y}\|\|r(x)\right) + \frac{\der}{\der y} D(y||s(x))\\
        \begin{split}&= (1-y)\frac{C_a x}{(1-y)^2}\log\left(\frac{\frac{C_a}{1-y}(1-r(x))}{\left(1-\frac{C_ax}{1-y}\right)}\frac{x}{r(x)}\right) - D\left(\frac{C_a x}{1-y}\|\|r(x) \right) + \log\left(\frac{y(1-s(x))}{(1-y)s(x)} \right)\\
    &= x\frac{C_a}{1-y}\log\left( \frac{C_a(1-r(x))}{(1-y)\left(1-\frac{C_a}{1-y} \right)}\frac{x}{r(x)}\right) - D\left(\frac{C_a x}{1-y}\|\|r(x)\right) + \log\left(\frac{y(1-s(x))}{(1-y)s(x)} \right).\end{split}
    \end{align}
    Similar to the derivative with respect to $x$, we consider $y^* = y(0) + O_x(1)$, $s(x^*) = s(0) + O_x(1)$, $r(x^*) = r(0) + r'(0)x + o_x(1)$, $x^*/r(x^*) = 1/(r'(0)) + o_x(1)$, $r(0) = 0$, $r'(0) = 4\log(2)$, $s(0) = 1/2$, and $\frac{C_a x}{1-y} = \frac{C_ax}{1-y(0)} + o_x(1)$. Furthermore, denoting $x^* = C_* x$ for some $C_* \in (0,1)$, by the definition of $o_x(1)$, for any sufficiently small $\epsilon > 0$, we have that as $x \downarrow 0$,
    \[D\left(\frac{C_a}{1-y(0)} x^* + o_x(1) ||4\log(2)x^* + o_x(1) \right) \leq\!\!\!\!\!\max_{\epsilon_1, \epsilon_2 \in [-\epsilon, \epsilon]}\!\!\!\!\!D\left(\frac{C_a}{1-y(0)}(C_* + \epsilon_1)x||4\log(2)(C_* + \epsilon_{2})x\right) \]
    By the monotonicity of KL divergence, a single point of $(\epsilon_1, \epsilon_2)$ on the boundary of $-[\epsilon, \epsilon]$ will be the maximizer, for such a point we invoke Lemma \ref{lem:KLLim} to give that \[D\left(\frac{C_a}{1-y(0)} x^* + o_x(1) ||4\log(2)x^* + o_x(1) \right) = O_x(1).\] Using the above collection of facts, we calculate
    \begin{align}
    \begin{split}\frac{\partial}{\partial y}f(y^*,x^*)&= O_x(1)\left[\frac{C_a}{1-y(0)}\log\left(\frac{C_a}{4\log(2)(1-y(0))}\right) + O_x(1)\right] \\
    &\qquad - D\left(\frac{C_a}{1-y(0)} x^* + O_x(1) || 4\log(2)x^* + O_x(1) \right) + \log\left(\frac{y(0)}{1-y(0)}\right) + O_x(1)\end{split}\\
    &= \log\left(\frac{y(0)}{1-y(0)}\right) + O_x(1)\label{eq:DerY}.
    \end{align}
    Utilizing \eqref{eq:derX} and \eqref{eq:DerY} in \eqref{eq:MVT} gives
    \begin{align}
         \frac{\log\left(\frac{\prod_{i=1}^{l}(k-i+1)^2}{l!\prod_{i=1}^l(p-2k+i)}\right)}{M} &= \left[\log\left(\frac{y(0)}{1-y(0)}\right) + O_x(1)\right] (y(x) - y(0)) + \bigg{[}-\log(2)\left(1-2y(0)\right) \\
         &\qquad + 2a\log(2) \log\left(\frac{a}{2(1-y(0))}\right) + 4\log(2)\left(1 - y(0) - \frac{a}{2}\right) + O_x(1)\bigg{]}x,
    \end{align}
    and upon a rearrangement of terms,
    \[\label{eq:boundDiff}\begin{split}
        \hspace{-1cm}&\left(\log\left( \frac{y(0)}{1-y(0)}\right) + O_x(1) \right)(y(x) - y(0))\\ &= \frac{\log\left(\frac{\prod_{i=1}^{l}(k-i+1)^2}{l!\prod_{i=1}^l(p-2k+i)}\right)}{M} + x\bigg{[}\log(2)\left(1-2y(0)\right) \\
        &\qquad - 2a\log(2) \log\left(\frac{a}{2(1-y(0))}\right) - 4\log(2)\left(1 - y(0) - \frac{a}{2}\right)\bigg{]} + o_x(1).
    \end{split}\]
    Analyzing the combinatorial term via direct algebraic manipulations and recalling $l = \floor{xk}$, gives
    \[\frac{\log\left(\frac{\prod_{i=1}^{l}(k-i+1)^2}{l!\prod_{i=1}^l(p-2k+i)}\right)}{M} \leq \frac{\log\left(\frac{k^{2l}}{(p-2k)^l}\right)}{M} \leq x\left[\frac{k}{M}\log\left(\frac{k^2}{p}\right) - \log\left(1-\frac{2k}{p}\right)\right] \leq x\left[-\frac{k}{M}\log\left(\frac{p}{k^2}\right)\right] + o_x(1).
    \]
    Combining this display with \eqref{eq:boundDiff}, we have
    \[\label{eq:preDerEquation}\hspace{-2cm}\begin{split}
    &\left(\log\left( \frac{y(0)}{1-y(0)}\right) + O_x(1) \right)(y(x) - y(0)) \\ 
    &\leq x\bigg{[}-\frac{k}{M}\log\left(\frac{p}{k^2}\right) + \log(2)\left(1-2y(0)\right) \\
        &\qquad- 2a\log(2) \log\left(\frac{a}{2(1-y(0))}\right)  - 4\log(2)\left(1 - y(0) - \frac{a}{2}\right)\bigg{]} + o_x(1).
    \end{split}\]

 Now by constraint \eqref{eq:s:constraint}, we have $y(0) < 1/2$. Moreover, using $H_C > 0$ (as $C \in (1,2)$) and the conditioned event \eqref{eq:conditionH_C} we have that $y(0) > 0$ for sufficiently large $n$. Thus, for large enough $n$, there exists a small enough $x > 0$ and constant $\Co{y(0)} > 0$ such that, $-\infty < \left(\log\left( \frac{y(0)}{1-y(0)}\right) + O_x(1) \right) \leq -\Co{y(0)}$. Hence, for any sufficiently small (yet non-vanishing) $x=\Omega(1)$,
    rearranging terms in \eqref{eq:preDerEquation} gives a.a.s. as \ngrow that
    \[\begin{split}
    \label{eq:derequation}y(x) - y(0) &\geq \frac{1}{\left(\log\left( \frac{y(0)}{1-y(0)}\right) + O_x(1) \right)}x\bigg{[}-\frac{k}{M}\log\left(\frac{p}{k^2}\right) + \log(2)\left(1-2y(0)\right)\\
    &\qquad - 2a\log(2) \log\left(\frac{a}{2(1-y(0))}\right) - 4\log(2)\left(1 - y(0) - \frac{a}{2}\right)\bigg{]} + o_x(1).
    \end{split}\]
    Next, we evaluate the limiting values of both $y(0) = H_C + o(1)$ and $\frac{k}{M}\log(\frac{p}{k^2})$ by using the conditioned event \eqref{eq:conditionH_C}. We have that
    \begin{align}
        \frac{p}{k^2} &\geq \frac{(1-k^{-\err})n\left(\frac{k}{n}\right)^{(1 + k^{-\err})C/2}}{k^2}\\
        &= (1-k^{-\err})n^{1 - (1+k^{-\err})C/2}k^{(1+k^{-\err})C/2 - 2}\\
        &\geq (1-k^{-\err})n^{1 - (1+k^{-\err})C/2}n^{\alpha[(1+k^{-\err})C/2 - 2]}(1-n^{-\alpha})^{(1+k^{-\err})C/2 - 2}\\
        &= (1-k^{-\err})n^{1 - (1+k^{-\err})C/2 + \alpha[(1+k^{-\err})C/2 - 2]}(1-n^{-\alpha})^{(1+k^{-\err})C/2 - 2},
    \end{align}
    and thus, 
    \begin{align}
        \log(p/k^2) &\geq \log((1-k^{-\err})n^{1 - (1+k^{-\err})C/2 + \alpha[(1+k^{-\err})C/2 - 2]}(1-n^{-\alpha})^{(1+k^{-\err})C/2 - 2})\\
        &\geq \log(1-k^{-\err}) + (1 - (1+k^{-\err})C/2 + \alpha[(1+k^{-\err})C/2 - 2]) \log(n) \\
        &\qquad+ ((1+k^{-\err})C/2 - 2)\log(1-n^{-\alpha})\\
        &= (1-C/2 + \alpha C/2 - 2\alpha)\log(n) + O(n^{-\alpha}) + O(k^{-\err}\log(n)).\label{eq:pordBound}
    \end{align}
    Moreover, using Lemma \ref{lem:Mbounds} and the conditioned event \eqref{eq:conditionH_C},
    \begin{align}
        \frac{k}{M} \geq \frac{k}{(1+N^{-\err})\left(\frac{Ck(1-\alpha)\log(n)}{2\log(2)} + O(k)\right)} \geq \frac{2\log(2)}{C(1-\alpha)\log(n) + O(1)}.\label{eq:MordBound}
    \end{align}
    Combining \eqref{eq:pordBound} and \eqref{eq:MordBound} gives,
    \begin{align}
    \frac{k}{M}\log\left(\frac{p}{k^2}\right) &\geq 2\log(2)\frac{(1-C/2 + \alpha C/2 - 2\alpha)\log(n) + O(n^{-\alpha}) + O(k^{-\err}\log(n))}{C(1-\alpha)\log(n) + O(1)}\\
    &= 2\log(2)\frac{(1-C/2)(1-\alpha) - \alpha}{C(1-\alpha)} + o(1)\\
    &= 2\log(2)\left[\frac{1-\frac{C}{2}}{C} - \frac{\alpha}{C(1-\alpha)}\right] + o(1).\label{eq:CombEta}
    \end{align}
    Plugging in both \eqref{eq:CombEta} and $H_C = y(0) + o(1)$ into condition \eqref{eq:derequation} gives,
    \begin{align}
    \begin{split}\label{eq:posDerCondition}
      y(x) - y(0)&\geq x\bigg{[}2\log(2)\left[\frac{1-\frac{C}{2}}{C} - \frac{\alpha}{C(1-\alpha)}\right] - \log(2)\left(1-2H_C\right) \\
    &\qquad + 2a\log(2) \log\left(\frac{a}{2(1-H_C)}\right) + 4\log(2)\left(1 - H_C - \frac{a}{2}\right) + o(1)\bigg{]} + o_x(1).
    \end{split}
    \end{align}

     Now directly combining the above equation with Assumption \ref{as:der_0}, implies, for a sufficiently large $n$, that there exists a $\Co{Inc}$ such that for all sufficiently small $x$ with $\epsilon'\geq x>0$ it holds $y(x) - y(0) \geq \Co{Inc}x$. We then choose $\delta_1 = \Co{Inc}$ and set $\epsilon_1$ to be our sufficiently small choice of $x$. Observing that this argument will then hold for any $l/k \leq \epsilon_1 \leq \epsilon'$ completes the proof.

    %Thus, for sufficiently large $n$ (and correspondingly large $k$) and sufficiently small $x$ we have that $\max(O(k^{-3/16}) + o(1),  o_x(1)) \leq z$ for any $z > 0$. Meaning that the above equation becomes,
    %\[\begin{split}
    %    y(x) - y(0)&\geq x\bigg{[}2\log(2)\left[\frac{1-\frac{C}{2}}{C} - \frac{\alpha}{C(1-\alpha)}\right] - \log(2)\left(1-2H_C\right) \\
    %  &\qquad + 2a\log(2) \log\left(\frac{a}{2(1-H_C)}\right) + 4\log(2)\left(1 - H_C - \frac{a}{2}\right) - z\bigg{]}.
    %\end{split}\]

    %Thus, we are left to show that $2\log(2)\left[\frac{1-\frac{C}{2}}{C} - \frac{\alpha}{C(1-\alpha)}\right] - \log(2)\left(1-2H_C\right)  + 2a\log(2) \log\left(\frac{a}{2(1-H_C)}\right) + 4\log(2)(1 - H_C - \frac{a}{2}) > z$, for a sufficiently small $z > 0$. By rearranging terms we can see that this is equivalent to Assumption \ref{as:der_0}, allowing us to conclude that there exists a $\Co{Inc}$ where, for a sufficiently small $x > 0$, that results in $y(x) - y(0) \geq \Co{Inc}x$. We then choose $\delta_1 = \Co{Inc}$ and set $\epsilon_1$ to be our sufficiently small choice of $x$, upon observing that this argument will then hold for any $l/k \leq \epsilon_1$ we complete the proof.
    \end{proof}

%---------------------------

\section{The Proof Of Theorem \ref{thm:QualInc_0}}\label{sec:q}

\begin{proof}[Proof of Theorem \ref{thm:QualInc_0}]
    Using Assumption \ref{as:FMFExists} we invoke Lemma \ref{lem:FMFnear0} to conclude the existence of an $\epsilon > 0$ such that the first moment function $y(x)$ exists for all $x \in [0,\epsilon]$. 

    Setting $\epsilon_1 = \epsilon/3$ and $\epsilon_2 = 2\epsilon/3$, Assumption \ref{as:FMFExists}, Assumption \ref{as:der_0} and Assumption \ref{as:alphaC_0} allows us to invoke Theorem \ref{lem:FMFnonMono_0} and Theorem \ref{thm:combinedBound} to give for some $\Co{Diff} > 0$ that, for any $l \in \{l:l/k \in (\epsilon_1, \epsilon_2)\}$, a.a.s. as \ngrow\!,
    \begin{align}
        \r{Using Theorem \ref{thm:combinedBound},}{\phi(l) - \phi(0) \;&{\geq\;} y(l/k) - y(0) - o(1)}\\
        \r{Using Theorem \ref{lem:FMFnonMono_0},}{&{\geq\;} \Co{Diff}\frac{l}{k}}
    \end{align}
    We can set $\delta = \Co{Diff}\epsilon_1 > 0$ to prove that for $l/k \in [\epsilon_1, \epsilon_2]$, we have $\phi(l) - \phi(0) \geq \delta$ a.a.s. as \ngrow\!. 
    Hence we can conclude by Theorem \ref{thm:combinedBound} that a.a.s. as \ngrow\! $$\min_{l:l/k \in [\epsilon_1, \epsilon_2]}\phi(l) \geq \phi(0) + \delta \geq y(0) + \delta/2.$$ Furthermore, choosing $\zeta_1 = \epsilon_1$, $\zeta_2 = \epsilon_2$ and $r = y(0) + \delta / 2$ gives the b-OGP since $\phi(k)=0$ and therefore, a.a.s. as \ngrow\!, it holds $\max\{\phi(0),\phi(k)\}=\phi(0) \leq r.$ 
\end{proof}

\subsection{Existence Of A Pair Satisfying Theorem \ref{thm:QualInc_0}}\label{subsec:qual}
     Recall that in the Figures \ref{fig:310}, \ref{fig:JustDerCond}, \ref{fig:PhiCond} we plotted the regions of $\alpha$ and $C$ such that the required Assumptions \ref{as:FMFExists}, \ref{as:der_0}, \ref{as:alphaC_0} for Theorem \ref{thm:QualInc_0} hold. In particular, numerically we can conclude that they holds for all $1<C<C^* \approx 1.4719$ if $\alpha>0$ is sufficiently small.
     
     In this small section, we prove analytically that if $\alpha>0$ is small enough and $C>1$ is sufficiently close to $1$ then Assumptions \ref{as:FMFExists}, \ref{as:der_0} and \ref{as:alphaC_0} hold.
    
    %Although such a limit will lead to $H_C \conv{} 0$, we can use the continuity of these conditions and consider a $(C,\alpha)$ sufficiently close to $(0,1)$ to have both $H_C > 0$ and the conditions of Theorem \ref{thm:QualInc_0} satisfied. 
    
    For $C \conv{} 1^+$ and $\alpha \conv{} 0^+$ it can easily be checked that:
    \begin{enumerate}
        \item for any $\epsilon > 0$, $a = 1 + \epsilon$ is a valid choice of $a$, as it will be in the set \eqref{eq:a:set}.
        \item $H_C \conv{} 0^+$, where $H_C$ is from Definition \ref{def:H_C}.
    \end{enumerate}
    
    \textbf{Assumption }\ref{as:FMFExists}: The first condition holds as we can choose $(a, \Co{roomR}, \Co{roomI})$ sufficiently close to $(1,0,0)$ respectively so that $D\left(1-\frac{a}{2(1-\Co{roomR})}\big{|}\big{|}\frac{1}{2}\right) \leq \delta$, for any desired $\delta > 0$. In particular, for any $C \in (1,2)$ we can guarantee $D\left(1-\frac{a}{2(1-\Co{roomR})}\big{|}\big{|}\frac{1}{2}\right) \leq (1-\Co{roomI})\frac{2-C}{C}\log(2)$. The second condition of $\frac{a}{2(1-\Co{roomR})} < 1$ is trivial since $a$ can be made arbitrarily close to one and we can choose $\Co{roomR} < 1/2$.
    
    \textbf{Assumption }\ref{as:der_0}: This assumption requires that $C$, $\alpha$, $a$ satisfy
    \[C < \frac{1-\frac{\alpha}{1-\alpha}}{a\left(1 - \log\left(\frac{a}{2(1-H_C)}\right)\right) + H_C - 1}.\]
    By continuity it suffices to plug in $C = 1$, $\alpha = 0$, $a = 1$ and $H_C = 0$, this inequality becomes $1< \frac{1}{(1+\log(2)) - 1}$ which obviously holds.

    \textbf{Assumption }\ref{as:alphaC_0}: This assumption requires that $\alpha < 28/1000$ which clearly we can satisfy. It also needs $C < 2\frac{1-2\alpha}{1-\alpha}$, which holds when $C = 1$ and $\alpha = 0$. We also need to satisfy the other two conditions:
    \[\hspace{-1cm}
    C\bigg{[}(1-H_C)(1-\log(2(1-H_C))) - \frac{h_2(H_C)}{2}-7\sqrt{\frac{\alpha}{1-\alpha}}\left(\frac{1}{2}\log(2(1-H_C)) \right)\bigg{]} > 4 \alpha/(1-\alpha)\]
    and 
    \[C\left[\frac{h_2(H_c)}{2} + \frac{1}{2}\log\left(\frac{1-H_C}{H_C} \right)\left( 1-H_C - 5\sqrt{\frac{\alpha}{1-\alpha}}\right) +H_C -1 \right] > 3 \alpha / (1-\alpha).\]
    By continuity, suffices to plug in $C = 1$, $\alpha = 0$, $a = 1$ and $H_C = 0$. Then the first inequality becomes $1-\log(2) > 0$ and the left-hand side of the second inequality becomes unbounded (as $H_C \conv{} 0$), meaning both these inequalities hold.

    Hence, we conclude the following statement. 
    
    \begin{theorem}\label{thm:01}
        There exists a $\delta > 0$ such that, for all $\alpha \leq \delta$ and $C \leq 1 + \delta$, the conditions of Theorem \ref{thm:QualInc_0} are satisfied for a sufficiently small $\Co{roomR}>0$.
    \end{theorem}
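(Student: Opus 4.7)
The plan is to verify that at the degenerate boundary point $(\alpha, C) = (0, 1)$, together with the natural auxiliary choices $a = 1$ and $\Co{roomR} = \Co{roomI} = 0$, each of Assumptions \ref{as:FMFExists}, \ref{as:der_0}, \ref{as:alphaC_0} either holds as a strict inequality or can be made to hold strictly by a small perturbation. Each assumption's defining inequality is continuous in the parameters $(\alpha, C, a, \Co{roomR}, \Co{roomI}, H_C)$ in a neighborhood of this limit point, and $H_C = h_2^{-1}(2-2/C)$ is itself continuous in $C$ with $H_C \to 0$ as $C \to 1^+$. Combining these two observations, all three assumptions will hold jointly on some non-empty neighborhood $\{(\alpha, C) : \alpha \leq \delta,\ C \leq 1 + \delta\}$, after fixing $a = 1 + \eta$ and $\Co{roomR} = \eta$ for a sufficiently small auxiliary $\eta > 0$ (which also guarantees that the choice $a$ satisfies the set condition \eqref{eq:a:set} for all $\alpha$ sufficiently small).

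First I would handle Assumption \ref{as:FMFExists}. Its second condition $a / (2(1 - \Co{roomR})) < 1$ is immediate for $a$ close to $1$ and $\Co{roomR} < 1/2$. For the first condition, I would note that $D(1 - a/(2(1-\Co{roomR})) \| 1/2) \to D(1/2 \| 1/2) = 0$ as $(a, \Co{roomR}) \to (1, 0)$, while the right-hand side $(1 - \Co{roomI})(2-C)/C \cdot \log 2$ tends to the strictly positive limit $(2-C)/C \cdot \log 2 > 0$ for any $C < 2$, so by continuity the inequality holds strictly. Next, for Assumption \ref{as:der_0}, I would substitute $(C, \alpha, a, H_C) = (1, 0, 1, 0)$ and observe that the right-hand side evaluates to $1/((1 + \log 2) - 1) = 1/\log 2 > 1$, so the condition $C < \mathrm{RHS}$ is strict at the limit point and hence persists under small perturbation of $(C, \alpha, a, H_C)$, using continuity of $H_C$ in $C$.

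For Assumption \ref{as:alphaC_0}, the first two conditions $\alpha < 28/1000$ and $C < 2(1-2\alpha)/(1-\alpha)$ clearly hold for $\alpha$ and $C - 1$ sufficiently small. The inequality \eqref{eq:2mmCond1_0} evaluates, in the limit $(C, \alpha, H_C) \to (1, 0, 0)$, to $1 \cdot [1 \cdot (1 - \log 2) - 0 - 0] > 0$, which is strict, and its left-hand side is continuous in $(\alpha, C, H_C)$, so it extends to a neighborhood. The inequality \eqref{eq:2mmCond2_0} is the only one where care is needed: its left-hand side contains a factor $\frac{1}{2}\log((1-H_C)/H_C)$ which diverges to $+\infty$ as $H_C \to 0^+$, while the rest of the left-hand side and the right-hand side remain bounded. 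Hence this inequality is satisfied with arbitrarily large margin at $C = 1$ and, by continuity in $C$ on any interval $[1, 1+\delta]$ on which $H_C$ stays bounded away from $1/2$, it persists in a one-sided neighborhood.

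The main (mild) obstacle is that the auxiliary constant $a$ must be chosen to satisfy \eqref{eq:a:set}, i.e., $\log(2) C (a \log a - a + 1) > \alpha/(1-\alpha)$. Since the left-hand side vanishes at $a = 1$, I would instead take $a = 1 + \eta$ for a small fixed $\eta > 0$, which makes the left-hand side a strictly positive constant; choosing $\delta < \eta$ small enough so that $\alpha/(1-\alpha)$ is below this constant validates the choice of $a$. After fixing this $\eta$ together with $\Co{roomR}, \Co{roomI}$ small enough to maintain the strict inequalities verified above, shrinking $\delta$ further if necessary yields a single $\delta > 0$ on which all assumptions hold simultaneously, completing the proof.
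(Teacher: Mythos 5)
Your proposal is correct and takes essentially the same approach as the paper: both arguments evaluate the defining inequalities of Assumptions~\ref{as:FMFExists},~\ref{as:der_0},~\ref{as:alphaC_0} at the degenerate limit point $(\alpha, C, a, H_C) = (0, 1, 1, 0)$, check that each holds with a strict margin (with \eqref{eq:2mmCond2_0} diverging favorably because of the $\frac{1}{2}\log((1-H_C)/H_C)$ factor), and conclude by continuity. Your treatment is slightly more careful than the paper's in spelling out that $a$ must be taken strictly above $1$ (as the expression in \eqref{eq:a:set} vanishes at $a=1$), and then shrinking $\delta$ to keep $\alpha/(1-\alpha)$ below the resulting positive constant, but the paper's phrase ``for any $\epsilon>0$, $a = 1+\epsilon$ is a valid choice'' is making the same point implicitly.
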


%----------------------------

\section{The Proof Of Corollary \ref{thm:mix}}\label{sec:mix_pf}

Before we give the proof of Corollary \ref{thm:mix}, we introduce the definition of a $T$-bottleneck.

\begin{definition}\label{def:Tbottle}
    Given a Markov chain with stationary distribution $\pi$, define the set $\scr{B}$ as a $T$-bottleneck if
    \[\frac{\pi(\partial \scr{B})}{\pi(\scr{B})} \leq 1/T,\]
    where $\partial \scr{B} \coloneqq \{b \in \scr{B}: b \text{ can transition to }\scr{B}^C \text{ in one step}\}$
\end{definition}

\begin{proof}[Proof of Corollary \ref{thm:mix}]
The stationary distribution is proportional to $e^{-\beta H(\sigma)}$. Consider the choice of $\beta \geq  C_{\epsilon}k\log(p/k)$ with $C_{\epsilon} > 0$ to be chosen later and the choice of $\epsilon_1 > 0$ inducing the event \[\scr{B} := \{\sigma: |\sigma| = k ,  |\sigma \cap \sigma^*| \leq \epsilon_1 k\}\]with the value of $\epsilon_1$ being the value guaranteed by Theorem \ref{thm:QualInc_0}, so that for $l =\epsilon_1 k$ it holds $\phi(l)\geq \phi(0)+\delta$ for some constant $\delta>0$ given also in Theorem \ref{thm:QualInc_0}. Note that, without loss of generality, by slightly perturbing $\epsilon_1$ if necessary, we assume here $\epsilon_1 k \in \mathbb{Z}$ and of course $\epsilon_1=\Omega(1).$

%First, we will show that $\pi_\beta(\scr{B}) \leq 1/2$. Using $H(\sigma^*) = 0$ in line \eqref{eq:sigmaIssigma*} and Theorem \ref{thm:QualInc_0}, we have
%\begin{align}
%    \P(\scr{B}) &= \frac{\sum_{ |\sigma \cap \sigma^* | \leq \epsilon_1 k} e^{-\beta H(\sigma)}}{\sum_{\sigma} e^{-\beta H(\sigma)}} \leq \frac{\sum_{| \sigma \cap \sigma^* | \leq \epsilon_1 k} e^{-\beta H(\sigma)}}{1}\label{eq:sigmaIssigma*}\\
  %  &\leq \sum_{l : l/k \in [0,\epsilon_1]}\sum_{\sigma: | \sigma \cap \sigma^* | = l}e^{-\beta H(\sigma)} \leq \epsilon_1 k \binom{p}{k} e^{-\beta \delta} \leq \epsilon_2 k \left( e\frac{p}{k}\right)^k e^{-\beta \delta}\\
  %  &= \epsilon_2 k e^{k\log(ep/k) - \beta \delta}.
%\end{align}
%We can consider $C_{\epsilon} \geq \frac{2.01}{\delta}$, and thus $\beta \geq \frac{2.01}{\delta}k\log(e\frac{p}{k})$, giving
%\[\lim_{n \conv{} \infty} \P(\scr{B}) \leq \lim_{k \conv{} \infty} \epsilon_2 k e^{-k\log(ep/k)} = 0.\]

%Thus, for sufficiently large $n$ (and thus $k$), $\P(\scr{B}) \leq 1/2$. 

As $\partial \scr{B} = \left\{\sigma: \frac{|\sigma \cap \sigma^*|}{k} = \epsilon_1\right\}$, we have that a.a.s. as \ngrow\!
\begin{align}
\frac{\pi_\beta(\partial \scr{B})}{\pi_\beta(\scr{B})}&= \frac{\pi_\beta\left(\frac{|\sigma \cap \sigma^*|}{k} =\epsilon_1\right)}{\pi_\beta\left( \frac{|\sigma \cap \sigma^*|}{k} \leq \epsilon_1\right)} = \frac{\sum_{\sigma: | \sigma \cap \sigma^* | =\epsilon_1 k}e^{-\beta H(\sigma)}}{\sum_{\sigma:  | \sigma \cap \sigma^* | \leq \epsilon_1 k}e^{-\beta H(\sigma)}} \leq \frac{\binom{k}{\epsilon_1 k}\binom{p-k}{k-\epsilon_1 k} e^{-\beta \phi(\epsilon_1 k)}}{e^{-\beta \phi(0)}} \leq \binom{k}{\epsilon_1 k}\binom{p-k}{k-\epsilon_1 k} 
 e^{-\beta \delta}.\label{eq:boundryRatio}
\end{align}
 We can then calculate, 
\begin{align}
    \binom{k}{\epsilon_1 k}\binom{p-k}{k-\epsilon_1 k} &\leq \left( e\epsilon_1\right)^{\epsilon_1 k}\left( e\frac{p-k}{k(1-\epsilon_1)}\right)^{k(1-\epsilon_1)} \\
    &= \exp\left(\epsilon_1 k\log\left( e\epsilon_1 \right) + (1-\epsilon_1)k\left( e\frac{p-k}{k-l}\right)\right)
    = e^{(1+o(1))(1-\epsilon_1)k\log(p/k)}.\label{eq:combCalc}
\end{align}
Plugging in \eqref{eq:combCalc} into \eqref{eq:boundryRatio}, we get that a.a.s. as \ngrow\!,
\begin{align}
    \frac{\pi_\beta\left(\frac{|\sigma \cap \sigma^*|}{k}=\epsilon_1\right)}{\pi_\beta\left( \frac{|\sigma \cap \sigma^*|}{k} \leq \epsilon_1\right)} & \leq 2\exp(-\beta \delta + (1+o(1))(1-\epsilon_1)k\log(p/k)).
\end{align}
Now setting $\beta = C_\epsilon k\log(p/k)$ and choosing\footnote{Note that this value of $\delta$, guaranteed by Theorem \ref{thm:QualInc_0}, does rely on the value of $\epsilon_1$} $C_{\epsilon}$ such that $C_\epsilon \geq \frac{2.01}{\delta}$ gives that, for sufficiently large $n$,
\[\frac{\pi_\beta\left(\frac{|\sigma \cap \sigma^*|}{k} =\epsilon_1\right)}{\pi_\beta\left( \frac{|\sigma \cap \sigma^*|}{k} \leq \epsilon_1\right)}\leq  \exp(-k\log(p/k)).\]
This demonstrates that $\scr{B}$ is a $\exp(k\log(p/k))$-bottleneck  for any $\pi_\beta$ with \[\beta \geq \frac{2.01}{ \delta}k \log(ep/k).\] Thus, using standard results (e.g., \cite[Proposition 2.2]{arous2023free}) there exists an initialization for which the Markov chain requires at least $\exp(\Omega(k \log(p/k)))$ iterations to reach any $k$-subset $\sigma$ in $\mathcal{B}^c$, that is with $| \sigma \cap \sigma^* | \geq \epsilon_1 k.$
\end{proof}

\newpage
\medskip
\bibliographystyle{alpha}
\bibliography{refs}

\newpage
\medskip
\appendix

\section{Deferred Lemmas And Proofs From Prior Results}

\subsection{Commonly Used Auxiliary Lemmas And Proofs}\label{subsec:commonProofs}

\begin{proof}[Proof of Lemma \ref{lem:M:limit}]
We prove the upper bound for the statement, the lower bound follows similarly.

Recall that $M$, the number of positive tests, is a $\Bi{N, 1/2}$ random variable, meaning that $\E[M] = \frac{N}{2}$. Using a standard Chernoff bound on a binomial random variable, for any $\delta> 0$, we have that
\[\P\left(M \geq \frac{N}{2}(1 + \delta)\right) \leq e^{-\delta^2N/6}\]
For the left-hand side of the above equation to go to zero as $n$ (and thus $N$) grows we can set $\delta = N^{-\eta}$ for any $\eta \in (0,1/2)$. 
\end{proof}

\begin{proof}[Proof of Lemma \ref{lem:p:limit}] 
    We prove the lower bound for the statement, the upper bound follows similarly.

    Define the number of negative tests as $\bar{M} = N - M$. We condition on the event that $\bar{M} \leq (1+\delta)\frac{N}{2}$. Through the exact same calculation given in \cite[Section G.2]{iliopoulos2021group}, using $\alpha < 1/2$ in \eqref{eq:alpha<1/2}, $\binom{n}{k} \geq (n/k)^k$ in \eqref{eq:combBound}, and choosing $\delta = N^{-\eta}$ for $\eta \in (0,C/4) \subseteq (0,1/2)$ in \eqref{eq:delta<1/3}, we have 
    \begin{align}
        \E\left[p-k| \bar{M} \leq (1+\delta)\frac{N}{2}\right] &= (n-k)\left(1-\frac{q}{k}\right)^{\bar{M}}\\
        &= (n-k)2^{-\bar{M}/k}\\
        &\geq (n-k)2^{-\frac{C}{2}(1+\delta)\log_2\left(\frac{n}{k}\right)}\label{eq:combBound}\\
        &= (1-k/n) n \left(\frac{k}{n}\right)^{C/2(1+\delta)}\label{eq:alpha<1/2}\\
        &=\Omega\left(n^{1-(1+\delta)C/2}k^{(1+\delta)C/2}\right)\\
        &=\Omega\left(n^{1-(1+N^{-\eta})C/2}k^{(1+N^{-\eta})C/2}\right)\label{eq:delta<1/3}
    \intertext{As $C < 2$ there exits an $N$ large enough such that $(1 + N^{-\eta})C < 2$, and thus,}
        \E\left[p-k| \bar{M} \leq (1+\delta)\frac{N}{2}\right] &= \Omega(k^{C/2}).
    \end{align}
    \stepcounter{Ccnt}
  Using a standard Chernoff bound we then have that
    \[\P\left(p -k \leq (1-\delta)(1-k/n)n\left( \frac{k}{n}\right)^{\frac{C}{2}(1+\delta)}| \bar{M} \leq (1+\delta)\frac{N}{2} \right) \leq \exp\left( -\frac{\delta^2k^{C/2}}{3}\right)\label{eq:highprobP}\]
    Notice that the pre-factor in the above probability can be rewritten as
    \[(1-\delta)(1-k/n) = 1-\delta - k/n + \delta k/n.\]
    Since $\delta=N^{-\eta+o(1)}$ for some $\eta \in (0,C/4)$, $N=\Theta(k \log n)$ and $k=n^{\alpha+o(1)}$ for some $ \alpha>0$ it holds that $\delta = \omega(k^{-1/2})$. Hence, choosing $\alpha<1/3$ we have $\delta= \omega(k^{-1/2}) = \omega(n^{-2/3}) =\omega(k/n)$ and thus for some $\Co{delta} > 0$ we have that
    \[(1-\delta)(1-k/n) \geq 1-\Co{delta}\delta.\]
    Moreover, using $0 < \eta < C/4$ we have that there exists $\epsilon > 0$ such that $\eta = C/4 - \epsilon$, and thus since $N=\Theta(k \log n)$
    \[\delta^2k^{C/2} = N^{-C/2 + 2\epsilon}k^{C/2}  = \omega(k^\epsilon).\]
    This allows us to conclude from \eqref{eq:highprobP} that a.a.s. as \ngrow, $p -k \leq (1-\delta)(1-k/n)n\left( \frac{k}{n}\right)^{\frac{C}{2}(1+\delta)}$ conditioned on $\bar{M} \leq (1+\delta)\frac{N}{2}$. Using Lemma \ref{lem:M:limit}, the event that $\{\bar{M} \leq (1+\delta)\frac{N}{2}\}$ is a.a.s. as \ngrow for any choice of $\delta = N^{-\eta}$ for $\eta \in [0,1/2)$, giving the a.a.s. as \ngrow bound without the conditioning on $\bar{M}$. Moreover, we can get the bound from the statement from the Lemma with $k^{-\eta}$ instead of $N^{-\eta}$ by observing that for $\eta \in (0,C/4)$, we have $k^{-\eta} = \Omega(N^{-\eta})$. 
\end{proof}

\begin{lemma}\label{lem:KLBinomial}
    For $X \sim \Bi{n,p}$ it holds
    \[\frac{1}{3 \sqrt{n}}e^{-nD(k/n||p)} \leq \P(X \leq k) \leq e^{-nD(k/n||p)}\] and
    \[\frac{1}{3 \sqrt{n}}e^{-nD(k/n||p)} \leq \P(X \geq k) \leq e^{-nD(k/n||p)}\]
\end{lemma}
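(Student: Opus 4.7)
My plan is to treat the two inequalities in the standard way: the upper bounds are the classical Chernoff--Cram\'er bounds, while the matching lower bounds come from isolating a single dominant term near the threshold and invoking Stirling's formula.

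For the upper bound on $\P(X\leq k)$ (assuming $k\leq np$; otherwise the estimate is trivial since the right-hand side exceeds $1$), I would use the exponential moment bound. For any $t\leq 0$,
\[
\P(X\leq k)\;\leq\; e^{-tk}\,\E[e^{tX}]\;=\;e^{-tk}\bigl(1-p+pe^{t}\bigr)^{n}.
\]
Optimizing in $t\leq 0$ (the optimum occurs at $e^{t}=\frac{k(1-p)}{p(n-k)}$) and simplifying shows that the minimum value of the right-hand side is exactly $e^{-nD(k/n\,\|\,p)}$. The argument for $\P(X\geq k)$ when $k\geq np$ is symmetric, taking $t\geq 0$.

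For the lower bound I would note that $\P(X\leq k)\geq \P(X=k)$ and estimate the single term
\[
\P(X=k)\;=\;\binom{n}{k}p^{k}(1-p)^{n-k}.
\]
Writing $q=k/n$ and using the factorization
\[
\binom{n}{k}p^{k}(1-p)^{n-k}
\;=\;\binom{n}{k}q^{k}(1-q)^{n-k}\cdot e^{-nD(q\|p)},
\]
the task reduces to lower bounding the binomial point mass at its mean. Stirling's formula in the sharp form $\sqrt{2\pi n}\,(n/e)^{n}e^{1/(12n+1)}\leq n!\leq \sqrt{2\pi n}\,(n/e)^{n}e^{1/(12n)}$ yields, for $1\leq k\leq n-1$,
\[
\binom{n}{k}q^{k}(1-q)^{n-k}\;\geq\;\frac{1}{\sqrt{2\pi n\,q(1-q)}}\,\exp\!\Bigl(-\tfrac{1}{12k}-\tfrac{1}{12(n-k)}\Bigr).
\]
Since $q(1-q)\leq 1/4$, the prefactor is at least $\tfrac{2}{\sqrt{2\pi n}}\geq \tfrac{1}{\sqrt{n}}\cdot\tfrac{2}{\sqrt{2\pi}}$, and the exponential correction is bounded below by a constant uniformly in $n\geq 2$. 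Absorbing all of these constants into the crude bound $\tfrac{1}{3\sqrt{n}}$ gives the claimed inequality. The boundary cases $k\in\{0,n\}$ are handled directly, since then $\P(X=k)=p^{k}(1-p)^{n-k}=e^{-nD(k/n\|p)}\geq \tfrac{1}{3\sqrt n}e^{-nD(k/n\|p)}$ trivially. The lower bound on $\P(X\geq k)$ follows by the same argument applied to the single term $\P(X=k)$.

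I do not anticipate a real obstacle here: the only mildly delicate point is verifying that the numerical constant $1/3$ in front of $1/\sqrt{n}$ is indeed achievable uniformly in $k$, but this follows from the explicit Stirling bounds above once the endpoint cases $k\in\{0,n\}$ are separated. Since the lemma is a standard tool and is invoked in the main text only as a black-box Chernoff-type estimate (e.g.\ in line \eqref{eq:FMF:upperbound}), no further refinement is needed.
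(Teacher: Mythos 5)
The paper does not actually prove this lemma: it simply cites \cite[Lemma~4.7.2]{AshInfoTheory}, so your self-contained argument is a genuinely different (and more informative) route. The core of your proof is sound: the Chernoff upper bound with the exponential tilt optimized at $e^{t}=\frac{k(1-p)}{p(n-k)}$ indeed collapses to $e^{-nD(k/n\|p)}$, and your lower bound correctly reduces to the point mass at $k$, which after the sharp Stirling bounds gives $\binom{n}{k}q^{k}(1-q)^{n-k}\geq \frac{1}{\sqrt{2\pi n q(1-q)}}e^{-1/(12k)-1/(12(n-k))}$ with $q=k/n$. For $1\le k\le n-1$ and $n\ge 2$ this is at least $\frac{2}{\sqrt{2\pi}}e^{-1/6}\approx 0.675>\frac13$, so the constant $\frac{1}{3\sqrt n}$ is confirmed, and the endpoint cases $k\in\{0,n\}$ are handled directly as you say. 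One advantage of your approach over the bare citation is that it makes the exact prefactor $1/3$ verifiable rather than a black box.

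There is, however, a genuine error in your side remark on the trivial case. You assert that when $k>np$ the upper bound ``is trivial since the right-hand side exceeds $1$.'' This is false: $D(k/n\,\|\,p)\ge 0$ always, so $e^{-nD(k/n\|p)}\le 1$ with equality only when $k/n=p$. In fact, for $k>np$ the stated inequality $\P(X\le k)\le e^{-nD(k/n\|p)}$ \emph{fails} in general: for instance with $p=1/2$ and $k=n$ one has $\P(X\le n)=1$ while $e^{-nD(1\|1/2)}=2^{-n}<1$. The lemma as written (and as it appears in Ash) is to be read with an implicit restriction --- the first pair of inequalities is for $k\le np$ and the second for $k\ge np$; otherwise adding the two upper bounds would force $2e^{-nD(k/n\|p)}\ge \P(X\le k)+\P(X\ge k)\ge 1$, which is impossible once $k/n$ is bounded away from $p$. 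The fix is simply to delete the erroneous ``trivial case'' remark and state the range hypothesis explicitly: the Chernoff optimization over $t\le 0$ for $\P(X\le k)$ is only nondegenerate when $k\le np$, and symmetrically for $\P(X\ge k)$ with $t\ge 0$ when $k\ge np$. With that one correction, your proof is complete and correct.
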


\begin{proof}[Proof of Lemma \ref{lem:KLBinomial}]
    See \cite[Lemma 4.7.2]{AshInfoTheory}
\end{proof}
    
\begin{lemma}\label{lem:D_3}
    For any $\delta > 0$ such that $a < (1-\delta)b$, there exists a $c_0 = c_0(\delta) > 0$ such that
    \[\frac{\der}{\der a}D(a||b) < -c_0.\]
    As a consequence, when $a_0 < a$, we have
    \[D(a - a_0 || b) \geq D(a||b) +c_0a_0.\]
\end{lemma}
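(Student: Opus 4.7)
The plan is to prove both claims by a direct calculation of the derivative of the two-point KL divergence with respect to its first argument, which simplifies to a logarithm that we can then bound uniformly.

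First, using the definition $D(a\|b) = a\log(a/b) + (1-a)\log((1-a)/(1-b))$ from \eqref{eq:two_KL}, I would differentiate with respect to $a$ and simplify, obtaining
\[
\frac{\partial}{\partial a} D(a\|b) \;=\; \log\!\left(\frac{a(1-b)}{b(1-a)}\right) \;=\; \log(a/b) \;+\; \log\!\left(\frac{1-b}{1-a}\right).
\]
The constant $+1 - 1$ cancels, leaving this clean form.

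Next, under the hypothesis $a < (1-\delta) b$ (with $a,b\in(0,1)$), I would bound each log-term separately. The first summand satisfies $\log(a/b) \leq \log(1-\delta)$, which is strictly negative. The second summand $\log((1-b)/(1-a))$ is non-positive since $a < b$ implies $1-b < 1-a$. Combining the two gives
\[
\frac{\partial}{\partial a} D(a\|b) \;\leq\; \log(1-\delta) \;<\; 0,
\]
so choosing $c_0(\delta) := -\log(1-\delta) = \log(1/(1-\delta)) > 0$ yields the desired derivative bound.

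For the consequence, I would apply the fundamental theorem of calculus along the segment $t\in [a-a_0,a]$. Since every such $t$ satisfies $t \leq a < (1-\delta)b$, the derivative bound above is valid pointwise, yielding
\[
D(a\|b) - D(a-a_0 \| b) \;=\; \int_{a-a_0}^{a} \frac{\partial}{\partial t} D(t\|b)\, dt \;\leq\; -c_0 \, a_0,
\]
which rearranges to $D(a-a_0\|b) \geq D(a\|b) + c_0 a_0$, as required. There is no significant obstacle here: the lemma is essentially a one-line calculus fact dressed up for later use, and the only mild point to check is that the segment of integration remains inside the region $\{t < (1-\delta)b\}$ where the derivative bound applies, which is immediate since $t \leq a$.
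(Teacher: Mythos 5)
Your proof is correct. The paper does not give its own argument for this lemma — it simply cites \cite[Lemma D.3]{iliopoulos2021group} — so a self-contained direct proof like yours is exactly what is wanted; the computation $\partial_a D(a\|b) = \log\!\left(\frac{a(1-b)}{b(1-a)}\right)$, the bound $\log(a/b) < \log(1-\delta)$ together with $\log((1-b)/(1-a)) < 0$, and the integration over $[a-a_0,a]$ (which stays inside $\{t < (1-\delta)b\}$) are all sound. One cosmetic remark: your intermediate inequality is already strict since $a < (1-\delta)b$ is a strict hypothesis, so the choice $c_0=-\log(1-\delta)$ does produce the strict $<-c_0$ in the statement.
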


\begin{proof}[Proof of Lemma \ref{lem:D_3}]
    See \cite[Lemma D.3]{iliopoulos2021group}
\end{proof}

\begin{lemma}\label{lem:KLLim} For any two constants $\Co{KL1} > 0$ and $\Co{KL2} > 0$, we have that as $x$ approaches zero from above
    \[D(\Co{KL1}x||\Co{KL2}x) = O(x).\]
\end{lemma}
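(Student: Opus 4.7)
The plan is to prove this by expanding the definition of the two-point KL divergence with $q_1 = C_1 x$ and $q_2 = C_2 x$ (where I write $C_1 := \Co{KL1}$ and $C_2 := \Co{KL2}$ for brevity) and separately bounding the two resulting terms. Using \eqref{eq:two_KL}, one obtains
\[
  D(C_1 x \| C_2 x) = C_1 x \log\!\left(\frac{C_1}{C_2}\right) + (1 - C_1 x)\log\!\left(\frac{1 - C_1 x}{1 - C_2 x}\right).
\]
The first term is manifestly $O(x)$ as $x\to 0^+$ since $\log(C_1/C_2)$ is a constant depending only on $C_1,C_2>0$.

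For the second term I would Taylor expand the logarithms around $u=0$: $\log(1-u) = -u + O(u^2)$, which gives
\[
  \log(1 - C_1 x) - \log(1 - C_2 x) = (C_2 - C_1)\, x + O(x^2)
\]
as $x \to 0^+$, with an absolute implicit constant depending only on $C_1, C_2$. Multiplying by the bounded prefactor $1 - C_1 x = 1 + O(x)$ preserves this, so
\[
  (1 - C_1 x)\log\!\left(\frac{1 - C_1 x}{1 - C_2 x}\right) = (C_2 - C_1)\, x + O(x^2) = O(x).
\]
Summing the two contributions yields $D(C_1 x \| C_2 x) = O(x)$ as claimed.

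There is no substantive obstacle here; the only minor care needed is to keep track of the fact that the Taylor expansions are valid for $x$ sufficiently small (so that $C_1 x, C_2 x \in [0, 1/2)$, say), and to note that both terms produce linear-in-$x$ leading behavior whose coefficients depend only on $C_1, C_2$. In particular the statement could be strengthened to an explicit asymptotic $D(C_1 x \| C_2 x) = x\bigl[C_1 \log(C_1/C_2) + C_2 - C_1\bigr] + O(x^2)$, but for the purposes of the downstream application in the proof of Theorem \ref{lem:FMFnonMono_0} the $O(x)$ bound suffices.
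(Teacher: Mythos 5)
Your proof is correct. The approach is genuinely more direct than the paper's: you expand the two-point KL divergence and Taylor-expand the logarithms about $0$, extracting the linear term explicitly, so the bound $O(x)$ is immediate and you even read off the leading coefficient $C_1\log(C_1/C_2) + C_2 - C_1$. The paper instead first observes that the function $g(x) := D(C_1 x \| C_2 x)$ satisfies $\lim_{x\downarrow 0} g(x) = 0$, then computes $g'(x)$, shows it is uniformly bounded for small $x > 0$, and concludes $g(x) = O(x)$ via the mean value theorem. The two routes are morally equivalent (both are first-order analyses near $x=0$), but yours is more elementary and yields a sharper asymptotic as a byproduct, whereas the paper's avoids any explicit series manipulation at the price of computing and bounding a derivative. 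Both are fine; no gap.
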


\begin{proof}[Proof of Lemma \ref{lem:KLLim}]
    Consider the function $g(x) = D(\Co{KL1}x||\Co{KL2}x)$, we can immediately see that   \begin{align}
       \lim_{x \downarrow 0}g(x) &=  \lim_{x \downarrow 0}(\Co{KL1}x)\log\left( \frac{\Co{KL1}x}{\Co{KL2}x} \right) + (1-\Co{KL1}x)\log \left( \frac{1-\Co{KL1}x}{1-\Co{KL2}x} \right) \\
       &= \lim_{x \downarrow 0}\Co{KL1}x\log(\Co{KL1}/\Co{KL2}) + (1-\Co{KL2})\log\left(1+x\frac{\Co{KL2} - \Co{KL1}}{1-\Co{KL2}x}\right) = 0.
    \end{align}
    Moreover, we calculate
    \begin{align}
    \frac{\der}{\der x} g(x) &= \Co{KL1}\left(\log\left(\frac{\Co{KL1}x}{1-\Co{KL1}x}\right) - \log \left( \frac{\Co{KL2}x}{1-\Co{KL2}x}\right)\right) + \Co{KL2}\left(\frac{1-\Co{KL1}x}{1-\Co{KL2}x} - \frac{\Co{KL1}x}{\Co{KL2}x}\right)\\
    &= \Co{KL1}\log\left(\frac{\Co{KL1}(1-\Co{KL2}x)}{\Co{KL2}(1-\Co{KL1}x)}\right) + \Co{KL2}\left(\frac{1-\Co{KL1}x}{1-\Co{KL2}x} - \frac{\Co{KL1}}{\Co{KL2}}\right) 
    \end{align}
    this derivative remains uniformly upper bounded by a constant for sufficiently small $x$. An application of the mean value theorem gives the proof.
\end{proof}

\subsection{The Proof Of Lemma \ref{lem:D_linterval} And Lemma \ref{lem:cornerBound}}\label{subsec:proofD_linterval}

For reference, we paste below the value of $D_l$:
\[D_{l} = \sqrt{6p_l(1-p_l)(1-y)M \left[\log \binom{k}{l} + (1+\Co{D_l})\log k\right]}.\]

\begin{lemma}\label{lem:cornerBound}
    \stateScale Given $p_l$ from Lemma \ref{lem:p_l} and $D_l$ from Lemma Definition \ref{lem:D_linterval}, there exists a $n$ independent $\delta > 0$ such that the following holds with a sufficiently small $\Co{D_l}$ and sufficiently large $n$:
    \begin{align}\max_{l/k \leq \delta} \frac{D_l}{M}&\leq 7\log(2)\sqrt{\frac{\alpha}{(1-\alpha)}}\frac{l}{k},\label{eq:boundLow}\\
    \max_{l/k \geq 1-\delta}\frac{D_l}{M} &\leq 5\log(2)\sqrt{\frac{\alpha}{(1-\alpha)}}\left(1-\frac{l}{k}\right),\label{eq:boundHigh}\\
    \max_{l \in \{1, \dots, k\}}\frac{D_l}{M} &= O\left(\frac{1}{\sqrt{\log(n)}} \right).\label{eq:D_lUnif}\end{align}
    Moreover, if $\alpha < 28/1000$, then for all $l \in \{1, \dots, k-1\}$, we have for any $|x| \leq 2D_l/M$ with $n$ sufficiently large, that
    \[D(p_l + x||p_l) \geq \frac{x^2}{6p_l(1-p_l)}\label{eq:KLChanging}.\]
\end{lemma}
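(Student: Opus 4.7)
My plan is to establish the four inequalities in the stated order, since the first three are used in the proof of the fourth.

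\textbf{Step 1: The uniform bound \eqref{eq:D_lUnif}.} This is the easiest. I would use the trivial inputs $p_l(1-p_l)\leq 1/4$, $1-y\leq 1$, and $\log\binom{k}{l}\leq k\log 2$, plugged into the definition of $D_l$, to get $D_l^2 \leq \tfrac{3}{2}M\bigl(k\log 2 + 2\log k\bigr)$. Assumption \ref{as:MPScale} together with the computation $M \geq (1-o(1))Ck(1-\alpha)\log n/(2\log 2)$ (essentially Lemma \ref{lem:Mbounds}) gives $M = \Theta(k \log n)$, and hence $D_l^2/M^2 = O(k/M) = O(1/\log n)$.

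\textbf{Step 2: The corner bounds \eqref{eq:boundLow}--\eqref{eq:boundHigh}.} For $l/k \leq \delta$, I would Taylor-expand $p_l = 2\cdot 2^{-l/k}-1$ about $l/k=0$ to get $1-p_l = 2\log(2)(l/k)\bigl(1+O(l/k)\bigr)$, and so $p_l(1-p_l)\leq 2\log(2)(l/k)(1+O(\delta))$. Substituting into the definition of $D_l$, bounding $\log\binom{k}{l} \leq l\log(ek/l)$, and using the asymptotics for $M$, the desired inequality $D_l/M \leq 7\log(2)\sqrt{\alpha/(1-\alpha)}(l/k)$ reduces to
\[
24(1+O(\delta))\bigl[\log(ek/l)+(1+\Co{D_l})\log(k)/l\bigr] \leq 49\alpha C \log n \cdot (1-o(1))
\]
uniformly in $l \in [1,\delta k]$. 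The worst case is $l=1$, where the bracket is $(2+\Co{D_l})\alpha \log n+O(1)$; since $C>1$ and $\Co{D_l}$ may be taken arbitrarily small, the required $24(2+\Co{D_l})/C < 49$ holds with strict margin. The companion bound \eqref{eq:boundHigh} is obtained identically, Taylor-expanding $p_l=2^{1-l/k}-1 = \log(2)(1-l/k)(1+O(1-l/k))$ about $l/k=1$ (so now $p_l$, not $1-p_l$, is the small quantity) and replacing $l$ by $k-l$ in the combinatorial estimate.

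\textbf{Step 3: The KL inequality \eqref{eq:KLChanging}.} By Taylor's theorem applied to $q \mapsto D(q\|p_l)$ (which has first derivative vanishing at $q=p_l$ and second derivative $1/[q(1-q)]$), for each $x$ there is $\xi$ between $p_l$ and $p_l+x$ with $D(p_l+x\|p_l) = x^2/[2\xi(1-\xi)]$. It therefore suffices to show $\xi(1-\xi) \leq 3p_l(1-p_l)$ whenever $|x|\leq 2D_l/M$. I split into three regimes:

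\emph{Near $l/k=0$ (so $\xi$ close to $1$)}: $\xi(1-\xi)/[p_l(1-p_l)] = (\xi/p_l)(1-\xi)/(1-p_l) = (1+o(1))(1-\xi)/(1-p_l)$. Now $(1-\xi)/(1-p_l) \leq 1 + (2D_l/M)/(1-p_l)$, and by Step 2 plus the Taylor expansion of $1-p_l$,
\[
\frac{2D_l/M}{1-p_l} \leq \frac{14\log(2)\sqrt{\alpha/(1-\alpha)}\,(l/k)}{2\log(2)(l/k)(1-O(\delta))} = \frac{7\sqrt{\alpha/(1-\alpha)}}{1-O(\delta)}.
\]
For $\alpha < 28/1000$ one has $7\sqrt{\alpha/(1-\alpha)} < 7\sqrt{28/972} < 1.19$, so the ratio is at most $2.19\cdot(1+o(1)) < 3$.

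\emph{Near $l/k=1$ (so $\xi$ close to $0$)}: analogously, $\xi/p_l \leq 1 + (2D_l/M)/p_l$, and Step 2 with the Taylor expansion of $p_l$ yields $(2D_l/M)/p_l \leq 10\sqrt{\alpha/(1-\alpha)}/(1-O(\delta))$. For $\alpha<28/1000$ this gives ratio $\leq 2.70\cdot(1+o(1))<3$.

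\emph{Bulk $l/k \in [\delta, 1-\delta]$}: here $p_l(1-p_l)=\Omega(1)$ and Step 1 gives $2D_l/M = o(1)$, so $\xi(1-\xi) = p_l(1-p_l)(1+o(1))<3p_l(1-p_l)$ for large $n$.

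\textbf{Main obstacle.} The delicate step is the KL inequality, since it requires that a perturbation of size $2D_l/M$ not inflate $\xi(1-\xi)$ by more than a factor $3$. This depends essentially on the tight corner bounds of Step 2, which limit the size of $D_l/M$ relative to $p_l$ or $1-p_l$ in each boundary regime; and on the numerical fact that the threshold $\alpha < 28/1000$ in Assumption \ref{as:alphaC_0} is precisely what is needed to make $1+7\sqrt{\alpha/(1-\alpha)}$ and $1+10\sqrt{\alpha/(1-\alpha)}$ both comfortably below $3$, leaving margin to absorb the $(1+o(1))$ factors from Taylor remainders.
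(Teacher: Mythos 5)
Your proposal is correct, and for the last inequality it takes a genuinely different and somewhat cleaner route than the paper. Parts (1)--(3) follow the paper's proof essentially verbatim: the same Taylor linearization of $p_l(1-p_l)$ near the two endpoints, the same $\log\binom{k}{l}$ bound (the paper uses $l\log k$ rather than $l\log(ek/l)$, an inconsequential difference), and the same asymptotics for $M$ from Assumption \ref{as:MPScale}; the bound $24(2+\Co{D_l})/C < 49$ that you invoke is exactly the constant that appears in the paper's display \eqref{eq:boundLeft}.

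For \eqref{eq:KLChanging} the paper expands $D(p_l+x\|p_l)$ to third order, controls the third derivative $\frac{2(p+x)-1}{(p+x)^2(1-(p+x))^2}$ separately on the central and corner regimes of $p_l$, and for the corners reduces matters to the auxiliary condition $D_l/M \leq p_l(1-p_l)$, which is then established by comparing $\min(z,1-z)\cdot\tfrac{12(2+\Co{D_l})\log(2)\alpha}{(1-\epsilon_4)(1-\alpha)}$ against $(2^{1-z}-1)(2-2^{1-z})$ via a concavity argument; this global inequality is where $\alpha<28/1000$ enters, with a margin of roughly $1\%$. You instead use the Lagrange form $D(p_l+x\|p_l)=x^2/[2\xi(1-\xi)]$ and reduce the claim to $\xi(1-\xi)\leq 3p_l(1-p_l)$, which you verify directly from the corner bounds of Step 2 plus the bound on $(2D_l/M)/(1-p_l)$, resp. $(2D_l/M)/p_l$, in the two corner regimes. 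This bypasses the auxiliary inequality $D_l/M\leq p_l(1-p_l)$ entirely and gives $\alpha<28/1000$ with visibly more slack (the worst corner gives $\approx 2.70$ against the target $3$). One small imprecision: the factors $\xi/p_l$ (near $l/k=0$) and $(1-\xi)/(1-p_l)$ (near $l/k=1$) should be written as $1+O(\delta)$ rather than $1+o(1)$ --- they do not vanish as $n\to\infty$ uniformly over $l/k\in[1/k,\delta]$, only uniformly over a shrinking $\delta$ --- but since $\delta$ is a free small constant this does not affect the conclusion.
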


\begin{proof}[Proof of Lemma \ref{lem:cornerBound}]
        We first show \eqref{eq:D_lUnif}. As $0 \leq p_l \leq 1$ for all $l$, we have $p_l(1-p_l) \leq 1$. Moreover, we observe that
        \[
           0 \leq \max_{l \in \{1, \dots, k\}} \binom{k}{l} \leq \binom{k}{\lceil{k/2}\rceil} \leq \left(e\frac{k}{k/2}\right)^k = (2e)^k.
        \]
        Combining these two bounds gives for large enough $n$, $\max_{l \in \{1, \dots, k\}} \frac{D_l}{M} =O\left( \sqrt{\frac{k}{M}}\right)$. By Assumption \ref{as:MPScale} we then have that $M = \Theta(k\log(n))$ and thus also $\max_l \frac{D_l}{M} = O\left(\sqrt{\frac{1}{\log(n)}}\right)$.
        
        Next we can show \eqref{eq:boundLow}. For $n$ sufficiently large, we bound $D_l / M$ from above with
        \[\frac{D_l}{M} \leq \sqrt{\frac{6(2 + \Co{D_l})(1-y)p_l(1-p_l)\log\binom{k}{l}}{M}}.\label{eq:boundstart}\]
        By Assumption \ref{as:MPScale} and Lemma \ref{lem:Mbounds},
        \begin{align}
            M &\geq (1-N^{-c})\left(\frac{C(1-\alpha)k\log(n)}{2\log(2)} - O(1)\right) \geq (1-N^{-c})\left(\frac{Ck\log(n/n^{\alpha})}{2\log(2)} - O(1)\right) \\
            &\geq (1-N^{-c})\left(\frac{Ck\log(n/k)}{2\log(2)} - O(1)\right) \geq (1-\epsilon_1)\frac{C}{2\log(2)}k\log(n/k),
        \end{align}
        for any $\epsilon_1 > 0$ with $n$ large.
        Using the above lower bound, $0<y<1$, $p_l \leq 1$, $1-p_l = 2(1-2^{-l/k}) \leq 2\log(2)\frac{l}{k}$ (by recognizing the $1-p_l$ is concave in $l/k$, has value $0$ at $l/k = 0$ and first derivative with respect to $l/k$ of $2\log(2)$ at $l/k = 0$), $\log\binom{k}{l} \leq l \log(k)$, gives that,
        \begin{align}
            \frac{D_l}{M} &\leq \sqrt{\frac{6(2+\Co{D_l})2\log(2)\frac{l^2}{k}\log(k)}{M}}\\
            &\leq \sqrt{\frac{6(2+\Co{D_l})2\log(2)(l/k)^2 k\log(k)}{(1-\epsilon_1)\frac{C}{2\log(2)}k\log(n/k)}}\\
            &\leq \sqrt{\frac{6(2+\Co{D_l})2\log(2)(l/k)^2 k(\log(n^\alpha) + \log(1 + n^{-\alpha}))}{(1-\epsilon_1)\frac{C}{2\log(2)}k(\log(n/n^\alpha) - \log(1-n^{-\alpha}))}}\\
            & \leq \sqrt{\frac{24(2 + \Co{D_l})\log(2)^2}{C(1-\epsilon_1)}}\sqrt{\frac{\log(n^\alpha) + \log(1 + n^{-\alpha})}{{\log(n/n^\alpha) - \log(1-n^{-\alpha})}}}\frac{l}{k}\\
            &= \log(2)\sqrt{\frac{24(2+\Co{D_l})}{1-\epsilon_1}}\sqrt{\frac{\alpha}{(1-\alpha)} + o(1)}\frac{l}{k}.\label{eq:boundLeft}
        \end{align}
        This confirms \eqref{eq:boundLow} by choosing $\epsilon_1$ and $\Co{D_l}$ sufficiently small and then letting $n$ be~sufficiently~large.
        
        To show \eqref{eq:boundHigh}, we start from \eqref{eq:boundstart}, using $0<y<1$, $1-p_l \leq 1$, $\log\binom{k}{l} = \binom{k}{k-l} \leq (k-l)\log(k)$ and choosing $\delta > 0$ small enough such that $p_l \leq (1+\epsilon_2)\log(2)(1-\frac{l}{k})$ for any desired $\epsilon_2 > 0$ over $1 \geq l/k \geq 1-\delta$, gives that,
        \begin{align}
            \frac{D_l}{M} &\leq \sqrt{\frac{6(2+\Co{D_l})(1+\epsilon_2)\log(2)\frac{(k-l)^2}{k}\log(k)}{M}}\\
            &\leq \sqrt{\frac{6(2+\Co{D_l})(1+\epsilon_2)\log(2)(1-l/k)^2 k\log(k)}{(1-\epsilon_1)\frac{C}{2\log(2)}k\log(n/k)}}\\
            & \leq \sqrt{\frac{12(2 + \Co{D_l})(1+\epsilon_2)\log(2)^2}{C(1-\epsilon_1)}}\sqrt{\frac{\log(k)}{{\log(n/k)}}}\left(1-\frac{l}{k}\right)\\
            &= \log(2)\sqrt{\frac{12(2 + \Co{D_l})(1+\epsilon_2)}{1-\epsilon_1}}\sqrt{\frac{\alpha}{(1-\alpha)} + o(1)}\left(1-\frac{l}{k}\right)\\
            &\leq \log(2)\sqrt{\frac{12(1+\epsilon_2)(2 + \Co{D_l})}{1-\epsilon_1}}\sqrt{\frac{\alpha}{(1-\alpha)} + o(1)}\left(1-\frac{l}{k}\right)
            \label{eq:boundRight}.
        \end{align} 
        This confirms \eqref{eq:boundHigh} by choosing $\epsilon_1,\epsilon_2, \Co{D_l}$ small enough and $n$ sufficiently large.
        
        Finally, we demonstrate the lower bound on the KL divergence. Observe the identities,
        \begin{align}
            \frac{\partial}{\partial x} D(p + x||p)|_{x = 0} &= 0\\
            \frac{\partial^2}{\partial x^2} D(p + x||p)|_{x = 0} &= \frac{1}{p(1-p)}\\
            \frac{\partial^3}{\partial x^3} D(p + x||p) &= \frac{2(x+p)-1}{(x+p)^2(1-(x+p))^2}.
        \end{align}

        Consider $p \in (\epsilon_3, 1-\epsilon_3)$ for arbitrary $\epsilon_3$ with $1/4 > \epsilon_3 > 0$. As $2D_l/M = O(\log(n)^{-1/2})$, we have that $\frac{x^3}{6}\frac{2(x+p)-1}{(x+p)^2(1-(x+p))^2} = O(\log(n)^{-3/2})$ when $|x| \leq 2D_l/M$. Thus, by Taylor's theorem, when $n$ is sufficiently large we always have that $D(p + x||p) \geq \frac{x^2}{6p(1-p)}$ for $|x| \leq 2D_l/M$ for all $p \in (\epsilon_3, 1-\epsilon_3)$.

        We now consider the case of $p \leq \epsilon_3$, for any $x$ such that $-p \leq x \leq 0$, we have that
        \[\frac{x^3}{6}\frac{2(x+p)-1}{(x+p)^2(1-(x+p))^2} \geq 0,\]
        invoking Taylor's theorem then gives that $D(p+x||p) \geq \frac{x^2}{2p(1-p)}  \geq \frac{x^2}{6p(1-p)}$ for $x \leq 0$ and $p \leq \epsilon_3$. Taking the derivative of $\frac{1}{(x+p)^2(1-(x+p))^2}$ with respect to $x$ we get 
        \[\frac{2(1- 2x - 2p)}{(x + p - 1)^3(x + p)^3}.\label{eq:derpx}\]
        We can see when $2D_l/M \geq x \geq 0$, for sufficiently large $n$, that $(x + p)^3 \geq 0$, $(x + p - 1)^3 \leq 0$ and $1-2x-2p \geq 0$. Thus, the derivative \eqref{eq:derpx} is negative for all considered values of $x$ and $p$. This means that
        \[\max_{0 \leq \xi \leq 2D_l/M ,p \leq \epsilon_3}\|\frac{2(\xi+p)-1}{(\xi+p)^2(1-(\xi+p))^2}\| \leq \max_{0 \leq \xi \leq 2D_l/M ,p \leq \epsilon_3}\|\frac{1}{(\xi+p)^2(1-(\xi+p))^2}\| \leq \frac{1}{p^2(1-p)^2}.\]
        The first inequality is due to $\xi + p \in [0,1]$ (as it is an input into the two point KL divergence) and thus $|2(\xi + p) - 1| \leq 1$. The second inequality is due to the negativity of the derivative \eqref{eq:derpx} meaning that the maximum occurs at $\xi = 0$.
    
        Now considering $p \geq 1-\epsilon_3$, then for any $1-p \geq x \geq 0$, we have that,
        \[\frac{x^3}{6}\frac{2(x+p)-1}{(x+p)^2(1-(x+p))^2} \geq 0,\]
        invoking Taylor's theorem then gives that $D(p+x||p) \geq \frac{x^2}{2p(1-p)}  \geq \frac{x^2}{6p(1-p)}$ for $x \geq 0$ and $p \geq 1-\epsilon_3$. When $-2D_l/M \leq x \leq 0$, for sufficiently large $n$, we have that $(x + p)^3 \geq 0$, $(x + p - 1)^3 \leq 0$ and $1-2x-2p \leq 0$. Thus, the derivative \eqref{eq:derpx} is positive for all considered values of $x$ and $p$. This means that (by the same argument as above with reversed signs),-
        \[\max_{0 \geq \xi \geq -2D_l/M ,p \geq 1-\epsilon_3}\|\frac{2(\xi+p)-1}{(\xi+p)^2(1-(\xi+p))^2}\| \leq \max_{0 \geq \xi \geq -2D_l/M ,p \geq 1-\epsilon_3}\|\frac{1}{(\xi+p)^2(1-(\xi+p))^2}\| \leq \frac{1}{p^2(1-p)^2}.\]
        Combining these facts with Taylor's theorem gives the lower bound for $p \in [0,\epsilon_3] \cup [1-\epsilon_3,1]$ and $|x| \leq 2D_l/M$,
        \begin{align}
            D(p + x || p) &\geq \frac{x^2}{2p(1-p)} - \frac{|x|^3}{6}\frac{1}{p^2(1-p)^2}\\
            &\geq \frac{x^2}{p(1-p)}\left(\frac{1}{2} - \frac{|x|}{6p(1-p)} \right)
        \end{align}
        This bound elicits a sufficient condition for $D(p + x || p) \geq \frac{x^2}{6p(1-p)}$ when $\frac{1}{2} - \frac{|x|}{6p(1-p)} \geq \frac{1}{6}$, or equivalently, $|x| \leq 2p(1-p)$ holds. Thus, we can prove our lower bound on the two point KL divergence if $D_l/M \leq p_l(1-p_l)$. Continuing from line \eqref{eq:boundstart}, we have by Assumption \ref{as:MPScale} and Lemma \ref{lem:Mbounds} that $M \geq (1-\epsilon_4)\frac{C}{2\log(2)}k(1-\alpha)\log(n)$ for any $\epsilon_4 > 0$ for large $n$. We then use that $y \leq 1$ and $\binom{k}{l} \leq \min(l,k-l)\alpha \log(n)$ to give
        \begin{align}
        \frac{D_l}{M} &\leq \sqrt{\frac{6(2 + \Co{D_l})p_l(1-p_l)\min(l,k-l)\alpha\log(n)}{(1-\epsilon_4)\frac{C}{2\log(2)}k(1-\alpha)\log(n)}}\\
        &\leq \sqrt{\min(l/k,1-l/k)\frac{12(2+\Co{D_l})\log(2)\alpha}{(1-\epsilon_4)(C(1-\alpha))}}\sqrt{p_l(1-p_l)}.
        \end{align}
        This upper bound gives a sufficient condition for $D_l/M \leq p_l(1-p_l)$ to be the demonstration of the existence of some $\epsilon_4, \Co{D_l} > 0$ where
        \[\min(l/k,1-l/k)\frac{12(2+\Co{D_l})\log(2)\alpha}{(1-\epsilon_4)(1-\alpha)} \leq p_l(1-p_l).\]
        Recalling that $p_l = 2^{1-l/k} - 1$ and setting $l/k = z$ then we need to just show, for all $z \in [0,1]$, that
        \[\min(z,1-z)\frac{12(2+\Co{D_l})\log(2)\alpha}{(1-\epsilon_4)(1-\alpha)} \leq (2^{1-z}-1)(1-(2^{1-z}-1))\label{eq:ineqwhatever}.\]
        With $g(z) = \min(z,1-z)\frac{12(2+\Co{D_l})\log(2)\alpha}{(1-\epsilon_4)(1-\alpha)}$ and $h(z) =  (2^{1-z}-1)(1-(2^{1-z}-1))$, for $\epsilon_3, \Co{D_l}$ small enough and $\alpha < 28/1000$ we have,
        \begin{itemize}
        \item[(a)] $h(1/2)$ dominates the maximum of $g(z)$:
        \[\max_{z \in [0,1]} g(z) = \frac{1}{2}\frac{12(2+\Co{D_l})\log(2)\alpha}{(1-\epsilon_4)(1-\alpha)} \leq 24/100 \leq (\sqrt{2}-1)(2-\sqrt{2}) = h(1/2),\]
        \item[(b)] $h$ is zero at $z = 0$ and $z = 1$,
        \item[(c)] $h$ is concave:
        \[\left[ \frac{d^2}{dz^2}h(z) \right] = \log^2(2)2^{1-2z}(3\cdot 2^z - 8),\]
        which is negative for all $z \in [0,1]$.
        \end{itemize}
        Combining all of these facts gives that $h(z) \geq (\sqrt{2}-1)(2-\sqrt{2})\min(z,1-z) \geq g(z)$ for all $z \in [0,1]$, completing the proof.
\end{proof}

\begin{proof}[Proof of Lemma \ref{lem:D_linterval}]
    Observe that $X_{\sigma_l}|(X_{\sigma } = y)$ is equal in distribution to $y M+\Bi{(1-y)M, p_l}.$  
   Denoting $Z_l \sim \Bi{ (1-y)M,p_l}$, we first show that,
    \[\P(|Z_l - (y_{(l)} - 
    y)M| \geq D_l) = O\left(\frac{1}{k^{1+\Co{D_l}}\binom{k}{l}}
    \right).\]
    Using $\E[Z_l] = p_l(1-y)M = (y_{(l)} - y)M$, Lemma \ref{lem:KLBinomial} and \ref{lem:cornerBound} (alongside $y < 1/2$) for a sufficiently small $\Co{D_l} > 0$ and sufficiently large $n$ gives that,
    \begin{align}
        \P(|Z_l - (y_{(l)} - 
    y)M| \geq D_l) &  \leq 2\exp\left(-(1-y)MD(p_l + D_l/((1-y)M)||p_l) \right)\\
    &\leq 2\exp\left({-\frac{D_l^2}{6M(1-y)p_l(1-p_l)}}\right)\\
    & \leq 2\exp\left({-[\log \binom{k}{l} + (1+\Co{D_l})\log k]}\right)\\
    &\leq \frac{2}{\binom{k}{l}k^{1+\Co{D_l}}}\oleq{\frac{1}{\binom{k}{l}k^{1+\Co{D_l}}}},
    \end{align}
    or, equivalently
    \[\binom{k}{l}\P(|X_{\sigma_l} - y_{(l)} M| \geq D_l) = O\left(\frac{1}{k^{1+\Co{D_l}}}\right).\]
    Thus, by a union bound
    \begin{align}\P\left(\bigcup_{\substack{0 \leq l\leq k\\ \sigma_l \subseteq \sigma, |\sigma_l|=l}} |X_{\sigma_l} - y_{(l)} M| \geq D_l\right) &\leq \sum_{l=0}^k\binom{k}{l}\P(|X_{\sigma_l} - y_{(l)} M|\leq D_l)\\
        &= O\left(\frac{1}{k^{\Co{D_l}}}\right)= o(1).\end{align}
\end{proof}

\subsection{Proofs For Subsection \ref{eq:paley-Section}}\label{subsec:FlatnessBounds}

\begin{proof}[Proof of Lemma \ref{lem:2mm bound 1}]

Define $Y_{\sigma, y}$ to be the indicator of the event that a specific $k$-subset $\sigma$ is $\Co{D_l}$-flat and leaves exactly $y M$ target sets uncovered. It holds $Y_y=\sum_{\sigma, |\sigma|=k} \mathbf{1}(Y_{\sigma, y})$
and by some standard expansion.
\begin{align}
    \frac{\E[Y_y^2]}{\E[Y_y]^2} &= \sum_{l=0}^k \frac{\binom{k}{l}\binom{p-k}{k-l}}{\binom{p}{k}}\frac{\P(Y_{\sigma, y} \cap Y_{\tau, y})}{\P(Y_{\sigma, y})^2}\\
    &= \frac{\binom{p-k}{k}}{\binom{p}{k}} + \frac{1}{\binom{p}{k}\P(Y_{\sigma,y})} + \sum_{l=1}^{k-1} \frac{\binom{k}{l}\binom{p-k}{k-l}}{\binom{p}{k}}\frac{\P(Y_{\sigma, y} \cap Y_{\tau, y})}{\P(Y_{\sigma, y})^2}\label{eq:HGbound}
\end{align}
where for the $l$-th term in summation has $|\sigma \cap \tau| = l$. 

To bound the first term in \eqref{eq:HGbound}, we use Assumption \ref{as:alphaC_0} which gives $k^2=o(p)$,
therefore $0 \leq 1-\frac{\binom{p-k}{k}}{\binom{p}{k}} \leq 1-\left(\frac{p-2k}{p}\right)^k=O(k^2/p)=o(1)$. Thus, $\frac{\binom{p-k}{k}}{\binom{p}{k}}=1+o(1)$. To upper bound the third term in \eqref{eq:HGbound}, we use Lemma \ref{lem:D_linterval} to give $\P(Y_{\sigma, y}) = (1-o(1))\P(X_{\sigma, y}) = (1-o(1))\binom{M}{yM} 2^M$, $\binom{M}{yM} \geq \frac{1}{M+1}e^{Mh(y)}$ and $\binom{p}{k} \geq e^{k \log(p/k)}$, which gives that $\frac{1}{\binom{p}{k}\P(Y_{\sigma,y})} \leq 2(M+1)\exp\left[M(\log(2)-h(y)) - k\log(p/k)\right]$ for large enough $n$. 

It remains to bound the summation in \eqref{eq:HGbound} by the summation term in \eqref{eq:2mm1Goal}. This is accomplished by bounding each summand for $l= 1, \cdots, k-1$. 

We decompose the event $Y_{\sigma, y} \cap Y_{\tau, y}$ based on the number of target sets the intersection $\sigma \cap \tau$ leaves uncovered. Notice, by only considering $k$-subsets $\sigma$ that are $\Co{D_l}$-flat, the number of the uncovered target test left by $\sigma \cap \tau \subseteq \sigma$ must fall into the interval $S_l$. Moreover, by Lemma \ref{lem:D_linterval} we have that a.a.s. as \ngrow that any $k$-subset which leaves $yM$ sets uncovered is $\Co{D_l}$-flat, meaning
\[\P(Y_{\sigma, y})=(1-o(1)) \P(X_{\sigma, y}), \label{eq:2mmProof1:flatnessDenom}\]
where $X_{\sigma, y}$ is an indicator random variable that there exists a $k$ sized set $\sigma$ (not necessarily flat) leaving $yM$ target sets uncovered.

By the definition of $S_l$, Definition \ref{def:flat} and \eqref{eq:2mmProof1:flatnessDenom}, for any $\sigma, \tau$ with $|\sigma \cap \tau| = l$, we have
\begin{align}\hspace{-0cm}\frac{\P(Y_{\sigma, y} \cap Y_{\tau, y})}{\P(Y_{\sigma, y})^2} &= \sum_{y' \in S_l}\frac{\P(Y_{\sigma, y} \cap Y_{\tau, y} \mid Y_{\sigma \cap \tau, y'})\P(Y_{\sigma \cap \tau, y '})}{\P(Y_{\sigma, y})^2}\\
&= (1+o(1))\sum_{y' \in S_l}\frac{\P(Y_{\sigma, y} \cap Y_{\tau, y} \mid Y_{\sigma \cap \tau, y'})\P(Y_{\sigma \cap \tau, y '})}{\P(X_{\sigma, y})^2}\label{eq:binomSum}
\end{align}
Defining a random variable $B_l$ distributed as $\Bi{M, (1-q)^l}$, observe that $Y_{\sigma, y }$ corresponds to the event $\{B_k = yM\}$ and $Y_{\sigma \cap \tau, y '}$ corresponds to $\{B_l = y'M\}$. Conditional on the event $Y_{\sigma \cap \tau, y '}$, the events  $Y_{\sigma, y}, Y_{\tau, y}$ are independent and each corresponds to the event defined by $\{ B'= yM\},$ for $B'$ distributed as a $\Bi{y'M, (1-q)^{k-l}}$.
Letting $x \coloneqq l/k$, utilizing Lemma \ref{lem:KLBinomial} and \eqref{eq:binomSum} we conclude that, for sufficiently large $n$,
\begin{align}
    \frac{\P(Y_{\sigma, y} \cap Y_{\tau, y})}{\P(Y_{\sigma, y})^2} &\leq (1+o(1))\sum_{y' \in S_l} \frac{\exp\left(-2y' MD(y / y' || 2^{-(1-x)})\right)\exp\left(-MD(y ' || 2^{-x})\right)}{\frac{1}{9M}\exp\left(-2M D(y || 1/2)\right)}\\
    &\hspace{-3cm}\leq 10M \sum_{y' \in S_l} \exp \left(-2M(y'D(y/y' || 2^{-(1-x)}) - D(y || 1/2) + \frac{1}{2}D(y' || 2^{-x}))\right) \label{eq:prob_part}
\end{align}
This bounds the rightmost summand term for the summation in \eqref{eq:HGbound}. To bound the other summand term we can make use of the following upper bound. For sufficiently large $n$, there exists a constant $\Co{boundComb}$ such that, with $x = l/k$,

\[\frac{\binom{k}{l}\binom{p-k}{k-l}}{\binom{p}{k}} \leq \Co{boundComb}\exp\left(-x k \log(px/k) \right)\label{stirling}\]
    To prove this notice for each $l,$
    \begin{align}
        \frac{\binom{k}{l+1}\binom{p-k}{k-l-1}}{\binom{k}{l}\binom{p-k}{k-l}} =\frac{(k-l)^2}{(l+1)(p-2k+l+1)} \leq \frac{k^2}{l(p-2k)}.
    \end{align}So for each $l$, by a telescopic product, using the inequality $l! \geq \frac{l^l}{e^{l-1}}$  and $\frac{\binom{p-k}{k}}{\binom{p}{k}} = 1 + o(1)$ in line \eqref{eq:usedFactBound},
    \begin{align}\label{stirlingpf}
    \frac{\binom{k}{l}\binom{p-k}{k-l}}{\binom{p}{k}} &\leq \frac{k^{2l}}{l!(p-2k)^l} \frac{\binom{p-k}{k}}{\binom{p}{k}} \\
    &\leq (1+o(1))\frac{e^{l-1}k^{2l}}{l^l(p-2k)^l} \label{eq:usedFactBound}\\
    &= (1+o(1))e^{-l [\log((p-2k)l/k^2)+\frac{l-1}{l}]}\\
    &= (1+o(1))e^{-l\log(pl/k^2) - xk\log(1-2k/p)}.
    \end{align}
Moreover, we know that when $C > 1$ and $\alpha > 0$ that $k = o(p)$ and thus for large enough $p$ we have $\log(1-2k/p) \geq -4k/p$. This gives, recalling $x = l/k$, 
\[\frac{\binom{k}{l}\binom{p-k}{k-l}}{\binom{p}{k}} \leq (1+o(1))e^{-x k \log(px/k) + 4k^2/p}\label{eq:needlargep}.\]
By Assumption \ref{as:alphaC_0} we then have that $k^2 = o(p)$ and thus there exists some constant $\Co{boundComb}$ for which \eqref{stirling} holds (this constant also absorbs the $1 + o(1)$ error term).

    %\begin{align}
    %    \frac{p}{k} &\geq (1-k^{-\err})\frac{n}{k}\left(\frac{k}{n}\right)^{\frac{C}{2}(1+k^{-\err})}\\
    %    &\geq (1-k^{-\err})n^{1-\frac{C}{2}(1 + k^{-\err})}k^{\frac{C}{2}(1 + k^{-\err}) - 1}\\
    %    &\geq (1-k^{-\err})n^{1-\frac{C}{2}(1 + k^{-\err})}(n^\alpha - 1)^{\frac{C}{2}(1 + k^{-\err}) - 1}\\
    %    &\geq (1-k^{-\err})n^{1-\frac{C}{2}(1 + k^{-\err})}{n}^{\alpha(\frac{C}{2}(1 + k^{-\err}) - 1)}(1-n^{-\alpha})^{\frac{C}{2}(1 + k^{-\err}) - 1}\\
    %    &\geq (1-k^{-\err})n^{1-\frac{C}{2}(1 + k^{-\err})}{n}^{\alpha(\frac{C}{2}(1 + k^{-\err}) - 1)}(1-n^{-\alpha})^{\frac{C}{2}(1 + k^{-\err}) - 1}\\
    %    &\geq (1-k^{-\err})n^{(1-\alpha)(1-\frac{C}{2}(1 + k^{-\err}))}(1-n^{-\alpha})^{\frac{C}{2}(1 + k^{-\err}) - 1}
    %\end{align}
Thus, using Lemma \ref{lem:plower} and Lemma \ref{lem:Mbounds}, we have that,

\begin{align}
    k\log(p/k) &= k(1-\alpha)(1-C/2)\log(n) - O(k^{1-\err}\log(n))\\
    &= \frac{2\log(2)(1-C/2)}{C}\left(\frac{Ck(1-\alpha)\log(n)}{2\log(2)}\right) - O(k^{1-\err}\log(n))\\
    &\geq \frac{2\log(2)(1-C/2)}{C}N/2 - O(k),
\end{align}
and thus,
\[xk\log(xp/k) \geq \frac{2\log(2)(1-C/2)}{C}N/2 - xk\log(x) - xO(k)\label{eq:reduction:p}.\]
Using \eqref{eq:reduction:p} in conjunction with \eqref{stirling} gives,
\[\label{eq:HG:upperBound}\begin{split}\frac{\binom{k}{l}\binom{p-k}{k-l}}{\binom{p}{k}} &\leq \Co{boundComb}\exp\bigg{(}-x M \frac{2\log(2)(1-C/2)}{C} - x k \log(x) \\
&\qquad + x \frac{2\log(2)(1-C/2)}{C}(M -N/2) - xO(k)\bigg{)}\end{split}\]
Under Assumption \ref{as:MPScale}, we can see that $M - N/2$ is bounded above by
\[M - N/2 \leq N^{-\err}\frac{N}{2} = \frac{N^{1-\err}}{2}.\]
As $N^{1-\err} = O(k^{1-\err}\log(n)^{1-\err}) = o(k)$ and $k = \Theta(n^\alpha)$, we finally get the bound,
\[\frac{\binom{k}{l}\binom{p-k}{k-l}}{\binom{p}{k}}  \leq \Co{boundComb}\exp\bigg{(}-x M \frac{2\log(2)(1-C/2)}{C} - x k \log(x) + xO(k)\bigg{)}.\]
Recalling Definition \ref{def:H_C}, we have the identity
\[h(H_C) = \log(2)\left(1-\frac{2-C}{C}\right)= \log(2)-\log(2)\frac{2-C}{C}\] or by rearranging terms,
\[2\log(2)\frac{1-C/2}{C} = \log(2)-h(H_C)= D(H_C||1/2).\label{eq:HG:zeroSolRelation}\]
Combining \eqref{eq:HG:upperBound} and \eqref{eq:HG:zeroSolRelation} we now have the upper bound
\[\frac{\binom{k}{l}\binom{p-k}{k-l}}{\binom{p}{k}} \leq  \Co{boundComb}\exp\left(-x MD(H_C||1/2) - x k \log(x) + xO(k) \right) \label{eq:ent_part}\]
Utilizing \eqref{eq:ent_part} in combination with \eqref{eq:prob_part} allows us to upper bound for every $y \in (0,1/2)$, with $n$ sufficiently large, the term $\frac{\binom{k}{l}\binom{p-k}{k-l}}{\binom{p}{k}}\frac{P(Y_{\sigma, y} \cap Y_{\tau, y})}{P(Y_{\sigma, y})^2}$ by
\begin{align}\hspace{-2cm}
& 10\Co{boundComb}M \sum_{y' \in S_l}\exp\left(-x MD(H_C||1/2) - x k \log(x) + xO(k)\right)  \\
&\times \exp\left(-2M(y'D(y/y' || 2^{-(1-x)}) - D(y || 1/2) + \frac{1}{2}D(y' || 2^{-x}))\right)\\
&\qquad\leq \Co{9+eO(1)}M \sum_{y ' \in S_l}\exp\left(-2M\tilde{G}(y, y', x) - x k \log(x) + xO(k)\right),
\end{align}
where we choose \(\Co{9+eO(1)} \geq 10\Co{boundComb}\). Plugging back $x = l/k$ above gives the result.
\end{proof}

\begin{proof}[Proof of Lemma \ref{lem:2mmbound3}]

Invoke Lemma \ref{lem:2mm bound 1} under the choice $y_* = H_C + \Co{Cprime}\kpert$ with $\Co{Cprime}$ to be chosen later. By Lemma \ref{lem:y=1o(1)}, we have that the term $2(M+1)\exp\left({M(\log(2)-h(y_*)) - k\log(p/k)}\right) = o(1)$. Now we bound the summation component of Lemma \ref{lem:2mm bound 1}, by the mean value theorem, for some $z \in (H_C, y_*)$,
\[D(H_C + \Co{Cprime}\kpert||1/2) = D(H_C||1/2) + \left[\partial_{y'}D(y'||1/2)\right]\bigg{|}_{y' = z}\left(\Co{Cprime} \kpert\right).\] As $k$ grows, we have that $z=(1+o(1))H_C$ due to $\Co{Cprime}\kpert = o(1)$. By the continuity and bounded derivative of $\partial_{y}D(y||1/2)$ for $y \in (0,1/2)$, we have that 
\begin{align}
\left[\partial_{y'}D(y'||1/2)\right]\bigg{|}_{y' = z}&=(1+o(1))\left[\partial_{y'}D(y'||1/2)\right]\bigg{|}_{y' = H_C} \\
&=(1+o(1))\log(H_C/(1-H_C))\label{eq:derwithH_C}\\
&=-\Omega(1)
\end{align}
as $H_C < 1/2$ (due to $C \in (1,2)$). Hence by Assumption \ref{as:MPScale}, for sufficiently large $n$ 
\[\left[\partial_{y'}D(y'||1/2)\right]\bigg{|}_{y' = z}\Co{Cprime}M \kpert x = -x\Co{Cprime}\Omega(k).\]
Combining the above alongside an application of the mean value theorem,
\begin{align} 
-x M D(H_C || 1/2)&=-x M D(H_C + \Co{Cprime}\kpert|| 1/2) +  \left[\partial_{y}D(y||1/2)\right]\bigg{|}_{y = z} \Co{Cprime} M \kpert x  \\
&= - x M D(H_C + \Co{Cprime}\kpert || 1/2) -x\Co{Cprime}\Omega(k) \label{eq:\Co{Cprime}perturb}.
\end{align}

Thus, by interchanging the differing terms between $\tilde{G}$ and $G$, we get
\[\frac{\E[Y^2_{y_*}]}{\E[Y_{y_*}]^2} -1 \leq o(1) + \Co{9+eO(1)}M \sum_{l = 1}^{k-1}\sum_{y ' \in S^*_l}\exp\left(-2MG(y_*, y', l/k) - l \log(l/k) + xO(k) - x\Co{Cprime}\Omega(k) \right).\]
Choosing $\Co{Cprime}$ sufficiently large so that the implicit constant in $\Co{Cprime}\Omega(k)$ dominates the implicit constant in $O(k)$ and substituting $x$ for $l/k$ gives the proof.
\end{proof}

\subsection{Proofs For Subsection \ref{subsub:G}}\label{subsec:proofG}

\begin{proof}[Proof of Lemma \ref{DerPrime}]
    
    Fix $x,y$, with a slight abuse of notation, we abbreviate $G(y, y', x) = G_{y,x}(y')$ and $G'_{y,x}(y') = \partial_{y'}G(y, y', x)$. By rearranging terms in the definition of $G$ we have,
    \[
        G_{y,x}(y') = \underbrace{y \log\left(\frac{y}{y'} 2^{1-x}\right)}_{A} + \underbrace{(y' - y)\log\left(\frac{1-y/y'}{1-2^{-(1-x)}}\right)}_{B} - \underbrace{(1-x/2)(\log(2) - h(y))}_{C}+ \underbrace{\frac{1}{2}D(y'||2^{-x})}_{D}
    \]
    We can then calculate for each term that,
    \begin{align}
    \partial_{y'}(A) &= - \frac{y}{y'}&
    \partial_{y '}(B) &= \log\left( \frac{1-\frac{y}{y'}}{1-2^{-(1-x)}}\right) + \frac{y}{y'}\\
    \partial_{y'}(C) &= 0&
    \partial_{y'}(D) &= \frac{1}{2}\log\left(\frac{y'}{1-y'}\frac{1-2^{-x}}{2^{-x}}\right).
    \end{align}
    Therefore, it holds,
    \[G'_{y,x}(y') = \log\left( \frac{1-\frac{y}{y'}}{1-2^{-(1-x)}}\right) + \frac{1}{2}\log\left(\frac{y'}{1-y'}\frac{1-2^{-x}}{2^{-x}}\right).\]
    Now plugging in $y' = y_{(x)}$ we get
    \[G'_{y,x}(y')\bigg{|}_{y' = y_{(x)}} = \log\left( \frac{1-\frac{y}{y_{(x)}}}{1-2^{-(1-x)}}\right) + \frac{1}{2}\log\left(\frac{y_{(x)}}{1-y_{(x)}}\frac{1-2^{-x}}{2^{-x}}\right),\label{eq:gamma':der}\]
    and using that $y_{(x)} = y + (1-y)(2^{1-x} - 1)$,
    \[[G'_{y,x}(y')]\bigg{|}_{y' = y_{(x)}} = \log\left( \frac{1-\frac{y}{\gl}}{1-2^{-(1-x)}}\right) + \frac{1}{2}\log\left(\frac{\gl}{1-(\gl)}\frac{1-2^{-x}}{2^{-x}}\right).\]
    Taking the derivative of the above function with respect to $x$, we have the function
    \[\frac{\log(2) - y \log(4)}{4y + (1-y)2^{2-x} - 2}\label{eq:derGamma'derEpsilon}\]
    One can directly see the numerator is positive when $y < 1/2$. Moreover, one can justify that the denominator is positive. Indeed, plugging in $x = 1$ into the denominator of \eqref{eq:derGamma'derEpsilon} we get $4y + 2(1-y) - 2 > 0$,
    when $y > 0$. Taking the derivative of the denominator in \eqref{eq:derGamma'derEpsilon} with respect to $x$ again, gives the function $-(1-y)2^{2-x}$ which is always negative for $y < 1/2$. Hence, the denominator of \eqref{eq:derGamma'derEpsilon} is positive. Thus, the whole term \eqref{eq:derGamma'derEpsilon} is positive for all $x \in (0,1)$. Therefore, we have shown that $[G'_{y,x}(y')]|_{y' = y_{(x)}}$ is increasing with respect to $x$. Thus, a sufficient condition to show the first statement in the lemma is to prove
    \[\lim_{x\conv{} 1}[G'_{y,x}(y')]\bigg{|}_{y' = y_{(x)}} = \frac{1}{2}\log\left(\frac{1-y}{y}\right)\]
    and 
    \[\lim_{x\conv{} 0}[G'_{y,x}(y')]\bigg{|}_{y' = y_{(x)}} = \frac{1}{2}\log(2(1-y)).\]
    By direct reasoning, $x \conv{} 1$ gives $y_{(x)} \conv{} y$. Hence, the second logarithm on the right-hand side of \eqref{eq:gamma':der} is equal to $\frac{1}{2}\log\left( \frac{y}{1-y}\right)$. Using L'Hospital's rule and continuity of the logarithm gives,
    \begin{align}
        &\lim_{x \conv{} 1}\log\left(\frac{1- \frac{y}{y + (1-y)(2^{1-x}-1)}}{1-2^{-(1-x)}} \right) =\log\left(\lim_{x \conv{} 1}\frac{1- \frac{y}{y + (1-y)(2^{1-x}-1)}}{1-2^{-(1-x)}} \right) =\log\left(\lim_{x \conv{} 1} \frac{\frac{2^{1+x}(y - 1)y \log(2)}{(2^x - 2 - 2(2^x -1)y)^2}}{-2^{x - 1}\log(2)} \right)\\
        &\qquad=\log\left(\frac{1-y}{y} \right).\label{eq:limFirstLog}
    \end{align}
    Using \eqref{eq:limFirstLog} alongside our arguments above we have
    \[
        \lim_{x\conv{} 1}[G'_{y,x}(y')]\bigg{|}_{y' = y_{(x)}} = \log\left(\frac{1-y}{y}\right) + \frac{1}{2}\log\left( \frac{y}{1-y}\right) = \frac{1}{2}\log\left(\frac{1-y}{y}\right).
    \]
    In order to prove the second claim of the first statement, we also need to calculate the limiting derivative as $x \conv{} 0$. Similar to above, $x \conv{} 0$ gives $y_{(x)} \conv{} 1$ from below. This means that the first logarithm on the right-hand side in \eqref{eq:gamma':der} converges to $\log(2(1-y))$. Using L'Hospital's rule and the continuity of the logarithm, we have,
    \begin{align}
        &\lim_{x \conv{} 0}\frac{1}{2}\log\left(\frac{\gl}{1-(\gl)}\frac{1-2^{-x}}{2^{-x}}\right)\\ 
        &\qquad= \frac{1}{2}\log\left(\lim_{x \conv{} 0}\frac{\gl}{1-(\gl)}\frac{1-2^{-x}}{2^{-x}}\right)\\
        &\qquad= \frac{1}{2}\log\left(\lim_{x \conv{} 0}\frac{4^{-x}(2-2y + 2^x(4y - 3))\log(2)}{2^{1-2x}(2^x - 2)(y - 1)\log(2)}\right)\\
        &\qquad=\frac{1}{2}\log\left(\frac{1}{2(1-y)}\right).\label{eq:2ndLogLimit}
    \end{align}
    Thus,
    \[
    \lim_{x\conv{} 0}[G'_{y,x}(y')]\bigg{|}_{y' = y_{(x)}}= \log(2(1-y)) + \frac{1}{2}\log\left(\frac{1}{2(1-y)} \right) = \frac{1}{2}\log(2(1-y))
        \label{eq:derat0}\]
    confirming the second claim of the first statement.
    Combining \eqref{eq:derat0} with \eqref{eq:derGamma'derEpsilon}, we can write by the Fundamental Theorem of Calculus,
    \[[G'_{y,x}(y')]\bigg{|}_{y' = y_{(x)}} = \frac{1}{2}\log(2(1-y)) + \int_0^x \frac{\log(2) - y \log(4)}{4y + (1-y)2^{2-u} -2} \;du\]
    Notice that, with respect to $y$, the integrand is decreasing in the numerator and increasing in the denominator. Thus, we get the following upper bound by plugging in $y = 0$ inside the integrand,
    \[[G'_{y,x}(y')]\bigg{|}_{y' = y_{(x)}} \leq \frac{1}{2}\log(2(1-y)) + \int_0^x \frac{\log(2)}{2^{2-u} -2} \;du \leq \frac{1}{2}\log(2(1-y)) + x \frac{\log(2)}{2^{2-x} -2},\]
    thus proving the second statement.
\end{proof}

\begin{proof}[Proof of Lemma \ref{lem:justifyLowerGamma'}]
    We denote $G'_{y,x}(\Breve{y}) = [\partial_{y'}G(y, y', x)]|_{y' = \Breve{y}}$. By the mean value theorem we have that for some $y'_* \in \left[y_{(x)} - \frac{D_l}{M}, y_{(x)} + \frac{D_l}{M}\right]$ we have
    \[
        G(y, y', x) = G(y, y_{(x)}, x) + (y' - y_{(x)})G'_{y,x}(y'_*) \geq G(y, y_{(x)}, x) - \frac{D_l}{M}G'_{y,x}(y'_*)\label{eq:continueder}
    \]
    From Lemma \ref{lem:cornerBound} we see that $D_l/M = o(1)$ uniformly over $l$. Thus,
    $\Breve{y} = (1+o(1))y_{(x)}$ for any $\Breve{y} \in \left[y_{(x)} - \frac{D_l}{M}, y_{(x)} + \frac{D_l}{M}\right]$, since, by definition, $y_{(x)} \in [y,1]$ and thus $y_{(x)} = \Theta(1)$ if $y > 0$. By the continuity of the derivative of $G$ in $y'$ and that (using Lemma \ref{DerPrime} with $x \in (0,1)$ and $1/2 > y > 0$) $G'_{y,x}(y_{(x)}) = \Theta(1)$ uniformly over $x \in (0,1)$, we also have $G'_{y,x}(\Breve{y}) = (1+o(1))G'_{y,x}(y_{(x)})$. As such,  \eqref{eq:continueder} implies that for any $\epsilon > 0$,
    \[G(y, y', x) \geq G(y, y_{(x)}, x) - (1 + \epsilon)\frac{D_l}{M}G'_{y,x}(y_{(x)})\]
    for sufficiently large enough $n$, concluding the proof.
\end{proof}

\begin{proof}[Proof of Lemma \ref{zerobound}]
When $x \conv{} 0$, we have that $\gl \conv{} 1$. By the continuity of KL divergence in both of its arguments, we have
    \[\lim_{x \conv{} 0}\Breve{G}(y, x) = 0 + D(y||1/2) - D(y||1/2) + 0 = 0\]
    Similarly, we have that $\gl \conv{} y$ when $x \conv{} 1$, giving
    \begin{align}
        \lim_{x \conv{} 1}\frac{x}{2}D(y||1/2) &= \frac{1}{2}D(y||1/2)\label{eq:lim1}\\
        \lim_{x \conv{} 1}\frac{1}{2}D(\gl||2^{-x}) &= \frac{1}{2}D(y||1/2).\label{eq:lim2}
    \end{align}
    Thus, by the definition of $\Breve{G}(y,x)$ we are left to characterize the limit
    \[\lim_{x \conv{} 1}(\gl) D\left(\frac{y}{\gl}\|\|2^{-(1-x)}\right).\]
    First we can immediately see that $\lim_{x \conv{} 1}\gl = y$
    Considering the following Taylor expansions, $2^{1-x} = 1 + (1-x)\log(2) + O((1-x)^2)$ and $2^{x - 1} = 1 - (1-x)\log(2) + O((1-x)^2)$, we can see that
    \begin{align}&\lim_{x \conv{} 1}D\left(\frac{y}{\gl}\|\|2^{-(1-x)}\right) \\
    &= \lim_{x \conv{} 1} D\left(1 - (1-x)\frac{(1-y)\log(2) + O(1-x)}{y}\|\|1 - (1-x)(\log(2) + O(1-x))) \right) = 0,\end{align}
    as $\frac{y}{1-y} = \Theta(1)$ for $y \in (0,1/2)$. Thus, 
    \[\lim_{x \conv{} 1}(\gl) D\left(\frac{y}{\gl}\|\|2^{-(1-x)}\right) = 0\label{eq:lim3}\]
    Using \eqref{eq:lim1}, \eqref{eq:lim2} and \eqref{eq:lim3} in \eqref{eq:g-eps}, we have that $y \in (0,1/2)$ implies $\lim_{x \conv{} 1}\Breve{G}(y, x) = 0$.
    \end{proof}

\begin{proof}[Proof of Lemma \ref{posder}]
We denote $\Breve{G}_y(x) = \Breve{G}(y, x)$ and $\Breve{G}'_y(\Breve{x}) = [\partial_x \breve G(y,x)]|_{x = \Breve{x}}$. We then calculate,
    \[\begin{split}
        \Breve{G}'_y(x) &= \frac{1}{2}\bigg{[}(1-y)\log(4-4y) + y\log(y) + 2^{-x}(1-y)\log(4)\\
        &\qquad \times \bigg{(}\log(2-2y)-\log(2-2^x + 2y(2^x - 1)) -2\log\left( \frac{2-2y}{2-2y + 2^x(2y - 1)}\right)\bigg{)}\bigg{]},
    \end{split}\]
    and, by elementary inspection, we can see that the formula for $\Breve{G}_y'(x), x\in (0,1)$ above is in fact continuous as a function of $x \in [0,1].$ Hence, we consider the continuous extension of $\Breve{G}_y'(x)$ over the domain of $x \in [0,1]$, to ease the notation for this proof.
    
    Plugging in $x = 0$ gives,
   \begin{align}
   \Breve{G}'_y(0) &= \frac{1}{2}\left[(1-y)\log(4-4y) + y\log(y) - (1-y)\log(4)\log(2-2y)\right]\\
   &= \frac{1}{2}\left[(1-y)\log(4) - (1-y)\log(4)\log(2-2y) + (1-y)\log(1-y) + y\log(y)\right]\\
   &= (1-y)\log(2)(1-\log(2-2y)) - \frac{h(y)}{2}\\
   &= \log(2)((1-y)(1-\log(2-2y)) - h_2(y)/2)
   \end{align}
   One observes that for $y = 1/2$, we have $\Breve{G}'_y(0) = 0$,
   and for $y = 0$, $\Breve{G}'_y(0) = \log(2)(1-\log(2)) > 0$.
    Alongside the above equation, a sufficient condition for the positivity of $\Breve{G}'_y(0)$ when $y < 1/2$ is, for all $\epsilon \in [0,1/2)$, to have $[\partial_{y}\Breve{G}'_y(0)]|_{y = \epsilon} < 0$. Calculating this value gives
   \[[\partial_{y}\Breve{G}'_y(0)]|_{y = \epsilon} = \log(2)\log(2(1-\epsilon)) - \frac{1}{2}\log\left( \frac{1-\epsilon}{\epsilon}\right)\]
   Eliciting, $[\partial_{y}\Breve{G}'_y(0)]|_{y = 1/2} = 0$
   and $\lim_{\epsilon \conv{} 0}[\partial_{y}\Breve{G}'_y(0)]|_{y = \epsilon} = -\infty$. Thus, a further sufficient condition for the positivity of $G'_y(x)$ is, for all $0 \leq \epsilon \leq 1/2$, that $[\partial^2_{y}\Breve{G}'_y(0)]|_{y = \epsilon} > 0$.
   We can see that this second derivative takes the form of
   \[
       [\partial^2_{y}\Breve{G}'_y(0)]|_{y = \epsilon} = \frac{1}{1-\epsilon}\left( \frac{1}{2\epsilon} - \log(2)\right) > 0,
   \]
   when $0\leq \epsilon \leq 1/2 $. Thus, $\Breve{G}'_y(0)$, treated as a function of $y$, is strictly bounded away from zero for $y \in (0,1/2)$ with a maximum value at $y = 0$ with $\breve G'_y(0) = \log(2)(1-\log(2))$. Meaning that derivative $ \breve G'_y$ at $x=0$ is $\Theta(1)$ for all $y \in (0,1/2)$.
   
   Plugging in $x = 1$, gives
    \begin{align}
        \begin{split}\Breve{G}'_y(1) &= \frac{1}{2}\bigg{[}(1-y)\log(4-4y)+y\log(y)+\frac{1}{2}(1-y)\log(4)\\
        &\qquad \times\left(\log(2-2y)-\log(2y)- 2\log\left(\frac{2-2y}{2-2y +2(2y-1)}\right) \right)\bigg{]}\end{split}\\
        \begin{split}&= \frac{1}{2}\bigg{[(}1-y)\log(4) + \frac{1}{2}(1-y)\log(4)\bigg{(}\log(1-y) - \log(y) \\
        &\qquad -2\left(\log(1-y) - \log(y)\right)\bigg{)}\bigg{]} - \frac{h(y)}{2}\end{split}\\
        &= \log(2)\left((1-y)\left[1+\frac{1}{2}\log\left(\frac{y}{1-y} \right)\right] - \frac{h_2(y)}{2}\right)
    \end{align}
    We can then see that at $y = 1/2$ we have $\Breve{G}'_y(1) = 0$ and $\lim_{y\conv{} 0}\Breve{G}_y'(1) = -\infty.$
    Similar to the above statement, a sufficient condition for negativity of $\Breve{G}'_y(1)$ for all $y < 1/2$ is to show for all $\epsilon \in [0,1/2)$ that $[\partial_{y}\Breve{G}'_y(1)]|_{y = \epsilon} > 0$.
    The derivative of $\Breve{G}'_y(1)$ with respect to $y$ is
    \[[ \partial_{y}\Breve{G}'_y(1)]|_{y = \epsilon} = \frac{-\log(2)(\epsilon\log(\frac{\epsilon}{1-\epsilon}) + 2\epsilon - 1)}{2\epsilon} - \frac{1}{2}\log\left(\frac{1-\epsilon}{\epsilon}\right).\]
    This can be rewritten as
    \[\begin{split}[\partial_{y}\Breve{G}_y'(1)]|_{y = \epsilon} &= \log\left(\frac{\epsilon}{1-\epsilon}\right)\left(\frac{1}{2} - \frac{\log(2)}{2}\right) + \frac{\log(2)}{2\epsilon} - \log(2)\\
        &= \frac{2\epsilon\log\left(\frac{\epsilon}{1-\epsilon}\right)\left(\frac{1}{2} - \frac{\log(2)}{2}\right) + \log(2) - 2\epsilon\log(2)}{2\epsilon}.
    \end{split}\]
    We can then see that $[ \partial_{y}\Breve{G}_y'(1)]|_{y = 1/2} = 0$ and $\lim_{\epsilon \conv{} 0}[ \partial_{y}\Breve{G}_y'(1)]|_{y =\epsilon} = \infty$. Meaning that a further sufficient condition for the negativity of $\Breve{G}'_y(1)$ is for all $\epsilon \in [0,1/2]$ that $[ \partial^2_{y}\Breve{G}'_y(1)]|_{y = \epsilon} < 0$.
    Taking the second derivative with respect to $y$ gives $[ \partial^2_{y}\Breve{G}'_y(1)]|_{y = \epsilon} = \frac{\epsilon - \log(2)}{2(1-\epsilon)\epsilon^2}$,
    which we can clearly see is negative for $0 \leq \epsilon \leq 1/2$. Thus, we have shown that $G'_y(1) < 0$ for any $y$ bounded away from both $0$ and $1/2$. This concludes the proof.
\end{proof}

\begin{proof}[Proof of Lemma \ref{positive}]
    Using Lemma \ref{posder} and \ref{zerobound}, the lemma follows from showing that $\Breve{G}(y,x)$ is strictly concave on $x \in (0,1)$ for each fixed $y \in (0,1/2)$. Thus, we compute the second derivative of $\Breve{G}_y(x) = \Breve{G}(y, x)$ as
    \begin{align}
    \frac{\partial^2}{\partial x^2}\Breve{G}_y(x) &= -(1-y) \log ^2(2) \bigg{(}\frac{1}{\frac{1}{1-2 y}-2^x+1}+2^{-x} \bigg{(}-2 \log \left(\frac{2-2 y}{(2 y-1) 2^x-2
   y+2}\right)\\
   &\qquad - \log \left((2 y-1) 2^x-2 y+2\right)+\log (2-2 y)\bigg{)}\bigg{)}\\
   & = -(1-y) \log ^2(2) \left(\frac{1-2y}{2-2y - 2^x(1-2y)} + 2^{-x}\log\left(1+\frac{(2y-1)2^x}{2-2y}\right)\right)\\
   &< -(1-y)\log^2(2)\left(\frac{1-2y}{2-2y - 2^x(1-2y)} + 2^{-x}\left(\frac{\frac{2^x(2y-1)}{2-2y}}{1+\frac{2^x(2y-1)}{2-2y}}\right)\right)\label{eq:usedlogbound}\\
   &= -(1-y)\log^2(2)\left(\frac{1-2y}{2-2y - 2^x(1-2y)} -\frac{1-2y}{2-2y - 2^\epsilon(1-2y)}\right)\\
   &= 0,
   \end{align}
   where we used $\frac{\epsilon}{1+\epsilon} < \log(1+\epsilon)$ for $\epsilon > 0$ in \eqref{eq:usedlogbound}, which can be applied since $\frac{(2y-1)2^x}{2-2y} > 0$ for $y \in (0,1/2)$. Thus, we have established that $\Breve{G}_y(x)$ is strictly concave for all $x \in (0,1)$. From Lemma \ref{posder} we have that the derivative of $\Breve{G}_y(x)$ is bounded away from zero positively and negatively at $x = 0$ and $x = 1$ respectively. Combining this result with the strict concavity demonstrated above, we have that $\Breve{G}_y(x) > 0$ for $x \in (0,1)$.
\end{proof}

\subsection{Useful Asymptotic Bounds for $M$ and $p$}

\begin{comment}
\[\begin{split}
    &\frac{C(1-\alpha)k\log(n)}{2\log(2)} - O(1) \leq \frac{Ck\log_2((n/\floor{n^\alpha})) + O(kn^{-\alpha}) - 1}{2} \leq \frac{Ck\log_2((n/\floor{n^\alpha})(1-O(n^{\alpha}))) - 1}{2}\\
    &\leq \frac{Ck\log_2(n/(n^\alpha + 1)) - 1}{2} \leq \frac{Ck\log_2(n/\floor{n^\alpha}) - 1}{2} \leq \frac{Ck\log_2(n/k) - 1}{2} \leq \frac{C\log_2\binom{n}{k} - 1}{2}\\
    &\leq \frac{N}{2} \leq \frac{C\log_2\binom{n}{k} + 1}{2} \leq \frac{C\log\binom{n}{k} + 1}{2\log(2)} \leq \frac{Ck\log(n/k) + Ck + 1}{2\log(2)} \leq \frac{Ck\log(n/(n^\alpha - 1)) + Ck + 1}{2\log(2)}\\
    &\leq \frac{Ck\log((n/n^\alpha)(1 + O(n^{-\alpha}))) + Ck + 1}{2\log(2)} \leq \frac{Ck(1-\alpha)\log(n/n^\alpha)) + Ck\log((1 + O(n^{-\alpha})) + Ck + 1}{2\log(2)}\\
    &\leq \frac{Ck(1-\alpha)\log(n/n^\alpha)) + O(kn^{-\alpha}) + Ck + 1}{2\log(2)} \leq \frac{Ck(1-\alpha)\log(n)}{2\log(2)} + O(k)
\end{split}
\]
\end{comment}

\begin{lemma}\label{lem:Mbounds}
Recall that $N = \floor{C\log_2\binom{n}{k}}$ and $k = \floor{n^\alpha}$ with $C \in (1,2)$ and $\alpha \in (0,1)$. We have that as $n$ grows,
    $\frac{C(1-\alpha)k\log(n)}{2\log(2)} - O(1) \leq \frac{N}{2} \leq \frac{Ck(1-\alpha)\log(n)}{2\log(2)} + O(k)$
\end{lemma}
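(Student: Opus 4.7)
The plan is to sandwich $\log_2\binom{n}{k}$ between $k\log_2(n/k)$ and $k\log_2(en/k)$ using the elementary inequalities
\[
\left(\frac{n}{k}\right)^k \;\le\; \binom{n}{k} \;\le\; \left(\frac{en}{k}\right)^k,
\]
and then expand $\log(n/k)$ using $k=\floor{n^\alpha}$. The floor in the definition $N = \floor{C\log_2\binom{n}{k}}$ will contribute only an additive $O(1)$, which is absorbed into the stated error terms.

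For the lower bound, the inequality $\binom{n}{k}\ge (n/k)^k$ together with $k\le n^\alpha$ gives $\log(n/k)\ge (1-\alpha)\log n$. Hence $\log_2\binom{n}{k}\ge \frac{k(1-\alpha)\log n}{\log 2}$, and stripping the floor costs at most $1/2$, which yields
\[
\frac{N}{2}\;\ge\;\frac{C\log_2\binom{n}{k}-1}{2}\;\ge\;\frac{Ck(1-\alpha)\log n}{2\log 2}-O(1).
\]
For the upper bound, from $\binom{n}{k}\le (en/k)^k$ one has $\log_2\binom{n}{k}\le k\log_2(n/k)+k\log_2 e$. Using $k=\floor{n^\alpha}\ge n^\alpha-1$, we can write $n/k\le n^{1-\alpha}/(1-n^{-\alpha})$, so that $\log(n/k)\le (1-\alpha)\log n-\log(1-n^{-\alpha})=(1-\alpha)\log n+O(n^{-\alpha})$. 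Substituting back,
\[
\log_2\binom{n}{k}\;\le\;\frac{k(1-\alpha)\log n}{\log 2}+O(kn^{-\alpha})+O(k)\;=\;\frac{k(1-\alpha)\log n}{\log 2}+O(k),
\]
so $N/2\le \frac{Ck(1-\alpha)\log n}{2\log 2}+O(k)$ after accounting for the $+1/2$ from removing the floor.

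There is no real obstacle here; the entire lemma is a routine asymptotic manipulation. The only point requiring some care is bookkeeping of the two sources of error, namely the additive $O(1)$ from the floor function on $N$ and the $k\log_2 e = O(k)$ factor coming from Stirling's inequality, together with the $O(n^{-\alpha})$ correction from the inequality $k\ge n^\alpha-1$, and confirming that all of these are dominated by the $O(1)$ and $O(k)$ terms in the statement.
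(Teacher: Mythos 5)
Your proposal is correct and follows essentially the same route as the paper: both proofs bound $\log_2\binom{n}{k}$ from below by $k\log_2(n/k)$ and from above by $k\log_2(en/k)$, then expand $\log(n/k)$ using $n^\alpha-1\le k=\floor{n^\alpha}\le n^\alpha$ and absorb the floor on $N$ and the $k\log_2 e$ term into the stated error classes. The only cosmetic difference is in the lower bound, where you use $k\le n^\alpha$ directly to get $\log(n/k)\ge(1-\alpha)\log n$ with no correction term, whereas the paper writes $k\le n^\alpha(1+n^{-\alpha})$ and then absorbs the resulting $O(k\log(1+n^{-\alpha}))=O(1)$ error; both versions land in the same place.
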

\begin{proof}
To show the upper bound we have,
\begin{align}
    N/2 &\leq \frac{C\log_2\binom{n}{k} + 1}{2} \leq \frac{Ck\log\left(ne/k\right) + 1}{2\log(2)} = \frac{Ck\log(n/k)}{2\log(2)} + O(k)\\
    &\leq  \frac{Ck\log\left(\frac{n}{n^\alpha}\frac{1}{1-n^{-\alpha}}\right)}{2\log(2)} + O(k) \leq \frac{Ck\log\left(\frac{n}{n^\alpha}\right)}{2\log(2)} + O(k\log(1-n^{-\alpha})) + O(k) \\
    &= \frac{Ck(1-\alpha)\log(n)}{2\log(2)} + O(k).
\end{align}
To show the lower bound we have,
\begin{align}
    \frac{N}{2} &\geq \frac{C\log_2\binom{n}{k} - 1}{2} \geq \frac{Ck\log(n/k)}{2\log(2)} - O(1) \geq \frac{Ck\log\left(\frac{n}{n^\alpha}\frac{1}{1+n^{-\alpha}} \right)}{2\log(2)} - O(1)\\
    &\geq \frac{Ck(1-\alpha)\log(n)}{2\log(2)} - O(k \log(1 + n^{-\alpha})) - O(1) \geq \frac{Ck(1-\alpha)\log(n)}{2\log(2)} - O(1),
\end{align}
where the last inequality is because $\log(1+n^{-\alpha}) = \Theta(n^{-\alpha}) = \Theta(k^{-1})$.
\end{proof}

\begin{lemma}\label{lem:plower}
    Recall that $p$ is the number of possibly infected individuals after COMP post-processing, $k = \floor{n^\alpha}$, $C \in (1,2)$ and $\alpha \in (0,1)$. We have that for any $1/4 > c > 0$ (or if $p = \pSet$ satisfies Assumption \ref{as:MPScale}) that,
    \[(1-\alpha)\left(1-\frac{C}{2}\right)\log(n) + O(k^{-c}\log(n)) \geq \log(p/k) \geq (1-\alpha)\left(1-\frac{C}{2}\right)\log(n) - O(k^{-c}\log(n)),\]
    a.a.s. as \ngrow\!.
\end{lemma}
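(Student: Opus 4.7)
The plan is to obtain the claimed estimate on $\log(p/k)$ by directly invoking Lemma \ref{lem:p:limit} (or Assumption \ref{as:MPScale} in the alternative formulation) with the parameter $\eta_p = c$. Since $C \in (1,2)$ we have $C/4 > 1/4$, so any $c \in (0, 1/4)$ is a valid choice for $\eta_p$ in Lemma \ref{lem:p:limit}, and a.a.s. as $n \to \infty$ it holds
\[
(1-k^{-c})\, n\!\left(\tfrac{k}{n}\right)^{\frac{C}{2}(1+k^{-c})} \leq p \leq (1+k^{-c})\, n\!\left(\tfrac{k}{n}\right)^{\frac{C}{2}(1-k^{-c})}.
\]
From here the proof is just bookkeeping: take logarithms of both sides, divide by $k$ (i.e.~subtract $\log k$), and simplify using the elementary identity $\log(k) = \alpha \log(n) + O(n^{-\alpha})$ (since $k = \floor{n^\alpha}$, so $\log k = \log n^\alpha + O(n^{-\alpha})$) together with $\log(1 \pm k^{-c}) = O(k^{-c})$.

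Concretely, for the upper bound I would write
\[
\log(p/k) \leq \log(1+k^{-c}) + (1-\alpha)\log(n) - \tfrac{C}{2}(1-k^{-c})(1-\alpha)\log(n) + O(n^{-\alpha}),
\]
and then collect the $k^{-c}$ terms to see that everything beyond $(1-\alpha)(1 - C/2)\log(n)$ is of order $k^{-c}\log(n)$ (recall $n^{-\alpha} = \Theta(k^{-1}) = o(k^{-c}\log n)$). The lower bound is obtained in exactly the same way starting from the lower bound on $p$ in Lemma \ref{lem:p:limit}, noting only a sign change in front of the $k^{-c}$ in the exponent.

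No step here is really an obstacle: the heavy lifting (the concentration of the number of negative tests, and the survival probability of an individual after COMP) has already been carried out in Lemma \ref{lem:p:limit}. The only mildly delicate point is ensuring that the subleading terms coming from $\log(1 \pm k^{-c})$, from $\log k = \alpha \log n + O(n^{-\alpha})$, and from the cross term $\frac{C}{2} k^{-c}(1-\alpha)\log n$ are all uniformly dominated by $O(k^{-c}\log n)$, which is immediate since $k=n^{\alpha}$ and $c<1/4$. Under Assumption \ref{as:MPScale}, the identical computation goes through deterministically, which accounts for the parenthetical alternative in the statement.
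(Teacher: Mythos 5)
Your proposal is correct and takes essentially the same route as the paper: both start from the a.a.s.\ bounds on $p$ given by Lemma~\ref{lem:p:limit} (equivalently Assumption~\ref{as:MPScale}) and then perform the same logarithmic bookkeeping, tracking the $O(k^{-c})$, $\frac{C}{2}k^{-c}(1-\alpha)\log n$, and $O(n^{-\alpha})$ remainders and observing they are all $O(k^{-c}\log n)$. The only cosmetic difference is that you take logarithms first and use $\log k = \alpha\log n + O(n^{-\alpha})$ directly, whereas the paper expresses the bound on $p/k$ in powers of $n$ before taking logarithms; the content is identical.
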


\begin{proof}
We will demonstrate the lower bound of this statement, the upper bound follows similarly.

By Lemma \ref{lem:p:limit} (or by Assumption \ref{as:MPScale} with $p = \pSet$), we have the following a.a.s. as \ngrow\!,
\begin{align}
\frac{p}{k} &\geq (1-k^{-\err})\frac{n}{k}\left(\frac{k}{n}\right)^{\frac{C}{2}(1+k^{-\err})}\\
&\geq (1-k^{-\err})n^{1-\frac{C}{2}(1 + k^{-\err})}k^{\frac{C}{2}(1 + k^{-\err}) - 1}\\
&\geq (1-k^{-\err})n^{1-\frac{C}{2}(1 + k^{-\err})}(n^\alpha - 1)^{\frac{C}{2}(1 + k^{-\err}) - 1}\\
&\geq (1-k^{-\err})n^{1-\frac{C}{2}(1 + k^{-\err})}{n}^{\alpha(\frac{C}{2}(1 + k^{-\err}) - 1)}(1-n^{-\alpha})^{\frac{C}{2}(1 + k^{-\err}) - 1}\\
&\geq (1-k^{-\err})n^{1-\frac{C}{2}(1 + k^{-\err})}{n}^{\alpha(\frac{C}{2}(1 + k^{-\err}) - 1)}(1-n^{-\alpha})^{\frac{C}{2}(1 + k^{-\err}) - 1}\\
&\geq (1-k^{-\err})n^{(1-\alpha)(1-\frac{C}{2}(1 + k^{-\err}))}(1-n^{-\alpha})^{\frac{C}{2}(1 + k^{-\err}) - 1}.\\
\end{align}
Thus, taking the logarithm, we have that,
\begin{align}
    \log(p/k) &\geq \log\left((1-k^{-\err})n^{(1-\alpha)(1-\frac{C}{2}(1 + k^{-\err}))}(1-n^{-\alpha})^{\frac{C}{2}(1 + k^{-\err}) - 1}\right)\\
    &= \log(1-k^{-\err}) + (1-\alpha)\left(1-\frac{C}{2}(1 + k^{-\err})\right)\log(n) + \left(\frac{C}{2}(1 + k^{-\err}) - 1\right)\log(1-n^{-\alpha})\\
    &= (1-\alpha)\left(1-\frac{C}{2}\right)\log(n) - O(k^{-\err}\log(n)) - O(n^{-\alpha}) - O(k^{-\err})\\
    &= (1-\alpha)\left(1-\frac{C}{2}\right)\log(n) - O(k^{-\err}\log(n))\\
\end{align}
\end{proof}

\end{document}